\documentclass[a4paper]{article}
\usepackage{amsmath,amsthm,amssymb,stmaryrd}
\usepackage{url}
\usepackage{todonotes}

\newtheorem{theorem}{Theorem}[section]
\newtheorem{proposition}[theorem]{Proposition}
\newtheorem{lemma}[theorem]{Lemma}
\newtheorem{corollary}[theorem]{Corollary}
\newtheorem{remark}[theorem]{Remark}
\newtheorem{conjecture}[theorem]{Conjecture}
\theoremstyle{definition}

\newtheorem{definition}[theorem]{Definition}

\newcommand{\dom}{\operatorname{dom}}
\newcommand{\powset}{\mathcal{P}}
\newcommand{\nat}{\mathbb{N}}
\newcommand{\baire}{\nat^\nat}
\newcommand{\natinfty}{\nat_\infty}
\newcommand{\klone}{\mathcal{K}_1}
\newcommand{\kltwo}{\mathcal{K}_2}
\newcommand{\vs}{V^{(\mathcal{S})}}
\newcommand{\vl}{V^{(\mathcal{L}_2)}}
\newcommand{\vkone}{V(\klone)}
\newcommand{\vktwo}{V^{\mathbb{P}}}
\newcommand{\vtruth}{V^\ast}
\newcommand{\Vdashtr}{\Vdash_{tr}}
\newcommand{\topln}{\mathcal{L}_n}
\newcommand{\vln}{V^{(\topln)}}
\newcommand{\pair}{\mathbf{p}}

\newcommand{\axiomf}{\mathbf}
\newcommand{\theoryf}{\mathbf}

\newcommand{\ip}[2]{\axiomf{IP}_{{#1},{#2}}}
\newcommand{\ipnn}{\ip{\mathcal{F}_n}{\nat^\nat}}
\newcommand{\llpo}{\axiomf{LLPO}}
\newcommand{\lpo}{\axiomf{LPO}}
\newcommand{\markov}{\axiomf{MP}}
\newcommand{\church}{\axiomf{CT}_0}
\newcommand{\churchu}{\axiomf{CT}_!}
\newcommand{\choice}{\axiomf{AC}}
\newcommand{\rea}{\axiomf{REA}}
\newcommand{\acn}[2]{\choice_{\nat,#1}^{#2}}
\newcommand{\acx}[2]{\choice_{X,#1}^{#2}}
\newcommand{\acb}[2]{\choice_{\baire,#1}^{#2}}
\newcommand{\acndn}[1]{\acn{#1}{\neg \neg}}
\newcommand{\acxdn}[1]{\acx{#1}{\neg \neg}}
\newcommand{\hchoice}{\axiomf{HAC}}
\newcommand{\hacn}{\hchoice_{\nat,\nat}}
\newcommand{\hacb}{\hchoice_{\baire,\nat}}
\newcommand{\hacx}{\hchoice_{X,\nat}}
\newcommand{\fan}{\axiomf{Fan}}
\newcommand{\bim}{\axiomf{BI}_M}

\newcommand{\contbn}{\axiomf{Cont}(\baire, \nat)}
\newcommand{\lcp}{\axiomf{LCP}}
\newcommand{\rdc}{\axiomf{RDC}}
\newcommand{\contac}{\axiomf{CC}}

\newcommand{\czf}{\theoryf{CZF}}
\newcommand{\izf}{\theoryf{IZF}}

\newcommand{\exlte}[1]{\exists^{\leq #1}}

\newcommand{\prf}{\operatorname{Pr}}
\newcommand{\godelno}[1]{\ulcorner #1 \urcorner}

\newcommand{\nil}{\mathtt{nil}}
\newcommand{\maketree}{\operatorname{Tr}}
\newcommand{\cov}{\operatorname{Cover}}

\newcommand{\shp}{\operatorname{S}}
\newcommand{\dt}{\operatorname{D}}

\title{Lifschitz Realizability as a Topological Construction}
\author{Michael Rathjen and Andrew W Swan}

\begin{document}

\maketitle
\begin{abstract}
  We develop a number of variants of Lifschitz realizability for
  $\czf$ by building topological models internally in certain
  realizability models. We use this to show some interesting
  metamathematical results about constructive set theory with variants
  of $\llpo$ including consistency with unique Church's thesis,
  consistency with some Brouwerian principles and variants of the
  numerical existence property.
\end{abstract}

\section{Introduction}

In \cite{vanoostentworemarks} and \cite{vanoosten}, Van Oosten shows
how the Lifschitz realizability topos can be viewed as a category of
sheaves over a particular Lawvere-Tierney topology constructed in the
effective topos. Although a remarkable result, it has some
shortcomings:
\begin{enumerate}
\item The construction refers explicitly to computable functions and
  Lifschitz's encoding of finite sets. This makes it appear that the
  construction is unique to the effective topos and cannot be carried
  out in other toposes.
\item The construction relies on many technical definitions and
  techniques from topos theory.
\item The construction is not guaranteed to work predicatively.
\end{enumerate}

In this paper we will give a new presentation of this result. Instead
of topos theory we work in the set theory $\czf$, which is regarded as
a predicative theory for mathematics. Instead of Lawvere-Tierney
topologies, we will use formal topologies and a predicative notion of
topological model due to Gambino.

Aside from this difference in presentation, our results are more
general than Van Oosten's in two ways (although the first of these
does relate to some more recent results by Lee and Van Oosten in
\cite{leevanoosten}).

Firstly, instead of considering just one formal topology, we will
consider an infinite family of formal topologies $\mathcal{L}_n$ for
each natural number $n \geq 2$, with the original Lifschitz
realizability model just corresponding to the formal topology
$\mathcal{L}_2$. The topologies $\mathcal{L}_n$ correspond to certain
variants of $\llpo$, which were first studied by Richman in
\cite{richmanllpon}, and are denoted $\llpo_n$. We will use these
models to give a new proof of a theorem due to Hendtlass and Lubarsky
in \cite{hendtlasslubarsky}: $\llpo_{n + 1}$ is strictly weaker than
$\llpo_n$. This answers positively a question raised by Hendtlass: is
there a variant of Lifschitz realizability that separates $\llpo_{n}$
from $\llpo_{n + 1}$?

Secondly, we identify axioms, $\ipnn$ that hold in the McCarty
realizability model $V(\klone)$ that suffice to carry out internally
the construction of the formal topologies $\mathcal{L}_n$ we will use
in the models. This can be done entirely in $\czf + \markov + \ipnn$,
without any explicit reference to computable functions. This enables
us to easily generate variants of Lifschitz realizability by simply
checking that the same axioms $\ipnn$ hold in other realizability
models. By using realizability with truth in this way we will show
that the theories $\czf + \markov + \llpo_n$ have certain variants of
the numerical existence property. By using realizability over $\kltwo$
in this way we will show that $\czf + \llpo$ is consistent with
certain (but not all) Brouwerian continuity principles.

A more traditional version of Lifschitz realizability for $\czf +
\llpo + \churchu$ similar to that in \cite{rathjenchen} can be
recovered by a two step process of interpreting the topological model
$\vl$ in the McCarty realizability model $V(\klone)$, itself
constructed in $\czf + \markov$ as illustrated below.
\[
\begin{array}{l|ccccc}
  \text{Theory} & \czf + \markov
  & & \czf + \markov
  & &  \czf + \markov \\
  & + \llpo + \churchu & & + \ip{\mathcal{F}_2}{\baire} + \church & \\
  & & \hookrightarrow & & \hookrightarrow & \\
  \text{Model} & \vl & & V(\klone) & & V
\end{array}
\]

\nocite{mylatzthesis}
\nocite{richmanllpon}

\section{Constructive Set Theory}

We will consider the intuitionistic set theories $\czf$ and $\izf$, as
described for instance in \cite{aczelrathjen} or
\cite{aczelrathjenbookdraft}.

We will use the following set theoretic formulations of Markov's
principle and Church's thesis.

\begin{definition}
  \label{def:8}
  \emph{Markov's principle}, $\markov$, is the following axiom. Let
  $\alpha : \nat \rightarrow 2$ be a function. Then,
  \begin{equation*}
    \label{eq:59}
    \neg \neg \; (\exists n \in \nat)\,\alpha(n) = 1 \quad \rightarrow
    \quad (\exists n \in \nat)\,\alpha(n) = 1
  \end{equation*}
\end{definition}

\begin{definition}
  \label{def:9}
  \emph{Church's thesis}, $\church$ is the following axiom. Let
  $\phi(x, y)$ be any formula. Then, writing $\{e\}(n)$ to mean the
  result of running the $e$th Turing machine with input $n$,
  \begin{equation*}
    \label{eq:63}
    (\forall n \in \nat)(\exists m \in \nat)\,\phi(n, m) \quad
    \rightarrow \quad
    (\exists e \in \nat)(\forall n \in \nat)\,\phi(n, \{e\}(n))
  \end{equation*}

  \emph{Church's thesis for functions}, $\churchu$ is the axiom that
  every function from $\nat$ to $\nat$ is computable.
\end{definition}

We recall the following definitions and theorems on finite
sets, as appear in \cite[Chapters 6 and
8]{aczelrathjenbookdraft}. The theorems will often be used
implicitly while working with finitely enumerable sets.

\begin{definition}
  A set $X$ is \emph{finite} if for some $n \in \nat$ there
  exists a bijection from $n$ to $X$.
  
  A set $X$ is \emph{finitely enumerable} if for some $n \in \nat$
    there exists a surjection from $n$ to $X$.
\end{definition}

\begin{theorem}[$\czf$]
  Suppose that $\phi(x_1,\ldots,x_n)$ is a formula of arithmetic,
  where all quantifiers are bounded, and the only free variables are
  amongst $x_1,\ldots,x_n$. Then we can prove the following instance
  of excluded middle.
  \begin{equation*}
    (\forall x_1,\ldots,x_n \in \nat)\;\phi(x_1,\ldots,x_n) \vee \neg
    \phi(x_1,\ldots,x_n)
  \end{equation*}
\end{theorem}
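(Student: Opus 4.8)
The plan is to argue by induction on the logical build-up of $\phi$, after first settling the atomic case. Throughout, call a formula $\psi$ (possibly with set parameters) \emph{decidable} if $\psi \vee \neg \psi$ holds; the goal is then to show that every bounded arithmetic formula becomes decidable once its free variables are instantiated by natural numbers.

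First I would handle atomic formulas. In $\czf$ the natural numbers are the finite von Neumann ordinals, so the relations to deal with are just $=$ and $\in$ (and $<$, which on $\nat$ coincides with $\in$); if the language of arithmetic also contains function symbols such as successor, $+$ and $\cdot$, one first notes that $\czf$ proves these to be total functions $\nat \to \nat$ or $\nat \times \nat \to \nat$ defined by recursion, so that it suffices to treat atomic formulas whose terms are variables. Then, by mathematical induction on $n \in \nat$ (available in $\czf$), one proves $(\forall m \in \nat)(m \in n \vee m \notin n)$ and $(\forall m \in \nat)(m = n \vee m \neq n)$, the successor step using $n + 1 = n \cup \{n\}$ and the induction hypothesis. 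This yields decidability of all atomic formulas with natural-number parameters.

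For the induction steps, the propositional connectives are immediate: the decidable formulas are closed under $\wedge$, $\vee$, $\rightarrow$ and $\neg$ by pure intuitionistic logic (a finite case split on the truth values of the immediate subformulas decides the compound formula). The crucial case is the bounded quantifiers $(\exists x \in t)\,\psi$ and $(\forall x \in t)\,\psi$. After instantiating parameters, $t$ denotes some $k \in \nat$, and every element of $k$ is again a natural number, so the inductive hypothesis applies and tells us $\psi(m)$ is decidable for each $m \in k$. The point is then that a bounded quantifier over a natural number is a \emph{finite} search: I would prove $(\forall k \in \nat)\big((\exists x \in k)\,\psi(x) \vee \neg(\exists x \in k)\,\psi(x)\big)$ by a second, inner induction on $k$, where the successor step uses $k + 1 = k \cup \{k\}$ to rewrite $(\exists x \in k + 1)\,\psi(x)$ as $(\exists x \in k)\,\psi(x) \vee \psi(k)$ and then combines the inner induction hypothesis with decidability of $\psi(k)$; the bounded universal quantifier is handled dually by the analogous inner induction. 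Running the outer induction on the structure of $\phi$ then gives the theorem.

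The main obstacle is precisely this bounded-quantifier step, and in particular the bookkeeping of keeping the inner induction on the bound $k$ cleanly separated from the outer induction on formula complexity, and of carrying parameters correctly through subformulas; the rest is routine. One should also be careful at the outset to fix exactly what counts as a \emph{term} and a \emph{bounded} quantifier in the chosen formulation of arithmetic, since the presence of function symbols means the reduction to atomic formulas in variables only is legitimate only after one has checked that those functions are provably total in $\czf$.
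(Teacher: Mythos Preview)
Your proposal is correct and is the standard argument. The paper does not give its own proof of this statement: it simply cites \cite[Theorem 6.6.2]{aczelrathjenbookdraft}. The proof found there is essentially the one you sketch --- induction on the logical complexity of $\phi$, with the atomic case handled by induction on natural numbers and the bounded-quantifier case handled by an inner induction on the bound. So there is nothing to compare; you have supplied what the paper outsources to a reference.
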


\begin{proof}
  See \cite[Theorem 6.6.2]{aczelrathjenbookdraft}.
\end{proof}

\begin{theorem}[$\czf$] ``The Pigeonhole Principle for Finitely
  Enumerable Sets.'' Let $A$ be a finitely enumerable set. Every
  injective function $f : A \rightarrowtail A$ is also a surjection.
\end{theorem}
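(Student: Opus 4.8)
The plan is to prove the formally stronger statement that for every $a \in A$ there is some $b \in A$ with $f(b) = a$; surjectivity of $f$ is then immediate. I would begin by fixing a surjection $g \colon n \to A$ witnessing that $A$ is finitely enumerable, together with an element $a \in A$, and then forming the sequence of iterates of $f$ at $a$: by recursion on $\nat$, set $a_0 = a$ and $a_{k+1} = f(a_k)$, obtaining $a_0, a_1, \dots, a_n \in A$ (each lies in $A$ since $f$ maps $A$ into $A$) with $a_k = f^k(a)$, where $f^k$ is the $k$-fold composite of $f$.

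The key idea is to push the pigeonhole count forward along $g$ into $\nat$, where equality is decidable. Since $g$ is onto, for each $k \le n$ the set $\{\, j \in n : g(j) = a_k \,\}$ — a set by bounded separation — is inhabited. Finite choice, the assertion that a family of inhabited sets indexed by a natural number admits a choice function, is provable in $\czf$ by a straightforward induction on the length of the family; applying it here yields a function $k \mapsto j_k$ with $j_k \in n$ and $g(j_k) = a_k$ for all $k \le n$. This function maps the $(n+1)$-element set $\{0, 1, \dots, n\}$ into the $n$-element set $n$, so by the pigeonhole principle for natural numbers — available in $\czf$, for instance via the excluded middle for bounded arithmetical formulas recorded above — there are $p < q \le n$ with $j_p = j_q$, and hence $a_p = g(j_p) = g(j_q) = a_q$.

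It remains to extract the preimage. A trivial induction shows each $f^k$ is injective, being a composite of copies of $f$. Since $a_q = f^q(a) = f^p\bigl(f^{q-p}(a)\bigr) = f^p(a_{q-p})$ while $a_p = f^p(a)$, the equality $a_p = a_q$ together with injectivity of $f^p$ forces $a = a_{q-p}$. As $q - p \ge 1$ this says $a = a_{q-p} = f(a_{q-p-1})$ with $a_{q-p-1} \in A$, so $b := a_{q-p-1}$ is the desired preimage.

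The one point demanding care is that a finitely enumerable set need not have decidable equality, so one cannot apply the pigeonhole principle directly to the iterates $a_0, \dots, a_n$ inside $A$. Routing the argument through the enumeration $g$ repairs this; the only price is the appeal to finite choice, which — being choice over a natural-number-indexed family — needs no principle beyond what $\czf$ already proves.
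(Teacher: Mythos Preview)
Your argument is correct: routing the comparison through the enumeration $g$ into $\nat$, where equality is decidable and the ordinary pigeonhole principle for natural numbers is available, is exactly the move one needs, and the extraction of the preimage via injectivity of $f^p$ is clean. The paper itself does not supply a proof of this theorem but merely cites Theorem~8.2.10 of the Aczel--Rathjen notes, so there is no in-paper argument to compare your approach against; your self-contained proof is a reasonable substitute.
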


\begin{proof}
  See \cite[Theorem 8.2.10]{aczelrathjenbookdraft}.
\end{proof}

\begin{theorem}[$\czf$] ``The Finite Axiom of Choice.''
  Suppose $A$ is a finite set, $B$ is any set, and $R \subseteq A
  \times B$ is a relation such that $(\forall a \in A)(\exists b \in
  B)\, \langle a, b \rangle \in R$.

  Then there is a function $f : A \rightarrow B$ such that for all $a
  \in A$, $\langle a, f(a) \rangle \in R$.
\end{theorem}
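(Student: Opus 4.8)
The plan is to reduce to the case where $A$ is a natural number and then argue by induction on it. Since $A$ is finite, fix $n \in \nat$ and a bijection $g : n \to A$. Set $S = \{\langle i, b\rangle \in n \times B : \langle g(i), b\rangle \in R\}$, which is a set since $B$ (hence $n \times B$ and $R$) is. Then $(\forall i \in n)(\exists b \in B)\,\langle i,b\rangle \in S$, and if we can produce a function $h : n \to B$ with $\langle i, h(i)\rangle \in S$ for all $i \in n$, then $f := h \circ g^{-1} : A \to B$ is a function (as $g^{-1}$ is a function, $g$ being a bijection) satisfying $\langle a, f(a)\rangle \in R$ for all $a \in A$. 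So it suffices to prove the statement for $A = n \in \nat$.

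This I would prove by induction on $n$, which is legitimate since $\czf$ proves induction on $\nat$ for arbitrary formulas and ``for every set $B$ and every total relation $S \subseteq n \times B$ there is a function $h : n \to B$ with $\langle i, h(i)\rangle \in S$ for all $i \in n$'' is a formula in the free variable $n$. For $n = 0$ the empty function works. For the inductive step, given a total relation $S \subseteq (n+1) \times B$, apply the induction hypothesis to $S \cap (n \times B)$ to obtain $h' : n \to B$ with $\langle i, h'(i)\rangle \in S$ for each $i \in n$; by totality of $S$ at the element $n \in n+1$ there is some $b_0 \in B$ with $\langle n, b_0\rangle \in S$, and we fix one such $b_0$. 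Since $n \notin n$, the set $h := h' \cup \{\langle n, b_0\rangle\}$ is a single-valued function $n+1 \to B$ selecting from $S$, which completes the induction.

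The argument is routine; the only points worth care are (i) that we instantiate the existential $(\exists b \in B)\,\langle n, b\rangle \in S$ exactly once to name the witness $b_0$ — this is precisely where finiteness is used (it bounds the number of such instantiations, packaged as an induction on $\nat$) and why no genuine choice principle is invoked; and (ii) checking $n \notin n$, so that the one-point extension $h' \cup \{\langle n, b_0\rangle\}$ is genuinely a function, which is immediate from Set Induction in $\czf$. I anticipate no real obstacle. (An equivalent route is to observe that the finite product $\prod_{a \in A}\{b \in B : \langle a,b\rangle \in R\}$ of inhabited sets is inhabited, which unwinds to the same one-point-extension step.)
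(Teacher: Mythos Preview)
Your argument is correct and is the standard proof by induction on the size of $A$. The paper does not actually supply a proof: it simply cites \cite[Theorem 8.2.8]{aczelrathjenbookdraft}, where precisely this induction is carried out.
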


\begin{proof}
   See \cite[Theorem 8.2.8]{aczelrathjenbookdraft}.
\end{proof}

We can also prove a finite version of $\lpo$:
\begin{theorem}[$\czf$]
  For every finitely enumerable set $X$ and every
  $f : X \rightarrow 2$, either there exists some $x \in X$ such that
  $f(x) = 1$ or for all $x \in X$, $f(x) = 0$.
\end{theorem}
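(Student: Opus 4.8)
The plan is to reduce the statement to a decidable question about a genuine natural number. Since $X$ is finitely enumerable, fix $n \in \nat$ and a surjection $g : n \rightarrow X$, and consider the composite $f \circ g : n \rightarrow 2$. I would first prove the auxiliary claim that for every $m \in \nat$ and every $h : m \rightarrow 2$, either $(\exists k \in m)\, h(k) = 1$ or $(\forall k \in m)\, h(k) = 0$. This is an instance of the decidability of bounded arithmetical formulas proved above, but it is just as quick to establish it directly by induction on $m$: for $m = 0$ the right disjunct holds vacuously, and in passing from $m$ to $m+1$ one applies the induction hypothesis to the restriction of $h$ to $m$, and in the remaining case simply inspects the value $h(m) \in 2$, using that equality on $2$ is decidable.

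Applying this claim to $h = f \circ g$, I would then transfer the conclusion back along the surjection $g$. If there is $k \in n$ with $f(g(k)) = 1$, then $x := g(k)$ is an element of $X$ with $f(x) = 1$, which gives the left disjunct. If instead $f(g(k)) = 0$ for all $k \in n$, then for an arbitrary $x \in X$, surjectivity of $g$ yields some $k \in n$ with $g(k) = x$, so that $f(x) = f(g(k)) = 0$; hence the right disjunct holds. This completes the argument.

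I do not expect a genuine obstacle here. The only point worth flagging is that the disjunction has to be decided at the level of the natural number $n$ — where induction and decidable equality on $2$ are available — rather than directly on $X$, for which we have neither decidable equality nor an induction principle. The surjection $g$ need not be injective, and $X$ may be empty, but neither of these causes any difficulty: an empty $X$ falls under the right disjunct, and non-injectivity of $g$ is harmless since $g$ is used only to pass from $n$ to $X$.
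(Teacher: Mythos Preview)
Your proposal is correct and follows essentially the same approach as the paper: both argue by induction on the natural number $n$ witnessing finite enumerability, using the surjection to transfer the decidable disjunction from $n$ back to $X$. The paper's proof is a one-line sketch (``Show by induction on $n$ that if there is a surjection $n \twoheadrightarrow X$ then the result holds for $X$''), and your writeup simply spells out the details.
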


\begin{proof}
  Show by induction on $n$ that if there is a surjection $n
  \twoheadrightarrow X$ then the result holds for $X$.
\end{proof}

\section{Formal Topologies and Heyting Valued Models of $\czf$}

\subsection{Basic Definitions}

We recall the basic definitions of formal topology and Gambino's
Heyting valued interpretation of $\czf$. For details see
\cite{gambinohvi}. The basic idea here is that to each formula in set
theory, we assign an open set, which we think of as the ``truth
value'' of the formula. We use Gambino's presentation of topological
models since it can be formalised in, and provides models for $\czf$.

\begin{definition}
  If $\langle S, \leq \rangle$ is a poset, and $p$ is a subset of $S$,
  we write $p \downarrow$ for the downwards closure of $p$. That is,
  \begin{equation*}
    p \downarrow \quad:=\quad \{ x \in S \;|\; (\exists y \in p)\,x \leq y \}
  \end{equation*}
\end{definition}

\begin{definition}
  A \emph{formal topology} is $\langle S, \leq, \triangleleft \rangle$
  such that $\langle S, \leq \rangle$ is a poset, and $\triangleleft$
  is a (class) relation between elements and subsets of $S$, such that
  \begin{enumerate}
  \item \label{ftreflexivity} if $a \in p$, then $a \triangleleft p$
  \item \label{ftdownclosed} if $a \leq b$ and $b \triangleleft p$,
    then $a \triangleleft p$
  \item \label{fttransitive} if $a \triangleleft p$ and $(\forall x
    \in p)(x \triangleleft q)$, then $a \triangleleft q$
  \item \label{ftintersect} if $a \triangleleft p$ and $a
    \triangleleft q$, then $a \triangleleft \downarrow p \cap
    \downarrow q$
  \end{enumerate}
\end{definition}

\begin{definition}
  Let $\mathcal{S} := \langle S, \leq, \triangleleft \rangle$ be a
  formal topology. A \emph{set-presentation} for $\mathcal{S}$ is a
  (set) function $R : S \rightarrow \powset (\powset S)$ such that
  \begin{equation*}
    \label{eq:1}
    a \triangleleft p \leftrightarrow (\exists u \in R(a))u \subseteq p
  \end{equation*}
  If $(S, \leq, \triangleleft)$ has a set-presentation, we say it is
  \emph{set-presentable}.
\end{definition}

\begin{definition}
  \label{def:1}
  Let $\mathcal{S} := \langle S, \leq, \triangleleft \rangle$ be a set
  presentable formal topology. We define the \emph{nucleus} of
  $\mathcal{S}$ to be the following class function $j : \powset(S)
  \rightarrow \powset(S)$. For $p \subseteq S$,
  \begin{equation*}
    \label{eq:8}
    j(p) := \{ a \in S \;|\; a \triangleleft p \}
  \end{equation*}

  We extend $j$ to an operation, $J$, on subclasses of $S$ by
  \begin{equation*}
    \label{eq:50}
    J(P) := \bigcup \{ j(v) \;|\; v \subseteq P \}
  \end{equation*}
\end{definition}

\begin{definition}
  \label{def:6}
  We say a formal topology $\langle S, \leq, \triangleleft \rangle$ is
  \emph{proper} if for all $a \in S$, $\neg a \triangleleft
  \emptyset$. (Or equivalently if $j(\emptyset) = \emptyset$.)
\end{definition}

\begin{definition}
  \label{def:5}
  Let $\mathcal{S} = \langle S, \leq, \triangleleft \rangle$ be a set
  presentable formal topology. The class $\vs$ is defined inductively
  as the smallest class such that $f \in \vs$ whenever $f$ is a
  function with $\dom(f) \subseteq \vs$ and for all $x \in \dom(f)$,
  $f(x)$ is a $\triangleleft$-closed subset of $S$.
\end{definition}

For each sentence $\phi$ in the language of set theory with parameters
from $\vs$, we assign a $\triangleleft$-closed class
denoted $\llbracket \phi \rrbracket$, which we define by induction
on formulas as follows. For bounded $\phi$, $\llbracket \phi
\rrbracket$ will be a set.

We first define a complete Heyting algebra structure on the class of
$\triangleleft$-closed classes as follows. For $P$ and $Q$
$\triangleleft$-closed classes,
\begin{align*}
  \top &:= S \\
  \bot &:= J(\emptyset) \\
  P \wedge Q &:= P \cap Q \\
  P \vee Q &:= J(P \cup Q) \\
  P \rightarrow Q &:= \{ a \in S \;|\; a \in P \rightarrow a \in Q \}
  \\
  \bigvee_{x \in U} P_x &:= J\left( \bigcup_{x \in U} P_x \right) \\
  \bigwedge_{x \in U} P_x &:= \bigcap_{x \in U} P_x
\end{align*}

We define the interpretation of atomic sentences $a \in b$ and $a = b$
by simultaneous induction on $a$ and $b$:
\begin{align*}
  a \in b &:= \bigvee_{c \in \dom(b)} b(c) \wedge \llbracket a = c \rrbracket \\
  a = b &:= \bigwedge_{c \in \dom(a)} a(c) \rightarrow \llbracket c
  \in b \rrbracket \quad \wedge \quad \bigwedge_{c \in \dom(b)} b(c)
  \rightarrow \llbracket c \in b \rrbracket
\end{align*}

We then extend this to all formulas as below.
\begin{align*}
  \llbracket \bot \rrbracket &:= \bot \\
  \llbracket \phi \wedge \psi \rrbracket &:= \llbracket \phi
  \rrbracket \wedge \llbracket \psi \rrbracket \\
  \llbracket \phi \vee \psi \rrbracket &:= \llbracket \phi
  \rrbracket \vee \llbracket \psi \rrbracket \\
  \llbracket \phi \rightarrow \psi \rrbracket &:= \llbracket \phi
  \rrbracket \rightarrow \llbracket \psi \rrbracket \\
  \llbracket (\exists x \in a)\,\phi \rrbracket &:= \bigvee_{x \in
    \dom(a)} \llbracket \phi \rrbracket \\
  \llbracket (\forall x \in a)\,\phi \rrbracket &:= \bigwedge_{x \in
    \dom(a)} \llbracket \phi \rrbracket \\
  \llbracket (\exists x)\,\phi \rrbracket &:= \bigvee_{x \in
    \vs} \llbracket \phi \rrbracket \\
  \llbracket (\forall x)\,\phi \rrbracket &:= \bigwedge_{x \in
    \vs} \llbracket \phi \rrbracket
\end{align*}

We write $\vs \models \phi$ to mean $\llbracket \phi
\rrbracket = \top$. For a collection of formulas, $\Phi$, we write
$\vs \models \Phi$ to mean $\vs \models
\phi$ for all $\phi \in \Phi$.

\begin{theorem}[Gambino]
  \label{thm:1}
  Let $\mathcal{S}$ be a set presentable formal topology. Then
  \begin{equation*}
    \label{eq:51}
    \vs \models \czf
  \end{equation*}
\end{theorem}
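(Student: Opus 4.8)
The plan is to invoke the soundness theorem for intuitionistic first-order logic for the interpretation $\llbracket \cdot \rrbracket$, and then to verify each non-logical axiom of $\czf$ in turn; this is Gambino's argument in \cite{gambinohvi}, so I only indicate the shape of each step and highlight where the hypotheses are used. The single point at which set-presentability enters is the observation that, if $R : S \to \powset(\powset S)$ is a set-presentation and $p \subseteq S$ is a \emph{set}, then ``$(\exists u \in R(a))\, u \subseteq p$'' is bounded in the parameters $R$, $a$, $p$, so $j(p) = \{ a \in S \mid a \triangleleft p \}$ is a set by Bounded Separation; and, since $j$ is monotone, $J(P) = j(P)$ whenever $P$ is a set. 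Hence $\bot$, $P \vee Q$ and $\bigvee_{x \in U} P_x$ are sets whenever $P$, $Q$ and the $P_x$ (with $U$ a set) are, and a routine induction on formulas then shows $\llbracket \phi \rrbracket$ is a set for every bounded $\phi$.

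The preliminary algebraic work is to check that the $\triangleleft$-closed subclasses of $S$, with the displayed operations, form a (class-sized) complete Heyting algebra: $J$ is a closure operator whose fixed classes are exactly the $\triangleleft$-closed ones, $P \rightarrow Q$ is again $\triangleleft$-closed and is the exponential, and set-indexed joins distribute over binary meets, for which one uses axioms \ref{ftdownclosed}, \ref{fttransitive} and \ref{ftintersect}. With this in place the interpretation of the connectives and quantifiers is precisely the one induced by the Heyting structure, so every intuitionistically derivable sequent is validated, provided we also have the equality axioms. For the latter I would prove, by induction on the ranks of the names (with an inner induction on the structure of the formula), the standard package: $\llbracket a = a \rrbracket = \top$, symmetry and transitivity of $\llbracket a = b \rrbracket$, and the Leibniz rule $\llbracket a = b \rrbracket \wedge \llbracket \phi(a) \rrbracket \le \llbracket \phi(b) \rrbracket$, whose atomic instances unwind directly from the simultaneous definition of $\llbracket a \in b \rrbracket$ and $\llbracket a = b \rrbracket$.

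Next come the axioms that only require exhibiting a name. Extensionality is immediate from the clause for $\llbracket a = b \rrbracket$. For Pairing, Union and Infinity one takes the obvious names — for the unordered pair of $a$ and $b$, the name with domain $\{a, b\}$ constantly equal to $\top$; for Infinity, the name built recursively from the finite von Neumann ordinals, each assigned value $\top$ — and checks the defining property holds with value $\top$. For Bounded Separation, given a name $a$ and a bounded $\phi(x)$ (with name parameters), the assignment $c \mapsto a(c) \wedge \llbracket \phi(c) \rrbracket$ on domain $\dom(a)$ is the required name; it lies in $\vs$ precisely because, by the first paragraph, each $\llbracket \phi(c) \rrbracket$ is a $\triangleleft$-closed set. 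Set Induction is verified using the induction principle that accompanies the inductive definition of $\vs$, as in the classical case.

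This leaves Strong Collection and Subset Collection, where set-presentability does the real work. For Strong Collection, suppose $\llbracket (\forall x \in a)(\exists y)\, \phi(x, y) \rrbracket = \top$, so that $a(c) \le \bigvee_{y \in \vs} \llbracket \phi(c, y) \rrbracket = J\bigl(\bigcup_{y \in \vs} \llbracket \phi(c, y) \rrbracket\bigr)$ for each $c \in \dom(a)$. Then every $s \in a(c)$ lies in $j(v)$ for some \emph{set} $v \subseteq \bigcup_{y \in \vs} \llbracket \phi(c, y) \rrbracket$; unfolding $s \triangleleft v$ through the set-presentation $R$ and applying Strong Collection in the background theory $\czf$ (over the sets $a(c)$, the sets $R(s)$, and the covers $v$), one extracts a single set $B_c$ of names, uniformly in $c$, with $a(c) \le \bigvee_{y \in B_c} \llbracket \phi(c, y) \rrbracket$. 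The name $b$ with domain $\bigcup_{c \in \dom(a)} B_c$ and $b(y) := \bigvee_{c \in \dom(a)}\bigl(a(c) \wedge \llbracket \phi(c, y) \rrbracket\bigr)$ then witnesses Strong Collection, the two required containments following from the definition of $b$ and the frame distributive law. Subset Collection is obtained via its equivalent, Fullness: given names $a$ and $b$, one exhibits a name for a ``full'' set of total multivalued relations from $a$ to $b$, and here again set-presentability is what guarantees that the relevant relation-names can be collected into a single set. I expect precisely these two items — the correct nested use of background Strong Collection inside the cover, and the bookkeeping that makes Fullness go through — to be the main obstacle; the rest is mechanical once the Heyting structure and the equality lemma are in hand.
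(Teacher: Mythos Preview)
Your sketch is a faithful outline of Gambino's argument and is correct; the paper itself does not give a proof but simply cites \cite{gambinohvi}, so there is nothing further to compare.
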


\begin{proof}
  See \cite{gambinohvi}.
\end{proof}

\subsection{Some Absoluteness Lemmas}
\label{sec:some-absol-lemm}

For some of the results later, it will be important that under certain
conditions statements that hold in the background universe also hold
internally in the topological model and vice versa. To this end, we
prove a series of absoluteness lemmas below.

First note that any set $x$ can be viewed as an element of $\vs$,
$\hat{x}$ as follows.
\begin{align*}
  \dom(\hat{x}) &:= x \\
  \hat{x}(y) &:= \top &\text{for all } y \in x
\end{align*}

\begin{lemma}
  \label{lem:abslem}
  In the below, let $\phi$ and $\psi$ be any formulas, possibly with
  parameters from $\vs$.
  \begin{enumerate}
  \item We can prove in $\czf$ that for any set $x$, $\llbracket
    \phi(\hat y) \rrbracket = \top$ holds for all $y$ in $x$ if and
    only if $\llbracket (\forall y \in \hat x)\, \phi(y) \rrbracket =
    \top$ holds.
  \item $\llbracket \phi \rrbracket \subseteq \llbracket \psi
    \rrbracket$ if and only if $\llbracket \phi \rightarrow \psi
    \rrbracket = \top$.
  \item $\llbracket \phi \rrbracket = \top$ and $\llbracket \psi
    \rrbracket = \top$ if and only if $\llbracket \phi \wedge \psi
    \rrbracket = \top$.
  \item For proper formal topologies, $\llbracket \bot
    \rrbracket = \emptyset$.
  \item \label{abslemex} If $(\exists y \in x)\,\llbracket \phi(\hat
    y) \rrbracket = \top$ then $\llbracket (\exists y \in \hat x)\,\phi(y)
    \rrbracket = \top$.
  \item \label{abslemdisj} If $\llbracket \phi \rrbracket = \top$ or
    $\llbracket \psi \rrbracket = \top$ then $\llbracket \phi \vee \psi
    \rrbracket = \top$.
  \end{enumerate}
\end{lemma}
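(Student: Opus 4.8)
The plan is to prove each of the six items by directly unfolding the definition of $\llbracket \cdot \rrbracket$ and of the Heyting operations on $\triangleleft$-closed classes, relying on three basic observations used repeatedly. First, every $\triangleleft$-closed class is a subclass of $S = \top$, so a $\triangleleft$-closed class $P$ equals $\top$ exactly when $S \subseteq P$. Second, $\bigwedge$ is literally intersection, so $\bigcap_{x \in U} P_x = \top$ iff $P_x = \top$ for every $x \in U$ (and the empty intersection is $\top$ by convention). Third, $J(S) = S$: by reflexivity of $\triangleleft$ (clause (\ref{ftreflexivity})) we have $j(S) = S$, while $j(v) \subseteq S$ for every set $v$, so $J(S) = \bigcup\{j(v) \mid v \subseteq S\} = S$; consequently $\bigvee_{x \in U} P_x = J(\bigcup_{x \in U} P_x) = \top$ as soon as some $P_{x_0} = \top$. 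None of this uses excluded middle, so the whole argument is carried out in $\czf$.

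Items (2), (3) and (4) are then immediate. For (2): $\llbracket \phi \rightarrow \psi \rrbracket = \{ a \in S \mid a \in \llbracket \phi \rrbracket \rightarrow a \in \llbracket \psi \rrbracket \}$ equals $S$ iff for all $a \in S$ we have $a \in \llbracket \phi \rrbracket \rightarrow a \in \llbracket \psi \rrbracket$, and since $\llbracket \phi \rrbracket \subseteq S$ this is exactly $\llbracket \phi \rrbracket \subseteq \llbracket \psi \rrbracket$. For (3): $\llbracket \phi \wedge \psi \rrbracket = \llbracket \phi \rrbracket \cap \llbracket \psi \rrbracket$, and an intersection of two subclasses of $S$ equals $S$ iff both of them do. For (4): $\llbracket \bot \rrbracket = \bot = J(\emptyset) = j(\emptyset) = \{ a \in S \mid a \triangleleft \emptyset \}$, which is empty precisely by the definition of properness (Definition \ref{def:6}).

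For (1), (\ref{abslemex}) and (\ref{abslemdisj}) I would unfold the quantifier and disjunction clauses. In (1), $\llbracket (\forall y \in \hat x)\, \phi(y) \rrbracket = \bigwedge_{y \in \dom(\hat x)} \llbracket \phi(\hat y) \rrbracket = \bigcap_{y \in x} \llbracket \phi(\hat y) \rrbracket$, reading $\dom(\hat x)$ via the canonical names $\hat y$ for $y \in x$; by the second observation this is $\top$ iff $\llbracket \phi(\hat y) \rrbracket = \top$ for every $y \in x$, the empty case being handled by the empty-meet convention. For (\ref{abslemex}): if $\llbracket \phi(\hat{y_0}) \rrbracket = \top$ for some $y_0 \in x$, then $S \subseteq \bigcup_{y \in x} \llbracket \phi(\hat y) \rrbracket \subseteq S$, so that union is $S$ and $\llbracket (\exists y \in \hat x)\, \phi(y) \rrbracket = J(S) = S$ by the third observation. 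Item (\ref{abslemdisj}) is the same computation with $\llbracket \phi \rrbracket \cup \llbracket \psi \rrbracket$ after a case split on the hypothesis.

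There is no substantial obstacle here — the lemma is a bookkeeping exercise — but two points need care. One is the coercion implicit in writing $\dom(\hat x) := x$: for the substitutions in (1) and (\ref{abslemex}) to typecheck one must read the elements of $\dom(\hat x)$ as the canonical names $\hat y$, $y \in x$, of members of $\vs$. The other is that (\ref{abslemex}) and (\ref{abslemdisj}) are genuine implications, not biconditionals: their converses would amount to internal existence and disjunction properties for $\vs$, which fail in general (for instance over a formal topology carrying a nontrivial cover), so there is no strengthening to be had.
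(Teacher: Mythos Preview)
Your proposal is correct and takes essentially the same approach as the paper: direct unfolding of the Heyting-valued semantics, using that meets and implications on $\triangleleft$-closed classes are just intersection and the subset-implication on $\powset(S)$, and that $J$ is inflationary so a join containing $\top$ equals $\top$. The paper's proof is terser but follows the same line; your extra care about the coercion $\dom(\hat x) = x$ versus $\hat y$ and the remark that (5) and (6) are genuinely one-directional are both apt and match observations the paper makes elsewhere.
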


\begin{proof}
  For 1, 2 and 3 note that joins and implications in the Heyting
  algebra on $\triangleleft$-closed classes are exactly the usual joins
  and implications for the Heyting algebra of subsets of a set. 1,
  2 and 3 follow by the basic properties of Heyting algebras.

  4 is just by unfolding definitions.

  For 5, note that we have
  \begin{equation*}
    \label{eq:60}
    \llbracket (\exists y \in \hat x)\,\phi(y)
    \rrbracket = J \left( \bigcup_{y \in x} \, \llbracket
      \phi(\hat{y}) \rrbracket \right)
  \end{equation*}
  However, we also have
  \begin{equation*}
    \label{eq:61}
    \bigcup_{y \in x} \, \llbracket \phi(\hat{y})
    \rrbracket \subseteq J \left( \bigcup_{y \in x} \, \llbracket
      \phi(\hat{y}) \rrbracket \right)
  \end{equation*}
  Then 5 easily follows.

  One can then prove 6 by a similar argument.
\end{proof}

\begin{lemma}
  \label{lem:disjexcriteria}
  Suppose that $(\bigcup_x\,\llbracket \phi(\hat x) \rrbracket) \;
  \subseteq \; \llbracket \psi \rrbracket$. Then $\llbracket ((\exists
  x)\,\phi(x)) \;\rightarrow\; \psi \rrbracket = \top$. Suppose that
  $\llbracket \phi \rrbracket
  \vee \llbracket \psi \rrbracket \subseteq \llbracket \chi
  \rrbracket$. Then $\llbracket \phi \vee \psi \rightarrow \chi
  \rrbracket = \top$.
\end{lemma}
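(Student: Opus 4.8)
The plan is to reduce both claims to purely Heyting-algebraic facts about the complete Heyting algebra of $\triangleleft$-closed classes, using the adjunction that characterises the nucleus $J$. For the first statement, I would unfold the left-hand side of the desired equality: by definition $\llbracket (\exists x)\,\phi(x) \rrbracket = J\bigl(\bigcup_x \llbracket \phi(\hat x)\rrbracket\bigr)$, so by part 2 of Lemma~\ref{lem:abslem} it suffices to show $J\bigl(\bigcup_x \llbracket \phi(\hat x)\rrbracket\bigr) \subseteq \llbracket \psi \rrbracket$. Now $\llbracket \psi \rrbracket$ is already $\triangleleft$-closed, and the key property of $J$ is that it is the least closure operator with $J(P) \supseteq P$ landing in closed classes; equivalently, if $P \subseteq Q$ with $Q$ closed, then $J(P) \subseteq Q$. (This follows from Definition~\ref{def:1}: if $v \subseteq P \subseteq Q$ and $a \triangleleft v$, then since $Q$ is $\triangleleft$-closed we get $a \in Q$ by transitivity, clause~\ref{fttransitive} of the formal topology axioms.) Applying this with $P = \bigcup_x \llbracket \phi(\hat x)\rrbracket$ and $Q = \llbracket \psi \rrbracket$ gives exactly what is needed.

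For the second statement the argument is identical in structure: $\llbracket \phi \vee \psi \rrbracket = J\bigl(\llbracket\phi\rrbracket \cup \llbracket\psi\rrbracket\bigr)$ by the definition of the disjunction interpretation, and since $\llbracket \phi\rrbracket \vee \llbracket\psi\rrbracket = J(\llbracket\phi\rrbracket\cup\llbracket\psi\rrbracket)$ is by hypothesis contained in the $\triangleleft$-closed class $\llbracket\chi\rrbracket$, the same monotonicity-of-$J$ argument yields $J\bigl(\llbracket\phi\rrbracket\cup\llbracket\psi\rrbracket\bigr)\subseteq\llbracket\chi\rrbracket$, and then part~2 of Lemma~\ref{lem:abslem} converts this containment into $\llbracket \phi\vee\psi\to\chi\rrbracket = \top$. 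In fact the hypothesis $\llbracket\phi\rrbracket\vee\llbracket\psi\rrbracket\subseteq\llbracket\chi\rrbracket$ already \emph{is} the containment $J(\cdots)\subseteq\llbracket\chi\rrbracket$, so this half is essentially immediate once one notes $\llbracket\phi\vee\psi\rrbracket = \llbracket\phi\rrbracket\vee\llbracket\psi\rrbracket$.

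I don't anticipate a serious obstacle here — the statement is really just recording, for later convenient use, that the join in the Heyting algebra is computed by applying $J$ to the union, together with the adjoint characterisation of $\rightarrow$ and $\top$ from Lemma~\ref{lem:abslem}. The one point that deserves a careful line is the claim that $J(P)\subseteq Q$ whenever $P\subseteq Q$ and $Q$ is $\triangleleft$-closed; this is where the transitivity axiom~\ref{fttransitive} (and down-closure, axiom~\ref{ftdownclosed}) of the formal topology is used, and it is worth stating explicitly since it is the engine behind both parts. Everything else is unwinding definitions.
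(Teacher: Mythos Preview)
Your proposal is correct and matches the paper's proof essentially step for step: the paper applies monotonicity of $J$ to obtain $J\bigl(\bigcup_x \llbracket \phi(\hat x)\rrbracket\bigr)\subseteq J(\llbracket\psi\rrbracket)$, then uses that $\llbracket\psi\rrbracket$ is $\triangleleft$-closed to replace $J(\llbracket\psi\rrbracket)$ by $\llbracket\psi\rrbracket$, and concludes via the containment-to-implication direction of Lemma~\ref{lem:abslem}; the second part is dismissed as ``a similar argument.'' Your phrasing in terms of the reflection property $P\subseteq Q,\ Q\text{ closed}\Rightarrow J(P)\subseteq Q$ is equivalent, and your additional remark that axiom~\ref{fttransitive} underlies this step is a helpful detail the paper leaves implicit.
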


\begin{proof}
  Suppose that $(\bigcup_x\,\llbracket \phi(\hat x) \rrbracket) \;
  \subseteq \; \llbracket \psi \rrbracket$. Then we have
  \begin{equation*}
    \label{eq:62}
    J\left (\bigcup_x\,\llbracket \phi(\hat x) \rrbracket \right) \;
    \subseteq \; J \left( \llbracket \psi \rrbracket \right)
  \end{equation*}
  However, $\llbracket \psi \rrbracket$ is already
  $\triangleleft$-closed, so $J ( \llbracket \psi \rrbracket ) =
  \llbracket \psi \rrbracket$. But then it easily follows that
  $\llbracket (\exists x)\,\phi(x) \rrbracket \subseteq \llbracket
  \psi \rrbracket$ and so $\llbracket ((\exists x)\,\phi(x))
  \;\rightarrow\; \psi \rrbracket = \top$.

  The other part can be proved by a similar argument.
\end{proof}

\begin{lemma}
  \label{lem:pairunabs}
  Let $x$ and $y$ be sets and let $z \in \vs$. Then,
  \begin{align}
    \llbracket z \in \widehat{\{x, y\}} \;\leftrightarrow\; z =
    \hat{x} \vee z = \hat{y} \llbracket &= \top \label{eq:pairabs}\\
    \llbracket z \in \widehat{\bigcup x} \; \leftrightarrow \;
    (\exists w \in \hat{x})\,z \in w \rrbracket &= \top \label{eq:unionabs}
  \end{align}
\end{lemma}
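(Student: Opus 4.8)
The plan is to unfold the clauses of the Heyting-valued interpretation on both sides of each biconditional and observe that the two truth values are \emph{literally equal}. By parts~(2) and~(3) of Lemma~\ref{lem:abslem}, $\llbracket \phi \leftrightarrow \psi \rrbracket = \top$ holds iff $\llbracket \phi \rrbracket = \llbracket \psi \rrbracket$, so equality of truth values is exactly what we need. Throughout I use that $\hat{\cdot}$ is defined by $\in$-recursion, so that $\dom(\widehat{\{x,y\}}) = \{\hat x, \hat y\}$ with both values $\top$, and $\dom(\widehat{\bigcup x}) = \{\hat w \mid w \in \bigcup x\}$ with all values $\top$; and that the nucleus operation $J$ of Definition~\ref{def:1} is monotone and satisfies $J(J(P)) = J(P)$ for $\triangleleft$-closed $P$, both of which follow from the formal topology axioms (the latter by the standard argument: given a set $v \subseteq J(P)$, cover each member of $v$ by a set $u_b \subseteq P$, put $u = \bigcup_{b} u_b \subseteq P$, and apply transitivity of $\triangleleft$ twice).

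For \eqref{eq:pairabs}: the atomic clause gives $\llbracket z \in \widehat{\{x,y\}} \rrbracket = \bigvee_{c \in \{\hat x, \hat y\}} \widehat{\{x,y\}}(c) \wedge \llbracket z = c \rrbracket = \llbracket z = \hat x \rrbracket \vee \llbracket z = \hat y \rrbracket$, and the right-hand side of \eqref{eq:pairabs} is by definition $\llbracket z = \hat x \rrbracket \vee \llbracket z = \hat y \rrbracket$. The two sides agree, so \eqref{eq:pairabs} follows from the remark above.

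For \eqref{eq:unionabs}: on the left, $\llbracket z \in \widehat{\bigcup x} \rrbracket = \bigvee_{w \in \bigcup x} \llbracket z = \hat w \rrbracket = J\bigl(\bigcup_{w \in \bigcup x} \llbracket z = \hat w \rrbracket\bigr)$. On the right, $\llbracket (\exists w \in \hat x)\, z \in w \rrbracket = \bigvee_{v \in x} \llbracket z \in \hat v \rrbracket$, and $\llbracket z \in \hat v \rrbracket = \bigvee_{w \in v} \llbracket z = \hat w \rrbracket = J(A_v)$ where $A_v := \bigcup_{w \in v} \llbracket z = \hat w \rrbracket$. Hence the right-hand side equals $J\bigl(\bigcup_{v \in x} J(A_v)\bigr)$, while the left-hand side equals $J\bigl(\bigcup_{v \in x} A_v\bigr)$ since $\{\, w \mid \exists v \in x\,(w \in v)\,\} = \bigcup x$. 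So it only remains to verify the identity $J\bigl(\bigcup_{v \in x} J(A_v)\bigr) = J\bigl(\bigcup_{v \in x} A_v\bigr)$.

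This last identity is the only point that requires any care, and I expect it to be the sole nontrivial step. Using $A_v \subseteq J(A_v)$ and monotonicity gives $\bigcup_v A_v \subseteq \bigcup_v J(A_v)$, hence $J\bigl(\bigcup_v A_v\bigr) \subseteq J\bigl(\bigcup_v J(A_v)\bigr)$; for the reverse inclusion, monotonicity gives $J(A_v) \subseteq J\bigl(\bigcup_{v} A_v\bigr)$ for each $v$, so $\bigcup_v J(A_v) \subseteq J\bigl(\bigcup_v A_v\bigr)$, and applying $J$ together with $J(J(P)) = J(P)$ finishes it. Thus both sides of \eqref{eq:unionabs} coincide. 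The only further things to watch are that $\hat{\cdot}$ must be read recursively (which is what makes "let $x$ and $y$ be arbitrary sets" legitimate, since then the relevant domains are sets of hatted elements of $\vs$) and the purely routine rewriting of the iterated joins in the union case.
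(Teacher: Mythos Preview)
Your argument is correct and follows essentially the same route as the paper: unfold both sides, observe they coincide for \eqref{eq:pairabs}, and for \eqref{eq:unionabs} reduce to the nucleus identity $J\bigl(\bigcup_v J(A_v)\bigr) = J\bigl(\bigcup_v A_v\bigr)$, which the paper proves by the same monotonicity-plus-transitivity reasoning (phrased as a direct element chase using axiom~\ref{fttransitive}). One small wording slip: you state $J(J(P)) = J(P)$ ``for $\triangleleft$-closed $P$,'' but in your application $P = \bigcup_v A_v$ need not be closed; fortunately the argument you sketch in parentheses establishes idempotence for arbitrary $P$, which is what you actually use.
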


\begin{proof}
  We first check \eqref{eq:pairabs}. Unfolding definitions we have
  that both $\llbracket z \in \widehat{\{x, y\}} \rrbracket$ and
  $\llbracket z = \hat{x} \vee z = \hat{y} \rrbracket$ are equal to
  $j(\llbracket z = \hat{x} \rrbracket \cup \llbracket z = \hat{y}
  \rrbracket)$. It easily follows that \eqref{eq:pairabs} holds.

  We now check \eqref{eq:unionabs}. Unfolding definitions we have the
  following.
  \begin{align*}
    \llbracket z \in \widehat{\bigcup x} \rrbracket &= j(\bigcup_{v
      \in x} \bigcup_{w \in v} \llbracket z = \hat{w} \rrbracket) \\
    \llbracket (\exists w \in \hat{x})\, z \in w \rrbracket &=
    j(\bigcup_{v \in x} j(\bigcup_{w \in v} \llbracket z = \hat{w}
    \rrbracket))
  \end{align*}
  By monotonicity of $j$ and union we have $\llbracket z \in
  \widehat{\bigcup x} \rrbracket \subseteq \llbracket (\exists w \in
  \hat{x})\, z \in w \rrbracket$. We now check $\llbracket z \in
  \widehat{\bigcup x} \rrbracket \supseteq \llbracket (\exists w \in
  \hat{x})\, z \in w \rrbracket$. By
  axiom \ref{fttransitive} of the definition of formal topology, it
  suffices to check that $\bigcup_{v \in x} j(\bigcup_{w \in
    v} \llbracket z = \hat{w} \rrbracket) \subseteq
  j(\bigcup_{v \in x} \bigcup_{w \in v} \llbracket z = \hat{w}
  \rrbracket)$. Let $a \in \bigcup_{v \in x} j(\bigcup_{w \in v}
  \llbracket z = \hat{w} \rrbracket)$. Then for some $v \in x$, we
  have $a \in j(\bigcup_{w \in v} \llbracket z = \hat{w}
  \rrbracket)$. But now $a \in j(\bigcup_{v' \in x} \bigcup_{w \in v'}
  \llbracket z = \hat{w} \rrbracket$ by monotonicity of $j$, as
  required.
\end{proof}

\begin{lemma}
  The natural numbers are absolute, in the following sense.
  \begin{equation*}
    \llbracket (\forall u) \; [u \in \hat \nat \,\leftrightarrow\,
    (\emptyset = u \vee (\exists v \in \hat \nat)\,u = v \cup \{v\})]
    \rrbracket = \top
  \end{equation*}
\end{lemma}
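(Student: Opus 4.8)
The plan is to verify the biconditional truth-value by truth-value, reducing everything to the absoluteness of pairing and union from Lemma~\ref{lem:pairunabs}, via the observation that the von~Neumann successor $n \mapsto n+1 = n \cup \{n\}$ is built from pairing and union. The key auxiliary step is the claim that for every $n \in \nat$,
\begin{equation*}
  \llbracket (\forall w)\,[\, w \in \widehat{n+1} \,\leftrightarrow\, (w \in \hat n \vee w = \hat n)\,] \rrbracket = \top .
\end{equation*}
To see this, write $n+1 = \bigcup\{n,\{n\}\}$ with $\{n\} = \{n,n\}$, and apply \eqref{eq:pairabs} and \eqref{eq:unionabs}: the former (with its two arguments equal) shows that internally $\widehat{\{n\}}$ is the singleton $\{\hat n\}$ and $\widehat{\{n,\{n\}\}}$ is $\{\hat n, \widehat{\{n\}}\} = \{\hat n, \{\hat n\}\}$, and the latter then identifies $\widehat{n+1} = \widehat{\bigcup\{n,\{n\}\}}$ internally with $\hat n \cup \{\hat n\}$; here we use freely that $\vs \models \czf$ by Theorem~\ref{thm:1} to manipulate these operations inside the model. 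Since equality is a congruence in $\vs$, it follows that for every $u \in \vs$ we have $\llbracket u = \widehat{n+1}\rrbracket = \llbracket u = \hat n \cup \{\hat n\}\rrbracket$.

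Next, unfolding the interpretation and using $\hat\nat(c) = \top$ for every $c \in \dom(\hat\nat)$ (which is indexed by $n \in \nat$), we have
\begin{align*}
  \llbracket u \in \hat\nat \rrbracket &= J\Bigl( \textstyle\bigcup_{n \in \nat} \llbracket u = \hat n \rrbracket \Bigr), \\
  \llbracket (\exists v \in \hat\nat)\, u = v \cup \{v\} \rrbracket &= J\Bigl( \textstyle\bigcup_{n \in \nat} \llbracket u = \hat n \cup \{\hat n\} \rrbracket \Bigr) = J\Bigl( \textstyle\bigcup_{n \in \nat} \llbracket u = \widehat{n+1} \rrbracket \Bigr),
\end{align*}
the last equality by the auxiliary step. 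Also $\llbracket \emptyset = u \rrbracket = \llbracket \hat 0 = u \rrbracket$, since $0 = \emptyset$ and a short computation gives $\llbracket (\forall w)\, w \notin \hat\emptyset \rrbracket = \top$, so $\hat\emptyset$ is the internal empty set (and $\hat 0 = \hat\emptyset$). For the ($\leftarrow$) direction, by Lemma~\ref{lem:disjexcriteria} it suffices to show $\llbracket \emptyset = u \rrbracket \vee \llbracket (\exists v \in \hat\nat)\, u = v \cup \{v\} \rrbracket \subseteq \llbracket u \in \hat\nat \rrbracket$; since the right side is $\triangleleft$-closed this reduces to the two joinands separately, where $\llbracket \hat 0 = u \rrbracket \subseteq \llbracket u \in \hat\nat \rrbracket$ is immediate and $J(\bigcup_n \llbracket u = \widehat{n+1} \rrbracket) \subseteq J(\bigcup_m \llbracket u = \hat m \rrbracket) = \llbracket u \in \hat\nat \rrbracket$ holds by monotonicity of $J$.

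For the ($\rightarrow$) direction, by Lemma~\ref{lem:abslem}(2) it suffices to show $\llbracket u \in \hat\nat \rrbracket \subseteq \llbracket \emptyset = u \vee (\exists v \in \hat\nat)\, u = v \cup \{v\} \rrbracket$, and since the right side is $\triangleleft$-closed it is enough to show $\llbracket u = \hat n \rrbracket$ lies in it for each $n \in \nat$: for $n = 0$ it lands in the first disjunct since $\llbracket u = \hat 0 \rrbracket = \llbracket \emptyset = u \rrbracket$, and for $n = m+1$ the auxiliary step gives $\llbracket u = \widehat{m+1} \rrbracket = \llbracket u = \hat m \cup \{\hat m\} \rrbracket \subseteq \bigvee_{v \in \dom(\hat\nat)} \llbracket u = v \cup \{v\} \rrbracket = \llbracket (\exists v \in \hat\nat)\, u = v \cup \{v\} \rrbracket$, using $\hat m \in \dom(\hat\nat)$, so it lies in the second disjunct. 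Combining the two implications by Lemma~\ref{lem:abslem}(3) yields $\llbracket \cdots \rrbracket = \top$ for this $u$, and as $u \in \vs$ was arbitrary, $\bigwedge_{u \in \vs} \llbracket \cdots \rrbracket = \top$, which is the statement. The only point that needs real care is the auxiliary step — in particular, matching the internally-read term ``$v \cup \{v\}$'' with the canonical name $\widehat{n+1}$; the rest is routine bookkeeping with the Heyting-valued semantics and the earlier absoluteness lemmas.
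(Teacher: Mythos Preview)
Your proof is correct and follows essentially the same approach as the paper's: both reduce to Lemma~\ref{lem:pairunabs} for the absoluteness of the successor operation and then use Lemma~\ref{lem:disjexcriteria} together with the soundness of equality to handle the disjunction and bounded existential on the right-hand side. Your version is more explicit about the auxiliary identity $\llbracket u = \widehat{n+1}\rrbracket = \llbracket u = \hat n \cup \{\hat n\}\rrbracket$ and the case split on $n$ for the $(\rightarrow)$ direction, whereas the paper's argument is terser.
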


\begin{proof}
  First note that $\llbracket (\forall u \in \hat{\nat})\, u \cap
  \{u\} \in \hat{\nat} \rrbracket = \bigcap_{u \in \nat} \llbracket
  \hat u \cup \{\hat u\} \in \hat{\nat} \rrbracket$ but this is equal
  to $\top$ by lemma \ref{lem:pairunabs} and the fact that $\{u\} \cup
  u \in \nat$ for every $u \in \nat$. We also easily have $\llbracket
  \emptyset \in \hat{\nat} \rrbracket$. But we have now shown one half
  of the bi-implication:
  \begin{equation*}
    \label{eq:37}
    \llbracket (\forall u) \; [u \in \hat \nat \,\rightarrow\,
    (\emptyset = u \vee (\exists v \in \hat \nat)\,u = v \cup \{v\})]
    \rrbracket = \top
  \end{equation*}

  Now assume that for some $v \in \nat$, $a \in \llbracket u = \hat{v}
  \cup \{\hat{v}\} \rrbracket$. Then using the soundness of the laws
  of equality, we have $\llbracket u = \hat{v} \cup \{\hat{v}\}
  \rrbracket \cap \llbracket \hat{v} \cup \{\hat{v}\} \in \hat{\nat}
  \rrbracket \subseteq \llbracket u \in \hat{\nat} \rrbracket$. Hence
  $a \in \llbracket u \in \hat{\nat} \rrbracket$. But we now apply
  both parts of lemma \ref{lem:disjexcriteria} to deduce
  \begin{equation*}
    \llbracket (\forall u) \; [u \in \hat \nat \,\leftarrow\,
    (\emptyset = u \vee (\exists v \in \hat \nat)\,u = v \cup \{v\})]
    \rrbracket = \top
  \end{equation*}
  which is the other half of the bi-implication we require.
\end{proof}

\begin{lemma}
  Suppose that $\langle S, \leq, \triangleleft\rangle$ is a proper
  formal topology. Then equality and membership are absolute for the
  natural numbers in the following sense. For every $m, n \in \nat$,
  we have that either $\llbracket \hat m = \hat n \rrbracket = \top$
  or $\llbracket \hat m = \hat n \rrbracket = \emptyset$, $m = n$ if
  and only if $\llbracket \hat m = \hat n \rrbracket = \top$, either
  $\llbracket \hat m \in \hat n \rrbracket = \top$ or $\llbracket \hat
  m \in \hat n \rrbracket = \emptyset$ and $m \in n$ if and only if
  $\llbracket \hat m \in \hat n \rrbracket = \top$.
\end{lemma}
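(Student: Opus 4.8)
The plan is to derive all four assertions of the lemma from the two schematic facts that, for all $m,n\in\nat$, the value $\llbracket\hat m\in\hat n\rrbracket$ equals $S$ if $m\in n$ and equals $\emptyset$ otherwise, and likewise $\llbracket\hat m=\hat n\rrbracket$ equals $S$ if $m=n$ and $\emptyset$ otherwise. Granting these, the ``either $\top$ or $\emptyset$'' clauses are immediate (and $\emptyset=\bot$ by properness, using Definition~\ref{def:6}), and the ``if and only if'' clauses follow since $\czf$ proves equality and membership of natural numbers to be decidable (by the decidability of bounded arithmetical formulas established at the start of Section~\ref{sec:some-absol-lemm}'s preceding section) together with the observation that $S\neq\emptyset$ (we tacitly assume the formal topology is non-degenerate; the case $S=\emptyset$ gives the trivial model). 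So the real content is the two schematic facts.

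First I would unfold the relevant interpretations on canonical names. Since every ``slot'' of $\hat n$ has value $\top$, and since $\top\wedge P=P=\top\rightarrow P$ for $\triangleleft$-closed classes $P$ and $J(S)=S$, one gets $\llbracket\hat m\in\hat n\rrbracket=J\bigl(\bigcup_{k\in n}\llbracket\hat m=\hat k\rrbracket\bigr)$ and $\llbracket\hat m=\hat n\rrbracket=\bigcap_{k\in m}\llbracket\hat k\in\hat n\rrbracket\;\cap\;\bigcap_{k\in n}\llbracket\hat k\in\hat m\rrbracket$. Properness enters only through the identity $J(\emptyset)=j(\emptyset)=\emptyset$, and this is exactly what forces the ``false'' cases to come out as $\emptyset$ rather than merely some proper $\triangleleft$-closed subclass.

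The core is then a strong induction on $n$, at stage $n$ proving, \emph{in this order}, first the membership fact (I$_n$): for all $k$, $\llbracket\hat k\in\hat n\rrbracket=S$ if $k<n$ and $=\emptyset$ otherwise; and then the equality fact (II$_n$): for all $k$, $\llbracket\hat k=\hat n\rrbracket=S$ if $k=n$ and $=\emptyset$ otherwise. For (I$_n$), feed (II$_k$) for each $k<n$ (from the induction hypothesis) into the formula above: if $m<n$ one joinand $\llbracket\hat m=\hat k\rrbracket$ is $S$, so the union is $S$ and $J(S)=S$; if $m\geq n$ every joinand is $\emptyset$, so the union is $\emptyset$ and $J(\emptyset)=\emptyset$ by properness. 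For (II$_n$): when $m=n$ both intersections range over $k<n$ and each term $\llbracket\hat k\in\hat n\rrbracket$ equals $S$ by the just-proved (I$_n$), so the value is $S$; when $m<n$ the second intersection contains the term $\llbracket\hat m\in\hat m\rrbracket$, which is $\emptyset$ by (I$_m$) from the induction hypothesis, hence the value is $\emptyset$; when $m>n$ the first intersection contains $\llbracket\hat n\in\hat n\rrbracket$, which is $\emptyset$ by (I$_n$), hence again $\emptyset$.

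The only step needing care — the ``obstacle'', such as it is — is precisely this ordering within the induction step: the case $m>n$ of (II$_n$) appeals to $\llbracket\hat n\in\hat n\rrbracket$, which is not available from the induction hypothesis (it belongs to stage $n$, not an earlier stage) and must be supplied by (I$_n$), established first. An induction organised naively on $m$, or one attempting (II$_n$) before (I$_n$), would stall there; everything else is routine unfolding of the Heyting operations and of $J$.
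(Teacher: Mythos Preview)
Your proof is correct and follows exactly the approach the paper indicates; the paper's proof reads, in full, ``These are proved simultaneously by induction on $n$ and $m$.'' Your careful ordering of (I$_n$) before (II$_n$) within the induction step, and your remark on the degenerate case $S=\emptyset$, are reasonable elaborations of what the paper leaves implicit.
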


\begin{proof}
  These are proved simultaneously by induction on $n$ and $m$.
\end{proof}

\begin{lemma}
  Finite tuples are absolute, in the following sense. We can show in
  $\czf$ that for every set $x$ and every $n \in \nat$ and every set
  $z$,
  \begin{equation*}
    \label{eq:34}
    \llbracket z \in \widehat{x^n} \;\leftrightarrow\; z \in
    \hat{x}^{\hat{n}} \rrbracket = \top
  \end{equation*}
\end{lemma}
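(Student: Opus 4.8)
The plan is to prove the statement by (external) induction on $n \in \nat$, with $x$ and $z$ universally quantified, building on the absoluteness of pairing, union and $\nat$ established above. Since $\vs \models \czf$ (Theorem \ref{thm:1}) and $\czf$ proves the Exponentiation Axiom, the internal exponential $\hat x^{\hat n}$ is a genuine element of $\vs$, so the two sides of the claimed bi-implication make sense.

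Before the induction it is convenient to record a few immediate consequences of Lemma \ref{lem:pairunabs}. Since $x \cup y = \bigcup \{x,y\}$ and a Kuratowski pair $\langle x, y\rangle = \{\{x\},\{x,y\}\}$ is built from unordered pairs, the embedding $x \mapsto \hat x$ commutes, up to $\vs$-provable equality, with binary union and with ordered pairs: $\llbracket \widehat{x \cup y} = \hat x \cup \hat y\rrbracket = \top$ and $\llbracket \widehat{\langle x,y\rangle} = \langle \hat x, \hat y\rangle\rrbracket = \top$. Combining these with the absoluteness of $\nat$ also gives $\llbracket \widehat{n+1} = \hat n \cup \{\hat n\}\rrbracket = \top$, and, for any function $g$ and sets $n,a$, $\llbracket \widehat{g \cup \{\langle n, a\rangle\}} = \hat g \cup \{\langle \hat n, \hat a\rangle\}\rrbracket = \top$.

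For the base case, $x^0 = \{\emptyset\}$ and $\widehat\emptyset = \emptyset$ is literally the internal empty function, so both $\llbracket z \in \widehat{x^0}\rrbracket$ and $\llbracket z \in \hat x^{\hat 0}\rrbracket$ reduce to $\llbracket z = \widehat\emptyset\rrbracket$. For the inductive step, I would use the external identity $x^{n+1} = \{\, g \cup \{\langle n, a\rangle\} \mid g \in x^n,\ a \in x\,\}$ coming from $f \mapsto (f \restriction n, f(n))$. By the clauses for atomic formulas and the remark above,
\[
  \llbracket z \in \widehat{x^{n+1}}\rrbracket \;=\; J\Big( \bigcup_{g \in x^n}\bigcup_{a \in x} \llbracket z = \hat g \cup \{\langle \hat n, \hat a\rangle\}\rrbracket \Big) .
\]
Reasoning internally in $\vs$ with $\widehat{n+1} = \hat n \cup \{\hat n\}$, an element of $\hat x^{\widehat{n+1}}$ is exactly $h \cup \{\langle \hat n, a\rangle\}$ with $h \in \hat x^{\hat n}$ and $a \in \hat x$; the inductive hypothesis says $\widehat{x^n}$ and $\hat x^{\hat n}$ have the same elements in $\vs$ (so the $h$ are, up to $\vs$-equality, the $\hat g$ for $g \in x^n$), and the absoluteness of $\hat x$ identifies its elements with the $\hat a$, $a \in x$. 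Passing this through Lemma \ref{lem:disjexcriteria} together with parts \ref{abslemex} and \ref{abslemdisj} of Lemma \ref{lem:abslem} to match the external unions with the internal existential quantifiers then yields equality of the two truth values.

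The step I expect to be the main obstacle is precisely this bookkeeping in the inductive step: one must pass back and forth between the external description of $x^{n+1}$ as one-point extensions of members of $x^n$ and the internal description of $\hat x^{\widehat{n+1}}$, checking that restriction, evaluation and one-point extension as computed inside $\vs$ agree with their external counterparts under $x \mapsto \hat x$, and keeping the $J(-)$-closures aligned with the internal quantifier clauses. Nothing here is deep — it is all a careful application of the absoluteness lemmas already in hand — but it is where the care is needed.
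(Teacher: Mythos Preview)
Your proposal is correct and follows the same approach as the paper: induction on $n$. The paper's own proof is simply the one-liner ``This can be proved by induction on $n$,'' so you have in fact supplied considerably more detail than the authors do, including the preliminary observations about ordered pairs and one-point extensions and the identification of where the bookkeeping lies.
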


\begin{proof}
  This can be proved by induction on $n$.
\end{proof}

\begin{lemma}
  Let $x$ be a set. Then function application for $\nat^x$ is
  absolute, in the sense that for $f \in \nat^x$, $z \in x$ and
  $n \in \nat$, $f(z) = n$ if and only if
  $\llbracket \hat{f}(\hat{z}) = \hat{n} \rrbracket = \top$.
\end{lemma}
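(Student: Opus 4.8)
The plan is to reduce the statement to the absoluteness of membership of (Kuratowski) ordered pairs. Since $f$ is a function with $z$ in its domain, $f(z) = n$ is equivalent to $\langle z, n \rangle \in f$. So I would first establish two auxiliary facts: (a) canonical names commute with ordered pairing, i.e. $\llbracket \widehat{\langle a, b \rangle} = \langle \hat{a}, \hat{b} \rangle \rrbracket = \top$ for all sets $a, b$; and (b) $\vs \models$ ``$\hat{f}$ is a function with domain $\hat{x}$ and values in $\hat{\nat}$'', so that internally the term $\hat{f}(\hat{z})$ makes sense and ``$\hat{f}(\hat{z}) = \hat{n}$'' is equivalent to ``$\langle \hat{z}, \hat{n} \rangle \in \hat{f}$''. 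Granting (a) and (b), the lemma amounts to: $\langle z, n \rangle \in f$ if and only if $\llbracket \langle \hat{z}, \hat{n} \rangle \in \hat{f} \rrbracket = \top$.

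For (a), use $\langle a, b \rangle = \{ \{a\}, \{a, b\} \}$ and apply the pairing part of Lemma~\ref{lem:pairunabs} three times. That part gives $\llbracket w \in \widehat{\{c, d\}} \;\leftrightarrow\; w = \hat{c} \vee w = \hat{d} \rrbracket = \top$ for all sets $c, d$, while Pairing in $\vs$ (Theorem~\ref{thm:1}) gives $\vs \models \forall w\,(w \in \{\hat{c}, \hat{d}\} \leftrightarrow w = \hat{c} \vee w = \hat{d})$. Hence $\widehat{\{c, d\}}$ and $\{\hat{c}, \hat{d}\}$ have the same members in $\vs$, so by Extensionality in $\vs$ they are internally equal. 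Applying this to $\{a\}$, to $\{a, b\}$, and then to $\{\{a\}, \{a, b\}\}$ yields (a).

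For (b), note that $\dom(\hat{f}) = \{\, \widehat{\langle z', f(z') \rangle} : z' \in x \,\}$ with every value $\top$, and by (a) each of these names is internally $\langle \hat{z'}, \widehat{f(z')} \rangle$. I would then check by a routine unwinding of Gambino's interpretation that $\vs$ regards $\hat{f}$ as a single-valued relation contained in $\hat{x} \times \hat{\nat}$ with domain $\hat{x}$ --- handling the ``$\forall z \in \hat{x}$'' part via the first part of Lemma~\ref{lem:abslem}, and reducing single-valuedness to the fact that $\llbracket \hat{a} = \hat{b} \rrbracket$ equals $\top$ when $a = b$ and $\emptyset$ otherwise. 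This last dichotomy is the general-set analogue of the absoluteness of equality and membership for $\nat$ proved just above; it follows by exactly the same simultaneous $\in$-induction and, like that lemma, requires the formal topology to be proper.

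With (a) and (b) in hand both directions are short. If $\langle z, n \rangle \in f$ then $\widehat{\langle z, n \rangle} \in \dom(\hat{f})$ with value $\top$, so $\llbracket \widehat{\langle z, n \rangle} \in \hat{f} \rrbracket = \top$, and by (a) $\llbracket \langle \hat{z}, \hat{n} \rangle \in \hat{f} \rrbracket = \top$, i.e. $\vs \models \hat{f}(\hat{z}) = \hat{n}$. Conversely, assume $\vs \models \hat{f}(\hat{z}) = \hat{n}$ and set $m := f(z)$; the forward direction gives $\vs \models \hat{f}(\hat{z}) = \hat{m}$, whence by single-valuedness of $\hat{f}$ in $\vs$ (equivalently, by soundness of the laws of equality) $\llbracket \hat{m} = \hat{n} \rrbracket = \top$, and absoluteness of equality for $\nat$ forces $m = n$. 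The main obstacle is (b) --- establishing that the canonical name $\hat{f}$ is internally a function --- since this is what forces us to prove the general absoluteness of equality of canonical names and to invoke properness; the rest is bookkeeping with the definition of $\hat{f}$.
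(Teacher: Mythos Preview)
Your route differs from the paper's: rather than showing $\hat f$ is internally a function, the paper simply unfolds ``$\hat f(\hat z)=\hat n$'' as a formula built from bounded universal quantifiers over canonical names and implication, with atomic pieces (tuple equality, equality of naturals) already shown absolute, and then invokes the earlier absoluteness lemmas for those connectives directly.

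There is a genuine gap in your step (b). You claim that for arbitrary sets $a,b$ one has $\llbracket \hat a=\hat b\rrbracket=\top$ when $a=b$ and $\llbracket \hat a=\hat b\rrbracket=\emptyset$ otherwise, ``by exactly the same simultaneous $\in$-induction'' as for the naturals. But the induction for $\nat$ works only because equality and membership on $\nat$ are decidable; that decidability is what lets one conclude at each node that the value is $\top$ or $\emptyset$. For arbitrary sets this is unavailable in $\czf$/$\izf$. Even the single implication you actually need for single-valuedness, $z_1\neq z_2\to\llbracket \hat{z_1}=\hat{z_2}\rrbracket=\emptyset$, can fail for a proper set-presentable formal topology: over the double-negation topology on a one-point poset (proper, and set-presentable in $\izf$), if $(\phi_k)_{k\in\nat}$ witnesses a failure of double-negation shift (each $\neg\neg\phi_k$ holds but $\neg\forall k\,\phi_k$, as happens in $V(\klone)$), then $a:=\nat$ and $b:=\{k\in\nat\mid\phi_k\}$ satisfy $a\neq b$ while $\llbracket \hat a=\hat b\rrbracket=\bigcap_k j(\{0\mid\phi_k\})=\bigcap_k\{0\mid\neg\neg\phi_k\}=\top$. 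Taking $x=\{a,b\}$ and $f(a)=0$, $f(b)=1$, your argument that $\hat f$ is internally single-valued then breaks down, and with it your converse direction. (The paper's own one-line proof is also loose for general $x$, since unfolding $w=\langle \hat z,u\rangle$ runs into the same $\llbracket \hat{z'}=\hat z\rrbracket$; but the only instance used later in the paper is $x=\nat$, where equality is decidable and both arguments go through cleanly.)
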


\begin{proof}
  Note that the formula $\hat{f}(\hat{z}) = \hat{n}$ is equivalent to
  the following
  \begin{equation*}
    \label{eq:38}
    (\forall w \in \hat{f})(\forall v \in \hat{x})(\forall u \in
    \hat{\nat})\, w = \langle v, u \rangle \rightarrow u = \hat{n}
  \end{equation*}
  This is clearly absolute by the previous lemmas.
\end{proof}

\begin{remark}
  In \cite{gambinohvi} it is stated that all restricted formulas are
  absolute. This is not provable in $\izf$ or $\czf$, since the
  converses to parts \ref{abslemex} and \ref{abslemdisj} of lemma
  \ref{lem:abslem} do not hold in general and atomic formulas are not
  in general absolute. The double negation formal
  topology provides a counterexample, as do the formal topologies
  $\mathcal{L}_n$ considered in this paper. Also note that properness
  is necessary to show that $\bot$ is absolute.
\end{remark}

\section{$\llpo$ and $\llpo_n$}

\subsection{An Alternative Formulation of $\llpo$}

We will first show how $\llpo$ can be formulated in terms of the poset
$\natinfty$ defined below. This formulation will motivate the
definition of the formal topology as the simplest one making $\llpo$
true in the topological model (based on an observation of Van Oosten
in \cite{vanoostentworemarks}).

\begin{definition}
  \label{def:2}
  Let $\natinfty$ be the set of decreasing binary sequences, i.e.
  \begin{equation*}
    \label{eq:3}
    \natinfty := \{ \alpha : \nat \rightarrow 2 \;|\; (\forall i \leq
    j)\,\alpha(j) \leq \alpha(i) \}
  \end{equation*}
\end{definition}
We will consider $\natinfty$ as a poset with the pointwise ordering,
i.e. $\alpha \leq \beta$ if for all $i \in \nat$, $\alpha(i) \leq \beta(i)$.

\begin{proposition}
  \label{prop:1}
  If $\alpha, \beta \in \natinfty$, then the join $\alpha \vee \beta$
  exists and is defined pointwise, i.e. for $i \in \nat$
  \begin{equation*}
    \label{eq:4}
    (\alpha \vee \beta)(i) := \alpha(i) \vee \beta(i)
  \end{equation*}

  Hence, if $F$ is a finitely enumerable subset of $\natinfty$, then
  $\bigvee F$ exists and is defined pointwise.
\end{proposition}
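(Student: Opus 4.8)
The plan is to treat the binary join first, by exhibiting the pointwise maximum and checking the three universal properties directly, and then to deduce the finitely enumerable case from it (or, equivalently, to argue directly). For $\alpha, \beta \in \natinfty$ I would set $\gamma(i) := \max(\alpha(i), \beta(i))$ (equivalently, $\gamma(i) = 1$ iff $\alpha(i) = 1$ or $\beta(i) = 1$); this is a genuine function $\nat \to 2$ because equality on $2$ is decidable, so $\max \colon 2 \times 2 \to 2$ is well defined. First I would verify $\gamma \in \natinfty$: if $i \leq j$ then $\alpha(j) \leq \alpha(i) \leq \gamma(i)$ and $\beta(j) \leq \beta(i) \leq \gamma(i)$, whence $\gamma(j) = \max(\alpha(j), \beta(j)) \leq \gamma(i)$, so $\gamma$ is decreasing. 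By construction $\alpha \leq \gamma$ and $\beta \leq \gamma$, and if $\delta \in \natinfty$ satisfies $\alpha \leq \delta$ and $\beta \leq \delta$ then $\gamma(i) = \max(\alpha(i), \beta(i)) \leq \delta(i)$ for every $i$, so $\gamma \leq \delta$. Hence $\gamma = \alpha \vee \beta$, computed pointwise.

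For the finitely enumerable case, let $F \subseteq \natinfty$ be finitely enumerable. For each fixed $i \in \nat$ I would apply the finite form of $\lpo$ proved above to the map $F \to 2$, $\alpha \mapsto \alpha(i)$: either there is $\alpha \in F$ with $\alpha(i) = 1$, or $\alpha(i) = 0$ for all $\alpha \in F$. These two alternatives cannot both hold, so putting $\delta(i) = 1$ in the first case and $\delta(i) = 0$ in the second defines (by bounded separation) a function $\delta \colon \nat \to 2$ satisfying $\delta(i) = \bigvee_{\alpha \in F} \alpha(i)$. Then, exactly as in the binary case and using that each member of $F$ is decreasing, $\delta \in \natinfty$ and $\delta$ is the least upper bound of $F$. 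Alternatively one can induct on $n$, where $n \twoheadrightarrow F$ is a surjection: for $n = 0$ the join is the bottom element, the constant-$0$ sequence; for the successor step write $F = G \cup \{\alpha\}$ with $G$ finitely enumerable and use the binary join together with the standard identity $\bigvee(G \cup \{\alpha\}) = (\bigvee G) \vee \alpha$.

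I do not expect a serious obstacle here; the only points needing care are the constructive ones. One is that the pointwise maximum is a bona fide function, which rests on decidability of equality on $2$. The other is that "$\exists \alpha \in F\; \alpha(i) = 1$" is a decidable property of $i$, and this is exactly where finite enumerability enters, via the finite $\lpo$ theorem: without that hypothesis the pointwise join of an arbitrary subset of $\natinfty$ need not even be a well-defined function $\nat \to 2$.
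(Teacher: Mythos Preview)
Your argument is correct. The paper actually states this proposition without proof, treating it as routine; your write-up supplies exactly the kind of verification one would expect in the $\czf$ setting. In particular, your attention to the two constructive points is well placed: that $\max \colon 2 \times 2 \to 2$ is a genuine function because equality on $2$ is decidable, and that the pointwise supremum over a finitely enumerable $F$ is well defined because the finite $\lpo$ theorem (stated earlier in the paper) makes ``$(\exists \alpha \in F)\,\alpha(i)=1$'' decidable for each $i$. Either of your two approaches to the finitely enumerable case---direct definition via finite $\lpo$, or induction on the size of an enumerating surjection using the binary case---is fine; the first is perhaps slightly cleaner since it yields the pointwise description immediately.
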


The top element of $\natinfty$ is the function constantly equal to
$1$. We'll write this function as $1$.

\begin{lemma}
  \label{lem:onestable}
  For all $\alpha \in \natinfty$, we have $\neg \neg \alpha = 1
  \rightarrow \alpha = 1$.
\end{lemma}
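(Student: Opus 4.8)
The plan is to reduce the set-theoretic equation $\alpha = 1$ to the pointwise statement $(\forall n \in \nat)\,\alpha(n) = 1$ and then to exploit the decidability of equality on $2$, which makes each conjunct $\neg\neg$-stable.

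First I would observe, using extensionality of sets (a function being a set of ordered pairs, and $\alpha$ and $1$ having the same domain $\nat$), that $\alpha = 1$ is equivalent to $(\forall n \in \nat)\,\alpha(n) = 1$. Consequently $\neg\neg\,\alpha = 1$ is equivalent to $\neg\neg\,(\forall n \in \nat)\,\alpha(n) = 1$. So assume the latter and fix an arbitrary $n \in \nat$; it suffices to prove $\alpha(n) = 1$.

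Since $\alpha(n) \in 2$, we have $\alpha(n) = 0 \vee \alpha(n) = 1$. In the case $\alpha(n) = 1$ we are immediately done. In the case $\alpha(n) = 0$, the functions $\alpha$ and $1$ disagree at $n$, so $\alpha \neq 1$; this contradicts the hypothesis $\neg\neg\,\alpha = 1$, so from the resulting absurdity we may conclude $\alpha(n) = 1$ in this case as well. Either way $\alpha(n) = 1$, and since $n$ was arbitrary, $(\forall n \in \nat)\,\alpha(n) = 1$, i.e. $\alpha = 1$.

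Note that the argument uses nothing about $\natinfty$ beyond $\alpha$ being a function $\nat \rightarrow 2$; the decreasing condition plays no role. There is no real obstacle here: the only point requiring a little care is to perform the case distinction on the \emph{decidable} value $\alpha(n)$ rather than on the undecidable statement $\alpha = 1$ — equivalently, to use that $\neg\neg\,(\forall n)\,\alpha(n) = 1$ implies $(\forall n)\,\neg\neg\,\alpha(n) = 1$, together with $\neg\neg$-stability of the decidable predicate $\alpha(n) = 1$.
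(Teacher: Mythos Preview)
Your proof is correct and essentially the same as the paper's: both assume $\neg\neg\,\alpha = 1$, case-split on the decidable value $\alpha(i) \in 2$ for each $i$, derive a contradiction from $\alpha(i) = 0$, and conclude $\alpha(i) = 1$ for all $i$. Your additional remarks (that the decreasing condition is unused and that the key is to case-split on the decidable pointwise statement) are accurate elaborations, but the argument itself matches the paper's.
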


\begin{proof}
  Suppose $\neg \neg \alpha = 1$. For each $i \in \nat$, we have that
  $\alpha(i)$ is either $0$ or $1$. But if $\alpha(i) = 0$, then we
  would have $\neg \alpha = 1$, contradicting $\neg \neg \alpha =
  1$. Hence $\alpha(i) = 1$ for all $i \in \nat$, and so $\alpha = 1$.
\end{proof}

\begin{lemma}
  \label{lem:meetnotone}
  Assume Markov's principle. Suppose that $\mathcal{F} \subseteq
  \natinfty$ is a finitely enumerable set such that $\bigwedge
  \mathcal{F} \neq 1$. Then for some $\alpha \in \mathcal{F}$, $\alpha
  \neq 1$.
\end{lemma}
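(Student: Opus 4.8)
The idea is to use $\markov$ to extract a concrete coordinate $i$ at which $\bigwedge\mathcal{F}$ takes the value $0$, and then to use the finite version of $\lpo$ (the last theorem of Section 2) to single out an $\alpha \in \mathcal{F}$ responsible for this; only the first step uses $\markov$.

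Write $\beta := \bigwedge \mathcal{F}$. Since $\mathcal{F}$ is finitely enumerable this meet exists and is itself an element of $\natinfty$, hence in particular a function $\nat \to 2$. From $\beta \neq 1$, i.e.\ $\neg(\forall i \in \nat)\, \beta(i) = 1$, together with the fact that each $\beta(i)$ is either $0$ or $1$, one gets $\neg\neg(\exists i \in \nat)\, \beta(i) = 0$. Applying $\markov$ to the binary sequence $i \mapsto 1 - \beta(i)$ then yields an actual $i \in \nat$ with $\beta(i) = 0$.

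Fix such an $i$ and apply the finitely enumerable version of $\lpo$ to the map $\alpha \mapsto 1 - \alpha(i)$ on $\mathcal{F}$. In the first case there is some $\alpha \in \mathcal{F}$ with $\alpha(i) = 0$; then $\alpha(i) = 0 \neq 1$, so $\alpha \neq 1$ and we are done. In the remaining case $\alpha(i) = 1$ for every $\alpha \in \mathcal{F}$, and I claim this is impossible. Let $\gamma \in \natinfty$ be the decreasing binary sequence given by $\gamma(j) = 1$ for $j \leq i$ and $\gamma(j) = 0$ for $j > i$. Each $\alpha \in \mathcal{F}$ is decreasing with $\alpha(i) = 1$, so $\alpha(j) = 1$ for $j \leq i$, whence $\gamma(j) \leq \alpha(j)$ for all $j$; thus $\gamma$ is a lower bound of $\mathcal{F}$ and so $\gamma \leq \beta$. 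But then $1 = \gamma(i) \leq \beta(i) = 0$, a contradiction.

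The only mildly delicate point is the assertion in the first step that $\bigwedge \mathcal{F}$ exists in $\natinfty$ for finitely enumerable $\mathcal{F}$; this follows by induction along a surjection $n \twoheadrightarrow \mathcal{F}$, using that $\natinfty$ has top element $1$ and that the pointwise minimum of two decreasing binary sequences is again a decreasing binary sequence. (The case $\mathcal{F} = \emptyset$ does not arise, since then $\bigwedge \mathcal{F} = 1$.)
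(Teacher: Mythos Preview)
Your proof is correct and follows essentially the same route as the paper: first use $\markov$ to extract an index $i$ with $(\bigwedge\mathcal{F})(i)=0$, then use finite decidability on the finitely enumerable set $\{\alpha(i)\mid\alpha\in\mathcal{F}\}$ to locate an $\alpha$ with $\alpha(i)=0$. The only difference is that your second step argues by contradiction via an auxiliary lower bound $\gamma$, whereas the paper (and your own final paragraph) observes that the meet is computed pointwise, so the case ``$\alpha(i)=1$ for all $\alpha$'' is immediately impossible because then $(\bigwedge\mathcal{F})(i)=1$; the detour through $\gamma$ is correct but unnecessary.
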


\begin{proof}
  Suppose $\bigwedge \mathcal{F} \neq 1$. Then by Markov's principle,
  there is some $n$ such that $\bigwedge \mathcal{F} (n) =
  0$. However, we now clearly have $\alpha(n) = 0$ for some $\alpha
  \in \mathcal{F}$ (since $\{ \alpha(n) \;|\; \alpha \in \mathcal{F}
  \}$ is a finitely enumerable set of natural numbers), and hence
  $\alpha \neq 1$.
\end{proof}

\begin{lemma}
  \label{lem:joinnotone}
  Assume Markov's principle. Suppose that $\mathcal{F} \subseteq
  \natinfty$ is a finitely enumerable set such that for each $\alpha
  \in \mathcal{F}$, $\alpha \neq 1$. Then $\bigvee \mathcal{F} \neq
  1$.
\end{lemma}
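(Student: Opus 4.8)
The plan is to work with the pointwise description of $\bigvee \mathcal{F}$ furnished by Proposition~\ref{prop:1}: since $\mathcal{F}$ is finitely enumerable, $\bigvee \mathcal{F}$ exists and $(\bigvee \mathcal{F})(i) = \bigvee_{\alpha \in \mathcal{F}} \alpha(i)$ for every $i \in \nat$. Consequently, if we can produce a single index $N$ with $\alpha(N) = 0$ for all $\alpha \in \mathcal{F}$, then $(\bigvee\mathcal{F})(N) = 0 \neq 1 = 1(N)$, and hence $\bigvee\mathcal{F} \neq 1$. So the whole problem reduces to finding such a uniform $N$. (If $\mathcal{F} = \emptyset$ there is nothing to do, as then $\bigvee\mathcal{F}$ is the bottom element of $\natinfty$, i.e. the constant sequence $0$, which is already $\neq 1$.)

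First I would note that for each individual $\alpha \in \mathcal{F}$ the hypothesis $\alpha \neq 1$ can be upgraded, using Markov's principle, to a genuine witness: since $\alpha : \nat \to 2$, the predicate ``$\alpha(i) = 0$'' is decidable, and $\neg\,(\exists i)\,\alpha(i) = 0$ would force $\alpha(i) = 1$ for all $i$, that is $\alpha = 1$, contrary to assumption; so $\neg\neg\,(\exists i)\,\alpha(i) = 0$, and Markov's principle applied to the binary sequence $i \mapsto 1 - \alpha(i)$ yields $(\exists i)\,\alpha(i) = 0$. Because a decidable predicate on $\nat$ that holds somewhere holds at a least point (a bounded search, available in $\czf$), this actually defines a function $g : \mathcal{F} \to \nat$ sending $\alpha$ to the least $i$ with $\alpha(i) = 0$. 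Alternatively one can simply invoke the Finite Axiom of Choice over a finite enumeration $n \twoheadrightarrow \mathcal{F}$ to obtain such a $g$.

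Then I would let $N$ be the maximum of the finitely enumerable set $\{\, g(\alpha) \mid \alpha \in \mathcal{F}\,\} \subseteq \nat$ (a finitely enumerable set of natural numbers, so its maximum exists). For every $\alpha \in \mathcal{F}$ we have $N \geq g(\alpha)$ and $\alpha(g(\alpha)) = 0$, and since $\alpha$ is a decreasing binary sequence this gives $\alpha(N) \leq \alpha(g(\alpha)) = 0$, so $\alpha(N) = 0$. Hence $(\bigvee\mathcal{F})(N) = \bigvee_{\alpha \in \mathcal{F}} \alpha(N) = 0$, and therefore $\bigvee\mathcal{F} \neq 1$. The only genuinely delicate point is the passage to a \emph{single} index $N$ working uniformly for all of $\mathcal{F}$; this is precisely where Markov's principle is used, converting the merely doubly-negated zero of each $\alpha$ into an honest one so that the witnesses can be collected and a maximum taken. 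Everything else — the pointwise join formula, decidability of ``$\alpha(i) = 0$'', and monotonicity of decreasing sequences — is routine. (One could equally argue by induction on the length of an enumeration of $\mathcal{F}$, reducing to the binary case $\beta \vee \gamma \neq 1$ for $\beta, \gamma \neq 1$, which is the same argument with just two witnesses.)
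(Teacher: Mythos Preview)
Your proof is correct and follows essentially the same route as the paper: enumerate $\mathcal{F}$, use Markov's principle to extract for each member a witness index where the sequence is $0$, take the maximum $N$ of these witnesses, and conclude $(\bigvee\mathcal{F})(N)=0$. You simply give more detail (the empty case, the use of least-witness or finite choice to obtain the assignment of indices, and the appeal to monotonicity of decreasing sequences), but the underlying argument is identical.
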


\begin{proof}
  Since $\mathcal{F}$ is finitely enumerable, we can write
  $\mathcal{F} = \{\alpha_1,\ldots,\alpha_k\}$. By Markov's principle
  we have for each $i$, $n_i$ such that $\alpha_i(n_i) = 0$. Take $N
  := \max_i n_i$. Then we have that $(\bigvee \mathcal{F}) (N) = 0$
  and therefore $\bigvee \mathcal{F} \neq 1$.
\end{proof}

Recall that $\llpo$ is usually formulated as below.

\begin{definition}
  \label{def:7}
  The lesser limited principle of omniscience ($\llpo$) is the
  following axiom. Let $\alpha : \nat \rightarrow 2$ be a binary
  sequence such that for all $i, j \in \nat$, if $\alpha(i) =
  \alpha(j) = 1$ then $i = j$. Then either for all $i \in \nat$,
  $\alpha(2i) = 0$, or for all $i \in \nat$ $\alpha(2i + 1) = 0$.
\end{definition}

We now obtain the equivalent presentations of $\llpo$ below.

\begin{proposition}
  \label{prop:llpoversions}
  The following are equivalent:
  \begin{enumerate}
  \item $\llpo$
  \item for all $\alpha, \beta \in \natinfty$, if $\alpha \vee \beta =
    1$, then $\alpha = 1$ or $\beta = 1$
  \item for all inhabited finitely enumerable sets $F \subseteq
    \natinfty$, if $\bigvee F = 1$, then there exists $\alpha \in F$
    such that $\alpha= 1$
  \end{enumerate}
\end{proposition}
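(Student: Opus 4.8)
The plan is to establish the cycle of implications $(1) \Rightarrow (2) \Rightarrow (3) \Rightarrow (1)$, using Proposition \ref{prop:1} throughout to pass freely between finitely enumerable joins in $\natinfty$ and pointwise suprema of binary sequences, and being careful to avoid any use of Markov's principle (in particular never passing from $\neg(\alpha = 1)$ to an explicit witness).

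For $(1) \Rightarrow (2)$: given $\alpha, \beta \in \natinfty$ with $\alpha \vee \beta = 1$, the idea is to encode the ``drop points'' of $\alpha$ and of $\beta$ into a single binary sequence $\gamma$ with at most one $1$. Concretely, set $\gamma(2n) = 1$ iff $n$ is the place where $\alpha$ first becomes $0$ (that is, $\alpha(n) = 0$ and either $n = 0$ or $\alpha(n-1) = 1$), and $\gamma(2n+1) = 1$ iff $n$ is the place where $\beta$ first becomes $0$. Each of these conditions is decidable, so $\gamma : \nat \to 2$ is well defined. One then checks $\gamma$ satisfies the hypothesis of $\llpo$: two $1$s among the even positions, or two among the odd positions, are excluded because a decreasing binary sequence has at most one drop point; and a $1$ on an even position $2m$ together with a $1$ on an odd position $2n+1$ is excluded because $\alpha(m) = 0$ forces $\beta(m) = 1$ (by $\alpha \vee \beta = 1$), hence by monotonicity $m$ lies strictly before $\beta$'s drop point $n$, while symmetrically $n$ lies strictly before $m$, a contradiction. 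Applying $\llpo$ to $\gamma$: if $\gamma(2n) = 0$ for all $n$ then $\alpha$ has no drop point, since assuming $\alpha(n) = 0$ for some $n$ and taking the least such $n$ would yield $\gamma(2n) = 1$; decidability of each $\alpha(n)$ then gives $\alpha = 1$. Symmetrically, if $\gamma(2n+1) = 0$ for all $n$ then $\beta = 1$.

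For $(2) \Rightarrow (3)$: if $F \subseteq \natinfty$ is inhabited and finitely enumerable, there is a surjection $g : k \twoheadrightarrow F$ with $k \geq 1$, and we argue by induction on $k$. For $k = 1$, $F = \{g(0)\}$ and $\bigvee F = g(0) = 1$. For the step, $F = F' \cup \{g(k)\}$ where $F' = g[k]$ is inhabited and finitely enumerable; by Proposition \ref{prop:1}, $\bigvee F = (\bigvee F') \vee g(k)$, so $(2)$ gives $\bigvee F' = 1$ or $g(k) = 1$, and we conclude by the induction hypothesis or directly. For $(3) \Rightarrow (1)$: given $\alpha : \nat \to 2$ satisfying the hypothesis of $\llpo$, define $\beta, \gamma \in \natinfty$ by $\beta(n) = 1$ iff $\alpha(2i) = 0$ for all $i \leq n$, and $\gamma(n) = 1$ iff $\alpha(2i+1) = 0$ for all $i \leq n$; these are decreasing, and $\beta = 1$ (resp. $\gamma = 1$) says exactly that $\alpha(2i) = 0$ for all $i$ (resp. $\alpha(2i+1) = 0$ for all $i$). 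Since $\alpha$ has at most one $1$, for each $n$ we cannot have both $\beta(n) = 0$ and $\gamma(n) = 0$ (that would give a $1$ of $\alpha$ at an even and at an odd position), so $\beta(n) = 1$ or $\gamma(n) = 1$; hence $\beta \vee \gamma = 1$. Applying $(3)$ to the inhabited finitely enumerable set $\{\beta, \gamma\}$ yields $\beta = 1$ or $\gamma = 1$, which is the conclusion of $\llpo$.

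The main obstacle is the step $(1) \Rightarrow (2)$: one must produce a binary sequence with at most one $1$ whose induced LLPO dichotomy actually recovers the dichotomy ``$\alpha = 1$ or $\beta = 1$'', and verify this constructively, without the temptation to extract witnesses from negated equalities. The ``drop point'' encoding makes every relevant predicate decidable and reduces both verifications — that $\gamma$ has at most one $1$, and that each disjunct of $\llpo$ gives the corresponding disjunct of $(2)$ — to bounded, routine case analyses; the delicate part is arranging the encoding so that the ``mixed'' case ($\alpha$ dropping and $\beta$ dropping) is genuinely ruled out using $\alpha \vee \beta = 1$ and monotonicity of decreasing sequences.
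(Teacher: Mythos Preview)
Your proof is correct and follows essentially the same approach as the paper: the drop-point encoding for $(1)\Rightarrow(2)$, induction on the size of the enumerating surjection for the step to finitely enumerable $F$, and the ``accumulated even/odd'' decreasing sequences to recover $\llpo$. The only cosmetic differences are that you close the cycle $(1)\Rightarrow(2)\Rightarrow(3)\Rightarrow(1)$ whereas the paper proves $(1)\Leftrightarrow(2)$ and then $(2)\Leftrightarrow(3)$, and that your $\gamma$ encodes the position of the \emph{first} $0$ rather than the \emph{last} $1$ (which has the minor advantage of cleanly covering the boundary case $\alpha(0)=0$).
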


\begin{proof}
  To show $1 \Rightarrow 2$, let $\alpha, \beta \in \natinfty$ be such
  that $\alpha \vee \beta = 1$. Then define $\gamma : \nat
  \rightarrow 2$ as below.
  \begin{equation*}
    \label{eq:58}
    \gamma(i) =
    \begin{cases}
      1 & \text{if } i = 2j, \alpha(j) = 1 \text{ and }\alpha(j + 1) =
      0 \\
      1 & \text{if } i = 2j + 1, \beta(j) = 1 \text{ and }\beta(j + 1) =
      0 \\
      0 & \text{otherwise}
    \end{cases}
  \end{equation*}
  Then by applying $\llpo$ to $\gamma$, we can show either $\alpha =
  1$ or $\beta = 1$.

  Now to show $2 \Rightarrow 1$, let $\gamma : \nat \rightarrow 2$ be
  such that for all $i, j$ if $\gamma(i) = \gamma(j) = 1$, then $i =
  j$. Define $\alpha$ and $\beta$ as follows.
  \begin{align*}
    \alpha(i) &=
    \begin{cases}
      1 & \text{for all } j \leq i, \gamma(2j) = 0 \\
      0 & \text{for some } j \leq i, \gamma(2j) = 1
    \end{cases} \\
    \beta(i) &=
    \begin{cases}
      1 & \text{for all } j \leq i, \gamma(2j + 1) = 0 \\
      0 & \text{for some } j \leq i, \gamma(2j + 1) = 1
    \end{cases}
  \end{align*}
  Then one can easily check that $\alpha \vee \beta = 1$, and if
  $\alpha = 1$ then $\gamma(2i) = 0$ for all $i$, and if $\beta = 1$
  then $\gamma(2i + 1) = 0$ for all $i$.

  Finally note that $2$ is a special case of $3$, and that $3$ follows
  from $2$ by showing by induction on $n$ that the result holds
  for all $F$ that admit a surjection $n \twoheadrightarrow F$.
\end{proof}

\subsection{Generalising to $\llpo_n$}
\label{sec:llpo_n}

In \cite{richmanllpon}, Richman considered for each $n \geq 2$ a
variant of $\llpo$, that he denoted $\llpo_n$. These axioms were also
studied by Hendtlass and Lubarsky, who showed (amongst other results)
that $\llpo_{n + 1}$ is strictly weaker than $\llpo_n$.  In this
section we show that like $\llpo$, $\llpo_n$ can also be formulated
using $\natinfty$.

\begin{definition}
  Let $n \geq 2$. $\llpo_n$ is the following statement: Let $\alpha :
  \nat \rightarrow 2$ be a binary sequence such that for all $i, j \in
  \nat$, $\alpha(i) = \alpha(j) = 1$ implies $i = j$. Then there is
  some $k$ with $0 \leq k < n$ such that for all $i$, $\alpha(in + k)
  = 0$.
\end{definition}

\begin{remark}
  In \cite{abhkarithhierarchy} Akama, Hayashi, Berardi and Kohlenbach
  studied a separate hierarchy of variants of $\llpo$, denoted
  $\Sigma^0_n-\llpo$. They show (amongst other results) that for each
  $n$, $\Sigma^0_{n + 1}-\llpo$ is strictly stronger than
  $\Sigma^0_n-\llpo$. Another variant of Lifschitz realizability
  (relativised to $\Delta^0_n$ functions) was used for one of their
  separation results.
\end{remark}

We now give the equivalent formulation using $\nat_\infty$.

\begin{proposition}
  Let $n \geq 2$. The following are equivalent:
  \begin{enumerate}
  \item $\llpo_n$
  \item Let $\alpha_1,\ldots,\alpha_n \in \natinfty$ be such that for all $i, j$
    with $1 \leq i \neq j \leq n$, $\alpha_i \vee \alpha_j = 1$. Then there exists
    $i \in \{1,\ldots,n\}$ such that $\alpha_i = 1$.
  \end{enumerate}
\end{proposition}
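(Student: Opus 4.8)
The plan is to adapt the proof of Proposition \ref{prop:llpoversions}, replacing the two-way split into even and odd indices by an $n$-way split into residue classes modulo $n$. Throughout I will use the bijection $\nat \cong \nat \times \{0,\ldots,n-1\}$ sending $\ell$ to the unique pair $(m,r)$ with $\ell = mn + r$, and I will index the sequences $\alpha_k$ by $k \in \{1,\ldots,n\}$, thinking of $\alpha_k$ as attached to residue class $k-1$. All the sequences I construct will be total binary functions, which is justified by the finite instances of excluded middle and of $\lpo$ recalled in Section 2, since in each case membership in the relevant value is a finite conjunction of decidable statements.

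For the direction $2 \Rightarrow 1$, I would start with $\gamma : \nat \to 2$ having at most one $1$, and define for each $k \in \{1,\ldots,n\}$ a sequence $\alpha_k$ by $\alpha_k(i) = 1$ iff $\gamma(mn + (k-1)) = 0$ for all $m \leq i$. Each $\alpha_k$ is clearly decreasing, hence lies in $\natinfty$. The key point is that $\alpha_i \vee \alpha_j = 1$ whenever $i \neq j$: if $(\alpha_i \vee \alpha_j)(\ell) = 0$, then there are $m, m' \leq \ell$ with $\gamma(mn + (i-1)) = \gamma(m'n + (j-1)) = 1$, and since $i-1$ and $j-1$ are distinct elements of $\{0,\ldots,n-1\}$ these two positions lie in different residue classes mod $n$ and so are distinct, contradicting the hypothesis on $\gamma$. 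Applying (2) yields some $k$ with $\alpha_k = 1$, which unwinds precisely to $\gamma(mn + (k-1)) = 0$ for all $m$; this is the conclusion of $\llpo_n$ with witness $k-1$.

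For the direction $1 \Rightarrow 2$, I would start with $\alpha_1,\ldots,\alpha_n \in \natinfty$ such that $\alpha_i \vee \alpha_j = 1$ whenever $i \neq j$, and define $\gamma : \nat \to 2$ by $\gamma(mn + (k-1)) = 1$ iff $\alpha_k(m) = 1$ and $\alpha_k(m+1) = 0$, so that $\gamma$ records the positions where some $\alpha_k$ ``drops'' from $1$ to $0$. The main content is checking that $\gamma$ has at most one $1$: suppose $\gamma(mn + (k-1)) = \gamma(m'n + (k'-1)) = 1$. If $k = k'$ then the decreasing property of $\alpha_k$ forces $m = m'$; if $k \neq k'$ then from $\alpha_k(m+1) = 0$ and $\alpha_k \vee \alpha_{k'} = 1$ we get $\alpha_{k'}(m+1) = 1$, which with $\alpha_{k'}(m'+1) = 0$ and monotonicity gives $m < m'$, and the symmetric argument gives $m' < m$, a contradiction. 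Then apply $\llpo_n$ to $\gamma$: there is $k \in \{1,\ldots,n\}$ with $\gamma(mn + (k-1)) = 0$ for all $m$, so $\alpha_k$ never drops and is therefore constant. Finally, case-split on the decidable statement $\alpha_k(0) = 0 \vee \alpha_k(0) = 1$: if $\alpha_k(0) = 1$ then $\alpha_k = 1$ and we are done; if $\alpha_k(0) = 0$ then $\alpha_k$ is constantly $0$, and picking any $j \neq k$ (possible since $n \geq 2$) the hypothesis $\alpha_k \vee \alpha_j = 1$ gives $\alpha_j = 1$.

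I expect the main obstacle to be the bookkeeping in the ``at most one $1$'' argument for $\gamma$ in $1 \Rightarrow 2$, namely correctly combining the decreasing property of each individual $\alpha_k$ with the pairwise join condition across different residue classes; the residue-class combinatorics in $2 \Rightarrow 1$ is comparatively straightforward, and the remaining steps are direct generalisations of the $n=2$ case together with the routine case analysis on $\alpha_k(0)$.
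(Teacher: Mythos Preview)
Your proposal is correct and follows exactly the approach the paper intends: the paper's proof is simply ``Similar to the proof of proposition \ref{prop:llpoversions}'', and your argument is precisely the $n$-ary generalisation of that proof, replacing the even/odd split by residue classes modulo $n$. Your explicit case analysis on $\alpha_k(0)$ in the $1 \Rightarrow 2$ direction, using $n \geq 2$ to pick a different index when $\alpha_k(0) = 0$, is a detail the paper leaves implicit even in the $n=2$ case, but it is the right way to complete the argument.
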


\begin{proof}
  Similar to the proof of proposition \ref{prop:llpoversions}.
\end{proof}

We now aim towards another characterisation of $\llpo_n$ analogous to
part 3 of proposition \ref{prop:llpoversions} that will be useful
later.

\begin{definition}
  For each $n$, we define the set of \emph{$n$-trees} by the following
  recursive definition.
  \begin{enumerate}
  \item There is an $n$-tree $\nil$.
  \item If we have a list of $n$-trees $T_1,\ldots,T_n$ and a list of
    decreasing sequences $\alpha_1,\ldots,\alpha_n \in \natinfty$,
    then $\maketree(T_1,\ldots,T_n ; \alpha_1,\ldots,\alpha_n)$ is an
    $n$-tree.
  \end{enumerate}
\end{definition}

\begin{definition}
  An $n$-tree is defined to be \emph{good} according to the following
  recursive definition.
  \begin{enumerate}
  \item $\nil$ is good.
  \item $\maketree(T_i ; \alpha_i)$ is good if for any $1 \leq i \neq
    j \leq n$, $\alpha_i \vee \alpha_j = 1$, and for any $1 \leq i \leq
    n$, if $\alpha_i = 1$ then $T_i$ is good.
  \end{enumerate}
\end{definition}

\begin{definition}
  An $n$-tree is defined to be \emph{very good} according to the
  following inductive definition.
  \begin{enumerate}
  \item $\nil$ is very good.
  \item $\maketree(T_i ; \alpha_i)$ is very good if it is good, and
    for some $1 \leq i \leq n$, $\alpha_i = 1$ and $T_i$ is very good.
  \end{enumerate}
\end{definition}

\begin{theorem}
  \label{thm:llpontrees}
  $\llpo_n$ is equivalent to the statement that every good $n$-tree is
  very good.
\end{theorem}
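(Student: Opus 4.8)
The plan is to prove both directions by induction, using the characterisation of $\llpo_n$ from the preceding proposition (that whenever $\alpha_1,\ldots,\alpha_n \in \natinfty$ satisfy $\alpha_i \vee \alpha_j = 1$ for all $i \neq j$, some $\alpha_i = 1$). For the direction ``$\llpo_n$ implies every good $n$-tree is very good,'' I would argue by induction on the inductive structure of $n$-trees. The base case $\nil$ is immediate, since $\nil$ is very good by definition. For the inductive step, suppose $T = \maketree(T_1,\ldots,T_n;\alpha_1,\ldots,\alpha_n)$ is good, so $\alpha_i \vee \alpha_j = 1$ for all $1 \leq i \neq j \leq n$, and whenever $\alpha_i = 1$ then $T_i$ is good. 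Applying $\llpo_n$ (in the form of part 2 of the proposition) to $\alpha_1,\ldots,\alpha_n$, we get some $i$ with $\alpha_i = 1$; then $T_i$ is good, so by the induction hypothesis $T_i$ is very good; hence $T$ is very good by the second clause of the definition of very good.

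For the converse, ``(every good $n$-tree is very good) implies $\llpo_n$,'' I would instead derive part 2 of the proposition. Given $\alpha_1,\ldots,\alpha_n \in \natinfty$ with $\alpha_i \vee \alpha_j = 1$ for all $i \neq j$, form the single $n$-tree $T := \maketree(\nil,\ldots,\nil;\alpha_1,\ldots,\alpha_n)$. This tree is good: the pairwise join condition holds by hypothesis, and whenever $\alpha_i = 1$ the subtree $\nil$ is good. By assumption $T$ is therefore very good, so by the definition of very good there is some $i$ with $\alpha_i = 1$ (and $T_i = \nil$ very good, which is automatic). This is exactly the conclusion of part 2 of the proposition, hence $\llpo_n$ holds.

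A subtle point worth being careful about is the distinction between the \emph{recursively} defined notion ``good'' and the \emph{inductively} defined notion ``very good'': the definition of good quantifies over the finite list of immediate subtrees using a recursive (well-founded on the tree structure) definition, whereas ``very good'' is given as a genuine inductive definition, so that in $\czf$ one has an induction principle for it. The induction in the first direction should be run as induction on the $n$-tree (equivalently, on the build-up of $T$ as a finitely-branching well-founded tree), and one must check that ``good'' is preserved when passing to the selected subtree $T_i$ — which is exactly what the second clause of the definition of good guarantees. The only genuine content is the single application of $\llpo_n$ in the inductive step; everything else is unwinding the definitions, so I expect no serious obstacle, the main care being to phrase the induction so that it is formalisable in $\czf$ (using the finite-tree structure, together with the fact, from the earlier section, that finitely enumerable sets of binary values behave classically).
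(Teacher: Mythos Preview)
Your proposal is correct and follows essentially the same approach as the paper's proof: both directions are handled identically (induction on the tree structure using a single application of $\llpo_n$ at each step for one direction, and the one-level tree $\maketree(\nil,\ldots,\nil;\alpha_1,\ldots,\alpha_n)$ for the converse), only with the two directions presented in the opposite order. Your additional remarks about formalisability in $\czf$ are fine but not needed for the argument.
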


\begin{proof}
  We first assume that every good $n$-tree is very good and deduce
  $\llpo_n$. Let $\alpha_1,\ldots,\alpha_n \in \natinfty$ be such that
  for any $1 \leq i \neq j \leq n$, $\alpha_i \vee \alpha_j = 1$. Then
  note that we can form a good $n$-tree $\maketree(\nil ;
  \alpha_i)$. If $\maketree(\nil ; \alpha_i)$ is very good, then for
  some $i$, $\alpha_i = 1$, as required.

  For the converse, we assume $\llpo_n$ and prove by induction that
  for every $n$-tree, $T$, if $T$ is good then $T$ is very good.

  For $\nil$, this is clear.

  For $T = \maketree(T_i ; \alpha_i)$, assume that $T$ is good. Then
  for $1 \leq i \neq j \leq n$ we have $\alpha_i \vee \alpha_j =
  1$. Hence, for some $i$, $\alpha_i = 1$ by $\llpo_n$. Since $T$ is
  good and $\alpha_i = 1$, we have that $T_i$ is good. But by
  induction we may assume now that $T_i$ is very good. Hence, $T$
  is also very good.
\end{proof}

\begin{definition}
  $\llpo_\infty$ is the following statement. Let $( , ) : \nat \times
  \nat \rightarrow \nat$ be a surjective pairing function, and let
  $\alpha : \nat \rightarrow 2$ be a binary sequence such that
  $\alpha(i) = 1$ for at most one $n$. Then for some $k \in \nat$, and
  for all $n \in \nat$ $\alpha(k, n) = 0$.
\end{definition}

\begin{proposition}
  $\llpo_\infty$ is equivalent to the following statement. Let
  $(\alpha_i)_{i \in \nat}$ be such that $\alpha_i \in \natinfty$ for
  each $i \in \nat$. Suppose further that for $i \neq j$, $\alpha_i
  \vee \alpha_j = 1$. Then for some $i$, $\alpha_i = 1$.
\end{proposition}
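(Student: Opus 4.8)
The plan is to follow the template of Proposition~\ref{prop:llpoversions} (and of the analogous equivalence for $\llpo_n$): from a binary sequence with at most one $1$, build a family in $\natinfty$ whose pairwise joins are $1$, and conversely. I would fix once and for all a bijective pairing function and, as in the statement of $\llpo_\infty$, write $(i,n)$ for the code of a pair. I would also record the terminology that $\alpha \in \natinfty$ \emph{falls at} $n$ when $\alpha(n) = 1$ and $\alpha(n+1) = 0$; since elements of $\natinfty$ are decreasing, such an $n$ is unique when it exists. The one genuinely useful lemma to isolate first is: if $\alpha, \beta \in \natinfty$ and $\alpha \vee \beta = 1$, then $\alpha$ and $\beta$ cannot both fall somewhere --- if $\alpha$ falls at $m$ and $\beta$ at $m'$, evaluating the pointwise join at $\max(m,m')+1$ gives $0 \neq 1$.

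For the direction from the displayed statement to $\llpo_\infty$: given $\gamma : \nat \to 2$ taking the value $1$ at most once, I would set, for each $k \in \nat$, $\alpha_k(i) = 1$ iff $\gamma(k,n) = 0$ for all $n \leq i$ (a bounded, hence decidable, condition), so that $\alpha_k$ is decreasing and lies in $\natinfty$. I would then check $\alpha_k \vee \alpha_{k'} = 1$ for $k \neq k'$: if some coordinate $i$ were $0$ in both, there would be $n_0, n_1 \leq i$ with $\gamma(k,n_0) = \gamma(k',n_1) = 1$, and the codes of $(k,n_0)$ and $(k',n_1)$ are distinct since $k \neq k'$, contradicting the hypothesis on $\gamma$. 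Applying the displayed statement yields $k$ with $\alpha_k = 1$, and instantiating its definition at $i = n$ gives $\gamma(k,n) = 0$ for every $n$, which is exactly the conclusion of $\llpo_\infty$.

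For the converse: given $(\alpha_i)_{i \in \nat}$ in $\natinfty$ with $\alpha_i \vee \alpha_j = 1$ for $i \neq j$, I would define $\beta(i,n) = 1$ iff $\alpha_i$ falls at $n$, and $\beta(m) = 0$ otherwise. Using uniqueness of the falling point together with the key lemma, $\beta$ takes the value $1$ at most once: if $\beta$ had value $1$ at the codes of $(i_1,n_1)$ and $(i_2,n_2)$, then either $i_1 = i_2$ (whence $n_1 = n_2$ by uniqueness, so the same code) or $i_1 \neq i_2$ (impossible, since then $\alpha_{i_1}$ and $\alpha_{i_2}$ would both fall). So $\llpo_\infty$ applies and produces $k$ with $\beta(k,n) = 0$ for all $n$, i.e.\ $\alpha_k$ never falls. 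The last step is where I expect the only real care to be needed: ``$\alpha_k$ never falls'' does not immediately give $\alpha_k = 1$, because $\alpha_k$ could be constantly $0$. So I would decide $\alpha_k(0)$: if $\alpha_k(0) = 1$, an easy induction on $n$ (using that $\alpha_k$ never falls and that $\alpha_k(n+1) \in \{0,1\}$ decidably) shows $\alpha_k(n) = 1$ for all $n$, so $\alpha_k = 1$; if $\alpha_k(0) = 0$, then $\alpha_k$ is constantly $0$ (it is decreasing), and picking any $j \neq k$ gives $\alpha_j = \alpha_j \vee \alpha_k = 1$. In either case some member of the family equals $1$, as required. Throughout, the reasoning stays within bounded decidable predicates, so --- unlike the earlier lemmas about finitely enumerable subsets of $\natinfty$ --- it needs neither Markov's principle nor any choice principle.
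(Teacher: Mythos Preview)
Your argument is correct and follows the same template as the paper's proof of Proposition~\ref{prop:llpoversions}; the paper itself leaves this proposition unproved, relying implicitly on that analogy. Your explicit treatment of the case $\alpha_k(0)=0$ is a genuine improvement in care over the paper's sketch of $1\Rightarrow 2$ in Proposition~\ref{prop:llpoversions}, which glosses over the possibility that the sequence never falls because it is constantly~$0$ (the fix there is the same as yours: if $\alpha$ is constantly~$0$ then $\beta=\alpha\vee\beta=1$).
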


\subsection{Absoluteness Results for $n$-Trees}
\label{sec:absol-results-n}

We next show how to encode $n$-trees as functions $\nat \rightarrow
\nat$.

\begin{definition}
  \label{def:shpdata}
  Let $T$ be an $n$-tree. We define the \emph{shape} of $T$, $\shp(T)
  \in \nat$ as follows. Assume that we have a standard way of encoding
  lists of natural numbers as natural numbers such that encoding and
  decoding can be done in a primitive recursive manner and the code
  for a list is greater than each of its elements, and write this
  using brackets $()$.
  \begin{enumerate}
  \item $\shp(\nil)$ is defined to be $()$.
  \item $\shp(\maketree(T_i ; \alpha_i))$ is defined to be
    $(\shp(T_1), \ldots, \shp(T_n))$.
  \end{enumerate}

  We define the data for $T$, $\dt(T) \in 2^\nat$ as follows.
  \begin{enumerate}
  \item $\dt(\nil)(j) := 0$ for all $j \in \nat$.
  \item We define $\dt(\maketree(T_i ; \alpha_i))$ as follows. For any
    $j \in \nat$, $j$ can be written uniquely as either $2 n k + 2 i$
    or $2 n k + 2 i + 1$ where $0 \leq i < n$. We define
    \begin{align*}
      \dt(\maketree(T_i ; \alpha_i))(2 n k + 2 i) &:=
      \alpha_i(k) \\
      \dt(\maketree(T_i ; \alpha_i))(2 n k + 2 i + 1) & :=
      \dt(T_i)(k)
    \end{align*}
  \end{enumerate}
\end{definition}

\begin{lemma}
  \label{lem:goodpr}
  There are primitive recursive functions $b$, $c$, $f$, $g_0$ and $g_1$
  such that an $n$-tree $T$ is good if and only if
  \begin{multline}
    \label{eq:35}
    \forall l < b(\shp(T)) \quad (c(l, \shp(T)) = 1 \;\rightarrow\; (\forall
    i \in \nat)\,\dt(T)(f(l, \shp(T), i)) = 1)
    \; \rightarrow \\
    (\forall i \in \nat)\,\neg (\dt(T)(g_0(l, \shp(T), i)) = 0 \wedge
    \dt(T)(g_1(l, \shp(T), i)) = 0)
  \end{multline}  
\end{lemma}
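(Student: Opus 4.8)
The plan is to unwind the definitions of goodness, $\shp$ and $\dt$ and match them against the claimed formula, exhibiting the primitive recursive functions $b, c, f, g_0, g_1$ explicitly enough that their primitive recursiveness is evident. The key observation is that goodness of an $n$-tree $T$ is a \emph{bounded} statement once $\shp(T)$ is fixed: whether $T$ is good depends on finitely many conditions indexed by the internal structure of $\shp(T)$, and each such condition is either ``$\alpha_i \vee \alpha_j = 1$ at some level'' or ``if $\alpha_i = 1$ then (recursively) $T_i$ is good''. Both of these can be rephrased purely in terms of the sequence $\dt(T) \in 2^\nat$, using the fact that from $\dt(\maketree(T_i;\alpha_i))$ one recovers $\alpha_i(k)$ as the value at position $2nk + 2i$ and $\dt(T_i)(k)$ as the value at position $2nk + 2i + 1$; iterating, every ``$\alpha$-value of a subtree reached along a path'' and every ``$\dt$-value of a subtree reached along a path'' of $T$ is itself a primitive recursive function of $\shp(T)$, the path, and $\dt(T)$ — here we use the stipulated property that the code of a list exceeds each of its entries, so recursion on $\shp(T)$ terminates and the relevant bounds are primitive recursive.

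Concretely, I would first set up an enumeration of the ``obligations'' that make up goodness. For a tree of shape $s = (\shp(T_1), \dots, \shp(T_n))$, the obligations are: (a) for each node $\nu$ in the tree (coded by a path, i.e. a list of indices in $\{0,\dots,n-1\}$, all of which are $< b(s)$ for a suitable bound $b(s)$ read off $s$) and each pair $i \neq j$, the condition $\alpha^\nu_i \vee \alpha^\nu_j = 1$; and (b) for each node $\nu$ and each $i$, if $\alpha^\nu_i = 1$ then the subtree at $\nu$'s $i$-th child is good — but unfolding the recursion, (b) reduces to: for every path $\nu$ that is ``activated'' (meaning all the $\alpha$'s along it that gate the path are $= 1$), the pairwise-join conditions at $\nu$ hold. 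So I would let $l$ range over codes of (path, pair-or-singleton-of-indices) data bounded by $b(\shp(T))$; let $c(l, s) = 1$ flag that $l$ encodes the ``gating'' obligation (the antecedent ``the $\alpha$'s along the path are all $1$''); let $f(l,s,\cdot)$ enumerate the positions in $\dt(T)$ that spell out those gating $\alpha$-values, so that $(\forall i)\,\dt(T)(f(l,s,i)) = 1$ says exactly ``the path is activated''; and let $g_0(l,s,i), g_1(l,s,i)$ enumerate the positions in $\dt(T)$ holding $\alpha^\nu_i(i')$ and $\alpha^\nu_j(i')$ for the relevant pair, so that $\neg(\dt(T)(g_0) = 0 \wedge \dt(T)(g_1) = 0)$ says $\alpha^\nu_i(i') \vee \alpha^\nu_j(i') = 1$ at coordinate $i'$. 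Since $\alpha \vee \beta = 1$ for $\alpha, \beta \in \natinfty$ is equivalent to $(\forall i')\,\alpha(i') \vee \beta(i') = 1$ by Proposition~\ref{prop:1}, the displayed formula \eqref{eq:35} then matches the recursive definition of ``good'' obligation by obligation. The verification that it does is an induction on the $n$-tree $T$: the base case $\nil$ is immediate since $\shp(\nil) = ()$ gives $b(()) = 0$ and the universal quantifier over $l < 0$ is vacuous, matching ``$\nil$ is good''; and the inductive step is bookkeeping, checking that the obligations for $\maketree(T_i;\alpha_i)$ are precisely the ``depth-zero'' pairwise-join obligations plus the obligations for each $T_i$ guarded by the antecedent $\alpha_i = 1$, all re-indexed through the position formulas of Definition~\ref{def:shpdata}.

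The main obstacle is purely organizational rather than conceptual: setting up a clean coding of paths and obligation-indices by natural numbers $l < b(\shp(T))$, together with the primitive recursive decoding functions that read off, from $l$ and $s = \shp(T)$, which path, which pair $(i,j)$ (or which child index $i$ for the gating part), and which coordinate are meant — and verifying that composing these with the position formulas $2nk + 2i$ and $2nk + 2i + 1$ of $\dt$, iterated along the path, yields primitive recursive $f, g_0, g_1$. The one place where care is genuinely needed is the bound: one must argue that the depth of $T$, the branching positions, and all the $\dt$-indices involved are bounded by primitive recursive functions of $\shp(T)$ alone — this is exactly where the hypothesis ``the code for a list is greater than each of its elements'' is used, since it guarantees that iterated list-decoding of $\shp(T)$ strictly decreases and hence the recursion depth (and everything derived from it) is controlled by $\shp(T)$. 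Granting that, the rest is a routine unwind, and I would present it by displaying the definitions of $b, c, f, g_0, g_1$ and then the one-line induction.
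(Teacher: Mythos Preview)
Your proposal is correct and takes essentially the same approach as the paper: both unwind goodness into a finite list of obligations indexed by nodes of the tree (each a gated pairwise-join condition on the $\alpha$'s), and both recover the relevant positions in $\dt(T)$ by iterating the encoding formulas $2nk+2i$ and $2nk+2i+1$ along paths, with primitive recursiveness coming from recursion on $\shp(T)$. The only cosmetic difference is that the paper writes $b,c,f,g_0,g_1$ as an explicit structural recursion on the shape (setting $b(\shp(\maketree(T_i;\alpha_i)))=\sum_i b(\shp(T_i))+n(n-1)$ and splitting $l$ into the top-level $n(n-1)$ pair-obligations with $c=0$ versus the recursive obligations with $c=1$), whereas you describe the same enumeration globally via paths; the content is the same.
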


\begin{proof}
  We define $b(\shp(\nil))$ to be $0$. We can then take $c,f,g_0,g_1$
  to be anything (e.g. constantly equal to $0$).

  We now deal with the case $T = \maketree(T_i ; \alpha_i)$. We define
  \begin{equation*}
    \label{eq:36}
    b(\shp(\maketree(T_1,\ldots,T_n ; \alpha_1,\ldots,\alpha_n)) :=
    \sum_{i = 1}^n \, b(\shp(T_i)) + n (n - 1)
  \end{equation*}

  Now given $l < b(\shp(T_1,\ldots,T_n ; \alpha_1,\ldots,\alpha_n))$
  we have one of the following two cases (and we can decide which in a
  primitive recursive manner).
  \begin{enumerate}
  \item For some (unique) $0 \leq l_0 < n$ and $0 \leq l_1 < n - 1$, 
    $l = \sum_{i = 1}^n\, b(\shp(T_i)) \;+\; n l_0 + (l_1 - 1)$.
  \item For some $1 \leq k \leq n$ and $0 \leq l' < b(\shp(T_k))$, $l
    = \sum_{i = 1}^{k - 1}\, b(\shp(T_i)) \;+\; l'$, and this is
    unique when we require furthermore that $k$ is the greatest such
    value.
  \end{enumerate}

  For case 1, we take $c(l, S(T)) := 0$. The value of $f$ now makes no
  difference, so we take it to be constantly $0$. Now write $l_1'$ for
  $l_1$ if $l_1 < l_0$ and $l_1 + 1$ if $l_1 \geq l_0$ (so that in any
  case we have $0 \leq l_1' < n$ and $l_0 \neq l_1'$). We define
  \begin{align*}
    g_0(l, \shp(T), i) &:= 2 n i + 2 l_0 \\
    g_1(l, \shp(T), i) &:= 2 n i + 2 l_1'
  \end{align*}
  (This corresponds to ensuring that $\alpha_{l_0} \vee \alpha_{l_1'}
  = 1$)

  For case 2, we define $c(l, S(T)) := 1$. Let $l'$ and $k$ be as in
  the description of case 2. We split into cases on whether or not
  $c(\shp(T_k)) = 1$. If $c(\shp(T_k)) = 1$, then define
  \begin{align*}
    f(l, \shp(T), 2 i) &:= 2 n i + 2 k \\
    f(l, \shp(T), 2 i + 1) &:= 2 n f(l', \shp(T_k), i) + 2k + 1 
  \end{align*}
  If $c(\shp(T_k)) \neq 1$, then define
  \begin{equation*}
    \label{eq:39}
    f(l, \shp(T), i) := 2 n i + k
  \end{equation*}
  
  In either case, we define
  \begin{align*}
    g_0(l, \shp(T), i) &:= 2 n g_0(l',\shp(T_k), i) + 2 k + 1 \\
    g_1(l, \shp(T), i) &:= 2 n g_1(l',\shp(T_k), i) + 2 k + 1       
  \end{align*}  
  (This corresponds to ensuring that if
  $\alpha_k(j) = 1$ for all $j$ then $T_k$ is good.)
\end{proof}

\begin{theorem}
  \label{thm:goodabs}
  Let $\alpha : \nat \rightarrow \nat$. Then the statement ``$f$ is the
  code of a good tree'' is absolute in $\vs$, for any
  proper formal topology $\mathcal{S}$.
\end{theorem}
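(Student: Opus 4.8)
The plan is to reduce to a structural induction over a single fixed arithmetic formula, using Lemma~\ref{lem:goodpr} to trade the recursion in the definition of ``good'' for an explicit formula. Recall that a code for an $n$-tree packages its shape (a natural number) and its data (a binary sequence) into one function $\nat \to \nat$ via primitive recursive coding and decoding, so ``$f$ is a code of an $n$-tree'' is a decidable condition on the shape-component of $f$ together with the assertion that its data-component is $2$-valued. By Lemma~\ref{lem:goodpr}, ``$f$ is the code of a good $n$-tree'' is then equivalent, provably in $\czf$, to a formula $\Theta(f)$ whose atomic subformulas are equations between primitive recursive terms built from the shape-component of $f$ and the bound variables, and equations of the form $(\text{decode of }f)(t) = 0$ or $(\text{decode of }f)(t) = 1$ for primitive recursive terms $t$, and which is built from these using only $\neg$, $\wedge$, $\vee$, $\to$, bounded numerical quantifiers, and universal numerical quantifiers $(\forall i \in \nat)$. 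The point of Lemma~\ref{lem:goodpr} is precisely to make this quantifier structure manifest: every quantifier in $\Theta$ ranges over a set of the form $\widehat{x}$ --- a numeral, or $\nat$ itself, using the absoluteness of the natural numbers --- and, crucially, no unbounded existential quantifier and no quantifier over $\vs$ occurs. Thus it suffices to prove that $\Theta$ is absolute: for every $f : \nat \to \nat$ in the background universe, $\vs \models \Theta(\hat f)$ if and only if $\Theta(f)$.

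First I would record the auxiliary fact that, for a proper formal topology, every primitive recursive function is absolute, in the sense that for $\vec x, y \in \nat$ we have $h(\vec x) = y$ iff $\llbracket \hat h(\hat{\vec x}) = \hat y \rrbracket = \top$, and that this value is always $\top$ or $\emptyset$. This is the usual induction on the build-up of primitive recursive functions: the base functions reduce to the already-established absoluteness of $=$, $\in$ and the successor operation on $\nat$; composition is immediate; and the primitive recursion step is handled by an auxiliary induction on the recursion variable, all carried out inside $\vs \models \czf$. The dichotomy ``$\top$ or $\emptyset$'' comes from comparing $\llbracket \hat h(\hat{\vec x}) = \hat y \rrbracket$ with $\llbracket \hat{y'} = \hat y \rrbracket$ for the actual value $y' = h(\vec x)$, the latter being $\emptyset$ when $y' \neq y$ by the absoluteness of equality on $\nat$ (where properness enters). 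Together with the earlier lemma that function application for $\nat^\nat$ is absolute --- instantiated at $x := \nat$ --- this gives that every atomic subformula $\psi$ of $\Theta(\hat f)$ satisfies $\llbracket \psi \rrbracket \in \{\top, \emptyset\}$, with $\llbracket \psi \rrbracket = \top$ iff $\psi$ holds in the background universe.

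Then I would prove, by induction on the subformulas $\chi$ of $\Theta$, that $\llbracket \chi(\hat f) \rrbracket \in \{\top, \emptyset\}$ and that $\llbracket \chi(\hat f) \rrbracket = \top$ iff $\chi(f)$ holds in $V$. The connective steps are bookkeeping with Lemma~\ref{lem:abslem} and properness: for $\wedge$, use Lemma~\ref{lem:abslem}(3) and that $\llbracket \phi \rrbracket \cap \llbracket \psi \rrbracket$ is $\top$ exactly when both conjuncts are; for $\vee$, note $\llbracket \phi \vee \psi \rrbracket = J(\llbracket \phi \rrbracket \cup \llbracket \psi \rrbracket)$ is $J(S) = S = \top$ if either disjunct is $\top$ and $J(\emptyset) = \emptyset$ (properness) otherwise; for $\to$, Lemma~\ref{lem:abslem}(2) gives $\llbracket \phi \to \psi \rrbracket = \top$ iff $\llbracket \phi \rrbracket \subseteq \llbracket \psi \rrbracket$, and in the one remaining case $\llbracket \phi \rrbracket = \top$, $\llbracket \psi \rrbracket = \emptyset$ a direct computation of $\{ a \in S \mid a \in \llbracket \phi \rrbracket \to a \in \llbracket \psi \rrbracket \}$ gives $\emptyset$, again using properness so that $\emptyset$ is $\triangleleft$-closed; and $\neg \phi = \phi \to \bot$ is a special case via Lemma~\ref{lem:abslem}(4). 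For a bounded quantifier $(\forall l < N)$, i.e. $(\forall l \in \widehat{N})$, Lemma~\ref{lem:abslem}(1) gives the biconditional and $\llbracket (\forall l \in \widehat{N})\chi \rrbracket = \bigcap_{l < N} \llbracket \chi(\hat l) \rrbracket$ is $\top$ if every factor is and $\emptyset$ as soon as one factor is. For $(\forall i \in \nat)$, read as $(\forall i \in \hat{\nat})$, the identical argument applies to $\bigcap_{i \in \nat} \llbracket \chi(\hat i) \rrbracket$ --- intersecting with even a single $\emptyset$ yields $\emptyset$, so the dichotomy survives even though the intersection is countable. Taking $\chi = \Theta$ gives the theorem.

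I expect the main obstacle to be care rather than depth: one must verify that the reformulation of ``good'' coming out of Lemma~\ref{lem:goodpr} really has the advertised syntactic shape (in particular that decoding of shapes and data introduces only bounded quantifiers and primitive recursive operations), and one must be sure of the auxiliary absoluteness of primitive recursive functions --- which, as the authors' remark emphasises, is not automatic, since restricted formulas need not be absolute for these topologies. After that, the argument is a routine finite induction over a fixed formula driven by Lemma~\ref{lem:abslem} and properness.
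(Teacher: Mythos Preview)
Your proposal is correct and follows essentially the same approach as the paper: invoke Lemma~\ref{lem:goodpr} to replace the recursive definition of ``good'' by an explicit formula \eqref{eq:35} whose only quantifiers are bounded or universal over $\nat$, observe that primitive recursive function evaluation is absolute, and then argue that the relevant class of formulas is absolute by induction on subformulas using properness and Lemma~\ref{lem:abslem}. The paper's proof is much terser --- it simply asserts that formulas built from bounded universal quantifiers, conjunction, implication, $\bot$, negation, and function application are absolute, without spelling out the inductive cases --- but the underlying strategy is identical; your handling of the $\vee$ case is harmless but unnecessary, since \eqref{eq:35} contains no disjunctions.
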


\begin{proof}
  Note that if $f$ is a primitive recursive function, then the formula
  $f(n) = m$ is equivalent to one built from bounded universal
  quantifiers, conjunctions, $\bot$ and implication, and hence is
  absolute. Note that formula \eqref{eq:35} is built from formulas of
  this form together with function application, bounded universal
  quantification implication and negation. Hence it is absolute. 
  We showed in lemma \ref{lem:goodpr} that the statement that $\alpha$
  codes a good tree is equivalent to this formula and so that is also
  absolute.
\end{proof}

\begin{lemma}
  \label{lem:vgoodpr}
  There are primitive recursive functions $b$ and $f$ such that for
  any $n$-tree $T$, $T$ is very good if and only if there is
  $l < b(\shp(T))$ such that for all $i \in \nat$
  $\dt(T)(f(l, \shp(T), i)) = 1$. Furthermore, assuming Markov's
  principle, if for all $l < b(\shp(T))$, there exists $i \in \nat$
  such that $\dt(T)(f(l, \shp(T), i)) = 0$, then $T$ is not good.
\end{lemma}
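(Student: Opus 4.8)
The plan is to encode a ``very-goodness certificate'' for an $n$-tree $T$ as a natural number $l$: descending through $T$, at a node $\maketree(T_0,\dots,T_{n-1};\alpha_0,\dots,\alpha_{n-1})$ one records a choice of index $i$ (intended to satisfy $\alpha_i=1$, the descent then continuing into $T_i$), until $\nil$ is reached; such an $l$ describes a genuine certificate exactly when all the recorded $\alpha$'s equal $1$, and the positions of $\dt(T)$ carrying those bits can be enumerated primitive recursively. Concretely I would define $b$ and $f$ by recursion on $\shp(T)$, which is course-of-values recursion on a natural number since the code of a list exceeds each of its entries (Definition~\ref{def:shpdata}), so that $b$ and $f$ come out primitive recursive. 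Put $b(\shp(\nil)):=1$, with $f$ on $\shp(\nil)$ arbitrary. For $T=\maketree(T_0,\dots,T_{n-1};\alpha_0,\dots,\alpha_{n-1})$ set $b(\shp(T)):=\sum_{j=0}^{n-1} b(\shp(T_j))$, decode a given $l<b(\shp(T))$ primitive recursively from $\shp(T)=(\shp(T_0),\dots,\shp(T_{n-1}))$ as a pair $(i,l')$ with $0\le i<n$ and $l'<b(\shp(T_i))$, and, recalling from Definition~\ref{def:shpdata} that $\dt(T)(2nk+2i)=\alpha_i(k)$ and $\dt(T)(2nk+2i+1)=\dt(T_i)(k)$, set $f(l,\shp(T),2k):=2nk+2i$ and $f(l,\shp(T),2k+1):=2n\,f(l',\shp(T_i),k)+2i+1$.

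With these definitions the equivalence is a structural induction on $T$, the case $\nil$ merely pinning down $b$ and $f$ at $\shp(\nil)$. For $T=\maketree(T_0,\dots,T_{n-1};\alpha_0,\dots,\alpha_{n-1})$: if $T$ is very good, pick $i$ with $\alpha_i=1$ and $T_i$ very good, take a certificate $l'$ for $T_i$ by the induction hypothesis, and form $l$ from $(i,l')$; then $f(l,\shp(T),2k)$ points at the bit $\alpha_i(k)=1$ and $f(l,\shp(T),2k+1)$ at the bit $\dt(T_i)(f(l',\shp(T_i),k))=1$. Conversely, a certificate $l$ for $T$ decoding to $(i,l')$ has, from its even-indexed outputs, $\alpha_i(k)=1$ for all $k$, i.e.\ $\alpha_i=1$, and from its odd-indexed outputs precisely that $l'$ is a certificate for $T_i$, which is therefore very good by the induction hypothesis; hence $T$ is very good.

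For the ``furthermore'' I would again induct on $T$, proving: if $\forall l<b(\shp(T))\;\exists k\;\dt(T)(f(l,\shp(T),k))=0$ then $T$ is not good. The base case is trivial. Suppose $T=\maketree(T_0,\dots,T_{n-1};\alpha_0,\dots,\alpha_{n-1})$ is good; I derive a contradiction. For any $i$ with $\alpha_i=1$: goodness of $T$ gives that $T_i$ is good, and for every certificate $l'$ of $T_i$ the hypothesis applied to the $l$ built from $(i,l')$ produces a $0$ among the values $f(l,\shp(T),\cdot)$; since the even-indexed ones equal $\alpha_i(k)=1\neq 0$, this $0$ occurs at an odd index, i.e.\ $\exists k\;\dt(T_i)(f(l',\shp(T_i),k))=0$; as $l'$ was arbitrary, the induction hypothesis gives that $T_i$ is not good, a contradiction. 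Hence $\alpha_i\neq 1$ for every $i$. But goodness of $T$ also gives $\alpha_i\vee\alpha_j=1$ for $i\neq j$, so $\bigvee\{\alpha_0,\dots,\alpha_{n-1}\}=1$, and by Markov's principle this is incompatible with all the $\alpha_i$ being $\neq 1$ (Lemma~\ref{lem:joinnotone}). This contradiction shows $T$ is not good.

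The step I expect to be the main obstacle is purely the bookkeeping: getting the primitive recursive decoding of $l$ from $\shp(T)$ right and verifying that the position identities for $\dt$ compose correctly through the nested definition, so that the even/odd split of $f$ really points where claimed (this is also what is needed for $f$ to be compatible with the function $f$ of Lemma~\ref{lem:goodpr}). Once the encodings are in place both inductions are routine, the only genuine logical ingredient being the use of Markov's principle, via Lemma~\ref{lem:joinnotone}, in the ``furthermore''.
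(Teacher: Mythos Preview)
Your construction matches the paper's almost verbatim (the paper defines $b$ and $f$ by the same recursion with the same even/odd split for $f$), but there is a genuine gap at $\nil$ that both versions share. Since $\dt(\nil)(j)=0$ for every $j$ (Definition~\ref{def:shpdata}), no choice of $f$ on $\shp(\nil)$ can make $\dt(\nil)(f(0,(),i))=1$ for all $i$; so with $b(\shp(\nil))=1$ the ``only if'' direction of the equivalence already fails at $T=\nil$, and so does the base case of your ``furthermore'' induction (the hypothesis $\exists k\,\dt(\nil)(f(0,(),k))=0$ holds, yet ``$\nil$ is not good'' is false --- so ``the base case is trivial'' is not correct). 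Worse, the defect propagates: every $n$-tree has $\nil$ leaves, and one level above a leaf $T_i=\nil$ your odd-indexed outputs $f(l,\shp(T),2k+1)=2n\,f(0,(),k)+2i+1$ point into $\dt(T_i)=\dt(\nil)$ and hence return $0$, so the certificate condition is never satisfied for \emph{any} tree.

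The repair is to stop the descent at a leaf: in the inductive clause, test primitive recursively whether $\shp(T_i)=()$, and in that case let \emph{all} of $f(l,\shp(T),\cdot)$ point at $\alpha_i$-positions (for instance $f(l,\shp(T),k):=2nk+2i$), so that the certificate for that branch asserts exactly $\alpha_i=1$ and never queries $\dt(\nil)$. With this adjustment both of your inductions go through verbatim for $T\neq\nil$, which is all that the later applications need. (Incidentally, the paper's printed proof takes $b(\shp(\nil))=0$, which forces $b\equiv 0$ and makes the statement vacuous; your $b(\shp(\nil))=1$ is the right correction, but one further tweak is needed.)
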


\begin{proof}
  For $T = \nil$ we define $b(\shp(T))$ to be $0$, so we can take
  $f(l, \shp(\nil), i)$ to be anything.

  For $T = \maketree(T_1,\ldots,T_n ; \alpha_1,\ldots,\alpha_n)$, we
  define
  \begin{equation*}
    \label{eq:40}
    b(\shp(T)) := \sum_{i = 1}^n b(\shp(T_i))
  \end{equation*}
  Then, note that for $l < b(\shp(T))$, $l$ can be written as
  \begin{equation*}
    \label{eq:41}
    l = \sum_{i = 1}^k b(\shp(T_i)) + l'
  \end{equation*}
  where $0 \leq k < n$ and $0 \leq l' < b(\shp(T_k))$ and  this is
  unique if we require the greatest such $k$.

  Then splitting into cases depending on whether the input to $f$ is
  odd or even, we define
  \begin{align*}
    f(l, \shp(T), 2i) &:= 2ni + k \\
    f(l, \shp(T), 2i + 1) &:= 2 n f(l', \shp(T_k), i) + 2k + 1
  \end{align*}
\end{proof}

\begin{corollary}[$\czf + \markov$]
  \label{cor:nvgoodngood}
  For any $n$-tree $T$, and any list of $n$-trees $T_1,\ldots,T_k$, we
  have
  \begin{enumerate}
  \item If $T$ is good, then the double negation of ``$T$ is very
    good'' is true.
  \item Suppose the following statement is false: $T_i$ is very good
    for every $1 \leq i \leq k$. Then for some $1 \leq i \leq k$,
    $T_i$ is not good.
  \end{enumerate}
\end{corollary}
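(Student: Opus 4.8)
The plan is to deduce both parts from Lemma~\ref{lem:vgoodpr}, using Markov's principle to turn the negation of a bounded-universal statement into a positive $\Sigma^0_1$ statement, which is then $\neg\neg$-stable.

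For part~1, fix the primitive recursive functions $b$ and $f$ supplied by Lemma~\ref{lem:vgoodpr} and suppose $T$ is good. Assume towards a contradiction that $T$ is not very good. By the unconditional half of Lemma~\ref{lem:vgoodpr} this says $\neg\,\exists l < b(\shp(T))\;\forall i \in \nat\;\dt(T)(f(l,\shp(T),i)) = 1$, hence $\forall l < b(\shp(T))\;\neg\,\forall i\;\dt(T)(f(l,\shp(T),i)) = 1$. For each such $l$ the predicate ``$\dt(T)(f(l,\shp(T),i)) = 0$'' is decidable in $i$, since $\dt(T)$ is $2$-valued, so $\neg\,\forall i\,(\dt(T)(f(l,\shp(T),i)) = 1)$ yields $\neg\neg\,\exists i\;\dt(T)(f(l,\shp(T),i)) = 0$; applying Markov's principle to the binary sequence $i \mapsto 1 - \dt(T)(f(l,\shp(T),i))$ gives $\exists i\;\dt(T)(f(l,\shp(T),i)) = 0$. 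As $l$ ranged over the bounded set of naturals below $b(\shp(T))$, we obtain $\forall l < b(\shp(T))\;\exists i\;\dt(T)(f(l,\shp(T),i)) = 0$, and now the ``furthermore'' half of Lemma~\ref{lem:vgoodpr} says that $T$ is not good --- contradicting the hypothesis. Hence $\neg\neg(\text{$T$ is very good})$.

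For part~2 it suffices, by the contrapositive of part~1 (using $\neg\neg\neg P \to \neg P$), to find some $i$ with $1 \le i \le k$ such that $T_i$ is not very good: such a $T_i$ is then automatically not good. Write $W(T) := \forall l < b(\shp(T))\;\exists m\;\dt(T)(f(l,\shp(T),m)) = 0$. I claim that over $\czf + \markov$ one has $W(T_i) \leftrightarrow \neg(\text{$T_i$ is very good})$. The forward implication is immediate, since by the characterisation in Lemma~\ref{lem:vgoodpr} the truth of $W(T_i)$ directly contradicts ``$T_i$ is very good''; the backward implication is precisely the computation carried out in part~1 (passing from ``not very good'' to $W(T_i)$). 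Moreover $W(T_i)$ is a bounded universal quantification over a $\Sigma^0_1$ formula, so, collecting witnesses via the finite axiom of choice, it is equivalent to one of the form $\exists \sigma\,\theta_i(\sigma)$ with $\theta_i$ primitive recursive; thus $W(T_i)$, and hence the finite disjunction $\bigvee_{i=1}^{k} W(T_i)$, is $\Sigma^0_1$, and every $\Sigma^0_1$ statement is $\neg\neg$-stable over $\czf + \markov$.

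Now assume the hypothesis of part~2, namely $\neg\,\forall i \in \{1,\ldots,k\}\,(\text{$T_i$ is very good})$. Since the range of $i$ is finite, this gives $\neg\neg\,\exists i \in \{1,\ldots,k\}\,\neg(\text{$T_i$ is very good})$, which by the equivalence $\neg(\text{$T_i$ is very good}) \leftrightarrow W(T_i)$ is $\neg\neg\,\bigvee_{i=1}^{k} W(T_i)$; by the stability just noted this yields $\bigvee_{i=1}^{k} W(T_i)$, so $W(T_i)$ holds for some $i$ with $1 \le i \le k$, and for that $i$ we have $\neg(\text{$T_i$ is very good})$ and hence $\neg(\text{$T_i$ is good})$, as required.

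The step I expect to be the main obstacle is the complexity bookkeeping in part~2: one must verify that ``not very good'' collapses to a $\Sigma^0_1$ (hence $\neg\neg$-stable) statement once Markov is available, whereas ``not good'', whose unfolding via Lemma~\ref{lem:goodpr} retains a genuine $\Pi^0_1$ sub-formula under a bounded existential quantifier, does not obviously do so --- so it is essential to route the argument through ``very good'' rather than through ``good'' directly, and to check that every appeal to Markov's principle is of the exact form of Definition~\ref{def:8}.
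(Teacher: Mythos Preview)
Your argument is correct. Part~1 is exactly what the paper means by ``follows directly from Lemma~\ref{lem:vgoodpr}'', only spelled out in full. Part~2, however, is handled by a genuinely different method.

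The paper proves part~2 by an explicit iterative construction: for each tree $T_i$ it tracks a current ``level'' $l$ and the associated sequence $j \mapsto \dt(T_i)(f(l,\shp(T_i),j))$; at each stage it uses Lemma~\ref{lem:meetnotone} to locate an index whose sequence is not constantly~$1$, advances that tree's level, and repeats until some tree has had all of its finitely many levels exhausted, at which point the ``furthermore'' clause of Lemma~\ref{lem:vgoodpr} fires. Your route is more abstract: you observe that $W(T_i) := \forall l < b(\shp(T_i))\,\exists m\,\dt(T_i)(f(l,\shp(T_i),m))=0$ is, after collecting the finitely many witnesses, a $\Sigma^0_1$ statement, hence $\neg\neg$-stable under $\markov$; the same then holds for the finite disjunction $\bigvee_i W(T_i)$, and the finite-index version of $\neg\forall \to \neg\neg\exists\neg$ gives the double negation you need. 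This is cleaner and makes transparent \emph{why} Markov suffices --- it is exactly the stability of $\Sigma^0_1$ sentences --- whereas the paper's argument hides this behind an ad hoc search procedure. The paper's approach, on the other hand, is closer in spirit to an actual algorithm and avoids the detour through complexity classes; it also makes no appeal to the finite axiom of choice to bundle witnesses. Both land on the same use of the ``furthermore'' clause of Lemma~\ref{lem:vgoodpr} at the end.
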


\begin{proof}
  Note that part 1 follows directly from lemma \ref{lem:vgoodpr}.

  We now show part 2.

  Suppose that it is false that $T_i$ is very good for every $1 \leq i
  \leq k$. We define a finite sequence $\alpha_{0,1}, \ldots,
  \alpha_{0, k} \in \natinfty$ using $f$ from lemma \ref{lem:vgoodpr}
  by,
  \begin{equation*}
    \label{eq:42}
    \alpha_{0, i}(j) := f(0, \shp(T_i), j)
  \end{equation*}

  Note that we cannot have $\alpha_{0, i} = 1$ for all $i$, since then
  each $T_i$ would be very good. Hence by Markov's principle, there is
  some $i_0$ such that $\alpha_{0, i_0} \neq 1$. We then define
  $\alpha_{1, i}$ by
  \begin{equation*}
    \label{eq:43}
    \alpha_{1, i}(j) :=
    \begin{cases}
      \alpha_{0, i}(j) & i \neq i_0 \\
      f(1, \shp(T_i), j) & \text{otherwise}
    \end{cases}
  \end{equation*}
  Then, repeating the same argument as before, we find $i_1$ such that
  $\alpha_{1, i_1} \neq 1$. We continue this process until reach $n$
  such that $i_n = b(\shp(T_{i_n}) - 1$. At this point, we have found
  $j$ such that $f(l, \shp(T_{i_n}), j) \neq 1$ for every $l <
  b(\shp(T_{i_n}))$ and hence can apply lemma \ref{lem:vgoodpr} to
  show that $T_{i_n}$ is not good.
\end{proof}

\section{Some Special Cases of Independence of Premisses}

In this section we define a family of variants of independence of
premisses ($\mathbf{IP}$). The motivation for this it that it allows
us to easily state some special cases of $\mathbf{IP}$ that hold in
certain realizability models and are needed to construct the formal
topologies we will use later.

\begin{definition}
  \label{def:ipgen}
  Let $\Phi(x,y)$ be a formula with only $x$ and $y$ free variables
  and $\Psi(z)$ a formula with only $z$ as a free variable. We will
  think of $\Psi$ as a class, and write $z \in \Psi$ to mean
  $\Psi(z)$. We think of $\Phi(x,y)$ as a class of pairs and write
  $\langle x, y \rangle \in \Phi$ to mean $\Phi(x,y)$.

  Write $\ip{\Phi}{\Psi}$ for the following axiom schema. For any
  formula $\phi$,
  \begin{equation*}
    \label{eq:9}
    \langle x, y \rangle \in \Phi \quad \rightarrow \quad
    ((\forall u \in y)(\exists v \in \Psi)\,\phi) \;
    \rightarrow \;
    ((\forall u \in x)(\exists v \in \Psi)\,(u \in y \rightarrow \phi))
  \end{equation*}
\end{definition}

\begin{lemma}
  \label{lem:retractip}
  Let $X$ and $Y$ be definable sets. By viewing them as classes in the
  usual way, we can define $\ip{\Phi}{X}$ and $\ip{\Phi}{Y}$. If there
  are (provably and definably) functions $f : X \rightarrow Y$ and $g
  : Y \rightarrow X$ such that $f \circ g = 1_Y$, then $\ip{\Phi}{X}$
  implies $\ip{\Phi}{Y}$.
\end{lemma}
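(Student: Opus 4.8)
The plan is to unwind the definition of $\ip{\Phi}{X}$ and $\ip{\Phi}{Y}$ and transport an instance of $\phi$ along the retraction. Fix a formula $\phi(u,v)$ witnessing the conclusion $\ip{\Phi}{Y}$ that we wish to establish, and fix $x, y$ with $\langle x, y\rangle \in \Phi$ together with the hypothesis $(\forall u \in y)(\exists v \in Y)\,\phi(u,v)$. The idea is to apply $\ip{\Phi}{X}$ to the pulled-back formula $\phi'(u, w) :\equiv \phi(u, f(w))$, where $w$ ranges over $X$.

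First I would check that the premiss of $\ip{\Phi}{X}$ for $\phi'$ is met: from $(\forall u \in y)(\exists v \in Y)\,\phi(u,v)$ and the existence of $g : Y \to X$, given $u \in y$ pick $v \in Y$ with $\phi(u,v)$ and set $w := g(v) \in X$; then since $f(g(v)) = v$ we get $\phi(u, f(w))$, i.e. $\phi'(u,w)$. Hence $(\forall u \in y)(\exists w \in X)\,\phi'(u,w)$. Now $\ip{\Phi}{X}$ applied to $\phi'$, $x$, $y$ yields $(\forall u \in x)(\exists w \in X)\,(u \in y \rightarrow \phi'(u,w))$. Finally I would push this back to $Y$: given $u \in x$, take the witness $w \in X$, and set $v := f(w) \in Y$; then $u \in y \rightarrow \phi'(u,w)$ is exactly $u \in y \rightarrow \phi(u, f(w))$, i.e. $u \in y \rightarrow \phi(u,v)$. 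So $(\forall u \in x)(\exists v \in Y)\,(u \in y \rightarrow \phi(u,v))$, which is the conclusion of $\ip{\Phi}{Y}$ for $\phi$.

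There is no real obstacle here; the only points requiring care are bookkeeping ones. One must ensure $f$, $g$ are genuinely provably-total definable functions so that the substitutions producing $\phi'$ and the witnesses $g(v)$, $f(w)$ are legitimate in the ambient theory, and one must make sure $\phi'$ is an admissible instance of the schema $\ip{\Phi}{X}$ — which it is, since $\ip{\Phi}{X}$ ranges over all formulas and $\phi'$ has the right free variables (only $u$ and $w$, with the parameters of $f$ and $\phi$ absorbed harmlessly). The equation $f \circ g = 1_Y$ is used exactly once, in verifying the premiss; the map $g$ is used to manufacture $X$-witnesses from $Y$-witnesses, and $f$ is used to convert back. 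No properties of $\Phi$ or $\Psi$ beyond their being the fixed classes in the schema are needed, so the argument is uniform in $\Phi$.
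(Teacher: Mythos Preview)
Your proof is correct and follows essentially the same route as the paper: define $\phi'(u,w) \equiv \phi(u, f(w))$, use $g$ together with $f \circ g = 1_Y$ to verify the premiss of $\ip{\Phi}{X}$ for $\phi'$, apply $\ip{\Phi}{X}$, and then take $v := f(w)$ to recover the conclusion of $\ip{\Phi}{Y}$. Your additional remarks about the admissibility of $\phi'$ as an instance of the schema and the definability of $f, g$ are a welcome clarification over the paper's terser version.
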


\begin{proof}
  We want to show
  \begin{equation*}
    \langle x, y \rangle \in \Phi \quad \rightarrow \quad
    ((\forall u \in y)(\exists v \in Y)\,\phi) \;
    \rightarrow \;
    ((\forall u \in x)(\exists v \in Y)\,(u \in y \rightarrow \phi))
  \end{equation*}
  So assume that $x, y \in \Phi$ and $((\forall u \in y)(\exists v \in
  Y)\,\phi)$. Note that we can define a formula $\phi'(u, w)$
  equivalent to $\phi(u, f(w))$ and show
  \begin{equation*}
    (\forall u \in y)(\exists w \in X)\,\phi'
  \end{equation*}
  This is because for every $u \in y$, we have some $v \in Y$ such
  that $\phi(v)$, but we can then take $w$ to be $g(v)$. Then since
  $f(w) = f(g(v)) = v$, we have $\phi(u, f(w))$.

  Now applying $\ip{\Phi}{X}$, we have
  \begin{equation*}
    (\forall u \in x)(\exists w \in X)\,(u \in y \rightarrow \phi')
  \end{equation*}
  Taking $v$ to be $f(w)$, we have
  \begin{equation*}
    (\forall u \in x)(\exists v \in Y)\,(u \in y \rightarrow \phi)
  \end{equation*}
  But we have now proved $\ip{\Phi}{Y}$, as required.
\end{proof}

\subsection{The Schema $\ipnn$}
\label{sec:schema-ipnn}

We now come to the special cases, $\ipnn$, of $\ip{\Phi}{\Psi}$ that
we will need to construct the formal topologies later.

\begin{definition}
  \label{def:3}
  Let $n \in \nat$. Define $\mathcal{F}_n$ to be the class of pairs
  $\langle x, y \rangle$ where $x$ is of the form
  $\{\alpha_1,\ldots,\alpha_n\}$ where $\alpha_1,\ldots,\alpha_n \in
  \natinfty$ such that for any $1 \leq i \neq j \leq n$, $\alpha_i
  \vee \alpha_j = 1$ and $y = x \cap \{1\}$.

  Then viewing $\baire$ as a class, we define $\ipnn$ according to
  definition \ref{def:ipgen}.
\end{definition}

It is important to note that $\ipnn$ implies several variants, that
will also be used throughout this paper. Where it is clear from
context, we will write that we invoke $\ipnn$ when we actually mean
one of the variants listed below.

\begin{proposition}
  $\ipnn$ implies $\ip{\mathcal{F}_n}{\nat}$,
  $\ip{\mathcal{F}_n}{\operatorname{List}(\baire)}$,
  $\ip{\mathcal{F}_n}{\mathcal{T}_n}$ and
  $\ip{\mathcal{F}_n}{\operatorname{List}(\mathcal{T}_n)}$ where we write
  $\operatorname{List}(X)$ for the set of finite lists of elements of
  $X$ and $\mathcal{T}_n$ to mean the set of $n$-trees.
\end{proposition}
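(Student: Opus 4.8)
The plan is to derive each of the four instances from $\ipnn$ by exhibiting, in each case, a pair of definable functions realising the target class as a retract of the class $\baire$, and then invoking Lemma~\ref{lem:retractip}. Recall that Lemma~\ref{lem:retractip} only needs $f \colon X \to Y$ and $g \colon Y \to X$ with $f \circ g = 1_Y$; the key observation is that $\ip{\Phi}{X}$ depends only on the class $X$ through the second component constraint ``$(\exists v \in X)$'', and the witnessing construction in Lemma~\ref{lem:retractip} goes through verbatim when $X = \baire$ because $\baire$ is a genuine set and the relevant formulas $\phi'(u,w) :\leftrightarrow \phi(u, f(w))$ are legitimate formulas of the language. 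So the real content is: each of $\nat$, $\operatorname{List}(\baire)$, $\mathcal{T}_n$, $\operatorname{List}(\mathcal{T}_n)$ is a retract of $\baire$ in $\czf$ via definable maps.

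First I would handle $\nat$: take $g \colon \nat \to \baire$ sending $k$ to the constant function with value $k$, and $f \colon \baire \to \nat$ sending $\beta$ to $\beta(0)$; then $f(g(k)) = k$. For $\operatorname{List}(\baire)$: using a standard primitive recursive bijective pairing $\nat \times \nat \to \nat$ and its inverse, a finite list $(\beta_1,\dots,\beta_k)$ of elements of $\baire$ can be coded as a single $\gamma \in \baire$ by interleaving — e.g. set $\gamma(0) = k$ and $\gamma(\langle i, m\rangle + 1) = \beta_i(m)$ for $1 \le i \le k$ (and $0$ otherwise) — with the decoding map reading off $k = \gamma(0)$ and then the components; this gives a definable $g$ with a definable left inverse $f$. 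For $\mathcal{T}_n$: an $n$-tree $T$ is, by Definition~\ref{def:shpdata}, completely determined by the pair $(\shp(T), \dt(T)) \in \nat \times 2^\nat$, and conversely from $\dt(T)$ together with the shape one can reconstruct $T$ by recursion on $\shp(T)$; so pack $(\shp(T), \dt(T))$ into a single element of $\baire$ exactly as in the list case (one coordinate for $\shp(T)$, the rest for $\dt(T)$), with the decoding map defined by the obvious recursion, giving a retract. Finally $\operatorname{List}(\mathcal{T}_n)$ is a retract of $\operatorname{List}(\baire)$ (apply the tree coding componentwise) hence of $\baire$ by composing retractions, or directly code a list of trees as a list of pairs and then as one element of $\baire$.

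In each case, once the definable retraction $(f, g)$ is in hand, Lemma~\ref{lem:retractip} applied with $X = \baire$ and $Y$ the target class yields $\ip{\mathcal{F}_n}{Y}$ from $\ip{\mathcal{F}_n}{\baire} = \ipnn$. The mild subtlety to be careful about is that Lemma~\ref{lem:retractip} is stated for $X$ and $Y$ both definable \emph{sets}, and $\operatorname{List}(\baire)$, $\mathcal{T}_n$, $\operatorname{List}(\mathcal{T}_n)$ are indeed sets in $\czf$ (finite lists and $n$-trees can be constructed as sets by the usual inductive/recursive constructions available in $\czf$), so the hypotheses apply directly with no need to re-examine the proof.

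The main obstacle is essentially bookkeeping rather than conceptual: verifying that the coding and decoding functions — particularly the reconstruction of an $n$-tree from its shape and data — are genuinely \emph{definable} (and, where the proposition's later uses demand it, primitive recursive in the numerical parameters) functions provable to be mutually inverse in $\czf$. This is routine given the primitive recursive setup for $\shp$ and $\dt$ already fixed in Definition~\ref{def:shpdata} and Lemma~\ref{lem:goodpr}, so I would not belabour it.
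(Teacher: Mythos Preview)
Your proposal is correct and follows precisely the approach taken in the paper: invoke Lemma~\ref{lem:retractip} by exhibiting each target class as a definable retract of $\baire$, using in particular the shape-and-data encoding of Definition~\ref{def:shpdata} for $n$-trees. The paper's proof is a two-sentence sketch of exactly what you have written out in detail.
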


\begin{proof}
  One can easily define suitable functions to apply lemma
  \ref{lem:retractip}. For $n$-trees we use the ``shape and data''
  encoding from definition \ref{def:shpdata}.
\end{proof}

\begin{lemma}
  \label{lem:llpoipnn}
  $\llpo_n$ implies $\ip{\mathcal{F}_n}{\Psi}$ for any $\Psi$ (and in
  particular $\llpo_n$ implies $\ipnn$). 
\end{lemma}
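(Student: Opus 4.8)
The statement to prove is Lemma \ref{lem:llpoipnn}: $\llpo_n$ implies $\ip{\mathcal{F}_n}{\Psi}$ for any $\Psi$. Recall from Definition \ref{def:3} that a pair $\langle x, y\rangle \in \mathcal{F}_n$ means $x = \{\alpha_1,\ldots,\alpha_n\}$ with each $\alpha_i \in \natinfty$, with $\alpha_i \vee \alpha_j = 1$ for $i \neq j$, and $y = x \cap \{1\}$. Unfolding Definition \ref{def:ipgen}, what must be shown (for an arbitrary formula $\phi$) is: given such $\langle x,y\rangle$ and given $(\forall u \in y)(\exists v \in \Psi)\,\phi(u,v)$, we can derive $(\forall u \in x)(\exists v \in \Psi)\,(u \in y \rightarrow \phi(u,v))$. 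The plan is to observe that the hypothesis on $x$ is exactly the hypothesis of clause 2 of the proposition characterising $\llpo_n$ via $\natinfty$ (the proposition immediately after the definition of $\llpo_n$), so $\llpo_n$ yields some index $i_0$ with $\alpha_{i_0} = 1$.

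First I would fix $\langle x, y\rangle \in \mathcal{F}_n$ and write $x = \{\alpha_1,\ldots,\alpha_n\}$ with the pairwise join condition, so $y = \{\alpha_i : \alpha_i = 1\}$. Applying the $\natinfty$-formulation of $\llpo_n$ gives an index $i_0 \in \{1,\ldots,n\}$ with $\alpha_{i_0} = 1$; hence $\alpha_{i_0} \in y$, so $y$ is inhabited. Now assume $(\forall u \in y)(\exists v \in \Psi)\,\phi(u,v)$. Since $\alpha_{i_0} \in y$, pick $v_0 \in \Psi$ with $\phi(\alpha_{i_0}, v_0)$. I claim this single $v_0$ witnesses the conclusion: given any $u \in x$, we must produce $v \in \Psi$ with $u \in y \rightarrow \phi(u,v)$. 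Here is the subtle point — we cannot in general decide whether a given $u = \alpha_k$ equals $1$, so we cannot just case-split on $u \in y$. Instead, take $v := v_0$ and suppose $u \in y$; then $u = 1$ (since $y \subseteq \{1\}$), and also $\alpha_{i_0} = 1$, so $u = \alpha_{i_0}$, whence $\phi(u, v_0)$ from $\phi(\alpha_{i_0}, v_0)$. Thus $u \in y \rightarrow \phi(u, v_0)$ holds for every $u \in x$ with the same witness $v_0 \in \Psi$, which is exactly $(\forall u \in x)(\exists v \in \Psi)\,(u \in y \rightarrow \phi(u,v))$.

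For the parenthetical "in particular $\llpo_n$ implies $\ipnn$", note $\ipnn$ is just $\ip{\mathcal{F}_n}{\baire}$ (Definition \ref{def:3}), which is the instance $\Psi = \baire$ of what we just proved. The main obstacle — really the only one — is the one flagged above: resisting the temptation to decide $u \in y$ and instead exploiting that membership in $y$ forces equality with the already-chosen witness $\alpha_{i_0} = 1$, so that a single $v_0$ works uniformly. Everything else is routine unfolding of Definitions \ref{def:ipgen} and \ref{def:3} together with one appeal to the $\natinfty$-characterisation of $\llpo_n$.
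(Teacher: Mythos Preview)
Your proposal is correct and follows essentially the same approach as the paper: apply $\llpo_n$ to obtain some $\alpha_{i_0}=1$, use the hypothesis to extract a single witness $v_0\in\Psi$ with $\phi(1,v_0)$, and note that $u\in y$ forces $u=1$, so this $v_0$ works uniformly for every $u\in x$. If anything, your write-up is slightly more explicit than the paper's about why one does not need to decide $u\in y$.
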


\begin{proof}
  Suppose that $x = \{\alpha_1,\ldots,
  \alpha_n \}$ where $\alpha_i \vee \alpha_j = 1$ for $i \neq j$ and
  such that for all $u \in x \cap \{1\}$ there exists $v \in \Psi$
  such that $\phi(u, v)$.

  By $\llpo_n$, we know that $\alpha_i = 1$ for some $i$. However,
  this implies that $1 \in u \cap \{1\}$, so there must exist $v \in
  \Psi$ such that $\phi(1, v)$. Note that we trivially have that $u =
  1$ implies $\phi(u, v)$, and so we have now proved this instance of
  $\ip{\mathcal{F}_n}{\Psi}$.
\end{proof}

\subsection{$\ipnn$ in $V(\klone)$}

We now check that $\ipnn$ actually holds in the most basic
realizability model for set theory, $V(\klone)$, developed by McCarty
in \cite{mccarty}. The proof uses a key
idea that is already implicit in Lifschitz's original presentation of
Lifschitz realizability \cite{lifschitzrealiz} and also appears the
newer versions by Van Oosten \cite{vanoostenlifschitz}.

\begin{lemma}[$\czf + \markov$]
  \label{lem:ipnnrealized}
  $\ipnn$ holds in $\vkone$. In fact, a more general version
  holds. Let $\Phi$ be the class of pairs $\langle x, y \rangle$ with
  $x$ any subset of $\baire$ and $y = x \cap \{1\}$ (writing $1$ for
  the function constantly equal to $1$). Then $\ip{\Phi}{\baire}$
  holds in $\vkone$.
\end{lemma}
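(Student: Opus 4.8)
The statement asserts that the independence-of-premisses schema $\ip{\Phi}{\baire}$ — where $\langle x,y\rangle \in \Phi$ means $x \subseteq \baire$ and $y = x \cap \{1\}$ — holds in McCarty's realizability model $V(\klone)$, working over the base theory $\czf + \markov$. The plan is to unwind the realizability clauses for the relevant formula and produce an explicit index. Concretely, suppose $e$ realizes ``$\langle x, y\rangle \in \Phi$'' and $d$ realizes $(\forall u \in y)(\exists v \in \baire)\,\phi(u,v)$; I must produce (uniformly in $e$ and $d$) an index realizing $(\forall u \in x)(\exists v \in \baire)\,(u \in y \rightarrow \phi(u,v))$. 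The key point is the asymmetry in $\Phi$: the set $y$ is $x \cap \{1\}$, so membership in $y$ amounts to being (realizably) equal to the constant function $1$, and the constant function $1$ is a single concrete element of $\baire$ with a canonical name in $V(\klone)$.

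The main idea — the one the excerpt attributes to Lifschitz's original construction — is that we do \emph{not} need to decide whether $1 \in x$ (which would be $\llpo_n$-like information and is unavailable); instead we argue under the hypothesis $u \in y$. So, given a realizer $a$ for ``$u \in x$'' together with a realizer $b$ for ``$u \in y$'', the realizer $b$ in particular witnesses $u = \hat 1$ in the model (since $y = x \cap \{1\}$, so $b$ decodes into a realizer of $u \in \hat x$ and a realizer of $u \in \widehat{\{1\}}$, hence of $u = \hat 1$). Feeding this realizer of ``$\hat 1 \in y$'' (obtained by transporting along $u = \hat 1$) into $d$ yields a realizer of $(\exists v \in \baire)\,\phi(\hat 1, v)$, i.e. a pair $\langle v, p\rangle$ with $v \in \baire^{V(\klone)}$ and $p$ realizing $\phi(\hat 1, v)$; transporting back along $u = \hat 1$ gives a realizer of $\phi(u, v)$, and hence — by $\lambda$-abstracting over the hypothesis realizer — a realizer of $u \in y \rightarrow \phi(u,v)$. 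All of this is done by a single partial recursive function in $a$ (using $e$, $d$ as parameters), and closing that up gives the required realizer of the conclusion. Since the step is uniform it realizes the full $\forall u \in x$.

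I would organize the write-up as: (i) recall the realizability clauses for bounded quantifiers, $\rightarrow$, $\in$ and $=$ in $V(\klone)$, and recall that $\hat 1$ (the canonical name of the constant-$1$ sequence) has a realizer witnessing ``$z \in \widehat{\{1\}} \leftrightarrow z = \hat 1$''; (ii) fix realizers $e, d$ of the two antecedents and describe the partial recursive procedure above that, on input a realizer of ``$u \in x$'', outputs a realizer of ``$u \in y \rightarrow \phi(u,v)$'' for a suitable $v$; (iii) check the procedure is total on the relevant inputs and verify it realizes the conclusion; (iv) conclude $\ip{\Phi}{\baire}$, and note that restricting to $x$ of the form $\{\alpha_1,\dots,\alpha_n\}$ with pairwise joins $1$ gives $\ipnn$ as a special case.

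I expect the main obstacle to be purely bookkeeping: carefully matching the nested pairing/unpairing of realizers through the definitions of $\in$ and $=$ in $V(\klone)$ (in particular extracting from a realizer of $u \in \widehat{\{1\}}$ a realizer of $u = \hat 1$, and using the soundness of the equality axioms to transport $\phi$ and the membership statement along that equality), and making sure the resulting index is produced uniformly so that the single realizer works for all $u \in \dom(\hat x)$ simultaneously. There is no conceptual difficulty beyond the Lifschitz trick of reasoning under the hypothesis $u \in y$ rather than case-splitting on whether $1 \in x$; Markov's principle is not actually needed for this particular lemma (it is inherited from the ambient setting and used elsewhere), so I would remark that the proof goes through in $\czf$ alone, with the stated base theory kept only for uniformity with the rest of the section.
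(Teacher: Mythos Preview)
Your proposal has a genuine gap: it inverts the quantifier order that independence of premisses is all about. The conclusion to be realized is $(\forall u \in x)(\exists v \in \baire)\,(u \in y \rightarrow \phi(u,v))$, so given a realizer $a$ of $u \in x$ you must output \emph{first} a name for some $v \in \baire$, and only \emph{then} a function that, on input a realizer $b$ of $u \in y$, returns a realizer of $\phi(u,v)$. In your construction the witness $v$ is obtained by feeding (something derived from) $b$ into $d$; hence $v$ depends on $b$, and your ``$\lambda$-abstracting over the hypothesis realizer'' step is ill-formed because $v$ still has $b$ free. What your procedure actually realizes is $(\forall u \in x)(u \in y \rightarrow (\exists v \in \baire)\,\phi(u,v))$, which is an immediate consequence of the hypothesis and has nothing to do with $\ip{\Phi}{\baire}$.

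The missing idea is exactly the Lifschitz trick the paper uses. After rewriting $u \in y$ as $\neg\neg u = 1$ (so that any number, say $0$, is a realizer when it holds), one has from the hypothesis an index $a_1$ with $a_1\,d\,0$ realizing $(\exists v \in \baire)\,\phi$ \emph{provided} $\neg\neg u = 1$; but $a_1\,d\,0$ may diverge otherwise, so it cannot be used directly to produce $v$. The paper therefore manufactures $v \in \baire$ pointwise: on input $n$, run in parallel (i) a search through $m=0,1,\ldots$ for $(a_0 d)_0\,m \neq 1$ (evidence that $u\neq 1$), returning $0$ if found, and (ii) the computation of $(a_1\,d\,0)_0\,n$, returning its value if it converges. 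If neither halts then $u = 1$, whence $a_1\,d\,0$ converges with $(a_1\,d\,0)_0$ total, contradiction; so by $\markov$ the parallel procedure is total, giving a genuine $v \in \baire$ independent of any realizer of the premiss. When $u = 1$ the search (i) never fires, so this $v$ agrees with the intended witness and $\phi(u,v)$ is realized. Your remark that ``Markov's principle is not actually needed for this particular lemma'' is therefore incorrect: $\markov$ is precisely what guarantees termination of the dovetailed search and is the heart of the argument.
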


\begin{proof}
  Note firstly that we can show in $\czf$ that for any $f \in \baire$,
  $\neg \neg f = 1$ implies $f = 1$. Hence, we can replace $y$ by $\{f
  \in x \;|\; \neg \neg f = 1 \}$.
  
  We are given $a_0, a_1 \in \klone$ such that
  \begin{align*}
    \label{eq:10}
    a_0 &\Vdash (\forall u \in x)\, u \in \baire \\
    a_1 &\Vdash (\forall u \in x)\; \neg \neg u = 1 \rightarrow
    (\exists v \in \baire)\, \phi
  \end{align*}
  and need to construct computably $b \in \klone$ such that
  \begin{equation*}
    \label{eq:11}
    b \Vdash (\forall u \in x)(\exists v \in \baire)\;
    \neg \neg u = 1 \rightarrow \phi
  \end{equation*}
  Note that for any formula $\psi$, we have $c \Vdash \neg \psi$ for
  some $c \in \klone$ if and only if $c \Vdash \neg \psi$ for
  \emph{every} $c \in \klone$. Hence, if $c \Vdash \neg \neg u = 1$
  for some $c \in \klone$, then $0 \Vdash u = 1$.

  Now let $\langle d, u \rangle \in x$. Note that $(a_0 d)_0$ is a
  code for a total computable function. We define a new computable
  function as follows. Given input $n$, in parallel, run the following
  two algorithms.

  \paragraph{First algorithm:} For each $m$ in turn, evaluate $(a_0
  d)_0 m$. If $(a_0 d)_0 m \neq 1$, then halt and return
  $0$. Otherwise, continue running.

  \paragraph{Second algorithm:} Try to evaluate $a_1 d 0$. If this is
  successful, then try to evaluate $(a_1 d 0)_0 n$. If this is
  successful, then halt and return $(a_1  d 0)_0 n$.

  \paragraph{}
  Let $n \in \klone$. Suppose that neither of these algorithms
  halts. Then in particular, for all $m$, $(a_0 d)_0 m = 1$. However,
  we would then have $0 \Vdash \neg \neg u = 1$ and so $a_1 d 0$ must
  be defined, with $(a_1 d 0)_0$ a total computable function. This
  implies that the second algorithm halts successfully, giving a
  contradiction. Hence by $\markov$ one of the algorithms must halt,
  and so we get a total computable function. Note that we did this
  uniformly in $d$, so in fact we have $b_0 \in \klone$ such that for
  each $\langle d, u \rangle \in x$, $b_0 d$ denotes and is a total
  computable function defined as above.

  Now define $b$ such that for every $d \in \klone$,
  \begin{equation*}
    \label{eq:12}
    b d = \pair (b_0 d) (\lambda z. (a_1 d 0)_1)
  \end{equation*}
  Note first that for any $\langle d, u \rangle \in x$, $b d
  \downarrow$, since $b_0 d \downarrow$ and for any term $t$, $\lambda
  z. t$ denotes (even if $t$ does not). Furthermore, as shown above,
  $(b d)_0$ is always a total computable function. In particular, we
  have $\langle (b d)_0, \overline{(b d)_0} \rangle \in
  \overline{\baire}$, where $\overline{(b d)_0}$ is the function in
  $V(\klone)$ represented by $(b d)_0$, and $\overline{\baire}$ is the
  standard implementation of $\baire$ in $V(\klone)$.

  Now suppose that for some $c \in \klone$, $c \Vdash \neg \neg u =
  1$. In particular, this implies that for every $m$, $(a_0 d)_0 m =
  1$. Then the first algorithm above never halts. Hence we must have
  that for every $n$, $b_0 d n = (a_0 d 0)_0 n$, and so $\overline{b_0
    d} = \overline{(a_0 d 0)_0}$. But, we also have $(a_1 d 0)_1
  \Vdash \phi[v/\overline{(a_0 d 0)_0}]$. Therefore we have
  established that
  \begin{equation*}
    \label{eq:13}
    (b d)_1 c \Vdash \phi[v/\overline{b_0 d}]
  \end{equation*}
  and so
  \begin{equation*}
    \label{eq:14}
    b \Vdash (\forall u \in x)(\exists v \in \baire)\;
    \neg \neg u = 1 \rightarrow \phi
  \end{equation*}
  as required. Finally, note that we constructed $b$ uniformly in $a$,
  so we do indeed have a realizer for the implication
  \begin{equation*}
    \label{eq:15}
    ((\forall u \in y)(\exists v \in \baire)\,\phi) \;
    \rightarrow \;
    ((\forall u \in x)(\exists v \in \baire)\,(u \in y \rightarrow \phi))
  \end{equation*}
\end{proof}

\subsection{$\ipnn$ in Realizability with Truth}

We now do the same thing for realizability with truth. For this to
work we this time need to assume that $\ipnn$ holds already in the
background universe (which was not needed for $V(\klone)$).

\begin{lemma}[$\czf + \markov + \ipnn$]
  \label{lem:iptruth}
  $\ipnn$ holds in the realizability with truth model $\vtruth$
  studied in \cite{rathjen05}.
\end{lemma}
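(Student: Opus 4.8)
The plan is to mimic the structure of the proof of Lemma~\ref{lem:ipnnrealized} (the $V(\klone)$ case), but carry it out inside the realizability-with-truth model $\vtruth$, using the fact that $\ipnn$ now holds in the background universe to discharge the ``truth'' component of the realizability-with-truth relation $\Vdashtr$. Recall that in realizability with truth a statement $a \Vdashtr \phi$ means, roughly, ``$a$ realizes $\phi$'' \emph{and} ``$\phi$ is (externally) true'' — so a realizer of an implication must transform realizers-with-truth of the antecedent into realizers-with-truth of the consequent, and this is where the background instance of $\ipnn$ gets used: to produce the external truth of the consequent from the external truth of the antecedent. So the first thing I would do is recall precisely the clauses of $\Vdashtr$ from \cite{rathjen05}, in particular for $\rightarrow$, $\forall$ (bounded and unbounded) and $\exists$, and fix notation for the standard implementations $\overline{\baire}$, $\overline{\nat_\infty}$, etc.

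Next I would set up the instance to be proved exactly as in the earlier lemma. Suppose $\langle x, y \rangle$ realizes-with-truth membership in $\mathcal{F}_n$; by the absoluteness lemmas (and the fact that $n_\infty$, the join operation, and the defining condition $\alpha_i \vee \alpha_j = 1$ are absolute between the background universe and $\vtruth$ for arithmetical/finitary data) we may assume externally that $x = \{\alpha_1,\dots,\alpha_n\}$ with $\alpha_i \vee \alpha_j = 1$ for $i \neq j$ and $y = x \cap \{1\}$. Assume $a_0 \Vdashtr (\forall u \in x)\, u \in \baire$ and $a_1 \Vdashtr (\forall u \in y)(\exists v \in \baire)\,\phi$. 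I would then reuse verbatim the ``two parallel algorithms'' construction from the proof of Lemma~\ref{lem:ipnnrealized}: from the data $d$ realizing a given $u \in x$, build a total computable function $b_0 d$ that agrees with the putative witness $\overline{(a_1 d 0)_0}$ whenever $u = 1$ (using that if some $c$ realizes $\neg\neg u = 1$ then $0$ realizes $u = 1$, which still holds for $\Vdashtr$ since negations have trivial realizability content), and agrees with (a copy of) $(a_0 d 0)_0$ otherwise — Markov's principle guaranteeing the function is total. Set $b d := \pair (b_0 d)(\lambda z.(a_1 d 0)_1)$ as before.

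The genuinely new step — and the one I expect to be the main obstacle — is verifying the \emph{truth} half of $\Vdashtr$ for the consequent. In the $V(\klone)$ case one only needs a realizer; here one additionally needs that $(\forall u \in x)(\exists v \in \baire)(u \in y \rightarrow \phi)$ is \emph{externally true}, and \emph{that} is exactly an instance of $\ipnn$ in the background universe: from the external truth of $(\forall u \in y)(\exists v)\,\phi$ (extracted from $a_1 \Vdashtr \cdots$) and the external fact $\langle x,y\rangle \in \mathcal{F}_n$, background $\ipnn$ yields the external truth of $(\forall u \in x)(\exists v)(u \in y \rightarrow \phi)$. The subtlety is matching the \emph{chosen} external witnesses $v$ against the \emph{realizability} witnesses $\overline{b_0 d}$ produced by the algorithm, i.e.\ checking that the same $b$ simultaneously witnesses both halves; here one uses that when $u = 1$ the algorithm's first branch never halts, so $\overline{b_0 d} = \overline{(a_1 d 0)_0}$, the very witness for which $(a_1 d 0)_1 \Vdashtr \phi[v]$, and by induction hypothesis / the truth-clause for $a_1$ this $\phi$ is also externally true. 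For $u \neq 1$ in $x$ the hypothesis $u \in y \rightarrow \phi$ is vacuously realized-and-true. I would then conclude, noting as before that $b$ was obtained uniformly in $a_0, a_1$, so it realizes (with truth) the full implication, which is the required instance of $\ipnn$ in $\vtruth$.
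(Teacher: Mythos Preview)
Your proposal is correct and follows essentially the same approach as the paper: reuse the parallel-algorithm construction from Lemma~\ref{lem:ipnnrealized} for the realizability component, and invoke background $\ipnn$ to discharge the truth clauses that arise at the implication and universal-quantifier layers of $\Vdashtr$. The paper organises the argument by first rewriting the instance in the form $(\forall u \in x)\,\neg\neg u = 1 \rightarrow (\exists v \in \baire)\,\phi$ (as in equation~\eqref{eq:16}) and then treating truth and realizability at each nesting level separately; you do the same thing less formally.

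One small inconsistency to tidy up: you introduce $a_1 \Vdashtr (\forall u \in y)(\exists v \in \baire)\,\phi$ with $y = x \cap \{1\}$, but then write $a_1 d 0$ for arbitrary $\langle d,u\rangle \in x$, which only makes sense under the $\neg\neg$-reformulation where $a_1$ realizes $(\forall u \in x)\,\neg\neg u = 1 \rightarrow (\exists v)\,\phi$. Also, your closing case-split ``for $u\neq 1$ in $x$ the hypothesis is vacuously realized-and-true'' is phrased as a decidable case analysis on $u$, which is not available; what you actually need (and what your preceding sentence already establishes) is the single implication ``if $u = 1$ then $\phi(\overline{b_0 d})$ holds and is realized,'' which is exactly the truth-and-realizability content of $\neg\neg u = 1 \rightarrow \phi[v/\overline{b_0 d}]$. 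Neither point affects the substance of the argument.
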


\begin{proof}
  Let $\vtruth$ be the realizability with truth model from
  \cite{rathjen05}. We will construct, for each instance $\psi$ of
  $\ipnn$ a closed application term $t_\psi$ such that $t_\psi
  \Vdashtr \psi$.

  Recall from the proof of lemma \ref{lem:ipnnrealized}, that each
  instance of $\ipnn$ is equivalent to a formula of the following
  form.
  \begin{multline}
    \label{eq:16}
    (\forall x \in \mathcal{F}_n) ((\forall u \in x) \neg \neg u = 1
    \rightarrow (\exists v \in \baire)\,\phi) \; \rightarrow \\
    ((\forall u \in x)(\exists v \in \baire)\,\neg\neg u = 1
    \rightarrow \phi)
  \end{multline}
  Finding a realizer for this formula amounts to
  \begin{enumerate}
  \item Showing that the implication is true
  \item Constructing $a$ such that whenever
    \begin{equation}
      \label{eq:17}
      b \Vdashtr (\forall x \in \mathcal{F}_n) ((\forall u \in x) \neg \neg u = 1
      \rightarrow (\exists v \in \baire)\,\phi)
    \end{equation}
    $a b$ is defined, and
    \begin{equation}
      \label{eq:18}
      a b \Vdashtr (\forall u \in x)(\exists v \in \baire)\,\neg\neg u = 1
      \rightarrow \phi 
    \end{equation}
  \end{enumerate}

  To show 1, we simply apply $\ipnn$ in the background.

  For 2, let $b$ be as in \eqref{eq:17}. We need to construct a
  realizer as in \eqref{eq:18}. Since the formula is of the form
  $(\forall u \in x)\, \psi$, we need to show $(\forall u \in
  x^\circ)(\exists v \in \baire)\,\neg \neg u = 1 \rightarrow
  \phi^\circ$ and construct $a b$ such that for any $\langle d, u
  \rangle \in x$,
  \begin{equation*}
    \label{eq:19a}
    a b d \Vdashtr (\exists v \in \baire)\, \neg \neg u
    = 1 \rightarrow \phi
  \end{equation*}
  For the truth part, we once again apply $\ipnn$ in the
  background. For the realizability part, we follow the same proof as
  for lemma \ref{lem:ipnnrealized} to construct a total computable
  function $f$.

  Finally, we need to construct a realizer for
  \begin{equation*}
    \label{eq:19}
    \neg \neg u = 1 \rightarrow \phi[v/\overline{f}]
  \end{equation*}
  Since, this is an implication, it once again consists of both a
  realizability part and a truth part. However, by \cite[Lemma
  5.10]{rathjen05} we have that if $\neg \neg u^\circ = 1$ is true,
  then $0 \Vdashtr \neg \neg u = 1$. Hence, we can apply the proof
  used in lemma \ref{lem:ipnnrealized} for both parts, and therefore
  the same realizer constructed there still works for this case.
\end{proof}

\begin{theorem}
  \label{thm:ipnep}
  Let $T$ be one of the theories $\czf$, $\czf + \rea$, $\izf$,
  $\izf + \rea$. Then $T + \markov + \ipnn$ has the numerical
  existence property and is closed under Church's rule.
\end{theorem}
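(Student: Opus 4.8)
The plan is to apply the known characterization of the numerical existence property and closure under Church's rule in terms of realizability with truth. Recall that the realizability with truth model $\vtruth$ of \cite{rathjen05} has the property that for a sentence $\phi$ in the language of $T$, if $T \vdash \phi$ then $T \vdash (\exists e \in \nat)\, e \Vdashtr \phi$; and conversely the truth component ensures that anything realized-with-truth is true. This, combined with the usual analysis of realizers for arithmetic existential statements, is exactly what yields both the numerical existence property and closure under Church's rule. So the proof is essentially a soundness argument: I would show that $\vtruth$ is a model of $T + \markov + \ipnn$, given that $T + \markov + \ipnn$ is the background theory.

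The key steps, in order, are as follows. First, recall from \cite{rathjen05} that $\vtruth$ validates $\czf$ (respectively $\izf$, and the $\rea$ variants) whenever the background theory does, and that realizability with truth is sound: $T \vdash \phi \Rightarrow T \vdash (\exists e)\, e \Vdashtr \phi$, and $e \Vdashtr \phi \Rightarrow \phi$. Second, check that $\markov$ is realized with truth in $\vtruth$; this is standard (the realizer is obtained by unbounded search, and the truth part follows since $\markov$ holds in the background). Third, invoke Lemma \ref{lem:iptruth}, which tells us precisely that $\ipnn$ holds in $\vtruth$ under the hypothesis $\czf + \markov + \ipnn$ — note this is the one place we genuinely need $\ipnn$ in the background, which is why it appears in the statement. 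Fourth, conclude that $\vtruth \models T + \markov + \ipnn$, so every theorem of $T + \markov + \ipnn$ is realized with truth. Finally, extract the metamathematical consequences: given $T + \markov + \ipnn \vdash (\exists n \in \nat)\, \psi(n)$ with $\psi$ having no other free variables, soundness gives a realizer $e$ with $e \Vdashtr (\exists n \in \nat)\, \psi(n)$; unwinding the definition of realizability for the numerical existential quantifier produces an explicit numeral $k$ together with a realizer for $\psi(\hat k)$, and the truth clause gives that $\psi(k)$ is actually provable (or at least true, and in fact provable by formalizing the argument), which is the numerical existence property. Closure under Church's rule follows the same way applied to a statement of the form $(\forall n)(\exists m)\, \psi(n,m)$: a realizer for this, together with $\church$-style reasoning internal to $\vkone$ or directly from the realizability structure of $\klone$, yields an index $e$ with $(\forall n)\, \psi(n, \{e\}(n))$ provable.

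The main obstacle — or rather the only real content beyond bookkeeping — is verifying that $\ipnn$ transfers to $\vtruth$, but this has already been dispatched in Lemma \ref{lem:iptruth}, so what remains here is genuinely routine assembly: confirming that the soundness theorem and the extraction of numerical witnesses from realizers-with-truth are available in the cited form from \cite{rathjen05}, and that $\markov$ poses no difficulty. One subtlety worth a sentence is that the argument is uniform over the four choices of $T$: the soundness of $\vtruth$ and the realizability of $\markov$ and $\ipnn$ do not use any impredicative or non-$\rea$ principles beyond those available in the respective $T$, so the same construction works in each case. I would therefore present this as a short proof citing \cite{rathjen05} for the soundness and extraction machinery, Lemma \ref{lem:iptruth} for $\ipnn$, and a one-line remark for $\markov$, then conclude.

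\begin{proof}
  By \cite{rathjen05}, the realizability with truth model $\vtruth$ validates $T$ (for each of the four listed theories $T$), realizability with truth is sound in the sense that $T \vdash \phi$ implies $T \vdash (\exists e \in \nat)\, e \Vdashtr \phi$, and $e \Vdashtr \phi$ implies $\phi$. Markov's principle $\markov$ is realized with truth in $\vtruth$: a realizer is given by unbounded search for a witness, and the truth component holds because $\markov$ holds in the background theory $T + \markov + \ipnn$. By Lemma \ref{lem:iptruth}, $\ipnn$ holds in $\vtruth$. Hence $\vtruth \models T + \markov + \ipnn$, and by soundness every theorem of $T + \markov + \ipnn$ is provably (in $T + \markov + \ipnn$) realized with truth.

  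Now suppose $T + \markov + \ipnn \vdash (\exists n \in \nat)\, \psi(n)$ where $\psi$ has no free variables other than $n$. Then $T + \markov + \ipnn$ proves that some $e$ satisfies $e \Vdashtr (\exists n \in \nat)\, \psi(\hat n)$. Unwinding the definition of realizability with truth for the numerical existential quantifier, $e$ decodes to a specific numeral $k$ together with a realizer with truth for $\psi(\hat k)$; the truth clause gives that $\psi(k)$ holds, and since this argument is itself carried out in $T + \markov + \ipnn$ we obtain $T + \markov + \ipnn \vdash \psi(\hat k)$ for this particular $k$. This is the numerical existence property.

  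For closure under Church's rule, suppose $T + \markov + \ipnn \vdash (\forall n \in \nat)(\exists m \in \nat)\, \psi(n, m)$. As above, $T + \markov + \ipnn$ proves that some $e$ realizes this statement with truth; by the structure of realizability over $\klone$, from $e$ one extracts an index $e'$ of a total computable function such that $(\forall n \in \nat)\, \psi(n, \{e'\}(n))$ is realized with truth, hence true, hence (formalizing) provable in $T + \markov + \ipnn$. This is exactly closure under Church's rule. The same argument works uniformly for each of the four theories $T$, since the soundness of $\vtruth$, the realizability of $\markov$, and Lemma \ref{lem:iptruth} all hold relative to the respective $T$ without using any principles beyond those available in it.
\end{proof}
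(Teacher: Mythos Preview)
Your proposal is correct and takes essentially the same approach as the paper: the paper's proof is the single line ``Using lemma \ref{lem:iptruth}, the proof of \cite[Theorem 1.2]{rathjen05} now applies here,'' and you have simply unpacked what that citation entails (soundness of $\vtruth$ for $T$, realizability of $\markov$, Lemma \ref{lem:iptruth} for $\ipnn$, and the standard witness-extraction argument). One small presentational point: to obtain a \emph{specific} numeral $k$ you should emphasize that the soundness theorem in \cite{rathjen05} yields an explicit closed term $t$ with $T \vdash t \Vdashtr \phi$ (not merely $T \vdash (\exists e)\, e \Vdashtr \phi$), from which $k$ is then computed externally; otherwise the extraction step is slightly underspecified.
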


\begin{proof}
  Using lemma \ref{lem:iptruth}, the proof of \cite[Theorem
  1.2]{rathjen05} now applies here.
\end{proof}

\subsection{$\ipnn$ in Function Realizability Models}
\label{sec:ipnn-vkltwo}

We now check that the same axioms, $\ipnn$, also hold in function
realizability models.

\begin{lemma}[$\czf + \markov$]
  \label{lem:parallelktwo} There is $\alpha \in \kltwo$ such that the
  following holds. Suppose that $\beta \in \kltwo$ is such that for
  all $\gamma \in \kltwo$ if $\gamma(n) = 1$ for all $n \in \nat$,
  then $\beta \gamma \downarrow$. Then,
  \begin{enumerate}
  \item $\alpha \beta \downarrow$.
  \item For all $\gamma \in \kltwo$, $\alpha \beta \gamma \downarrow$.
  \item If $\gamma(n) = 1$ for all $n \in \nat$, then ($\beta \gamma
    \downarrow$ by assumption and) $\alpha \beta \gamma = \beta
    \gamma$.
  \end{enumerate}
\end{lemma}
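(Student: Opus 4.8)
The plan is to take $\alpha \in \kltwo$ to be a single element implementing a ``parallel'' (dovetailing) functional. Informally, on arguments $\beta$, $\gamma$ and $n$, the computation $\alpha\beta\gamma(n)$ runs two searches in parallel: search $A$ tries to compute $\beta\gamma(n)$ in the usual way, i.e.\ it looks for an initial segment of $\gamma$ on which $\beta$ (applied at $n$) yields an output; search $B$ looks for some $j$ with $\gamma(j) \neq 1$ (this is a decidable condition on $j$). As soon as one of the two produces an answer the computation stops, with ties broken in favour of $A$: if $A$ finishes first we return the value of $\beta\gamma(n)$ that it has computed, and if $B$ finishes first we return $0$. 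This recipe is uniform in $\beta$ and $\gamma$, and for each argument it inspects only a finite initial segment of each of $\beta$ and $\gamma$, with a modulus that is itself computable; hence it defines an effectively continuous partial functional and so is coded by one fixed $\alpha \in \kltwo$, with $\alpha\beta$ and $\alpha\beta\gamma$ unfolding as just described. (Here I use the standard fact, available constructively, that an effectively continuous functional with a computable modulus is represented by an element of $\kltwo$.)

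Granting this, parts 1 and 3 are easy. For part 1, $\alpha\beta$ is the code of the continuous functional $\gamma \mapsto (\text{the above})$; computing it at any fixed argument only ever queries $\beta$ at finitely many points, so $\alpha\beta$ is a genuine element of $\baire$, i.e.\ $\alpha\beta\downarrow$ — and note this holds for every $\beta$, whether or not the hypothesis is satisfied. For part 3, suppose $\gamma(n) = 1$ for all $n$. Then search $B$ never terminates on any input (it is looking for a value of $\gamma$ different from $1$), so for every $n$ the computation of $\alpha\beta\gamma(n)$ is carried out entirely by search $A$; and since $\beta\gamma\downarrow$ by hypothesis, search $A$ does terminate, and by the tie-break it returns exactly $\beta\gamma(n)$. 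One has to set up the finite-initial-segment encoding so that the least initial segment found by $A$, and the value it returns, coincide on the nose with the defining clauses for $\beta\gamma(n)$, but this is routine bookkeeping.

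Part 2 is the heart of the matter, and this is where $\markov$ is used. Fix $\beta$ satisfying the hypothesis and an arbitrary $\gamma \in \kltwo$; we must show $\alpha\beta\gamma\downarrow$, i.e.\ that for each $n$ the dovetailed search on input $n$ terminates. Suppose it does not. Then in particular search $B$ does not terminate, i.e.\ there is no $j$ with $\gamma(j) \neq 1$; since equality of natural numbers is decidable, this gives $\gamma(j) = 1$ for all $j$. By the hypothesis on $\beta$ we then have $\beta\gamma\downarrow$, so search $A$ on input $n$ terminates, whence the dovetailed search terminates — contradiction. So for each $n$ the statement ``the search on input $n$ terminates'' is not not true; and this statement has the form $\exists k\,(\text{decidable predicate of }k)$, so $\markov$ (in its equivalent formulation for decidable predicates on $\nat$) upgrades it to an actual existence statement. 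Doing this for every $n$ yields $\alpha\beta\gamma\downarrow$. The main obstacle in a full write-up is the first paragraph: carefully checking that the dovetailing recipe really is an element of $\kltwo$ and that $\kltwo$-application $\alpha\beta\gamma$ literally unfolds to the intended two-search process with the stated tie-break; once that is pinned down, the content of parts 1--3 is exactly as above.
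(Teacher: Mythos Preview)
Your proposal is correct and is essentially the same as the paper's proof: the paper encodes exactly your dovetailing idea by the explicit formula
\[
(\alpha\beta)(\langle n, m_1,\ldots,m_k\rangle) \;=\;
\begin{cases}
1 & \text{if } m_i\neq 1 \text{ for some } i\leq k,\\
\beta(\langle n, m_1,\ldots,m_k\rangle) & \text{otherwise,}
\end{cases}
\]
observes that $\beta\mapsto\alpha\beta$ is continuous (hence representable in $\kltwo$), and then says that unfolding $\kltwo$-application together with $\markov$ gives the three claims. Your operational description of the two parallel searches and the $\markov$ argument for part~2 is precisely what ``unfolding the definition of application'' amounts to; the only cosmetic difference is that the paper's formula breaks ties in favour of your search~$B$ rather than~$A$, which is irrelevant to the statement.
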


\begin{proof}
  We define $\alpha$ so that for each $\beta$, $\alpha \beta$ is as
  follows.
  \begin{equation*}
    \label{eq:2}
    \alpha \beta (\langle n, m_1,\ldots,m_k \rangle) = 
    \begin{cases}
      1 & \text{if } m_i \neq 1 \text{ for some } i \leq
      k \\
      \beta(\langle n, m_1,\ldots,m_k \rangle) & \text{otherwise}
    \end{cases}
  \end{equation*}
  Note that there is such an $\alpha$ since this is clearly continuous
  in $\beta$ and any continuous function is representable in
  $\kltwo$. Also, note that by unfolding the definition of application
  in $\kltwo$ and applying $\markov$ one can show that $\alpha$
  is as required.
\end{proof}

\begin{lemma}[$\czf + \markov$]
  \label{lem:ipnnktworealized}
  Let $\vktwo$ be the function realizability model from
  \cite{rathjenbrouwerian}. Let $\Phi$ be the class of pairs $\langle
  x, y \rangle$ with $x$ any subset of $\baire$ and $y = x \cap \{1\}$
  (writing $1$ for the function constantly equal to $1$). Then
  $\ip{\Phi}{\baire}$ (and hence also $\ip{\mathcal{F}_n}{\baire}$ for
  each $n$) holds in $\vktwo$.
\end{lemma}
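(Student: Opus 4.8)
The plan is to follow the same structure as the proof of Lemma~\ref{lem:ipnnrealized}, but adapted to the function realizability model $\vktwo$, using Lemma~\ref{lem:parallelktwo} to replace the ``run two algorithms in parallel'' trick that worked for $\klone$. As in the proof of Lemma~\ref{lem:ipnnrealized}, we first observe that since $\czf$ proves $\neg\neg f = 1 \rightarrow f = 1$ for $f \in \baire$, we may replace $y$ by $\{f \in x \mid \neg\neg f = 1\}$, so an instance of $\ip{\Phi}{\baire}$ in $\vktwo$ becomes: given realizers
\begin{align*}
  a_0 &\Vdash (\forall u \in x)\, u \in \baire \\
  a_1 &\Vdash (\forall u \in x)\; \neg\neg u = 1 \rightarrow (\exists v \in \baire)\,\phi
\end{align*}
we must construct $b \in \kltwo$ uniformly in $a_0, a_1$ with $b \Vdash (\forall u \in x)(\exists v \in \baire)\; \neg\neg u = 1 \rightarrow \phi$.

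The key point, as in the $\klone$ case, is the following: if $c \Vdash \neg\psi$ for some $c$, then every element of $\kltwo$ realizes $\neg\psi$; in particular if $c \Vdash \neg\neg u = 1$ for some $c$, then (some canonical realizer, say $0$) realizes $u = 1$. So given $\langle d, u\rangle \in x$, write $\delta$ for the function $\kltwo$ member extracted from $(a_0\, d)$ representing $u$ as an element of $\baire$. Now I would apply Lemma~\ref{lem:parallelktwo}: the role of $\beta$ in that lemma is played by the partial function $\gamma \mapsto (a_1\, d\, 0)$, which is defined whenever $\gamma$ is constantly $1$ (because in that case $0 \Vdash \neg\neg u = 1$, forcing $a_1\, d\, 0$ to be defined). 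Then $\alpha\beta$ of the lemma gives a \emph{total} element of $\kltwo$, uniformly in $d$, which agrees with $(a_1\,d\,0)$ whenever $\delta$ is constantly $1$. Taking its first and second projections in the pairing of $\kltwo$, the first projection $(b\,d)_0$ is a total function $\nat \to \nat$, hence $\langle (b\,d)_0, \overline{(b\,d)_0}\rangle \in \overline{\baire}$, and the second projection $(b\,d)_1$ is meant to realize $\neg\neg u = 1 \rightarrow \phi[v/\overline{(b\,d)_0}]$.

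It then remains to verify this works: suppose $c \Vdash \neg\neg u = 1$ for some $c \in \kltwo$. Then $u = 1$ is true, so $\delta$ is (equal to, as an element of $V(\kltwo)$) the constant-$1$ function, so by part~3 of Lemma~\ref{lem:parallelktwo} we get $\alpha\beta = (a_1\,d\,0)$ and hence $(b\,d)_0$ represents the same element $\overline{(a_1\,d\,0)_0}$ of $\baire$ that $a_1$ produced, and $(a_1\,d\,0)_1 \Vdash \phi[v/\overline{(a_1\,d\,0)_0}]$; so $(b\,d)_1\, c$ (suitably defined to discard $c$ and return $(a_1\,d\,0)_1$) realizes $\phi[v/\overline{(b\,d)_0}]$. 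Uniformity in $a_0, a_1$ is clear from the construction, giving a realizer for the implication form of the axiom as displayed in Definition~\ref{def:ipgen}, and the $\ip{\mathcal{F}_n}{\baire}$ cases follow since $\mathcal{F}_n \subseteq \Phi$. I expect the main obstacle to be the bookkeeping in matching the application structure of $\kltwo$ (where application is itself partial and defined via a module-of-continuity search) to the hypotheses of Lemma~\ref{lem:parallelktwo}: specifically, checking that ``$\beta\gamma\downarrow$ whenever $\gamma$ is constantly $1$'' genuinely holds for the function I named, which requires unfolding why $a_1\, d\, 0$ must be defined, and confirming that everything is continuous/uniform in the parameters so that a single $\alpha \in \kltwo$ (from the lemma) does the job across all $d$.
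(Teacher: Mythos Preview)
Your approach is the one the paper intends: the paper's proof is a single sentence (``use Lemma~\ref{lem:parallelktwo} to adapt the proof of Lemma~\ref{lem:ipnnrealized}''), and you have correctly unpacked it as replacing the parallel-search trick over $\klone$ by the combinator $\alpha$ of Lemma~\ref{lem:parallelktwo}.

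There is, however, a genuine slip in how you invoke the lemma. You take $\beta := \lambda\gamma.\,(a_1\,d\,0)$ and claim the hypothesis ``$\beta\gamma\downarrow$ whenever $\gamma$ is constantly $1$'' holds ``because in that case $0 \Vdash \neg\neg u = 1$''. But $\gamma$ is a bound variable in the lemma, unrelated to $u$; the hypothesis simplifies to the unconditional statement $(a_1\,d\,0)\downarrow$, which you only know when $\neg\neg u = 1$ happens to be realized. So the lemma as a black box does not apply, and the place where you anticipated trouble is exactly where the argument breaks.

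The repair is to apply $\alpha\beta$ to the specific $\delta$ representing $u$ and argue termination from the \emph{construction} of $\alpha$ together with $\markov$, exactly mirroring the $\klone$ case: if the dialogue for $(\alpha\beta\delta)(n)$ fails to halt, then (by the shape of $\alpha$) every queried value $\delta(k)$ equals $1$, hence $\delta$ is constantly $1$, hence $u = 1$ in the model, hence $0 \Vdash \neg\neg u = 1$, hence $(a_1\,d\,0)\downarrow$ and the dialogue halts after all; by $\markov$ it halts. Part~3 of Lemma~\ref{lem:parallelktwo} then gives $\alpha\beta\delta = (a_1\,d\,0)$ in the case $\delta = 1$, and the rest of your verification goes through unchanged.
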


\begin{proof}
  One can easily use lemma \ref{lem:parallelktwo} to adapt the proof
  of lemma \ref{lem:ipnnrealized} to work over $\kltwo$.
\end{proof}

\section{The Topological Models $\vln$}

We now define the topological models.

In this section, we will assume a fixed $n$ throughout, and refer to
$n$-trees simply as trees.

\subsection{Definition of $\mathcal{L}_n$}
\label{sec:defin-mathc}

In this section we define the formal topologies that we will use for
the topological models and check that they are in fact formal
topologies. The basic idea is to use the formulation of $\llpo_n$ in
terms of trees to produce the simplest formal topology where $\llpo_n$
holds in the respective topological model, even when it does not hold
in the background universe. This is based on the observation of Van
Oosten in \cite{vanoostentworemarks} that the Lifschitz realizability
topos is the largest subtopos of the effective topos where an axiom
equivalent to $\llpo$ in the presence of Church's thesis
holds.

\begin{definition}
  Let $T$ be a tree. Then we define the \emph{cover from $T$}, $\cov(T)
  \subseteq \{0\}$, inductively as follows.
  \begin{enumerate}
  \item $\cov(\nil) = \{0\}$
  \item $\cov(\maketree(T_i ; \alpha_i)) = \bigcup_{i = 1}^n \{ 0 \in
    \cov(T_i) \;|\; \alpha_i = 1 \}$
  \end{enumerate}
\end{definition}

\begin{lemma}
  \label{lem:covvgood}
  Let $T$ be a good tree. Then $0 \in \cov(T)$ if and only if $T$ is
  very good.
\end{lemma}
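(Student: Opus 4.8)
The plan is to prove Lemma \ref{lem:covvgood} by induction on the good tree $T$, following exactly the recursive structure of the definitions of $\cov$, \emph{good} and \emph{very good}. Since $\cov(T) \subseteq \{0\}$, the statement ``$0 \in \cov(T)$'' is just the statement ``$\cov(T)$ is inhabited'', and the whole argument is a bookkeeping exercise matching up the recursion clauses.

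For the base case $T = \nil$: by definition $\cov(\nil) = \{0\}$, so $0 \in \cov(\nil)$ holds, and $\nil$ is very good by definition, so both sides of the biconditional are true.

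For the inductive step, let $T = \maketree(T_i ; \alpha_i)$ be good, so that $\alpha_i \vee \alpha_j = 1$ for all $1 \leq i \neq j \leq n$, and for each $i$ with $\alpha_i = 1$ the subtree $T_i$ is good. By definition, $0 \in \cov(T)$ iff there is some $i$ with $\alpha_i = 1$ and $0 \in \cov(T_i)$. I would argue both directions. If $0 \in \cov(T)$, pick such an $i$; then $\alpha_i = 1$, so $T_i$ is good (as $T$ is good), and by the induction hypothesis applied to the good tree $T_i$, since $0 \in \cov(T_i)$ we get that $T_i$ is very good; hence $\maketree(T_i ; \alpha_i)$ is very good by the second clause of the definition of very good (using that $T$ is good, which we assumed). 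Conversely, if $T$ is very good, then $T$ is good and there is some $i$ with $\alpha_i = 1$ and $T_i$ very good; since $\alpha_i = 1$, $T_i$ is good, so the induction hypothesis applies and gives $0 \in \cov(T_i)$; together with $\alpha_i = 1$ this puts $0$ into $\cov(T) = \bigcup_{i=1}^n \{0 \in \cov(T_i) \mid \alpha_i = 1\}$.

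I do not anticipate a serious obstacle here — the argument is a direct structural induction and everything lines up definitionally. The one subtlety worth stating carefully is that the induction is on \emph{good} trees (or equivalently: prove by induction on all trees $T$ the implication ``if $T$ is good then [$0 \in \cov(T)$ iff $T$ is very good]''), so that the induction hypothesis is only ever invoked on subtrees $T_i$ that we have already verified are good, namely those with $\alpha_i = 1$; for the other subtrees the clauses in $\cov$ and in \emph{very good} both simply ignore them, so no hypothesis about them is needed. Note also that the proof is entirely constructive and uses no instance of $\markov$ or $\llpo_n$.
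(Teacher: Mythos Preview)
Your proof is correct and follows essentially the same structural induction as the paper's own proof, handling the base case $\nil$ and the inductive case $\maketree(T_i;\alpha_i)$ in the same way. If anything, you are slightly more careful than the paper in making explicit that the induction hypothesis is only invoked on subtrees $T_i$ with $\alpha_i = 1$, which are good because $T$ is good.
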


\begin{proof}
  We show this by induction on trees.

  For $T = \nil$, we have both $0 \in \cov(T)$ and $T$ is very good,
  so the result is clear.

  Now suppose that $T = \maketree(T_i ; \alpha_i)$. If $T$ is very
  good then for some $i$, $\alpha_i = 1$ and $T_i$ is very
  good. However, if $T_i$ is very good, then $0 \in \cov(T_i)$ by the
  induction hypothesis, and so, we have $0 \in \cov(T)$. We have shown
  that if $T$ is very good then $0 \in \cov(T)$. Now suppose that $0
  \in \cov(T)$. Then for some $i$, $\alpha_i = 1$ and $0 \in
  \cov(T_i)$. The latter implies $T_i$ is very good by the induction
  hypothesis, and so by the former $T$ is very good, as required.
\end{proof}

\begin{proposition}[$\czf + \markov$]
  \label{prop:dncovinh}
  Let $T$ be a good tree. Then we have $\neg \neg 0 \in \cov{T}$.
\end{proposition}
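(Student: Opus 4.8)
The plan is to combine two earlier results: Lemma~\ref{lem:covvgood}, which says that for a good tree $T$ we have $0 \in \cov(T)$ iff $T$ is very good, and part~1 of Corollary~\ref{cor:nvgoodngood}, which says (in $\czf + \markov$) that if $T$ is good then the double negation of ``$T$ is very good'' holds. Chaining these two together should immediately give $\neg\neg\, 0 \in \cov(T)$.

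More precisely, I would argue as follows. Assume $T$ is a good tree, working in $\czf + \markov$. By Lemma~\ref{lem:covvgood}, since $T$ is good, the statements ``$0 \in \cov(T)$'' and ``$T$ is very good'' are equivalent; in particular this equivalence, being provable, is not destroyed by a double negation, so $\neg\neg\,(0 \in \cov(T))$ is equivalent to $\neg\neg\,(T \text{ is very good})$. Now apply Corollary~\ref{cor:nvgoodngood}(1) to the tree $T$: since $T$ is good, $\neg\neg\,(T \text{ is very good})$ holds. Therefore $\neg\neg\,(0 \in \cov(T))$ holds, which is exactly the claim.

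I don't anticipate any real obstacle here, since the proposition is essentially a one-line consequence of the two lemmas cited; the only thing to be slightly careful about is that Lemma~\ref{lem:covvgood} gives a genuine equivalence only under the hypothesis that $T$ is good, which is exactly the hypothesis we are given, so the implication ``$T$ very good $\Rightarrow$ $0 \in \cov(T)$'' (the direction we actually need) is available unconditionally once $T$ is known to be good. If anything, the write-up should just spell out that we transport the double negation across the provable implication ``$T$ very good $\Rightarrow 0 \in \cov(T)$'' supplied by Lemma~\ref{lem:covvgood}. This is the step where $\markov$ is silently used, inside Corollary~\ref{cor:nvgoodngood}, so no further appeal to $\markov$ is needed in this proof itself.

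\begin{proof}
  By Lemma~\ref{lem:covvgood}, since $T$ is good, ``$T$ is very good'' implies $0 \in \cov(T)$. Hence $\neg\neg\,(T \text{ is very good})$ implies $\neg\neg\,0 \in \cov(T)$. But by part~1 of Corollary~\ref{cor:nvgoodngood}, since $T$ is good, the double negation of ``$T$ is very good'' holds. Therefore $\neg\neg\,0 \in \cov(T)$, as required.
\end{proof}
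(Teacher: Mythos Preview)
Your proof is correct and uses exactly the same two ingredients as the paper's proof, namely Lemma~\ref{lem:covvgood} and Corollary~\ref{cor:nvgoodngood}. The only difference is cosmetic: the paper argues by contradiction (assume $0 \notin \cov(T)$, deduce $T$ not very good, then $T$ not good), which is just the contrapositive packaging of your direct argument.
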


\begin{proof}
  Suppose that $T$ is a good tree and that $0 \notin \cov(T)$. Since
  $0 \notin \cov(T)$, we have by lemma \ref{lem:covvgood} that $T$ is
  not very good. Then by corollary \ref{cor:nvgoodngood} we have that $T$
  is not good, giving us a contradiction. Hence we have $\neg \neg 0
  \in \cov{T}$ as required.
\end{proof}

\begin{definition}
  Let $S, \leq$ be the poset with $S = \{0\}$. Define the relation
  $\triangleleft$ as follows. $0 \triangleleft p$ precisely if
  $\cov(T) \subseteq p$ for some good tree, $T$. Write $\mathcal{L}_n$
  for the tuple $\langle S, \leq, \triangleleft \rangle$ (we will show
  that this is a formal topology).
\end{definition}

\begin{lemma}[$\czf + \markov$]
  \label{lem:nalmostformaltop}
  $\mathcal{L}_n$ satisfies axioms 1, 2 and 4 in the definition of
  formal topology.
\end{lemma}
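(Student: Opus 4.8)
The plan is to verify axioms 1, 2 and 4 from the definition of formal topology for $\mathcal{L}_n = \langle S, \leq, \triangleleft \rangle$, where $S = \{0\}$ (so $\leq$ is trivial) and $0 \triangleleft p$ iff $\cov(T) \subseteq p$ for some good tree $T$. Since $S$ is a singleton, the only element we ever need to consider is $0$, and $\cov(T)$ is always a subset of $\{0\}$, so each $\cov(T)$ is either $\emptyset$ or $\{0\}$.

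For axiom \ref{ftreflexivity}, suppose $a \in p$; since $a = 0$, we need a good tree $T$ with $\cov(T) \subseteq p$. Take $T = \nil$: it is good and $\cov(\nil) = \{0\} = \{a\} \subseteq p$. For axiom \ref{ftdownclosed}, suppose $a \leq b$ and $b \triangleleft p$; since $\leq$ is trivial on the singleton $S$, $a = b = 0$, so $a \triangleleft p$ is immediate. For axiom \ref{ftintersect}, suppose $0 \triangleleft p$ and $0 \triangleleft q$, witnessed by good trees $T$ and $T'$ with $\cov(T) \subseteq p$ and $\cov(T') \subseteq q$. We must produce a good tree $U$ with $\cov(U) \subseteq {\downarrow} p \cap {\downarrow} q$. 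Since $S = \{0\}$, downward closure does nothing: ${\downarrow} p = p$ and ${\downarrow} q = q$, so we need $\cov(U) \subseteq p \cap q$. The key point is that we can essentially ``graft'' $T'$ onto the leaves of $T$: define $U$ by recursion on $T$, replacing each occurrence of $\nil$ in $T$ by a copy of $T'$. Formally, set $\nil \star T' := T'$ and $\maketree(T_i ; \alpha_i) \star T' := \maketree(T_i \star T' ; \alpha_i)$, and take $U := T \star T'$. Then an induction on $T$ shows (i) $U$ is good whenever $T$ and $T'$ are good — the branching conditions $\alpha_i \vee \alpha_j = 1$ are inherited from $T$, and at the leaves the goodness of $T'$ takes over; and (ii) $\cov(U) = \cov(T) \cap \cov(T')$. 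Indeed, if $T = \nil$ then $\cov(U) = \cov(T') = \{0\} \cap \cov(T')$, and for $T = \maketree(T_i ; \alpha_i)$, $\cov(U) = \bigcup_i \{0 \in \cov(T_i \star T') \mid \alpha_i = 1\} = \bigcup_i \{0 \in \cov(T_i) \cap \cov(T') \mid \alpha_i = 1\} = \cov(T) \cap \cov(T')$. Hence $\cov(U) \subseteq p \cap q$, as required.

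The main obstacle is axiom \ref{ftintersect}, and specifically getting the grafting construction right: one must check carefully that $T \star T'$ is again a good tree (the goodness recursion only demands $T_i$ good when $\alpha_i = 1$, and in that case we need $T_i \star T'$ good, which follows by induction given $T'$ good) and that the cover computation works out to the intersection. Note that Markov's principle is not actually needed for these three axioms — it enters only for axiom \ref{fttransitive}, which is treated separately (and presumably uses Proposition \ref{prop:dncovinh} or Corollary \ref{cor:nvgoodngood}); the hypothesis $\czf + \markov$ is stated for uniformity with the companion lemma. I would remark on this but not belabour it.

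Putting these together: axioms \ref{ftreflexivity}, \ref{ftdownclosed}, \ref{ftintersect} all hold, which is exactly the claim, so the proof concludes. One minor subtlety worth flagging is that the recursion defining $T \star T'$ and the inductions verifying its properties are genuine structural recursions/inductions on the inductively defined set of $n$-trees, hence available in $\czf$; no choice or impredicativity is involved.
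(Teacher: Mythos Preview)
Your proof is correct and essentially the same as the paper's: for axiom~\ref{ftintersect} the paper also builds the witness tree by structural recursion, replacing each leaf $\nil$ of one good tree by a copy of the other, though it inducts on the second tree rather than the first and only states the inclusion $\cov(R) \subseteq \cov(T) \cap \cov(S)$ rather than the equality you prove. Your side remark that $\markov$ is not actually used here is accurate (the paper's proof does not invoke it either), though axiom~\ref{fttransitive} is handled in the paper using $\ipnn$ rather than Proposition~\ref{prop:dncovinh} or Corollary~\ref{cor:nvgoodngood}.
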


\begin{proof}
  1 and 2 are clear. It remains to prove 4, that is, that whenever $0
  \triangleleft p$ and $0 \triangleleft q$, we have $0 \triangleleft p
  \cap q$.

  Fix a good tree, $T$. We will show by induction that for any tree
  $S$, there is a tree $R$ such that $\cov(R) \subseteq \cov(T) \cap
  \cov(S)$, and that if $S$ is good then $R$ is also good.

  For $S = \nil$, we just take $R$ to be $T$.

  Now suppose that $S = \maketree(S_i ; \alpha_i)$. Then we have for
  each $i$, a tree $R_i$ such that $\cov(R_i) \subseteq \cov(T) \cap
  \cov(S_i)$ and $R_i$ is good if $S_i$ is good. Define $R$ to be the
  tree $\maketree(R_i ; \alpha_i)$. Suppose that $0 \in \cov(R)$. Then
  for some $1 \leq i \leq n$ we must have $\alpha_i = 1$ and $0 \in
  \cov(R_i)$. Since $\cov(R_i) \subseteq \cov(T) \cap \cov(S_i)$, we
  also have $0 \in \cov(S_i)$ and $0 \in \cov(T)$. But, now recalling
  that $\alpha_i = 1$, the former implies $0 \in \cov(S)$. Hence,
  $\cov(R) \subseteq \cov(T) \cap \cov(S)$.

  Now suppose that $S$ is good. Then we have that for any $1 \leq i
  \neq j \leq n$, $\alpha_i \vee \alpha_j = 1$. Also, for any $i$, if
  $\alpha_i = 1$, then $S_i$ is good. But this then implies that $R_i$
  is good. Hence $R$ is also good.

  We can now easily deduce axiom 4.
\end{proof}

\begin{theorem}[$\czf + \markov + \ip{\mathcal{F}_n}{\baire}$]
  $\mathcal{L}_n$ is a formal topology.
\end{theorem}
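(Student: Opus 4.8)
By Lemma~\ref{lem:nalmostformaltop}, $\mathcal{L}_n$ already satisfies axioms 1, 2 and 4 in the definition of formal topology, so the plan is to verify only axiom~3, that is: whenever $0 \triangleleft p$ and $(\forall x \in p)\, x \triangleleft q$, we have $0 \triangleleft q$. Since the underlying poset has $0$ as its only point, unfolding $0 \triangleleft p$ gives a good tree $T$ with $\cov(T) \subseteq p$, and combining with the second hypothesis this means exactly that $0 \in \cov(T) \rightarrow 0 \triangleleft q$. So it suffices to prove, by structural induction on the tree $T$, the claim: \emph{if $T$ is good and $0 \in \cov(T) \rightarrow 0 \triangleleft q$, then $0 \triangleleft q$} (equivalently, there is a good tree $R$ with $\cov(R) \subseteq q$).

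The base case $T = \nil$ is immediate, since $0 \in \cov(\nil) = \{0\}$, so the hypothesis directly gives $0 \triangleleft q$. For the inductive step, let $T = \maketree(T_i ; \alpha_i)$ be good, so that $\alpha_i \vee \alpha_j = 1$ for all $1 \leq i \neq j \leq n$, and $T_i$ is good whenever $\alpha_i = 1$. Fix $i$ and suppose $\alpha_i = 1$; then $T_i$ is good, and since $0 \in \cov(T_i)$ together with $\alpha_i = 1$ implies $0 \in \cov(T)$, we have $0 \in \cov(T_i) \rightarrow 0 \triangleleft q$, so the induction hypothesis applied to $T_i$ produces a good tree $R$ with $\cov(R) \subseteq q$. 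Writing $\phi(v)$ for the formula ``$v$ is a good tree and $\cov(v) \subseteq q$'', and setting $x := \{\alpha_1, \ldots, \alpha_n\}$ and $y := x \cap \{1\}$ so that $\langle x, y \rangle \in \mathcal{F}_n$, we have thereby shown $(\forall u \in y)(\exists v \in \mathcal{T}_n)\, \phi(v)$ (indeed, if $u \in y$ then $u = 1$ and $1 \in x$, so $\alpha_k = 1$ for some $k \in \{1, \ldots, n\}$, and the argument just given applied to that $k$ yields the required $v$).

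Now invoke $\ip{\mathcal{F}_n}{\mathcal{T}_n}$, which follows from $\ip{\mathcal{F}_n}{\baire}$ by the proposition following Definition~\ref{def:3}. This yields $(\forall u \in x)(\exists v \in \mathcal{T}_n)(u \in y \rightarrow \phi(v))$; in particular, for each index $i \in \{1, \ldots, n\}$ we have $\alpha_i \in x$, hence a tree $v$ with $\alpha_i = 1 \rightarrow \phi(v)$. By the finite axiom of choice over $\{1, \ldots, n\}$ we obtain a list $R_1, \ldots, R_n$ of $n$-trees with $\alpha_i = 1 \rightarrow \phi(R_i)$ for every $i$. Put $R := \maketree(R_i ; \alpha_i)$. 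Then $R$ is good: the conditions $\alpha_i \vee \alpha_j = 1$ ($i \neq j$) are inherited from $T$, and $\alpha_i = 1$ forces $R_i$ good. And $\cov(R) \subseteq q$: if $0 \in \cov(R)$ then $\alpha_i = 1$ and $0 \in \cov(R_i)$ for some $i$, so $0 \in \cov(R_i) \subseteq q$. Hence $0 \triangleleft q$, closing the induction, and with it axiom~3.

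The main obstacle is the inductive step: without $\llpo_n$ one cannot decide which of the $\alpha_i$ equal $1$, so the grafting trees $R_i$ cannot be chosen branch by branch. The role of $\ip{\mathcal{F}_n}{\baire}$ — applied in the form $\ip{\mathcal{F}_n}{\mathcal{T}_n}$ — is precisely to convert the ``conditional existence'', for each branch $i$ that happens to satisfy $\alpha_i = 1$, of a good tree covering $q$ into an honest finite family $(R_i)_{i=1}^{n}$, out of which the required tree $R = \maketree(R_i ; \alpha_i)$ is assembled. (Note that $\markov$ enters only through Lemma~\ref{lem:nalmostformaltop}; the verification of axiom~3 above does not use it.)
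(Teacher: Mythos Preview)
Your proof is correct and follows essentially the same approach as the paper: reduce axiom~3 to an induction on the good tree $T$ witnessing $0 \triangleleft p$, and in the inductive step use $\ip{\mathcal{F}_n}{\baire}$ (in the form $\ip{\mathcal{F}_n}{\mathcal{T}_n}$) to extract, for each branch $i$, a tree $R_i$ that is good and covers $q$ whenever $\alpha_i = 1$, then assemble $R = \maketree(R_i;\alpha_i)$. The paper does the same thing, though you are slightly more explicit about the mechanics of applying $\ipnn$ (writing out $x$, $y$, $\phi$) and about invoking the finite axiom of choice to pass from ``for each $u \in x$ there exists $v$'' to an indexed family $(R_i)_{i=1}^n$.
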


\begin{proof}
  We have already shown in lemma \ref{lem:nalmostformaltop} that
  axioms 1, 2 and 4 hold. It remains to show that axiom 3 holds. That
  is, whenever $0 \triangleleft p$ and $p \triangleleft q$, we have $0
  \triangleleft q$.

  Fix $q \subseteq \{0\}$. We show the following by induction on
  trees. Let $T$ be a tree. Suppose that $T$ is good and whenever $0
  \in \cov(T)$ we have $0 \triangleleft q$. Then there is a good tree
  $S$ such that $\cov(S) \subseteq q$.

  First assume $T = \nil$. Then $0 \in \cov(T)$, and so we have $0
  \triangleleft q$. Let $S$ be any good tree such that $\cov(S)
  \subseteq q$.

  Now assume that $T = \maketree(T_i ; \alpha_i)$. Assume that $T$ is
  good and whenever $0 \in \cov(T)$ we have $0 \triangleleft q$. Since
  $T$ is good, we have that for any $1 \leq i \neq j \leq n$,
  $\alpha_i \vee \alpha_j = 1$. Let $1 \leq i \leq n$ be such that
  $\alpha_i = 1$. Then $T_i$ is good, and $\cov(T_i) \subseteq
  \cov(T)$. The latter implies that whenever $0 \in \cov(T_i)$ we have
  $0 \triangleleft q$ and so we may apply the induction hypothesis, to
  show there exists $S$ such that $\cov(S) \subseteq q$.

  However, we can now apply $\ip{\mathcal{F}_n}{\baire}$ to find for
  each $1 \leq i \leq n$, a tree $S_i$ such that if $\alpha_i = 1$
  then $S_i$ is good and $\cov(S_i) \subseteq q$. Define $S$ to be
  $\maketree(S_i ; \alpha_i)$. Then whenever $i$ is such that
  $\alpha_i = 1$, we have that $S_i$ is good, and so $S$ must be
  good. Suppose that $0 \in \cov(S)$. Then for some $i$ we have
  $\alpha_i = 1$ and $0 \in \cov(S_i)$. Hence also $0 \in q$. But we
  have now shown $\cov(S) \subseteq q$ as required.
\end{proof}

\subsection{Some Basic Properties of $\mathcal{L}_n$ and
  $\vln$}
\label{sec:some-basic-prop}

\begin{lemma}[$\czf + \markov$]
  If $\llpo_n$ is true, then we have
  \label{lem:degeneratevln}
  \begin{enumerate}
  \item $\vln$ is isomorphic to the class of all sets, $V$.
  \item $\vln \models \phi$ if and only if $\phi$ is true.
  \end{enumerate}
\end{lemma}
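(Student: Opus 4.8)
The plan is to show that when $\llpo_n$ holds in the background, the cover relation $\triangleleft$ of $\mathcal{L}_n$ collapses to the trivial topology on the one-point poset $S = \{0\}$, i.e.\ $0 \triangleleft p$ if and only if $0 \in p$. Once this is established, everything else follows mechanically. First I would observe that by Theorem~\ref{thm:llpontrees}, under $\llpo_n$ every good tree is very good; combining this with Lemma~\ref{lem:covvgood}, every good tree $T$ satisfies $0 \in \cov(T)$. Hence if $0 \triangleleft p$ — witnessed by a good tree $T$ with $\cov(T) \subseteq p$ — we get $0 \in \cov(T) \subseteq p$, so $0 \in p$. The converse ($0 \in p$ implies $0 \triangleleft p$) is just axiom~\ref{ftreflexivity} of a formal topology (or directly: $\cov(\nil) = \{0\} \subseteq p$). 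So $\triangleleft$ is the discrete cover, and in particular $\mathcal{L}_n$ is proper.

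Next, I would unwind what this means for the construction of $\vln$ and $\llbracket \cdot \rrbracket$. Since a subset $p \subseteq \{0\}$ is $\triangleleft$-closed iff it is downward closed iff (vacuously, as $0$ is the only element) it is any subset of $\{0\}$, the $\triangleleft$-closed subsets of $S$ are exactly $\emptyset$ and $\{0\}$, which we identify with the Boolean values $\bot$ and $\top$. The operations $J(\emptyset) = \emptyset$, $P \vee Q = P \cup Q$, $P \wedge Q = P \cap Q$, $P \to Q$, and the infinitary joins and meets all reduce to the corresponding Boolean operations on $\{\emptyset, \{0\}\}$ — this uses that $J$ is the identity on $\triangleleft$-closed classes together with the collapse just proven. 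Therefore $\vln$, defined inductively as the class of functions $f$ with $\dom(f) \subseteq \vln$ and each $f(x) \in \{\emptyset, \{0\}\}$, is precisely the class of ``Boolean-valued names'' over the two-element Boolean algebra, which is well-known to be isomorphic to $V$ via the obvious recursion $g \mapsto \{ g(x) : x \in \dom(g),\ g(x) = \{0\}\}$ with inverse $x \mapsto \hat{x}$. I would verify by $\in$-induction that $x \mapsto \hat x$ and this map are mutually inverse up to the equivalence $\llbracket \hat x = \hat y \rrbracket = \top$, and that the isomorphism respects $\in$; part~1 follows. For part~2, I would prove by induction on the structure of $\phi$ that $\llbracket \phi \rrbracket = \top$ iff $\phi$ (with parameters replaced by their images under the isomorphism) is true — the atomic case uses absoluteness of $=$ and $\in$ under the isomorphism, and the inductive steps are immediate since each Heyting operation is now the Boolean one and $\bigvee$ over $\vln$ corresponds to $\exists$ over $V$ via the isomorphism.

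The main obstacle I anticipate is not conceptual but bookkeeping: carefully checking that the collapse of $\triangleleft$ genuinely forces $J$, and hence all the Heyting-algebra operations used in the definition of $\llbracket \cdot \rrbracket$, to coincide with the two-valued Boolean operations, and that this coincidence is preserved through the simultaneous induction defining $\llbracket a \in b \rrbracket$ and $\llbracket a = b \rrbracket$. One should be slightly careful that we are working in $\czf + \markov$, so the isomorphism $\vln \cong V$ and the equivalence in part~2 are being asserted and proved within that theory; since under the hypothesis $\llpo_n$ the structure is genuinely two-valued, no further choice or excluded middle is needed beyond what is already available. A clean way to organize the argument is to first record the lemma ``$0 \triangleleft p \iff 0 \in p$'' explicitly, then note $\mathcal{L}_n$ is proper and two-valued, and then cite that Gambino's model over the trivial formal topology (equivalently, the two-valued Boolean model) is the identity interpretation — but since the excerpt does not state such a result, I would instead give the short direct induction sketched above.
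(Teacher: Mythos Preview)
Your proposal is correct and follows essentially the same approach as the paper: use Theorem~\ref{thm:llpontrees} (together with Lemma~\ref{lem:covvgood}) to collapse $\triangleleft$ to the trivial cover on $\{0\}$, and then observe that the topological model over the trivial topology is the identity interpretation. The paper's own proof is a two-sentence version of exactly this argument, simply asserting that ``the result clearly follows'' once $\mathcal{L}_n$ is seen to be trivial, whereas you have additionally spelled out the bookkeeping for the isomorphism and the induction on formulas.
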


\begin{proof}
  By $\llpo_n$, we know that every good $n$-tree is very good.  Hence,
  in this case $\mathcal{L}_n$ reduces to the trivial formal topology,
  where for every $p \subseteq \{0\}$, $0 \triangleleft p$ if and only
  if $0 \in p$. The result clearly follows.
\end{proof}

\begin{lemma}[$\czf + \markov + \ipnn$]
  \label{lem:lncompact}
  For each $j \in \nat$, let $p_j$ be a subset of $\{0\}$. Suppose
  that $0 \triangleleft \bigcup_{j \in \nat} p_j$. Then there is some
  finite set $J \subseteq \nat$ such that $0 \triangleleft \bigcup_{j
    \in J} p_j$. (That is, $\mathcal{L}_n$ is \emph{countably
    compact}.)
\end{lemma}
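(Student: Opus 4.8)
The plan is to unfold the definition of $\triangleleft$ and then argue by induction on trees, using $\ipnn$ to compensate for the fact that we cannot decide which of the sequences $\alpha_i$ occurring in a tree are equal to $1$.

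First I would observe that, by definition of $\mathcal{L}_n$, the hypothesis $0 \triangleleft \bigcup_{j \in \nat} p_j$ just says that there is a good tree $T$ with $\cov(T) \subseteq \bigcup_{j \in \nat} p_j$. So it suffices to prove, by induction on the tree $T$, that whenever $T$ is good and $\cov(T) \subseteq \bigcup_{j \in \nat} p_j$ there are a finite set $J \subseteq \nat$ and a good tree $S$ with $\cov(S) \subseteq \bigcup_{j \in J} p_j$; this immediately gives $0 \triangleleft \bigcup_{j \in J} p_j$, which is what we want once we feed in the tree $T$ supplied by the hypothesis. For $T = \nil$ we have $\cov(\nil) = \{0\}$, so $0 \in \bigcup_{j} p_j$, hence $0 \in p_{j_0}$ for some $j_0 \in \nat$; then $J := \{j_0\}$ and $S := \nil$ work.

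For the inductive step, suppose $T = \maketree(T_i ; \alpha_i)$ is good and $\cov(T) \subseteq \bigcup_j p_j$. If $\alpha_i = 1$ then $T_i$ is good (as $T$ is good) and $\cov(T_i) \subseteq \cov(T) \subseteq \bigcup_j p_j$ (since the $i$-th term in the union defining $\cov(T)$ then equals $\cov(T_i)$), so the induction hypothesis applies to $T_i$. Writing $x := \{\alpha_1, \ldots, \alpha_n\}$, letting $\Psi$ be the set of pairs consisting of an $n$-tree and a finite subset of $\nat$, and letting $\phi(v)$ abbreviate ``$v$ is a pair $(S', J')$ with $S'$ good and $\cov(S') \subseteq \bigcup_{j \in J'} p_j$'', we have thus shown $(\forall u \in x)(u = 1 \rightarrow (\exists v \in \Psi)\, \phi(v))$. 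Since $T$ is good, $\langle x, x \cap \{1\} \rangle \in \mathcal{F}_n$, so we may apply the instance of $\ipnn$ for $\Psi$ — which follows from $\ipnn$ by Lemma \ref{lem:retractip}, since a pair of an $n$-tree (encoded by its shape and data) and a finite subset of $\nat$ is a retract of $\baire$ — to obtain $(\forall u \in x)(\exists v \in \Psi)(u = 1 \rightarrow \phi(v))$, and hence $(\forall i \in \{1, \ldots, n\})(\exists v \in \Psi)(\alpha_i = 1 \rightarrow \phi(v))$. Applying the Finite Axiom of Choice over the finite set $\{1, \ldots, n\}$ then yields, for each $i$, a pair $(S_i, J_i) \in \Psi$ such that $\alpha_i = 1$ implies both that $S_i$ is good and that $\cov(S_i) \subseteq \bigcup_{j \in J_i} p_j$. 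Finally I would put $S := \maketree(S_i ; \alpha_i)$ and $J := \bigcup_{i = 1}^n J_i$, the latter finite as a finite union of finite sets. Then $S$ is good: the requirement $\alpha_i \vee \alpha_j = 1$ for $i \neq j$ is inherited from $T$, and if $\alpha_i = 1$ then $S_i$ is good by the property just obtained. Moreover $\cov(S) \subseteq \bigcup_{j \in J} p_j$: if $0 \in \cov(S)$ then $\alpha_i = 1$ and $0 \in \cov(S_i)$ for some $i$, so $0 \in \bigcup_{j \in J_i} p_j \subseteq \bigcup_{j \in J} p_j$, and since $\cov(S) \subseteq \{0\}$ this gives the inclusion. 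This closes the induction.

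The step I expect to be the main obstacle is the inductive step, and in particular getting the instance of $\ipnn$ exactly right: because $\cov(T) \subseteq \{0\}$ and we cannot decide whether $0 \in \cov(T)$, we cannot simply select one branch $i$ with $\alpha_i = 1$ and recurse into $T_i$. What $\ipnn$ buys us is, uniformly in $i$, a candidate pair $(S_i, J_i)$ whose relevant properties — goodness of $S_i$ and $\cov(S_i) \subseteq \bigcup_{j \in J_i} p_j$ — are guaranteed only under the extra hypothesis $\alpha_i = 1$; this is exactly enough, since both the goodness of $\maketree(S_i ; \alpha_i)$ and the inclusion $\cov(\maketree(S_i ; \alpha_i)) \subseteq \bigcup_{j \in J} p_j$ only ever appeal to the branches on which $\alpha_i = 1$.
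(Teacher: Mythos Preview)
Your proof is correct and follows essentially the same route as the paper's: induction on the witnessing tree, with the base case $\nil$ handled directly and the inductive step handled by invoking $\ipnn$ to produce candidate pairs $(S_i,J_i)$ for each branch, then assembling $S := \maketree(S_i;\alpha_i)$ and $J := \bigcup_i J_i$. You spell out a few points the paper leaves implicit (the retract argument justifying the relevant instance of $\ipnn$, and the use of the Finite Axiom of Choice to pass from $(\forall i)(\exists v)$ to a tuple $(S_i,J_i)_i$), but the argument is the same.
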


\begin{proof}
  We show by induction on trees, that for every tree $T$, if $T$ is
  good and $\cov(T) \subseteq \bigcup_{j \in \nat} p_j$ then there
  exists a finite set $J \subseteq \nat$ and another good tree $S$
  such that $\cov(S) \subseteq \bigcup_{j \in J} p_j$.

  For $T = \nil$, we have $0 \in \bigcup_{j \in \nat} p_j$ and so for
  some $j \in \nat$, $0 \in p_j$. Hence we can just take $J := \{j\}$
  and $S = \nil$.

  Now suppose $T = \maketree(T_i ; \alpha_i)$. Note that if $1 \leq i
  \leq n$ is such that $\alpha_i = 1$, then $T_i$ is good and
  $\cov(T_i) \subseteq \cov(T) \subseteq \bigcup_{j \in \nat} p_j$. So
  by the induction hypothesis, there is a finite set $J$ and a good
  tree $S$ such that $\cov(S) \subseteq \bigcup_{j \in J} p_j$. Hence
  we can apply $\ipnn$ to find for each $1 \leq i \leq n$, a finite
  set $J_i \subseteq \nat$ and a tree $S_i$ such that if $\alpha_i =
  1$ then $S_i$ is good and $\cov(S_i) \subseteq \bigcup_{j \in J_i}
  p_j$. We then take $J := \bigcup_{i = 1}^n J_i$ and $S :=
  \maketree(S_i ; \alpha_i)$ and note these are as required.
\end{proof}

\begin{lemma}[$\czf + \markov + \ipnn$]
  \label{lem:listwitnesses}
  Suppose that $\vln \models (\exists j \in \nat)\,\phi(j)$.
  Then there is some finite $J \subseteq \nat$ such that
  $\vln \models (\exists j \in \hat{J})\,\phi(j)$.
\end{lemma}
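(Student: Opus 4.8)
The plan is to unwind Gambino's interpretation, reduce the hypothesis to a covering statement in $\mathcal{L}_n$, apply the countable compactness of $\mathcal{L}_n$ from Lemma~\ref{lem:lncompact}, and translate back. First I would use that $\nat$ is absolute (so that $\vln \models \nat = \hat\nat$, using also $\vln \models \czf$) to rewrite the hypothesis as $\llbracket (\exists j \in \hat\nat)\,\phi(j) \rrbracket = \top$. Unfolding the clauses for bounded existential quantification and for $\bigvee$, this says $J(\bigcup_{j \in \nat} \llbracket \phi(\hat j) \rrbracket) = \{0\}$. Since $J(P) \subseteq S = \{0\}$ for every class $P$, this is equivalent to $0 \in J(\bigcup_{j \in \nat} \llbracket \phi(\hat j) \rrbracket)$, and by the definition of $J$ and of $\triangleleft$ for $\mathcal{L}_n$ this holds precisely when there is a good tree $T$ with $\cov(T) \subseteq \bigcup_{j \in \nat} \llbracket \phi(\hat j) \rrbracket$; that is, $0 \triangleleft \bigcup_{j \in \nat} \llbracket \phi(\hat j) \rrbracket$.

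Next I would apply Lemma~\ref{lem:lncompact} with $p_j := \llbracket \phi(\hat j) \rrbracket$, obtaining a finite $J \subseteq \nat$ with $0 \triangleleft \bigcup_{j \in J} \llbracket \phi(\hat j) \rrbracket$, i.e.\ $\cov(S) \subseteq \bigcup_{j \in J} \llbracket \phi(\hat j) \rrbracket$ for some good tree $S$. Translating back through the semantics, $0 \triangleleft \bigcup_{j \in J} \llbracket \phi(\hat j) \rrbracket$ gives $0 \in j(\cov(S)) \subseteq J(\bigcup_{j \in J} \llbracket \phi(\hat j) \rrbracket) = \bigvee_{j \in J} \llbracket \phi(\hat j) \rrbracket = \llbracket (\exists j \in \hat J)\,\phi(j) \rrbracket$. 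As this last class is a subclass of $\{0\}$ that contains $0$, it equals $\{0\} = \top$, so $\vln \models (\exists j \in \hat J)\,\phi(j)$, as required.

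The main point requiring care is that for an unbounded formula $\phi$ the class $\llbracket \phi(\hat j) \rrbracket$ need not literally be a set, whereas Lemma~\ref{lem:lncompact} is stated for subsets $p_j$ of $\{0\}$. This is harmless: the proof of Lemma~\ref{lem:lncompact} only uses membership, finite unions, inclusions, and one application of $\ipnn$ (whose schema permits class parameters in the matrix formula), so the very same induction on good trees goes through with the $p_j$ taken to be $\triangleleft$-closed subclasses of $\{0\}$. Alternatively, one can first extract the witnessing good tree $T$ as above, observe that $\cov(T)$ is a genuine set, and run the compactness argument relative to it. Beyond this, the proof is just routine bookkeeping with Gambino's interpretation, and I do not expect any substantial obstacle.
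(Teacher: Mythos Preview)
Your proposal is correct and follows exactly the paper's approach: the paper's proof is the single line ``Apply lemma~\ref{lem:lncompact} with $p_j := \llbracket \phi(\hat{j}) \rrbracket$ for $j \in \nat$,'' and you have simply unpacked the semantic bookkeeping before and after that application. Your remark about the set/class issue for unbounded $\phi$ is a genuine point the paper leaves implicit, and both of your proposed fixes (either noting that the induction in Lemma~\ref{lem:lncompact} goes through verbatim for subclasses of $\{0\}$ since $\ipnn$ is a schema, or first extracting the witnessing good tree from the definition of $J$) are sound.
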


\begin{proof}
  Apply lemma \ref{lem:lncompact} with $p_j := \llbracket
  \phi(\hat{j}) \rrbracket$ for $j \in \nat$.
\end{proof}

The following lemma will be key to showing later that certain choice
axioms and existence properties hold. It appears to be related to the
constructions developed by Lee and Van Oosten in \cite[Sections 4 and
5]{leevanoosten}. We will return to this point in section
\ref{sec:conn-topos-theory}.
\begin{lemma}[$\czf + \markov + \ip{\mathcal{F}_n}{\baire}$]
  \label{lem:buildwitness}
  Let $1 \leq k < n$ and for each $j \in \nat$, let $p_j$ be a subset
  of $\{0\}$. Suppose that $0 \triangleleft \bigcup_{j \in \nat} p_j$
  (relative to $\mathcal{L}_n$). Suppose further that for every $J
  \subseteq \nat$ such that $J$ is finite and $|J| > k$ we have
  $\bigcap_{j \in J} p_j = \emptyset$.

  Then for some $j \in \nat$ there exists a good $\lceil \frac{n}{k}
  \rceil$-tree, $S$ such that $\cov(S) \subseteq p_j$ (where $\lceil
  \frac{n}{k} \rceil$ means round up $\frac{n}{k}$ to the next
  integer).
\end{lemma}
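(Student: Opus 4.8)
The plan is to first pass to a finite subfamily and then induct on a tree that witnesses the cover. Since $\mathcal{L}_n$ is countably compact (Lemma~\ref{lem:lncompact}), from $0 \triangleleft \bigcup_{j \in \nat} p_j$ we extract a finite $J_0 \subseteq \nat$ with $0 \triangleleft \bigcup_{j \in J_0} p_j$, and the hypothesis $\bigcap_{j \in J'} p_j = \emptyset$ for finite $|J'| > k$ is inherited by $(p_j)_{j \in J_0}$. So we may assume there are only finitely many sets $p_1, \dots, p_N$, with $0 \triangleleft \bigcup_{j=1}^N p_j$ relative to $\mathcal{L}_n$ and $\bigcap_{j \in J'} p_j = \emptyset$ whenever $J' \subseteq \{1,\dots,N\}$ and $|J'| > k$. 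Write $m := \lceil n/k \rceil$; note $m \geq 2$ because $k < n$.

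Next I would unwind the cover to a good $n$-tree $T$ with $\cov(T) \subseteq \bigcup_{j=1}^N p_j$ and prove, by induction on trees, the statement: for every good $n$-tree $T$ and every finite family $(q_j)_{j \in J}$ satisfying the $k$-sparseness condition, if $\cov(T) \subseteq \bigcup_{j \in J} q_j$ then there are $j \in J$ and a good $m$-tree $S$ with $\cov(S) \subseteq q_j$. The base case $T = \nil$ is immediate: $\cov(\nil) = \{0\}$ forces $0 \in q_j$ for some $j$, and $S := \nil$ works since $m \geq 2$. For the inductive step, let $T = \maketree(T_i ; \alpha_i)$ be good, so $\alpha_i \vee \alpha_{i'} = 1$ for $i \neq i'$ and $\alpha_i = 1 \Rightarrow T_i$ good. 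Whenever $\alpha_i = 1$, the subtree $T_i$ is a good $n$-tree with $\cov(T_i) \subseteq \cov(T) \subseteq \bigcup_{j \in J} q_j$, so the induction hypothesis supplies a good $m$-tree and an index for $T_i$. Coding a pair consisting of an $m$-tree and an element of $J$ by an element of $\baire$ (via the shape/data encoding of Definition~\ref{def:shpdata}, arranged so that every code decodes to an actual $m$-tree and an index in $J$), I would apply $\ip{\mathcal{F}_n}{\baire}$ to $x = \{\alpha_1,\dots,\alpha_n\}$, $y = x \cap \{1\}$, to obtain uniformly, for each $i \in \{1,\dots,n\}$, a pair $(S_i, j_i)$ with $j_i \in J$ such that: if $\alpha_i = 1$, then $S_i$ is a good $m$-tree and $\cov(S_i) \subseteq q_{j_i}$.

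It then remains to assemble these data into a single good $m$-tree whose cover lies inside one $q_j$, and this is the step I expect to be the main obstacle. Since equality on $\nat$ is decidable, $\{1,\dots,n\}$ splits into the classes $I_v := \{\, i \mid j_i = v \,\}$, and we can decide whether there are at most $k$ such classes. If there are, then some class $I_{v^\ast}$ has $|I_{v^\ast}| \geq \lceil n/k \rceil = m$; picking $B \subseteq I_{v^\ast}$ with $|B| = m$, the tree $\maketree((S_i)_{i \in B} ; (\alpha_i)_{i \in B})$ is an $m$-tree, is good (its sequences pairwise join to $1$ and $\alpha_i = 1 \Rightarrow S_i$ good), and has $\cov \subseteq q_{v^\ast}$, so we finish with $j := v^\ast$. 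In the remaining case one must cut $\{1,\dots,n\}$ into exactly $m$ blocks of size at most $k$, form the block joins $\bigvee_{i \in B}\alpha_i$ (which pairwise join to $1$), and, using the bound — coming from sparseness — that at most $k$ of the $q_j$ meet $\{0\}$, recursively build $m$-tree subtrees for the blocks so that the assembled good $m$-tree has cover inside a single, explicitly named $q_j$. The delicate points here are respecting the rigid arity typing of $n$-trees (all subtrees of an $m$-tree are themselves $m$-trees, so one cannot freely re-arity intermediate trees) and keeping the target index $j$ decidably determined; the hypothesis $\bigcap_{j \in J'} q_j = \emptyset$ for $|J'| > k$ is used only at this point, and is exactly what prevents the assembled tree from needing to be $n$-ary — for a single $q_j$ the statement is false when $k \geq 2$, since it would equate the covering relations of $\mathcal{L}_n$ and $\mathcal{L}_m$.
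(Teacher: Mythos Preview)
Your setup---induction on the good $n$-tree $T$ witnessing the cover, the use of $\ip{\mathcal{F}_n}{\baire}$ to extract $(S_i,j_i)$ for each $i$, and the pigeonhole argument when there are at most $k$ distinct values among the $j_i$---matches the paper exactly. (The preliminary compactness reduction to a finite family is harmless but unnecessary; the paper works directly with the countable family.)

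The genuine gap is your ``remaining case,'' where the $j_i$ take more than $k$ distinct values. The block-partition idea does not go through: once you form the joins $\beta_l=\bigvee_{i\in B_l}\alpha_i$, the condition $\beta_l=1$ tells you nothing about any specific $\alpha_i$, and the subtrees $S_i$ for $i\in B_l$ may target different $q_{j_i}$, so there is no way to manufacture a single $m$-subtree for the block landing in one $q_j$. The observation that ``at most $k$ of the $q_j$ meet $\{0\}$'' is correct but does not pin down \emph{which} $k$, so it does not let you name a target index decidably.

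The paper handles this case by an entirely different mechanism: it reduces to the at-most-$k$ case by \emph{reassigning} some of the $j_i$. Choose $I\subseteq\{1,\ldots,n\}$ with $|I|>k$ on which the $j_i$ are pairwise distinct; sparseness gives $\bigcap_{i\in I}p_{j_i}=\emptyset$, hence $\bigcap_{i\in I}\cov(S_i)=\emptyset$. If every $\alpha_i$ with $i\in I$ were $1$, all $S_i$ would be good, and then Corollary~\ref{cor:nvgoodngood} together with Lemma~\ref{lem:covvgood} would yield that some $S_i$ is not good---a contradiction. So $\bigwedge_{i\in I}\alpha_i\neq 1$, and Markov's principle (Lemma~\ref{lem:meetnotone}) produces a concrete $i\in I$ with $\alpha_i\neq 1$. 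For that $i$ the implication ``$\alpha_i=1\Rightarrow\cov(S_i)\subseteq p_{j_i}$'' is vacuous, so you may overwrite $j_i$ by $j_{i'}$ for any $i'\in I\setminus\{i\}$. Iterating drives the number of distinct values down to at most $k$, after which your pigeonhole argument finishes the proof. This use of $\markov$ to locate an $\alpha_i\neq 1$ and then relabel is the missing idea.
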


\begin{proof}
  We show by induction on trees that for every $n$-tree, $T$, if $T$
  is good and $\cov(T) \subseteq \bigcup_j p_j$, then there exists $j
  \in \nat$ and an $\lceil \frac{n}{k} \rceil$-tree $S$ such that
  $\cov(S) \subseteq p_j$.

  For $T = \nil$, we have $0 \in \bigcup_{j \in \nat} p_j$. Hence for
  some $j \in \nat$ we in fact have $0 \in p_j$. We can then take $S$ to
  be $\nil$.

  Now suppose that $T = \maketree(T_i ; \alpha_i)$.

  Suppose that $\alpha_i = 1$. Then $T_i$ is good and $\cov(T_i)
  \subseteq \bigcup_i p_i$. So there exist $j \in \nat$ and $S$ a good
  $\lceil \frac{n}{k} \rceil$-tree such that $\cov(S) \subseteq p_j$.

  Hence we can apply $\ip{\mathcal{F}_n}{\baire}$ to find for each $1
  \leq i \leq n$, $j_i \in \nat$ and an $\lceil \frac{n}{k}
  \rceil$-tree $S_i$ such that if $\alpha_i = 1$ then $S_i$ is good
  and $\cov(S_i) \subseteq p_{j_i}$.

  Now suppose that $|\{ j_i \;|\; 1 \leq i \leq n \}| > k$. Let $I
  \subseteq \{1,\ldots,n\}$ be such that $|I| = |\{ j_i \;|\; 1 \leq i
  \leq n \}| = |\{j_i \;|\; i \in I \}|$ (which exists by finite
  choice and decidability of equality for $\nat$). By assumption,
  $\bigcap_{i \in I} p_i = \emptyset$. Suppose that for all $i \in I$,
  $\alpha_i = 1$. Then we would have that each $S_i$ is good but
  $\bigcap_{i \in I} \cov(S_i) = \emptyset$, giving a contradiction by
  corollary \ref{cor:nvgoodngood} and lemma \ref{lem:covvgood}. Hence
  by lemma \ref{lem:meetnotone}, for some $i$, $\alpha_i \neq 1$. Let
  $i' \in I \setminus \{i\}$. Since $\alpha_i \neq 1$, we vacuously
  have $\alpha_i = 1$ implies that $\cov(S_i) \subseteq
  p_{j_{i'}}$. Hence we may ``replace'' $j_i$ with $j_{i'}$.

  By repeating the above argument we may assume without loss of
  generality that in fact
  \begin{equation*}
    \label{eq:22}
    |\{ j_i \;|\; 1 \leq i \leq n \}| \leq k
  \end{equation*}
  Write $J$ for the set $\{ j_i \;|\; 1 \leq i \leq n \}$.
  
  Now note that we have
  \begin{equation*}
    \sum_{j \in J} | \{i \;|\; j_i = j \} | \; = \; n
  \end{equation*}
  Note that if $l \in \nat$ is such that $l < \lceil \frac{n}{k}
  \rceil$, then $l < \frac{n}{k}$. To show this, see that we can find
  $p,q \in \mathbb{N}$ with $0 \leq q < k$ such that $n = p k + q$ by
  Euclid's algorithm. We can then split into cases depending on
  whether or not $q = 0$, by decidability of equality for
  $\mathbb{N}$. If $q = 0$, then $l < \lceil \frac{n}{k} \rceil =
  \frac{n}{k}$. If $q > 0$, then $l \leq \lceil \frac{n}{k} \rceil - 1
  < \frac{n}{k}$. So in either case $l < \frac{n}{k}$.
  
  Hence, if we had $| \{i \;|\; j_i = j \} | < \lceil \frac{n}{k}
  \rceil$ for all $j \in J$, this would imply $\sum_{j \in J} | \{i
  \;|\; j_i = j \} | < \frac{n}{k}. k = n$, giving a
  contradiction. Hence, for some $j \in J$ we must have $| \{i \;|\;
  j_i = j \} | \geq \lceil \frac{n}{k} \rceil$. Choose such a $j$, and
  $I \subseteq \{i \;|\; j_i = j \}$ with $|I| = \lceil \frac{n}{k}
  \rceil$ and an enumeration of $I$. Then let $S$ be the $\lceil
  \frac{n}{k} \rceil$-tree $\maketree( (\alpha_i)_{i \in I} ;
  (S_{i})_{i \in I})$. Since $T$ is good and $S_i$ is good when
  $\alpha_i = 1$, $S$ must also be good. Now suppose $0 \in
  \cov(S)$. This implies that for some $i \in I$, $\alpha_i = 1$ and
  $0 \in S_i$. But then also $0 \in p_j$. So $\cov(S) \subseteq p_j$
  as required.
\end{proof}

\begin{remark}
  Note that in the above lemma we do not have $0 \triangleleft p_j$
  relative to $\mathcal{L}_n$, because we require a good $n$-tree $S$,
  such that $\cov(S) \subseteq p_j$, but have only a good $\lceil
  \frac{n}{k} \rceil$-tree. We do however have $\neg \neg 0 \in p_j$.
\end{remark}

\begin{lemma}[$\czf + \markov + \ip{\mathcal{F}_n}{\baire}$]
  \label{lem:getuniquewitness}
  Suppose that for each $j \in \nat$, $p_j$ is a subset of $\{0\}$
  such that $0 \triangleleft \bigcup_{j \in \nat} p_j$ and that for
  all $j \neq j' \in \nat$ we have $p_j \cap p_{j'} = \emptyset$. Then
  for some (necessarily unique) $j \in \nat$ we have $0 \in p_j$.
\end{lemma}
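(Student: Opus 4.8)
The plan is to cut a cover into disjoint halves, decide which half actually contains the point, and iterate. Uniqueness is immediate: if $0 \triangleleft p_j$ and $0 \triangleleft p_{j'}$ with $j \neq j'$, then axiom~\ref{ftintersect} gives $0 \triangleleft \downarrow p_j \cap \downarrow p_{j'} = p_j \cap p_{j'} = \emptyset$, contradicting properness of $\mathcal{L}_n$. For existence, first apply Lemma~\ref{lem:lncompact} to get a finite $J \subseteq \nat$ with $0 \triangleleft \bigcup_{j \in J} p_j$; enumerating $J$, it then suffices to peel off its members one at a time, at each step partitioning the current union as $p_j \cup (\text{rest})$, which is disjoint.

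The core is a \emph{splitting lemma}, proved by induction on $n$-trees: if $T$ is a good $n$-tree, $q_0, q_1 \subseteq \{0\}$ are disjoint and $\cov(T) \subseteq q_0 \cup q_1$, then one can decide whether $0 \triangleleft q_0$ or $0 \triangleleft q_1$. For $T = \nil$ we have $0 \in \cov(\nil) = \{0\} \subseteq q_0 \cup q_1$, so $0 \in q_0$ or $0 \in q_1$, a disjunction decided by definition of union; and $0 \in q_b$ forces $q_b = \{0\}$, whence $0 \triangleleft q_b$ via $\nil$. For $T = \maketree(T_i ; \alpha_i)$, whenever $\alpha_i = 1$ the subtree $T_i$ is good and $\cov(T_i) \subseteq \cov(T) \subseteq q_0 \cup q_1$, so the induction hypothesis chooses a side together with a witnessing good tree. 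Applying $\ipnn$ (in its forms for $2$ and for $\mathcal{T}_n$) to $\{\alpha_1, \dots, \alpha_n\} \in \mathcal{F}_n$ we obtain, for each $i$, a bit $b_i \in 2$ and an $n$-tree $R_i$ such that $\alpha_i = 1$ implies $R_i$ is good and $\cov(R_i) \subseteq q_{b_i}$. If $\alpha_i = 1$, $\alpha_{i'} = 1$ and $b_i \neq b_{i'}$, then $0 \triangleleft q_0$ and $0 \triangleleft q_1$, contradicting properness via axiom~\ref{ftintersect}; hence $\alpha_i = 1 \wedge \alpha_{i'} = 1 \to b_i = b_{i'}$. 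Now decide whether all $b_i$ coincide. If they all equal some $b$, then $\maketree(R_1, \dots, R_n ; \alpha_1, \dots, \alpha_n)$ is a good $n$-tree (the pairwise joins being inherited from goodness of $T$) with cover contained in $q_b$, so $0 \triangleleft q_b$. If $b_i \neq b_{i'}$ for some pair, then $\neg(\alpha_i = 1 \wedge \alpha_{i'} = 1)$; together with $\alpha_i \vee \alpha_{i'} = 1$ this yields $\alpha_i = 1 \vee \alpha_{i'} = 1$ under Markov's principle — if both failed, Lemma~\ref{lem:joinnotone} would contradict $\alpha_i \vee \alpha_{i'} = 1$, and inspecting an index where one of them takes the value $0$ pins down which — and the true side supplies the required cover.

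Iterating the splitting lemma over $J$ singles out one $j_0 \in \nat$ with $0 \triangleleft p_{j_0}$; by disjointness and properness all other $p_j$ are then empty. The remaining task, upgrading $0 \triangleleft p_{j_0}$ to $0 \in p_{j_0}$, is the main obstacle: a bare cover only delivers $\neg\neg\, 0 \in p_{j_0}$ via Proposition~\ref{prop:dncovinh}, so one must exploit the disjoint-union structure further — re-running the splitting induction in a membership-tracking form in which the collapse to a single side in the $\maketree$ step is forced using Markov's principle together with Lemmas~\ref{lem:meetnotone} and~\ref{lem:joinnotone}.
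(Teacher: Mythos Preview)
Your splitting-lemma argument is a correct direct derivation of the $k=1$ case of Lemma~\ref{lem:buildwitness}, and that is all the paper uses: its entire proof is the one line ``This is a special case of Lemma~\ref{lem:buildwitness} with $k=1$.'' With $k=1$ that lemma yields a $j$ together with a good $n$-tree $S$ satisfying $\cov(S)\subseteq p_j$, i.e.\ $0\triangleleft p_j$. Your binary splitting plus iteration over a finite $J$ from Lemma~\ref{lem:lncompact} reaches exactly the same conclusion by essentially the same mechanism (push $\ipnn$ down the tree to get labels $b_i$ and subtrees $R_i$, then use Markov through Lemma~\ref{lem:meetnotone} and the join condition to collapse any disagreement among the $b_i$). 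So up to the point $0\triangleleft p_{j_0}$ you are doing the same thing as the paper, only unpacked.

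The step you single out as ``the main obstacle'' --- upgrading $0\triangleleft p_{j_0}$ to $0\in p_{j_0}$ --- is not carried out by the paper either, and in fact cannot be done at the literal generality of the statement. Take any good $n$-tree $T$ that is not very good (these exist whenever $\llpo_n$ fails), set $p_0:=\cov(T)$ and $p_j:=\emptyset$ for $j\neq 0$; then the hypotheses hold, yet $0\notin p_j$ for every $j$. So your proposed ``membership-tracking'' re-run of the induction cannot succeed. The intended reading, and the only way the lemma is ever applied (Lemma~\ref{lem:functionabs}, with $p_m=\llbracket f(\hat n)=\hat m\rrbracket$), has each $p_j$ already $\triangleleft$-closed, in which case $0\triangleleft p_j$ and $0\in p_j$ coincide and the ``upgrade'' is free. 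The clean fix is therefore not extra work but an extra hypothesis: either assume the $p_j$ are $\triangleleft$-closed, or state the conclusion as $0\triangleleft p_j$.
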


\begin{proof}
  This is a special case of lemma \ref{lem:buildwitness} with $k = 1$.
\end{proof}

\begin{lemma}[$\czf + \markov + \ipnn$]
  \label{lem:functionabs}
  Suppose that $\vln \models f \in \baire$. Then for some $g : \nat
  \rightarrow \nat$, $\vln \models f = \hat{g}$.
\end{lemma}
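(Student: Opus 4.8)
The plan is to read off the value of $f$ at each natural number argument from the internal truth values, using Lemma~\ref{lem:getuniquewitness} to isolate a single such value, and then to assemble these values into a genuine function $g$ in the background universe.

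First I would record that $\mathcal{L}_n$ is proper: if $0 \triangleleft \emptyset$ then by definition there is a good tree $T$ with $\cov(T) = \emptyset$, contradicting Proposition~\ref{prop:dncovinh}. Next, since $\vln \models f \in \baire$ and the natural numbers are absolute (so $\vln \models \hat\nat = \nat$), $\vln$ regards $f$ as a function with domain $\hat\nat$; in particular $\vln \models (\forall x \in \hat\nat)(\exists! y \in \hat\nat)\, \langle x, y\rangle \in f$. Fix $n \in \nat$ and put $p_m := \llbracket \langle \hat n, \hat m\rangle \in f \rrbracket$ for each $m \in \nat$; each $p_m$ is a $\triangleleft$-closed subset of $\{0\}$. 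Stripping the leading universal quantifier with Lemma~\ref{lem:abslem}(1), the displayed sentence yields, for this $n$, that $\vln \models (\exists y \in \hat\nat)\,\langle \hat n, y\rangle \in f$, which unfolds to $0 \triangleleft \bigcup_{m \in \nat} p_m$; and that for $m \neq m'$ we have $p_m \cap p_{m'} = \llbracket \langle \hat n, \hat m\rangle \in f \wedge \langle \hat n, \hat{m'}\rangle \in f \rrbracket \subseteq \llbracket \hat m = \hat{m'}\rrbracket = \emptyset$, where the last equality uses properness of $\mathcal{L}_n$ together with absoluteness of equality of naturals.

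Thus the hypotheses of Lemma~\ref{lem:getuniquewitness} hold for the family $(p_m)_{m \in \nat}$ (recalling that $\ipnn$ is $\ip{\mathcal{F}_n}{\baire}$), so there is a unique $m \in \nat$ with $0 \in p_m$; I set $g(n) := m$. By Replacement this defines a function $g : \nat \rightarrow \nat$, and by construction $\llbracket \langle \hat n, \widehat{g(n)}\rangle \in f \rrbracket = \top$ for every $n \in \nat$. On the other hand $\langle n, g(n)\rangle \in g$ in the background, so by the clause defining the interpretation of $\in$ together with absoluteness of pairs (Lemma~\ref{lem:pairunabs}) also $\vln \models \langle \hat n, \widehat{g(n)}\rangle \in \hat g$; the same absoluteness lemmas give $\vln \models \hat g \in \baire$. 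Hence $f$ and $\hat g$ are both seen by $\vln$ as functions with domain $\hat\nat$ taking the value $\widehat{g(n)}$ at each $\hat n$, so applying Lemma~\ref{lem:abslem}(1) once more gives $\vln \models (\forall x \in \hat\nat)\, f(x) = \hat g(x)$, and therefore $\vln \models f = \hat g$.

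Most of the labour here is the absoluteness bookkeeping translating $\vln \models f \in \baire$ into the coordinatewise statements and back, all supplied by the lemmas of Section~\ref{sec:some-absol-lemm}. The one place the ambient hypotheses $\markov$ and $\ipnn$ are really used is the appeal to Lemma~\ref{lem:getuniquewitness}, and the main point to be careful about is its disjointness hypothesis: this is exactly where properness of $\mathcal{L}_n$ (hence $\llbracket \hat m = \hat{m'}\rrbracket = \emptyset$ for $m \neq m'$) is needed.
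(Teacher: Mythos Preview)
Your proof is correct and follows essentially the same approach as the paper: for each $n$ set $p_m$ to be the truth value of $f$ taking value $m$ at $n$ (the paper writes $p_m := \llbracket f(\hat n) = \hat m \rrbracket$, you use the equivalent $\llbracket \langle \hat n, \hat m\rangle \in f\rrbracket$), apply Lemma~\ref{lem:getuniquewitness} to extract a unique $m$, and assemble these into $g$. Your write-up is more careful than the paper's about properness of $\mathcal{L}_n$ and the absoluteness bookkeeping, but the argument is the same.
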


\begin{proof}
  We define $g : \nat \rightarrow \nat$ as follows. Let $n \in
  \nat$.
  For each $m \in \nat$, set
  $p_m := \llbracket f(\hat{n}) = \hat{m} \rrbracket$. Note that for
  $m \neq m'$, we have $p_m \cap p_{m'} = \emptyset$, so we can apply
  lemma \ref{lem:getuniquewitness} to find $m$ such that
  $0 \in \llbracket f(\hat{n}) = \hat{m} \rrbracket$. We take $g(n)$
  to be this $m$.

  Note that by construction we have $\vln \models (\forall n \in
  \nat)\,\hat{g}(n) = f(m)$, and so $\vln \models \hat{g} = f$.
\end{proof}

\begin{lemma}[$\czf + \markov + \ipnn$]
  \label{lem:function2abs}
  Suppose that $\vln \models F : \baire \rightarrow \nat$. Then for
  some $G : \baire \rightarrow \nat$, $\vln \models F = \hat{G}$.
\end{lemma}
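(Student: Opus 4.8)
The idea is to run the argument of Lemma~\ref{lem:functionabs} one level up, with $\baire$ replacing $\nat$ as the domain, and then to feed Lemma~\ref{lem:functionabs} itself back in to identify the resulting function with $F$. First I would record that $\mathcal{L}_n$ is proper: by Proposition~\ref{prop:dncovinh} every good tree $T$ satisfies $\neg \neg\, 0 \in \cov(T)$, so $\cov(T) \neq \emptyset$, hence $\neg\, 0 \triangleleft \emptyset$. Consequently equality on $\nat$ is absolute, in particular $\llbracket \hat m = \hat{m'} \rrbracket = \emptyset$ for $m \neq m'$, and (as in the absoluteness lemmas, using absoluteness of pairs, of $\nat$, and of function application on $\nat$) the canonical name of any $h : \nat \rightarrow \nat$ is internally an element of $\baire$.

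\textbf{Construction of $G$.} Fix $g \in \baire$. Since $\vln \models \hat g \in \baire$ and $\vln \models F : \baire \rightarrow \nat$, we have $\vln \models F(\hat g) \in \nat$, so by absoluteness of $\nat$ this says $\llbracket F(\hat g) \in \hat\nat \rrbracket = \top$. Set $p_m := \llbracket F(\hat g) = \hat m \rrbracket \subseteq \{0\}$ for $m \in \nat$. Unfolding $\llbracket F(\hat g) \in \hat\nat \rrbracket$ exactly as in Lemma~\ref{lem:functionabs} gives $0 \triangleleft \bigcup_{m \in \nat} p_m$, while soundness of the equality axioms together with $\llbracket \hat m = \hat{m'} \rrbracket = \emptyset$ gives $p_m \cap p_{m'} = \emptyset$ for $m \neq m'$. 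Lemma~\ref{lem:getuniquewitness} then supplies a (necessarily unique) $m$ with $0 \in p_m$, i.e.\ with $\vln \models F(\hat g) = \hat m$; put $G(g) := m$. Since $G$ is defined by a total functional relation on $\baire$, Replacement (a consequence of Strong Collection in $\czf$) yields that $G$ is a genuine function $\baire \rightarrow \nat$.

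\textbf{Verifying $\vln \models F = \hat G$.} Here Lemma~\ref{lem:functionabs} does double duty. On one hand, it shows that every $h$ with $\vln \models h \in \baire$ satisfies $\vln \models h = \hat g$ for some background $g \in \baire$; combined with the forward inclusion from the absoluteness bookkeeping above this gives $\vln \models \widehat{\baire} = \baire$, and hence (using absoluteness of pairs to compute $\dom \hat G$) that $F$ and $\hat G$ are internally functions with the same domain $\baire$. On the other hand, for such an $h$ with $\vln \models h = \hat g$, the construction of $G$ gives $\vln \models F(\hat g) = \widehat{G(g)}$, and the $\nat^x$-absoluteness lemma applied with $x = \baire$, $f = G$, $z = g$ gives $\vln \models \widehat{G(g)} = \hat G(\hat g)$; chaining, $\vln \models F(h) = \hat G(h)$. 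As $F$ and $\hat G$ agree at every argument and have the same domain, $\vln \models F = \hat G$.

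\textbf{Main obstacle.} Everything except the last step is a verbatim rerun of Lemma~\ref{lem:functionabs} and the other absoluteness lemmas. The one genuinely new point is that the internal quantifier $(\forall h \in \baire)$ ranges over all elements of the internally computed $\baire$, not merely over the canonical names $\hat g$; handling this is exactly where Lemma~\ref{lem:functionabs} is needed to collapse an arbitrary internal $h \in \baire$ back to some $\hat g$.
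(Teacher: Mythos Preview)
Your proposal is correct and follows essentially the same approach as the paper: first use Lemma~\ref{lem:functionabs} to see that $\widehat{\baire}$ is internally the set of functions $\nat\to\nat$, then rerun the proof of Lemma~\ref{lem:functionabs} with $\baire$ in place of $\nat$ (via Lemma~\ref{lem:getuniquewitness}) to build $G$. You have simply spelled out the details that the paper leaves implicit, including the explicit invocation of properness and the function-application absoluteness lemma for $\nat^{\baire}$.
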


\begin{proof}
  First note that by lemma \ref{lem:functionabs} we can show that
  $\baire$ is absolute, in the sense that in $\vln$ we can show that
  $\hat{\baire}$ is the set of functions $\nat \rightarrow
  \nat$. However, we can now apply the same proof as in lemma
  \ref{lem:functionabs} to get the result.
\end{proof}

\begin{lemma}[$\czf + \markov + \ipnn$]
  \label{lem:vlnmarkov}
  \begin{equation*}
    \label{eq:52}
    \vln \models \markov
  \end{equation*}
\end{lemma}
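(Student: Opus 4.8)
The only genuinely new ingredient beyond the absoluteness lemmas is Proposition~\ref{prop:dncovinh}, that covers of good trees are $\neg\neg$-inhabited; everything else is bookkeeping, so the plan is to reduce $\vln\models\markov$ to a statement about a concrete background function and then apply Proposition~\ref{prop:dncovinh} together with Markov's principle in the ambient theory.

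\textbf{Step 1: reduce to a concrete $g:\nat\to 2$.} Write $\markov$ in the form $(\forall\alpha)\big(\alpha\colon\nat\to 2\;\to\;\Theta(\alpha)\big)$ with $\Theta(\alpha)\;:\equiv\;\neg\neg(\exists n\in\nat)\,\alpha(n)=1\to(\exists n\in\nat)\,\alpha(n)=1$. Since $S=\{0\}$, any $\triangleleft$-closed subset of $S$ containing $0$ equals $\top$; hence $\llbracket\alpha\colon\hat\nat\to\hat 2\rrbracket$ is $\top$ exactly when $\vln$ believes $\alpha$ is a function $\nat\to 2$, and, using part~2 of Lemma~\ref{lem:abslem} and the fact that a meet of truth values equals $\top$ iff each does, $\vln\models\markov$ is equivalent to: for every $z\in\vln$ with $\vln\models z\colon\hat\nat\to\hat 2$ one has $\vln\models\Theta(z)$. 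For such $z$, Lemma~\ref{lem:functionabs} gives $g\colon\nat\to\nat$ with $\vln\models z=\hat g$, and the absoluteness of function application for $\baire$ together with absoluteness of membership for the naturals forces $g(n)\in 2$ for all $n$, i.e.\ $g\colon\nat\to 2$. So it suffices to prove, for each $g\colon\nat\to 2$, that $\llbracket\neg\neg(\exists n\in\hat\nat)\,\hat g(n)=\hat 1\;\to\;(\exists n\in\hat\nat)\,\hat g(n)=\hat 1\rrbracket=\top$.

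\textbf{Step 2: compute the relevant truth values.} First note $\mathcal{L}_n$ is proper: if $0\triangleleft\emptyset$ then some good tree $T$ has $\cov(T)\subseteq\emptyset$, contradicting Proposition~\ref{prop:dncovinh}; hence by part~4 of Lemma~\ref{lem:abslem}, $\llbracket\bot\rrbracket=\emptyset$. By the absoluteness lemmas for equality and function application on the naturals, $\llbracket\hat g(\hat n)=\hat 1\rrbracket$ equals $\{0\}$ when $g(n)=1$ and $\emptyset$ when $g(n)=0$. Setting $P:=\bigcup_{n\in\nat}\llbracket\hat g(\hat n)=\hat 1\rrbracket$ we thus have $P\subseteq\{0\}$ and $0\in P\leftrightarrow(\exists n)\,g(n)=1$. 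Unfolding the interpretation, $\llbracket(\exists n\in\hat\nat)\,\hat g(n)=\hat 1\rrbracket=J(P)=j(P)$, and, since $\llbracket\bot\rrbracket=\emptyset$, a point $0$ lies in $\llbracket\neg\neg(\exists n\in\hat\nat)\,\hat g(n)=\hat 1\rrbracket$ exactly when $\neg\neg(0\in j(P))$ holds.

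\textbf{Step 3: the heart --- $\neg\neg$-stability of $0\in j(P)$.} If $0\in j(P)$, i.e.\ $0\triangleleft P$, then $\cov(T)\subseteq P$ for some good tree $T$, and Proposition~\ref{prop:dncovinh} gives $\neg\neg\,0\in\cov(T)$, hence $\neg\neg\,0\in P$. Applying $\neg\neg$ (which is monotone and idempotent) this yields $\neg\neg(0\in j(P))\to\neg\neg(0\in P)$, i.e.\ $\neg\neg(0\in j(P))\to\neg\neg(\exists n)\,g(n)=1$. By Markov's principle in the background (the ambient theory being $\czf+\markov+\ipnn$) this gives $(\exists n)\,g(n)=1$, hence $0\in P$, hence $0\triangleleft P$ by reflexivity of $\triangleleft$, i.e.\ $0\in j(P)$. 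Therefore $\neg\neg(0\in j(P))\to 0\in j(P)$, so $\llbracket\neg\neg(\exists n\in\hat\nat)\,\hat g(n)=\hat 1\rrbracket=j(P)=\llbracket(\exists n\in\hat\nat)\,\hat g(n)=\hat 1\rrbracket$; in particular the left side is contained in the right, and by part~2 of Lemma~\ref{lem:abslem} the required implication is forced in $\vln$. This completes the argument.

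I expect the main obstacle to be purely organisational: making the reduction in Step~1 precise (exploiting that all truth values lie in $\powset(\{0\})$ and chaining Lemma~\ref{lem:functionabs} with the absoluteness lemmas), and keeping careful track of exactly where properness of $\mathcal{L}_n$ and where Markov's principle in the background are invoked. The only substantive topological input is Proposition~\ref{prop:dncovinh}; the rest is formal manipulation in the Heyting algebra $\powset(\{0\})$.
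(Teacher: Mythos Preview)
Your proof is correct and follows the same overall shape as the paper's: reduce via Lemma~\ref{lem:functionabs} to a background $g:\nat\to 2$, then invoke $\markov$ in the ambient theory. The difference lies in how the hypothesis $\vln\models\neg\neg(\exists n)\,\hat g(n)=1$ is transferred to the background. The paper observes that $\neg\neg(\exists n)\,\hat g(n)=1$ is intuitionistically equivalent to $\neg(\forall n)\,\hat g(n)=0$, a formula built from $\forall$, $\to$, $\bot$ and absolute atomic pieces, hence absolute by the lemmas of Section~\ref{sec:some-absol-lemm}; this lets it pass to the background in one line. You instead compute $\llbracket\neg\neg(\exists n)\,\hat g(n)=1\rrbracket$ explicitly as $\{0:\neg\neg(0\in j(P))\}$ and then show $\neg\neg(0\in j(P))\to 0\in j(P)$ by invoking Proposition~\ref{prop:dncovinh} to pass from $0\in j(P)$ to $\neg\neg(0\in P)$. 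Both routes are sound; the paper's is shorter because the rewriting to a negative formula hides the $j$-operator entirely, while yours has the virtue of making the single genuine topological input (Proposition~\ref{prop:dncovinh}, hence properness of $\mathcal{L}_n$) fully explicit rather than leaving it buried inside the absoluteness machinery.
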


\begin{proof}
  Suppose that $f \in \vln$ is such that
  $\vln \models f \in 2^\nat \;\wedge\; \neg \neg (\exists x \in
  \nat)\,f(x) = 1$.
  Then by lemma \ref{lem:functionabs} there is
  $g : \nat \rightarrow 2$ such that $\vln \models \hat{g} = f$. Note
  that $\neg \neg (\exists x \in \nat)\,\hat{g}(x) = 1$ is equivalent to
  $\neg (\forall x \in \nat)\,\hat{g}(x) = 0$ and so is absolute. Hence we
  can apply $\markov$ in the background to find $m \in \nat$ such that
  $g(m) = 1$. But then $\vln \models (\exists x \in \nat)\,f(x) =
  1$. Therefore $\markov$ holds in $\vln$.
\end{proof}

\subsection{$\llpo_n$ in $\vln$}
\label{sec:llpo_n-vln}

The motivation for the definition of $\mathcal{L}_n$ was to try to
write down the simplest topology where $\llpo_n$ holds in the
topological model. We now check that in fact it really is the case
that $\llpo_n$ holds in $\vln$. Note that we don't need to assume
$\llpo_n$ holds in the background for this to work, although we did
need $\ipnn$, even just to construct the topological model.

\begin{lemma}[$\czf + \markov + \ip{\mathcal{F}_n}{\baire}$]
  \label{lem:llponsound}
  \begin{equation*}
    \label{eq:29}
    \vln \models \llpo_n
  \end{equation*}
\end{lemma}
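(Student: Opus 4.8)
The plan is to show $\vln \models \llpo_n$ by using the tree characterisation of $\llpo_n$ from Theorem~\ref{thm:llpontrees}, together with the fact that ``$T$ is a good tree'' is absolute (Theorem~\ref{thm:goodabs}) and that the topology $\mathcal{L}_n$ was cooked up precisely so that the cover $\cov(T)$ witnesses ``$T$ is very good'' (Lemma~\ref{lem:covvgood}). Concretely, by the tree reformulation it suffices to show that $\vln$ thinks every good $n$-tree is very good. So suppose $T \in \vln$ with $\vln \models$ ``$T$ is a good $n$-tree''. Using the absoluteness lemmas for $n$-trees (the shape-and-data encoding of Section~\ref{sec:absol-results-n}, together with Lemma~\ref{lem:functionabs} to pull the code down to an honest function $\nat \to \nat$ in the background), I would first argue that $T$ may be taken of the form $\hat{T_0}$ for an actual $n$-tree $T_0$ in the background universe, and that since ``is a good tree'' is absolute (Theorem~\ref{thm:goodabs}), $T_0$ is a good tree in the background.

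Next I would compute $\llbracket T_0 \text{ is very good} \rrbracket$. Unwinding the inductive definition of ``very good'' through the interpretation $\llbracket - \rrbracket$, and using that ``good'' is absolute, one sees that membership of $0$ in this truth value corresponds exactly to $0 \triangleleft \cov(T_0)$ relative to $\mathcal{L}_n$. But $\cov(T_0)$ is a subset of $\{0\}$, and $T_0$ is itself a good tree with $\cov(T_0) \subseteq \cov(T_0)$, so by the very definition of the relation $\triangleleft$ for $\mathcal{L}_n$ we get $0 \triangleleft \cov(T_0)$, i.e. $0 \in \llbracket T_0 \text{ is very good}\rrbracket$, hence $\llbracket T_0 \text{ is very good}\rrbracket = \top = \{0\}$ (the space has only the point $0$). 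Thus $\vln \models$ ``$T_0$ is very good'', which is what we needed. Then Theorem~\ref{thm:llpontrees}, applied inside $\vln$ (which models $\czf + \markov$ by Theorem~\ref{thm:1} and Lemma~\ref{lem:vlnmarkov}), gives $\vln \models \llpo_n$.

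The main obstacle will be the bookkeeping around absoluteness: getting from ``$\vln$ thinks $T$ is a good $n$-tree'' to an actual good $n$-tree $T_0$ in the background with $\vln \models T = \hat{T_0}$. This requires carefully combining the encoding of $n$-trees as elements of $\baire$ from Definition~\ref{def:shpdata}, the absoluteness of the good-tree predicate from Theorem~\ref{thm:goodabs}, and Lemma~\ref{lem:functionabs} to replace an internal element of $\baire$ with $\hat g$ for $g$ in the background; one also needs to check that the inductive definitions of ``good'' and ``very good'' are respected under the encoding and that the internal induction on trees matches an external one. A secondary subtlety is making precise the claim that, after unwinding $\llbracket - \rrbracket$, the internal statement ``$T_0$ is very good'' has truth value $j(\cov(T_0))$ — this is where the particular choice of $\cov$ and the definition of $\triangleleft$ for $\mathcal{L}_n$ do all the work, and it should follow by an induction on $T_0$ mirroring Lemma~\ref{lem:covvgood}. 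Once these absoluteness points are nailed down, the rest is immediate from the construction of $\mathcal{L}_n$.
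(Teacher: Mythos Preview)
Your approach is correct but takes a different route from the paper. The paper works directly with the original ``at most one $1$'' formulation of $\llpo_n$: given an internal $f : \nat \to 2$ with at most one value $1$, it uses Lemma~\ref{lem:functionabs} to pull $f$ down to a background $g : \nat \to 2$, then from $g$ builds a single depth-one good tree $T = \maketree(\nil,\ldots,\nil;\alpha_1,\ldots,\alpha_n)$ with $\alpha_k(i) := 1 - \max_{i'\le i} g(n i' + (k-1))$, and checks by hand that $\cov(T) \subseteq \bigcup_k \llbracket (\forall x)\,f(xn + (k-1)) = 0\rrbracket$, whence $0 \triangleleft$ that union and the required disjunction holds in $\vln$.

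Compared to this, your argument via Theorem~\ref{thm:llpontrees} is more conceptual: it makes transparent \emph{why} the covers of $\mathcal{L}_n$ were defined via good trees, since then $\llbracket \hat{T_0}\text{ very good}\rrbracket = j(\cov(T_0)) = \top$ is essentially by construction. The cost is the extra absoluteness bookkeeping you flag --- replacing an arbitrary internal tree by $\hat{T_0}$ via the shape-and-data encoding and Lemma~\ref{lem:functionabs}, and unwinding the inductive ``very good'' predicate (Lemma~\ref{lem:vgoodpr} makes this routine, since the inner clauses are negative and hence absolute). The paper sidesteps all of that by only ever needing to pull down a single binary sequence and exhibit one explicit tree, so its proof is shorter and touches less machinery; yours better illuminates the design of $\mathcal{L}_n$.
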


\begin{proof}
  Suppose that $f \in \vln$ is such that internally in $\vln$, $f$ is
  a function $\nat \rightarrow 2$ such that $f(i) = 1$ for at most one
  $i$. Then by lemma \ref{lem:functionabs} there must be some (unique)
  $g : \nat \rightarrow 2$ such that $\vln \models \hat g = f$. Then
  by lemma \ref{lem:abslem} we must have that also $g(i) = 1$ for at
  most one $i$. We now define a tree by setting for $1 \leq k \leq n$,
  \begin{equation*}
    \label{eq:30}
    \alpha_k(i) := 1 - \max_{i' \leq i} (g(n i' + (k - 1)))
  \end{equation*}
  and then define
  \begin{equation*}
    \label{eq:31}
    T := \maketree(\nil,\ldots,\nil ; \alpha_1, \ldots, \alpha_n)
  \end{equation*}
  We clearly have that $T$ is a good tree and by lemma
  \ref{lem:abslem} we know
  \begin{equation*}
    \label{eq:32}
    \cov(T) \subseteq \bigcup_{1 \leq k \leq n} \llbracket (\forall x
    \in \nat)\,f(x n + (\hat{k} - 1)) = 0 \rrbracket
  \end{equation*}
  Hence
  \begin{equation*}
    \label{eq:33}
    \vln \models \bigvee_{1 \leq k \leq n} (\forall x \in \nat)\,f(x n
    + (k - 1)) = 0
  \end{equation*}
  But we now have that $\vln \models \llpo_n$ as required.  
\end{proof}

\subsection{Bounded Existential Formulas and Countable Choice in
  $\vln$}
\label{sec:countable-choice-vln}

Although countable choice fails in each $\vln$, there are weaker
variants that we define below that do hold. To formulate them, we
first define some notation for certain bounded existential formulas.

\begin{definition}
  Let $\phi$ be a formula. We write $(\exlte{n} x) \, \phi$ as
  shorthand for the following formula.
  \begin{equation*}
    (\exists x \in \nat)\,\phi \quad \wedge \quad
    (\forall x_1,\ldots,x_{n + 1} \in \nat) \left( \bigwedge_{i \neq j}(x_i
      \neq x_j) \; \rightarrow \neg \bigwedge_i \phi(x_i) \right)
  \end{equation*}
  Informally, this says that there exists a witness of $\phi(x)$ in
  $\nat$, but given any $X \subseteq \nat$ with $|X| = n + 1$ it is
  false that every element of $X$ is a witness of $\phi(x)$. In other
  words $\phi(x)$ has at least one, but at most $n$ witnesses.
\end{definition}

\begin{definition}
  \label{def:4}
  We define the following variants of the axiom of choice. Let $X$ be
  any set.
  \begin{enumerate}
  \item Write $\acx{k}{}$ for the following principle. Let $\phi(x,
    y)$ be a bounded formula (that may have parameters). Suppose that
    we have $(\forall x \in X)(\exlte{k} y)\,\phi(x,y)$. Then there
    is a function $f : X \rightarrow \nat$ such that for every $x
    \in X$, $\phi(x, f(x))$.
  \item Write $\acx{k}{m}$ for the following principle. Let $\phi(x,
    y)$ be a bounded formula (that may have parameters). Suppose that
    we have $(\forall x \in X)(\exlte{k} y)\,\phi(x,y)$. Then there
    is a function $f : X \rightarrow \nat$ such that for every $x
    \in X$, there is a good $m$-tree, $T$ such that if $T$ is very
    good then $\phi(x, f(x))$.
  \item Write $\acxdn{k}$ for the following principle. Let $\phi(x,
    y)$ be a bounded formula. Suppose that we have $(\forall x \in
    X)(\exlte{k} y)\,\phi(x,y)$. Then there is a function $f : X
    \rightarrow \nat$ such that for all $x \in X$, $\neg \neg
    \phi(x, f(x))$.
  \end{enumerate}
\end{definition}

\begin{proposition}[$\czf + \markov$]
  \label{prop:acimpl}
  Let $X$ be any set. For all $m, k \in \nat$ with $m, k \geq 2$, and
  all $m' \leq m$,
  \[
  \acx{k}{} \; \Rightarrow \; \acx{k}{m} \; 
  \Rightarrow \; \acx{k}{m'} \; \Rightarrow \; \acxdn{k} 
  \]
\end{proposition}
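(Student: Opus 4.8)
The plan is to treat the three implications separately, exploiting that in each case the \emph{same} witnessing function $f : X \rightarrow \nat$ works and only the guarantee attached to it changes. Fix throughout a bounded formula $\phi$ with $(\forall x \in X)(\exlte{k}y)\,\phi(x,y)$. For $\acx{k}{} \Rightarrow \acx{k}{m}$ I would take the $f$ produced by $\acx{k}{}$, which already satisfies $\phi(x, f(x))$ for every $x$, and for each $x$ exhibit the tree $\nil$: it is a good, indeed very good, $m$-tree, so ``if $\nil$ is very good then $\phi(x,f(x))$'' holds trivially. For $\acx{k}{m'} \Rightarrow \acxdn{k}$ I would take the $f$ produced by $\acx{k}{m'}$ together with, for each $x$, the good $m'$-tree $T_x$ with ``$T_x$ very good $\Rightarrow \phi(x,f(x))$''; corollary \ref{cor:nvgoodngood}(1) gives $\neg\neg(T_x$ very good$)$, so the implication yields $\neg\neg\phi(x,f(x))$. (This is the only place $\markov$ is used.)

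The substance is the middle implication $\acx{k}{m} \Rightarrow \acx{k}{m'}$ for $m' \leq m$. The case $m' = m$ is trivial, and otherwise $m \geq 3$, so by iteration it suffices to prove $\acx{k}{r} \Rightarrow \acx{k}{r-1}$ for every $r \geq 3$. For this I would establish, by induction on $r$-trees, an operation $T \mapsto T^\ast$ sending $r$-trees to $(r-1)$-trees such that whenever $T$ is good, $T^\ast$ is good and ``$T^\ast$ very good $\Rightarrow T$ very good''. Given this, a witness $f$ for $\acx{k}{r}$ witnesses $\acx{k}{r-1}$: replace each good $r$-tree $T_x$ by $T_x^\ast$.

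The construction of $T^\ast$ ``merges the last two branches'': $\nil^\ast := \nil$, and for $T = \maketree(T_i ; \alpha_i)$ put $\beta_i := \alpha_i$ for $i \leq r-2$ and $\beta_{r-1} := \alpha_{r-1} \wedge \alpha_r$ (pointwise minimum, still in $\natinfty$), and take $T^\ast := \maketree(T_1^\ast,\ldots,T_{r-2}^\ast, S ; \beta_1,\ldots,\beta_{r-2},\beta_{r-1})$ where $S := \maketree(T_{r-1}^\ast, T_r^\ast,\ldots,T_r^\ast ; 1,\ldots,1)$ is the $(r-1)$-tree whose first subtree is $T_{r-1}^\ast$ and all others $T_r^\ast$. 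Checking that $T$ good implies $T^\ast$ good uses the coordinatewise identity $\alpha_i \vee (\alpha_{r-1} \wedge \alpha_r) = 1$ (immediate from $\alpha_i \vee \alpha_{r-1} = \alpha_i \vee \alpha_r = 1$) and that $\beta_{r-1} = 1$ forces $\alpha_{r-1} = \alpha_r = 1$, hence $T_{r-1}, T_r$ good and so $S$ good. For ``$T^\ast$ very good $\Rightarrow T$ very good'' one tracks the witnessing branch of $T^\ast$: a branch $i \leq r-2$ gives $\alpha_i = 1$ with $T_i^\ast$ very good, so $T_i$ very good by induction; the branch $r-1$ gives $\alpha_{r-1} = \alpha_r = 1$ with $S$ very good, so $T_{r-1}^\ast$ or $T_r^\ast$ is very good, so by induction $T_{r-1}$ or $T_r$ is; in either case $T$ is very good.

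The main obstacle is pinning down this tree surgery: one has to notice that branches must be merged using a \emph{meet} of the $\natinfty$-labels (padding with extra branches labelled $1$ would make $T^\ast$ unconditionally very good, destroying the implication), and that goodness and the implication ``$T^\ast$ very good $\Rightarrow T$ very good'' must be proved simultaneously under the standing hypothesis that $T$ is good, since $T^\ast$ being good does not on its own imply $T$ is good. Everything else is bookkeeping.
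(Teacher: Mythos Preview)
Your proof is correct, and for the first and third implications it matches the paper exactly (use $\nil$ as the trivial good tree, respectively invoke Corollary~\ref{cor:nvgoodngood}).

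For the middle implication $\acx{k}{m} \Rightarrow \acx{k}{m'}$ you take a different and more elaborate route than the paper. The paper's argument is simply to \emph{drop} branches: from a good $m$-tree $T = \maketree(T_1,\ldots,T_m;\alpha_1,\ldots,\alpha_m)$, recursively form the $m'$-tree $T^\ast := \maketree(T_1^\ast,\ldots,T_{m'}^\ast;\alpha_1,\ldots,\alpha_{m'})$ by keeping only the first $m'$ children at every node. Goodness is inherited because the pairwise join condition $\alpha_i \vee \alpha_j = 1$ already holds for all $i \neq j$, and ``$T^\ast$ very good $\Rightarrow$ $T$ very good'' is immediate since a witnessing branch of $T^\ast$ is literally a branch of $T$. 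No meets or auxiliary subtrees are needed.

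Your merging construction via $\beta_{r-1} := \alpha_{r-1} \wedge \alpha_r$ and the padded subtree $S$ is also correct, and your verification of goodness and of ``$T^\ast$ very good $\Rightarrow T$ very good'' goes through. But the concern you voice in the last paragraph---that one must merge rather than pad---is a solution to a problem that does not arise: since $m' \leq m$ we are shrinking the arity, so simply discarding branches is available and is the path of least resistance. Your approach would be the right idea if one instead had to \emph{increase} the arity while controlling very-goodness, but that is not what is needed here.
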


\begin{proof}
  For $(\acx{k}{} \Rightarrow \acx{k}{m})$, note that $\acx{k}{m}$
  is easily a special case of $\acx{k}{}$.

  For $(\acx{k}{m} \Rightarrow \acx{k}{m'})$, given any good $m$-tree
  $T$, we can generate a good $m'$-tree by ``choosing $m'$ branches at
  each level.''

  For $(\acx{k}{m'} \Rightarrow \acxdn{k})$, we just apply corollary
  \ref{cor:nvgoodngood}.
\end{proof}

\begin{lemma}[$\czf + \markov + \ip{\mathcal{F}_n}{\baire} +
  \choice_{\nat, \nat}$]
  \label{lem:vlncc}
  Let $n, k \in \nat$ and $2 \leq k < n$. Then
  \begin{equation*}
    \label{eq:25}
    \vln \models \acn{k}{\lceil \frac{n}{k} \rceil}
  \end{equation*}
\end{lemma}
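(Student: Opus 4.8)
Because the underlying poset of $\mathcal{L}_n$ is trivial ($S=\{0\}$), every $\triangleleft$-closed class is a subclass of $\{0\}$, and hence $\llbracket A\rightarrow B\rrbracket=\top$ holds exactly when $\vln\models A$ implies $\vln\models B$. So to verify the relevant instance of $\acn{k}{\lceil n/k\rceil}$ in $\vln$ it is enough to fix a bounded formula $\phi(x,y)$ (possibly with parameters from $\vln$), assume $\vln\models(\forall x\in\nat)(\exlte{k}y)\,\phi(x,y)$, and produce a witnessing $f\in\vln$ together with the required trees. I first record that $\mathcal{L}_n$ is proper: if $0\triangleleft\emptyset$ then some good $n$-tree $T$ has $\cov(T)=\emptyset$, contradicting Proposition \ref{prop:dncovinh}; consequently $\llbracket\bot\rrbracket=\emptyset$ and Lemma \ref{lem:abslem}(4) is available.

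Next I unpack the hypothesis. For each $a\in\nat$ put $p^a_b:=\llbracket\phi(\hat a,\hat b)\rrbracket$, a set since $\phi$ is bounded. Using absoluteness of $\nat$, Lemma \ref{lem:abslem}(3), and absoluteness of equality on $\nat$ together with properness, the first conjunct of $(\exlte{k}y)\,\phi(\hat a,y)$ gives $0\triangleleft\bigcup_{b\in\nat}p^a_b$, while the second conjunct gives $\llbracket\bigwedge_i\phi(\hat a,\hat b_i)\rrbracket\subseteq\llbracket\bot\rrbracket=\emptyset$ for any distinct $b_1,\dots,b_{k+1}$, i.e. $\bigcap_{j\in J}p^a_j=\emptyset$ whenever $J\subseteq\nat$ is finite with $|J|>k$. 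Set $m:=\lceil n/k\rceil$; note $m\geq 2$ since $k<n$. Lemma \ref{lem:buildwitness} (applied with this $k$ and the sets $p^a_j$) then yields, for each $a$, some $j\in\nat$ and a good $m$-tree $S$ with $\cov(S)\subseteq p^a_j$. By $\choice_{\nat,\nat}$ there is $g:\nat\to\nat$ such that for every $a$ there is a good $m$-tree $S$ with $\cov(S)\subseteq p^a_{g(a)}$; put $f:=\hat g\in\vln$, which internally is a function $\nat\to\nat$.

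It remains to check, for each $a\in\nat$, that $\vln$ satisfies ``there is a good $m$-tree $T$ such that if $T$ is very good then $\phi(\hat a,\widehat{g(a)})$''; by absoluteness of $\nat$ and of function application (so that $\hat g(\hat a)=\widehat{g(a)}$), this yields $\vln\models$ (conclusion) with $f=\hat g$. Fix $a$ and a good $m$-tree $S_a$ with $\cov(S_a)\subseteq\llbracket\phi(\hat a,\widehat{g(a)})\rrbracket=:P$, and take $T:=\widehat{S_a}$. That $\vln\models$ ``$\widehat{S_a}$ is a good $m$-tree'' is Theorem \ref{thm:goodabs} (the shape/data encoding and its absoluteness proof are uniform in the arity). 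For the implication, by Lemma \ref{lem:abslem}(2) it suffices to show $\llbracket\widehat{S_a}\text{ is very good}\rrbracket\subseteq P$. Since Lemma \ref{lem:vgoodpr} is a theorem of $\czf$, its internal instance holds in $\vln$, so, after reducing the internal shape/data/primitive-recursive terms by the absoluteness results, $\llbracket\widehat{S_a}\text{ is very good}\rrbracket$ equals $J$ applied to $\bigcup_{l<B}\bigcap_{i\in\nat}\llbracket\widehat{\dt(S_a)}(\widehat{f(l,\shp(S_a),i)})=\widehat 1\rrbracket$, where $B=b(\shp(S_a))$ and $b,f$ are the primitive recursive functions of that lemma (for arity $m$). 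By absoluteness of function application each bracketed value is $\top$ when $\dt(S_a)$ has value $1$ at the displayed argument and $\emptyset$ otherwise; hence the displayed union equals $\{0: S_a\text{ is very good}\}$ (Lemma \ref{lem:vgoodpr} again, in the background), which equals $\cov(S_a)$ by Lemma \ref{lem:covvgood} for $m$-trees together with $\cov(S_a)\subseteq\{0\}$. Therefore $\llbracket\widehat{S_a}\text{ is very good}\rrbracket=J(\cov(S_a))=j(\cov(S_a))\subseteq j(P)=P$, the final equality because $P$ is $\triangleleft$-closed and the inclusion by monotonicity of $j$ from $\cov(S_a)\subseteq P$. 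This completes the verification.

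The step that needs the most care, and which I expect to be the main obstacle, is the computation of $\llbracket\widehat{S_a}\text{ is very good}\rrbracket$. Unlike ``good'', the property ``very good'' is not absolute: the bounded existential over branches in its inductive definition is exactly the direction of Lemma \ref{lem:abslem} that can fail, so one cannot simply argue that $\widehat{S_a}$ is internally very good iff $S_a$ is. The argument succeeds precisely because the cover relation of $\mathcal{L}_n$ is defined via $n$-trees while $S_a$ is only an $m$-tree with $m<n$ (this is where $k\geq 2$ enters, since $m=\lceil n/k\rceil<n$ needs $k\geq 2$): thus $\llbracket\widehat{S_a}\text{ is very good}\rrbracket$ is the genuinely nontrivial set $j(\cov(S_a))$ rather than all of $\{0\}$, and it lands inside $\llbracket\phi(\hat a,\widehat{g(a)})\rrbracket$ merely because that value is $\triangleleft$-closed and contains $\cov(S_a)$. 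A secondary point is that $\choice_{\nat,\nat}$ is genuinely needed here — the witness $g(a)$ is not unique when $k\geq 2$, in contrast with Lemma \ref{lem:getuniquewitness} — which is why it appears among the hypotheses.
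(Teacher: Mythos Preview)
Your proof is correct and follows essentially the same route as the paper: unfold the hypothesis to obtain the cover condition and the $k$-fold disjointness, invoke Lemma~\ref{lem:buildwitness} to produce for each $a$ a witness $g(a)$ together with a good $\lceil n/k\rceil$-tree $S_a$ with $\cov(S_a)\subseteq\llbracket\phi(\hat a,\widehat{g(a)})\rrbracket$, apply $\choice_{\nat,\nat}$ to assemble $g$, and then verify the internal implication using absoluteness of ``good'' (Theorem~\ref{thm:goodabs}) and the $\exists$-over-negative shape of ``very good'' (Lemma~\ref{lem:vgoodpr}). The only real difference is presentational: where the paper dispatches the last step in one line by citing Lemma~\ref{lem:disjexcriteria} (since ``very good'' has the form $(\exists l)\,\psi(l)$ with $\psi$ negative, and each $\llbracket\psi(\hat l)\rrbracket$ is $\top$ or $\emptyset$ with $\top$ forcing $0\in\cov(S_a)\subseteq P$), you instead compute $\llbracket\widehat{S_a}\text{ very good}\rrbracket=j(\cov(S_a))$ explicitly and use monotonicity of $j$ together with $\triangleleft$-closedness of $P$. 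These are the same argument unwound to different depths.
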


\begin{proof}
  Let $x \in \nat$ and suppose that $0 \in \llbracket (\exlte{k}
  y)\,\phi(\hat x, y) \rrbracket$. Then we have by unfolding the
  interpretation of formulas in $\vln$ and the definition of
  $\exlte{k}$ that,
  \begin{equation*}
    0 \triangleleft \bigcup_{i \in \nat} \llbracket \phi(\hat x, \hat{i}) \rrbracket
  \end{equation*}
  and for every list $i_1,\ldots,i_{k + 1}$
  \begin{equation*}
    \label{eq:26}
    \bigcap_{1 \leq j \leq k + 1} \llbracket \phi(\hat x, \hat{i_j}) \rrbracket \;=\; \emptyset
  \end{equation*}

  Hence, applying lemma \ref{lem:buildwitness} with $p_i := \llbracket
  \phi(\hat x, \hat{i}) \rrbracket$, we have that for every $x \in
  \nat$ there exists $y \in \nat$ and a good $\lceil \frac{n}{k}
  \rceil$-tree $S$ such that if $S$ is very good then $0 \in
  \llbracket \phi(\hat x, \hat y) \rrbracket$.

  Now applying $\choice_{\nat,\nat}$ we get a choice function $f :
  \nat \rightarrow \nat$. That is, for every $x \in \nat$, there
  exists a good $\lceil \frac{n}{k} \rceil$-tree $S$ such that if $S$
  is very good then $0 \in \llbracket \phi(\hat x, \hat{f(x)})
  \rrbracket$. For each $x \in \nat$, let $g \in \baire$ be a code for
  the tree $S$ as above. Then the statement that $g$ codes a good tree
  is absolute by theorem \ref{thm:goodabs}, so also holds internally.

  Also, the statement that $g$ codes a very good tree is equivalent to
  a formula of the form $(\exists x \in \nat)\,\psi(x)$, where $\psi$
  is negative by lemma \ref{lem:vgoodpr}. Hence by lemma
  \ref{lem:disjexcriteria} the statement ``$\hat g$ codes a very
  good tree implies $ \phi(\hat{x}, \hat{f(x)})$'' must also hold
  internally.
\end{proof}

Finally, we define another variant of choice that will also
hold in our model. This will be denoted \emph{Herbrand
  choice}, since it also holds in the Herbrand topos
developed by Van den Berg in \cite{vdbergherbrandtop}.

\begin{definition}
  We refer to the following principle as $\hacx$ or
  \emph{Herbrand countable choice}. Let $\phi(x, y)$ be a bounded
  formula (that may have parameters). Suppose that we have
  $(\forall x \in X)(\exists y \in \nat)\phi(x, y)$. Then there
  exists a function $f$ from $X$ to the set of finite subsets of
  $\nat$, $\powset_{\mathrm{fin}(\nat)}$, such that for all
  $x \in X$ there exists $m \in f(x)$ such that $\phi(x, m)$.
\end{definition}

One can easily show $\hacx$ can be alternatively formulated as
follows.
\begin{proposition}[$\czf$]
  \label{prop:hebrandboundeddef}
  $\hacx$ is true if and only if the following holds. Suppose
  that we have $(\forall x \in X)(\exists y \in \nat)\phi(x,
  y)$. Then there exists a function $f \colon X \to \nat$ such that
  for all $x \in X$ there exists $m < f(x)$ such that $\phi(x,
  m)$.
\end{proposition}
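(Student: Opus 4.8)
The plan is to prove the two implications separately, in each case obtaining the required choice function by a trivial modification of the one provided by the other principle; everything takes place in plain $\czf$.

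For the ``only if'' direction, suppose $\hacx$ holds and that $(\forall x \in X)(\exists y \in \nat)\,\phi(x, y)$. Applying $\hacx$ gives a function $f$ from $X$ to the finite subsets of $\nat$ such that for every $x \in X$ there is $m \in f(x)$ with $\phi(x, m)$; in particular each $f(x)$ is inhabited. Since an inhabited finite subset of $\nat$ has a greatest element (an easy induction, using decidability of equality on $\nat$), we may set $g(x) := \max(f(x)) + 1$, and this defines a function $g \colon X \to \nat$ by Replacement. For each $x$ the witness $m \in f(x)$ then satisfies $m \leq \max(f(x)) < g(x)$, so $m < g(x)$ and $\phi(x, m)$, which is exactly the conclusion of the variant principle.

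For the ``if'' direction, suppose the variant holds and again assume $(\forall x \in X)(\exists y \in \nat)\,\phi(x, y)$. The variant yields $f \colon X \to \nat$ such that for every $x \in X$ there is $m < f(x)$ with $\phi(x, m)$. Recalling that each natural number $f(x)$ is, as a set, exactly $\{0, 1, \ldots, f(x) - 1\}$, a finite subset of $\nat$, and that $m < f(x)$ is literally $m \in f(x)$, the same $f$ — now regarded as a function from $X$ into the finite subsets of $\nat$ — witnesses $\hacx$. There is no real obstacle: the argument is elementary, uses only the basic facts about finite sets recalled in Section 2 together with Replacement (to see that the reassignments are genuine functions), and requires neither $\markov$ nor any form of choice.
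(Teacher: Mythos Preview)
Your proof is correct and is exactly the routine verification the paper has in mind; the paper itself gives no proof beyond the remark ``One can easily show,'' so your argument supplies precisely the details that were left to the reader.
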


\begin{lemma}[$\czf + \markov + \ip{\mathcal{F}_n}{\baire} +
  \choice_{\nat, \nat}$]
  \label{lem:hacnnsound}
  \begin{equation*}
    \vln \models \hacn
  \end{equation*}
\end{lemma}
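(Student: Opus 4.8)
The plan is to show $\vln \models \hacn$ by combining Lemma \ref{lem:lncompact} (countable compactness of $\mathcal{L}_n$) with absoluteness of functions, Lemma \ref{lem:functionabs}, and the background choice principle $\choice_{\nat,\nat}$. By Proposition \ref{prop:hebrandboundeddef}, it suffices to show internally in $\vln$ that if $(\forall x \in \nat)(\exists y \in \nat)\,\phi(x, y)$ (with $\phi$ bounded), then there is $f : \nat \rightarrow \nat$ with $(\forall x \in \nat)(\exists m < f(x))\,\phi(x, m)$. So suppose $0 \in \llbracket (\forall x \in \hat\nat)(\exists y \in \hat\nat)\,\phi(\hat x, y)\rrbracket$, i.e. for every $x \in \nat$ we have $0 \triangleleft \bigcup_{m \in \nat} \llbracket \phi(\hat x, \hat m)\rrbracket$.

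First, fix $x \in \nat$. Applying Lemma \ref{lem:lncompact} with $p_m := \llbracket \phi(\hat x, \hat m)\rrbracket$, there is a finite set $J \subseteq \nat$ such that $0 \triangleleft \bigcup_{m \in J} p_m$. Taking $N := \max(J) + 1$ (or using that $J$ is finitely enumerable and contained in $\nat$, so bounded), we get $0 \triangleleft \bigcup_{m < N} \llbracket \phi(\hat x, \hat m)\rrbracket$, which is exactly $0 \in \llbracket (\exists m \in \widehat{N})\,\phi(\hat x, m)\rrbracket$ by unfolding the interpretation of bounded existentials together with Lemma \ref{lem:abslem}. Since $\phi$ is bounded, the statement "$(\exists m < N)\,\phi(\hat x, m)$" is interpreted in $\vln$ via the $\llbracket-\rrbracket$ machinery, and $0$ lying in this truth value means $\vln \models (\exists m < \widehat{N})\,\phi(\hat x, m)$.

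Next, I would assemble these local bounds into a single function in the background. For each $x \in \nat$ we have produced \emph{some} $N \in \nat$ with the above property; applying $\choice_{\nat,\nat}$ in the background yields $F : \nat \rightarrow \nat$ such that for every $x \in \nat$, $0 \in \llbracket (\exists m < \widehat{F(x)})\,\phi(\hat x, m)\rrbracket$. Now $\hat F \in \vln$ and, using absoluteness of function application for $\nat^\nat$ (the lemmas on $\nat^x$ in Section \ref{sec:some-absol-lemm}), $\vln \models \hat F : \nat \rightarrow \nat$ and $\vln \models \hat F(\hat x) = \widehat{F(x)}$ for each $x$. Putting this together with the previous step and Lemma \ref{lem:disjexcriteria} (to internalize "for all $x$"), we obtain $\vln \models (\forall x \in \nat)(\exists m < \hat F(x))\,\phi(x, m)$, which is the conclusion of $\hacn$ in the formulation of Proposition \ref{prop:hebrandboundeddef}.

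The main obstacle I anticipate is the bookkeeping around internalizing the universal quantifier over $x \in \nat$: Lemma \ref{lem:lncompact} gives the finite bound for each fixed external $x$, but to conclude $\vln \models (\forall x \in \nat)\,(\ldots)$ one must check that the witnessing function $F$, built externally via $\choice_{\nat,\nat}$, genuinely realizes the internal universal statement. This should follow from part 1 of Lemma \ref{lem:abslem} (the absoluteness of bounded universal quantification over a $\hat x$) together with the fact that $\llbracket (\exists m < \hat F(\hat x))\,\phi(\hat x, m)\rrbracket = \top$ for each $x$ — but one must be careful that the bounded formula $(\exists m < F(x))\,\phi(x,m)$ is handled correctly by the interpretation, which is where the absoluteness lemmas for $\nat$, finite tuples, and function application all get used. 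A secondary point is making sure the passage from "finite $J$" (finitely enumerable subset of $\nat$) to an actual numerical bound $N$ is valid in $\czf$, which it is since finitely enumerable subsets of $\nat$ are bounded.
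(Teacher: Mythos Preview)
Your proposal is correct and follows essentially the same approach as the paper: the paper's proof applies Lemma~\ref{lem:listwitnesses} (which is just Lemma~\ref{lem:lncompact} specialised to $p_j := \llbracket \phi(\hat{j}) \rrbracket$) to obtain a finite bound for each $x$, then invokes $\choice_{\nat,\nat}$ to assemble these into a function $f$, and concludes by absoluteness that $\vln \models (\forall x \in \nat)(\exists y < \hat{f}(x))\,\phi(x,y)$. Your discussion of the bookkeeping around internalising the universal quantifier is more careful than the paper's one-word appeal to ``absoluteness,'' but the argument is the same.
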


\begin{proof}
  Suppose that
  $\vln \models (\forall x \in \nat)(\exists y \in \nat)\phi(x,
  y)$. Then for every $n \in \nat$, we have
  $\vln \models (\exists y \in \nat)\,\phi(\hat{n}, y)$. By lemma
  \ref{lem:listwitnesses} there exists a finite set $J \subseteq \nat$
  such that $\vln \models (\exists y \in \hat{J})\,\phi(\hat{n},
  y)$. Hence also there exists $N \in \nat$ such that
  $\vln \models (\exists y < \hat{N})\,\phi(\hat{n}, y)$. By
  $\choice_{\nat, \nat}$, we deduce that there is a function
  $f \colon \nat \to \nat$ such that for all $n \in \nat$,
  $\vln \models (\exists y < \widehat{f(n)})\,\phi(\hat{n},
  y)$. Finally by absoluteness, we deduce
  $\vln \models (\forall x \in \nat)(\exists y < \hat{f}(x))\,
  \phi(x, y)$, and thereby $\vln \models \hacn$.
\end{proof}

\section{Applications}
\label{sec:applications}

\subsection{Consistency of Church's Thesis with $\llpo_n$}
\label{sec:cons-churchs-thes}

A hallmark of Lifschitz realizability, from Lifschitz's original model
for arithmetic in \cite{lifschitzrealiz} onwards is that it satisfies
both Church's thesis and $\llpo$. We will recover the result from
\cite{rathjenchen} that Church's thesis and $\llpo$ are compatible
over $\izf$. Moreover, we will show something even stronger.  Certain
variants of the axiom of countable choice are compatible with Church's
thesis and $\llpo$, and as $n$ increases, we can show that successively
stronger forms of countable choice are compatible with Church's thesis
and $\llpo_n$.

\begin{lemma}[$\czf + \markov + \churchu$]
  \label{lem:vlnchurchu}
  \begin{equation*}
    \label{eq:56}
    \vln \models \churchu
  \end{equation*}
\end{lemma}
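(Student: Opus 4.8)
The plan is to push an arbitrary internal function $\nat\to\nat$ down to a function of the background universe via Lemma~\ref{lem:functionabs}, apply $\churchu$ there to get an actual Turing machine index, and then transfer the resulting true arithmetical statement back up into $\vln$ using the absoluteness lemmas of Section~\ref{sec:some-absol-lemm}. No continuity or realizability argument is needed; everything is arithmetic.

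In detail: suppose externally that $f\in\vln$ with $\vln\models f\in\baire$. By Lemma~\ref{lem:functionabs} there is $g:\nat\to\nat$ in the background with $\vln\models f=\hat g$. Applying $\churchu$ in the background, fix $e\in\nat$ such that the $e$th Turing machine is total with $\{e\}(m)=g(m)$ for every $m\in\nat$. I claim $\vln\models(\forall m\in\nat)\,\{\hat e\}(m)=\hat g(m)$; since $\nat$ and $\baire$ are absolute (the latter recorded in the proof of Lemma~\ref{lem:function2abs}), this says internally that $\hat e$ is a total index computing $f$, so $\vln\models\churchu$ follows. To prove the claim, unfold ``$\{e\}(m)=k$'' by Kleene's normal form theorem as $(\exists s\in\nat)\,\theta(e,m,s,k)$, where $\theta$ is built from graphs of primitive recursive functions by bounded quantifiers, conjunctions, $\bot$ and implication; hence $\theta$ is absolute by the argument used in Theorem~\ref{thm:goodabs}. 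By part~1 of Lemma~\ref{lem:abslem} (together with absoluteness of $\nat$) it suffices to show, for each fixed $m\in\nat$, that $\vln\models\{\hat e\}(\hat m)=\hat g(\hat m)$. Fix $m$. Since $\{e\}(m)=g(m)$ is true there is $s_0\in\nat$ with $\theta(e,m,s_0,g(m))$ true, so $\vln\models\theta(\hat e,\hat m,\hat{s_0},\widehat{g(m)})$ by absoluteness of $\theta$, and then $\vln\models(\exists s\in\hat\nat)\,\theta(\hat e,\hat m,s,\widehat{g(m)})$ by part~\ref{abslemex} of Lemma~\ref{lem:abslem}; that is, $\vln\models\{\hat e\}(\hat m)=\widehat{g(m)}$. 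Finally $\vln\models\hat g(\hat m)=\widehat{g(m)}$ by absoluteness of function application for $\nat^\nat$, so $\vln\models\{\hat e\}(\hat m)=\hat g(\hat m)$ as needed.

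The only point requiring care is to keep track of which arithmetical statements are absolute and to check that every transfer step goes in the ``easy'' direction: true primitive recursive (hence bounded) statements rise to $\vln$ by absoluteness, and existentials over $\hat\nat$ rise by part~\ref{abslemex} of Lemma~\ref{lem:abslem}. We never invoke the converse directions, which fail in general for $\mathcal L_n$. One should also read $\churchu$ as asserting a \emph{total} index for $f$: this is exactly what the argument delivers, since for each numeral $\hat m$ the value $\{\hat e\}(\hat m)$ is shown internally to equal a specific numeral, so totality holds in $\vln$ as well.
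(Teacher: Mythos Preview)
Your proposal is correct and follows essentially the same route as the paper: use Lemma~\ref{lem:functionabs} to replace an internal $f\in\baire$ by $\hat g$ for an actual $g:\nat\to\nat$, apply $\churchu$ in the background to obtain an index $e$, and then push the $\Sigma^0_1$ statement $\{e\}(m)=g(m)$ up into $\vln$ via absoluteness of primitive recursive formulas and part~\ref{abslemex} of Lemma~\ref{lem:abslem}. Your version is somewhat more explicit about the Kleene normal form and the direction of each absoluteness step, but the argument is the same.
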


\begin{proof}
  By lemma \ref{lem:functionabs} it suffices to show that for every $f
  \in \baire$, the statement that $f$ is computable holds in
  $\vln$. For any $f$, we have by applying $\churchu$ in the
  background that there exists $e \in \nat$ such that $f = \{e\}$. For
  every $i \in \nat$, the statement that $f(i) = \{e\}(i)$ is of the
  form $(\exists x \in \nat)\,\phi(x)$ where $\phi$ is primitive
  recursive. Since this holds in the background universe we must also
  have for each $i$, $\vln \models \hat{f}(\hat{i}) =
  \{\hat{e}\}(\hat{i})$. Therefore $\vln \models (\forall x \in
  \nat)\,\hat{f}(x) = \{\hat{e}\}(x)$. Therefore $\vln \models
  \churchu$ as required.
\end{proof}

\begin{theorem}
  \label{thm:ctllpocon}
  Assume that $\czf$ is consistent. Then for each $n \in \nat$, the
  following theory is consistent.
  \[
  \czf + \markov + \llpo_n +  \bigwedge_{2 \leq k < n} \acn{k}{\lceil \frac{n}{k}
    \rceil} + \hacn + \churchu
  \]
  Assume that $\izf$ is consistent. Then for each $n \in \nat$, the
  following theory is consistent.
  \[
  \izf + \markov + \llpo_n +  \bigwedge_{2 \leq k < n} \acn{k}{\lceil \frac{n}{k}
    \rceil} + \hacn + \churchu
  \]  
\end{theorem}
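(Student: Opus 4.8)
The plan is to realize the theory as the Heyting-valued model $\vln$ built internally in the McCarty realizability model $V(\klone)$, following the two-step passage $\vln \hookrightarrow V(\klone) \hookrightarrow V$ from the introduction. Before that, a base-case reduction: since $\czf$ (resp.\ $\izf$) is consistent, so is $\czf + \markov$ (resp.\ $\izf + \markov$), because $\markov$ is conservative over these theories for $\Pi^0_2$ arithmetic statements (a standard $A$-translation argument) and in particular consistency-innocuous. So from now on I work in $T + \markov$, with $T$ either $\czf$ or $\izf$.

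I next read off from the soundness lemmas which axioms the ground model of the $\vln$-construction must satisfy: $T$ itself (for Gambino's Theorem~\ref{thm:1}, together with the fact that Gambino's construction sends models of $\izf$ to models of $\izf$, which covers the $\izf$ case); $\markov + \ipnn$ (Lemma~\ref{lem:vlnmarkov}); $\ip{\mathcal{F}_n}{\baire}$, which follows from $\ipnn$ (Lemmas~\ref{lem:llponsound}, \ref{lem:vlncc}, \ref{lem:hacnnsound}); $\choice_{\nat,\nat}$ (Lemmas~\ref{lem:vlncc} and \ref{lem:hacnnsound}); and $\churchu$ (Lemma~\ref{lem:vlnchurchu}). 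Since $\izf$ proves $\czf$, it suffices to exhibit, working in $T + \markov$, a model of $T + \markov + \ipnn + \choice_{\nat,\nat} + \churchu$; and $V(\klone)$ is such a model. Indeed it satisfies $T$ (for $T = \izf$ by McCarty's theorem that $V(\klone) \models \izf$ over an $\izf$ background); it satisfies $\markov$ because $\markov$ holds in the background; it validates $\church$, hence $\churchu$; it validates $\choice_{\nat,\nat}$ (in fact countable choice and more); and it satisfies $\ipnn$ by Lemma~\ref{lem:ipnnrealized}.

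Now I argue inside $V(\klone)$, where all the hypotheses above hold. There $\mathcal{L}_n$ is a set-presentable formal topology, proper by Proposition~\ref{prop:dncovinh}, and $\vln$ satisfies $\czf$ (Theorem~\ref{thm:1}), $\markov$ (Lemma~\ref{lem:vlnmarkov}), $\llpo_n$ (Lemma~\ref{lem:llponsound}), $\churchu$ (Lemma~\ref{lem:vlnchurchu}), $\hacn$ (Lemma~\ref{lem:hacnnsound}), and $\acn{k}{\lceil \frac{n}{k} \rceil}$ for each $k$ with $2 \leq k < n$ (Lemma~\ref{lem:vlncc}); this conjunction is vacuous when $n = 2$. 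By the constructive soundness of Heyting-valued semantics and the fact that $\llbracket \bot \rrbracket = \emptyset \neq \top$ for the proper topology $\mathcal{L}_n$, $V(\klone)$ sees that the theory $T + \markov + \llpo_n + \bigwedge_{2 \leq k < n} \acn{k}{\lceil \frac{n}{k} \rceil} + \hacn + \churchu$ is consistent; as consistency is a negative arithmetic statement, it reflects down the realizability interpretation to $V$. Hence this theory is consistent relative to $T + \markov$, and therefore relative to $T$; the $\izf$ half is identical, using the $\izf$-versions of the facts about $V(\klone)$ and about Gambino's construction.

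The argument is in essence bookkeeping---every ingredient is a lemma already proved---so I do not anticipate a genuine obstacle. The points that do need care are: (i) the base-case reduction $\operatorname{Con}(T) \Rightarrow \operatorname{Con}(T + \markov)$; (ii) checking that $V(\klone)$ simultaneously validates $\markov$, $\church$, $\choice_{\nat,\nat}$ and $\ipnn$---the last being precisely Lemma~\ref{lem:ipnnrealized}, and the reason one cannot build $\vln$ directly over $T + \markov$, which on its own supplies none of $\churchu$, $\ipnn$, or $\choice_{\nat,\nat}$; and (iii) confirming that both the McCarty and the Gambino constructions preserve $\izf$, so that the second theory really has a model of $\izf$ rather than merely of $\czf$.
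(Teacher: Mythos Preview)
Your proposal is correct and follows essentially the same two-step route as the paper: first pass from $T+\markov$ to the McCarty model $V(\klone)$ to obtain $\ipnn$, $\church$ (hence $\churchu$ and $\choice_{\nat,\nat}$), and $\markov$, then build $\vln$ there and invoke the soundness lemmas. Your write-up is in fact slightly more explicit than the paper's on the need for $\choice_{\nat,\nat}$ in the ground theory and on the $\izf$-preservation of both interpretations, but the architecture is identical.
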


\begin{proof}
  Let $T$ be either $\czf$ or $\izf$ and assume that $T$ is
  consistent. It is already known that in both cases $\markov$ does
  not change the consistency strength. ($\izf$ is the same consistency
  strength as $\mathbf{ZF}$ by the main result in \cite{friedman73}
  and $\czf$ is the same consistency strength as $\czf + \mathbf{LPO}$
  by \cite{rathjenlpo})
  
  So we have that $T + \markov$ is consistent. Then so is the theory
  $T + \markov + \church + \ipnn$ by working in the McCarty
  realizability model $V(\klone)$ and using the main results in
  \cite{mccarty} and \cite{rathjen06} together with lemma
  \ref{lem:ipnnrealized}.

  However we now get the result by building the model $\vln$ in
  $T + \markov + \church + \ipnn$ and applying lemmas
  \ref{lem:vlnmarkov}, \ref{lem:llponsound}, \ref{lem:vlncc},
  \ref{lem:hacnnsound} and
  \ref{lem:vlnchurchu}.
\end{proof}

In \cite{richmanllpon}, Richman gave a proof in Bishop style
constructive mathematics that for each $n$, $\llpo_n$ is inconsistent
with the statement that all functions are computable (that in fact
this is even true for $\llpo_\infty$). Richman's argument does not
hold in $\czf$ or even $\izf$, as is already clear from the earlier
Lifschitz realizability model in \cite{rathjenchen}. However, it turns
out that the only obstacle is an implicit use of countable choice, and
one can use $\acndn{n}$ to carry out Richman's argument, as follows.
\begin{theorem}
  \label{thm:richmansthm}
  For each $n \in \nat$, the following theory is inconsistent.
  \begin{equation*}
    \label{eq:21}
    \czf + \llpo_n + \churchu + \acndn{n}
  \end{equation*}
\end{theorem}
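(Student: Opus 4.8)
The plan is to recover Fred Richman's recursion-theoretic argument from \cite{richmanllpon} inside $\czf$, with the countable choice it silently uses replaced by the single instance $\acndn{n}$. Recall the shape of that argument: granting $\churchu$, one attaches to each index $e$ a binary sequence $\gamma^e\colon\nat\to 2$ with at most one $1$ --- namely run the $e$-th machine on input $e$, and if it ever halts, say at step $t$ with output $k<n$, plant the single $1$ of $\gamma^e$ at position $tn+k$ (so $\gamma^e$ is constantly $0$ when $\{e\}(e)$ diverges or outputs something $\geq n$). By $\churchu$ each $\gamma^e$ is an honest function, so $\llpo_n$ applies and gives, for each $e$, a residue $k_e<n$ with $\gamma^e$ constantly $0$ on $\{\,j:j\equiv k_e\!\!\pmod n\,\}$; necessarily $k_e\neq\{e\}(e)$ whenever $\{e\}(e)\!\downarrow\,<n$. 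Converting $e\mapsto k_e$ into an actual function and then (by $\churchu$) into an index $e^\ast$ for it, the instance $e^\ast$ refutes itself once we normalise so that $\{e^\ast\}$ only outputs values $<n$: $\{e^\ast\}$ is total, so $\{e^\ast\}(e^\ast)$ halts with some $k^\ast<n$, yet $k^\ast=k_{e^\ast}\neq\{e^\ast\}(e^\ast)=k^\ast$.

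The only appeal to choice here is the passage from ``$(\forall e)(\exists k<n)$: $\gamma^e$ is zero on class $k$'' to the function $e\mapsto k_e$, and the plan is to present this as an instance of $\acndn{n}$. The predicate ``$\gamma^e$ is zero on class $k$'' is $\Pi^0_1$, not bounded, so it must be replaced by a bounded surrogate $\phi(e,k):\equiv$ ``$k<n$ and $\gamma^e(in+k)=0$ for all $i\leq M(e)$'' for a suitable total recursive $M$. Every genuine $\llpo_n$-answer $k_e$ satisfies $\phi(e,\cdot)$, so $\llpo_n$ still yields $(\exists k)\,\phi(e,k)$, while $\phi(e,\cdot)$ has at most $n$ witnesses as $k$ ranges over $\{0,\dots,n-1\}$; hence $(\forall e)(\exlte{n}k)\,\phi(e,k)$ and $\acndn{n}$ produces $f\colon\nat\to\nat$ with $\phi(e,f(e))$ for all $e$ (the double negation in the conclusion of $\acndn{n}$ is harmless since $\phi$ is bounded, hence decidable). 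It then remains to arrange that $M$ is large enough at the diagonal index that $\phi(e^\ast,f(e^\ast))$ is genuinely \emph{refuted}: the planted $1$ of $\gamma^{e^\ast}$ lies in class $f(e^\ast)$ at the round equal to the halting time of $\{e^\ast\}(e^\ast)$, so we need $M(e^\ast)$ to dominate that halting time. I would secure this by running the construction through the recursion theorem together with the padding lemma --- $e^\ast$ is an index for the $f$ returned by $\acndn{n}$, and one is free to take a sufficiently padded such index --- so that $\gamma^{e^\ast}(tn+f(e^\ast))=1$ with $t\leq M(e^\ast)$, contradicting $\phi(e^\ast,f(e^\ast))$.

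The step I expect to be the real obstacle is precisely this last one: keeping $\phi$ a genuinely \emph{bounded} formula (so that $M$ is a fixed total recursive function) while simultaneously making $M$ adequate at the diagonal index, i.e.\ correctly interleaving the application of $\acndn{n}$ with the recursion-theoretic self-reference, since a crude deadline built into $\gamma^e$ would make $\llpo_n$ vacuous on those instances. Two bookkeeping points also need care: the normalisation of $f$ to values in $\{0,\dots,n-1\}$ (replace $f$ by $e\mapsto\min(f(e),n-1)$ and re-run, or fold ``$f(e)<n$'' into $\phi$), and checking that no further choice enters. Finally, it is the ``at most $n$'' witness bound, matching the $n$-fold residue partition in $\llpo_n$, that forces $\acndn{n}$ rather than a smaller variant: by Proposition \ref{prop:acimpl} and Theorem \ref{thm:ctllpocon}, $\acndn{k}$ for $2\leq k<n$ is already consistent with $\czf+\llpo_n+\churchu$, so no weaker bound can make the argument go through.
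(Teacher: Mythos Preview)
Your outline of Richman's diagonalisation is correct, but the ``obstacle'' you identify is self-inflicted: it comes from misreading the boundedness hypothesis in $\acndn{n}$. In $\czf$, a \emph{bounded} (i.e.\ $\Delta_0$) formula is one whose quantifiers are all of the form $(\forall x\in y)$ or $(\exists x\in y)$ for set terms $y$. Since $\nat$ is a set, the quantifier $(\forall i\in\nat)$ is bounded. Hence your $\Pi^0_1$-arithmetic predicate ``$\gamma^e$ is zero on residue class $k$'', namely $(\forall i\in\nat)\,\gamma^e(in+k)=0$, \emph{is} a bounded formula of set theory once the family $e\mapsto\gamma^e$ is taken as a set parameter. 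No surrogate with a recursive deadline $M$ is needed, and the circularity you try to resolve with the recursion theorem and padding never arises.

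The paper's proof is accordingly a few lines. For $i\in\nat$ and $j<n$ put $\alpha_{i,j}\in\natinfty$ with $\alpha_{i,j}(k)=0$ iff $\{i\}(i)$ has halted by stage $k$ with output $j$. Then $\alpha_{i,j}\vee\alpha_{i,j'}=1$ for $j\neq j'$, so $\llpo_n$ yields $(\forall i\in\nat)(\exlte{n} j)\,(j<n\wedge\alpha_{i,j}=1)$, a bounded formula with parameter the function $(i,j)\mapsto\alpha_{i,j}$. Now $\acndn{n}$ gives $f:\nat\to\nat$ with $\neg\neg(f(i)<n\wedge\alpha_{i,f(i)}=1)$ for all $i$; since $f(i)<n$ is decidable we get $f:\nat\to n$. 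By $\churchu$ there is $e$ with $\{e\}=f$, so $\{e\}(e)\!\downarrow=f(e)<n$, whence $\alpha_{e,f(e)}\neq 1$, contradicting $\neg\neg\,\alpha_{e,f(e)}=1$. One does \emph{not} remove the double negation (your remark ``bounded, hence decidable'' is false for $\Delta_0$ formulas with arbitrary set parameters); one simply derives the negation directly.
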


\begin{proof}
  For each $i, j \in \nat$ with $j < n$, we define $\alpha_{i, j} \in
  \natinfty$ as follows. $\alpha_{i,j}(k)$ is equal to $0$ if the
  $i$th Turing machine with input $i$ has halted by stage $k$ with
  output $j$, and $\alpha_{i, j}(k)$ is equal to $1$ otherwise.

  Note that for any $i$ and for any $j, j' < n$ with $j \neq j'$ we
  have $\alpha_{i, j} \vee \alpha_{i, j'} = 1$ (since the $i$th Turing
  machine on input $i$ can have at most $1$ output). Hence we can
  apply $\llpo_n$ to show that for some $j < n$, $\alpha_{i, j} = 1$.

  Now we can apply $\acndn{n}$ to find a function $f : \nat
  \rightarrow n$ such that for each $i$, $\neg \neg \alpha_{i, f(i)} =
  1$. (In fact this implies that $\alpha_{i, f(i)} = 1$, but we don't
  need this.)

  Now apply $\churchu$ to find $e \in \nat$ such that for all $i$,
  $\{e\}(i) = f(i)$. In particular, the $e$th Turing machine with
  input $e$ halts with output $f(e)$. Hence, for sufficiently large
  $k$ we have $\alpha_{e, f(e)} (k) = 0$ and so $\alpha_{e, f(e)} \neq
  1$. However, $f(e)$ was chosen so that $\neg \neg \alpha_{e, f(e)} =
  1$. Therefore we get a contradiction, as required.
\end{proof}

Hendtlass and Lubarsky showed in \cite{hendtlasslubarsky} that
$\llpo_{n + 1}$ is independent of $\llpo_n$ over $\izf + \mathbf{DC}$
using topological models. We obtain here a similar separation result.
\begin{corollary}
  For each $n$ $\llpo_{n + 1}$ does not imply $\llpo_n$ over $\izf +
  \markov + \churchu + \acndn{n} + \hacn$.
\end{corollary}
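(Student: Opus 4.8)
The plan is to derive this purely by combining Theorem~\ref{thm:ctllpocon} (applied with the index $n+1$) with Theorem~\ref{thm:richmansthm} (applied with the index $n$); throughout I assume $n \geq 2$, so that $\llpo_n$ is defined. Write $T := \izf + \markov + \churchu + \acndn{n} + \hacn$ for the base theory in the statement. First I would reduce the claim to two facts: (a) $T + \llpo_{n+1}$ is consistent (relative to $\izf$, equivalently $\mathbf{ZF}$), and (b) $T + \llpo_n$ is inconsistent. These suffice: if $T$ proved $\llpo_{n+1} \rightarrow \llpo_n$, then $T + \llpo_{n+1}$ would prove $\llpo_n$ and hence extend the inconsistent theory $T + \llpo_n$, contradicting (a); so $T$ does not prove $\llpo_{n+1}\rightarrow\llpo_n$, which is exactly the assertion that $\llpo_{n+1}$ does not imply $\llpo_n$ over $T$.

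For (a) I would apply Theorem~\ref{thm:ctllpocon} with $n+1$ in place of $n$: assuming $\izf$ is consistent, the theory
\[
\izf + \markov + \llpo_{n+1} + \bigwedge_{2 \leq k < n+1} \acn{k}{\lceil \frac{n+1}{k} \rceil} + \hacn + \churchu
\]
is consistent. Since $2 \leq n < n+1$, the conjunction includes the conjunct for $k = n$, namely $\acn{n}{\lceil \frac{n+1}{n} \rceil}$, and $\lceil \frac{n+1}{n} \rceil = 2$ because $1 < 1 + \tfrac1n \leq 2$ for every $n \geq 1$. Thus the consistent theory above contains $\acn{n}{2}$, and by Proposition~\ref{prop:acimpl} (the implication $\acx{k}{m} \Rightarrow \acxdn{k}$, here with $k = n$, $m = 2$) it proves $\acndn{n}$. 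Hence it proves every axiom of $T + \llpo_{n+1}$, so $T + \llpo_{n+1}$ is consistent, establishing (a).

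For (b) I would observe that $\izf$ proves all axioms of $\czf$, so Theorem~\ref{thm:richmansthm} — which says $\czf + \llpo_n + \churchu + \acndn{n}$ is inconsistent — immediately gives that $\izf + \llpo_n + \churchu + \acndn{n}$ is inconsistent; since this is a subtheory of $T + \llpo_n$, part (b) follows. Combining (a) and (b) as in the reduction completes the proof. There is really no hard step here: the only content beyond citing the two theorems is the elementary identity $\lceil \frac{n+1}{n} \rceil = 2$ together with the transfer $\acn{n}{2} \Rightarrow \acndn{n}$ from Proposition~\ref{prop:acimpl}, so the ``obstacle'' is merely the bookkeeping of lining up the exact axiom lists of the cited theorems.
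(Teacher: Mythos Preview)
Your proof is correct and follows exactly the paper's own argument: consistency of $T + \llpo_{n+1}$ from Theorem~\ref{thm:ctllpocon} together with Proposition~\ref{prop:acimpl}, and inconsistency of $T + \llpo_n$ from Theorem~\ref{thm:richmansthm}. You have simply made explicit the bookkeeping (the identity $\lceil \tfrac{n+1}{n}\rceil = 2$ and the passage $\acn{n}{2} \Rightarrow \acndn{n}$) that the paper leaves implicit.
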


\begin{proof}
  $\izf + \markov + \churchu + \acndn{n} + \hacn + \llpo_{n + 1}$ is
  consistent by theorem \ref{thm:ctllpocon} and proposition
  \ref{prop:acimpl} but $\izf + \markov + \churchu + \acndn{n} +
  \llpo_{n}$ is not by theorem \ref{thm:richmansthm}.
\end{proof}

In addition we get the following corollary by the same argument.

\begin{corollary}
  $\acndn{n}$ does not imply $\acndn{n + 1}$ over $\izf + \markov +
  \churchu + \llpo_{n + 1} + \hacn$.
\end{corollary}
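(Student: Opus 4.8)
The plan is to mirror the proof of the preceding corollary, interchanging the roles of the two choice principles. It suffices to establish two facts: that $\izf + \markov + \churchu + \llpo_{n+1} + \hacn + \acndn{n}$ is consistent (assuming $\izf$ is consistent), and that $\izf + \markov + \churchu + \llpo_{n+1} + \hacn + \acndn{n+1}$ is inconsistent. Together these yield that $\acndn{n}$ cannot prove $\acndn{n+1}$ over $\izf + \markov + \churchu + \llpo_{n+1} + \hacn$, for otherwise the first theory would be inconsistent.

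For the consistency half, I would apply Theorem~\ref{thm:ctllpocon} with $n+1$ in place of $n$, obtaining that
\[
\izf + \markov + \llpo_{n+1} + \bigwedge_{2 \leq k < n+1} \acn{k}{\lceil \frac{n+1}{k} \rceil} + \hacn + \churchu
\]
is consistent. As the corollary is understood to range over $n \geq 2$ (the values for which $\llpo_n$ is defined), the index $k = n$ satisfies $2 \leq n < n+1$, so $\acn{n}{\lceil \frac{n+1}{n} \rceil}$, which is $\acn{n}{2}$, is one of the conjuncts. By Proposition~\ref{prop:acimpl}, applied with $X = \nat$, $k = n$ and $m = m' = 2$ (the side conditions $k \geq 2$ and $m \geq 2$ being met), this conjunct already proves $\acndn{n}$. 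Hence the displayed theory proves $\acndn{n}$, and so $\izf + \markov + \churchu + \llpo_{n+1} + \hacn + \acndn{n}$ is consistent.

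For the inconsistency half, I would invoke Theorem~\ref{thm:richmansthm} with $n+1$ in place of $n$: the theory $\czf + \llpo_{n+1} + \churchu + \acndn{n+1}$ is inconsistent. Since $\izf$ extends $\czf$ and adjoining $\markov$ and $\hacn$ only shrinks the class of models, $\izf + \markov + \churchu + \llpo_{n+1} + \hacn + \acndn{n+1}$ is inconsistent as well.

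There is no real obstacle here; the only thing to watch is the index bookkeeping — checking that $\lceil \frac{n+1}{n} \rceil = 2$, so that the relevant conjunct supplied by Theorem~\ref{thm:ctllpocon} is exactly $\acn{n}{2}$, and confirming that the hypotheses $k \geq 2$ and $m \geq 2$ of Proposition~\ref{prop:acimpl} hold, which they do precisely because $n \geq 2$.
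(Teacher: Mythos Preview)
Your proposal is correct and follows exactly the paper's approach: consistency via Theorem~\ref{thm:ctllpocon} (applied at $n+1$) together with Proposition~\ref{prop:acimpl}, and inconsistency via Theorem~\ref{thm:richmansthm} (applied at $n+1$). Your explicit index computation $\lceil (n+1)/n \rceil = 2$ and the verification that $k=n\geq 2$ lies in the range of the conjunction merely spell out what the paper leaves implicit.
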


\begin{proof}
  $\izf + \markov + \churchu + \llpo_{n + 1} + \hacn + \acndn{n}$ is
  consistent by theorem \ref{thm:ctllpocon} and proposition
  \ref{prop:acimpl} but $\izf + \markov + \churchu + \llpo_{n + 1} +
  \acndn{n + 1}$ is not by theorem \ref{thm:richmansthm}.
\end{proof}

\subsection{Existence Properties}

\begin{theorem}
  \label{thm:2}
  Let $T$ be one of $\czf$ or $\izf$. Let $\phi(x)$ be a formula with
  one free variable, $x$. Suppose that
  \begin{equation*}
    \label{eq:5}
    T + \markov + \llpo_n \quad \vdash \quad (\exists j \in \nat)\,\phi(j)
  \end{equation*}
  Then there is a finite set $J \subseteq \nat$ such that
  \begin{equation*}
    T + \markov + \llpo_n \quad \vdash \quad \bigvee_{j \in J} \phi(\underline{j})
  \end{equation*}
\end{theorem}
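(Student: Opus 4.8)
The plan is to use the topological model $\vln$ constructed inside the McCarty realizability model $V(\klone)$, exactly as in the proof of Theorem~\ref{thm:ctllpocon}, together with the fact that $\vln$ is countably compact (Lemma~\ref{lem:lncompact}) to extract a finite disjunction. First I would observe that by Lemma~\ref{lem:degeneratevln}, if $\llpo_n$ already holds in the background then $\vln \models \phi$ iff $\phi$ is true, so $\vln$ is a faithful model of $T + \markov + \llpo_n$ in the relevant sense; but more importantly, working in $T + \markov + \ipnn$ (which we may assume consistent whenever $T + \markov$ is, via $V(\klone)$ and Lemmas~\ref{lem:ipnnrealized}, \ref{lem:vlnmarkov}, \ref{lem:llponsound}), the model $\vln$ always satisfies $\markov$ and $\llpo_n$ by those same lemmas. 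So the first move is purely proof-theoretic: from the hypothesis that $T + \markov + \llpo_n \vdash (\exists j \in \nat)\,\phi(j)$, together with the soundness of $\vln$ for $\czf + \markov + \llpo_n$, conclude that $T + \markov + \ipnn \vdash \vln \models (\exists j \in \nat)\,\phi(j)$.

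Next I would invoke Lemma~\ref{lem:listwitnesses}: since $\vln \models (\exists j \in \nat)\,\phi(j)$, there is a finite $J \subseteq \nat$ with $\vln \models (\exists j \in \hat J)\,\phi(j)$, hence $\vln \models \bigvee_{j \in J}\phi(\hat j)$. The delicate point is that $J$ is obtained inside the metatheory $T + \markov + \ipnn$, so a priori it is not a fixed finite set but one whose existence is merely provable. To pin it down as an actual numeral-indexed finite set in the conclusion, I would run this argument at the level of \emph{proofs} rather than models: a derivation of $(\exists j \in \nat)\,\phi(j)$ in $T + \markov + \llpo_n$ is a finite object, and by formalizing the soundness of $\vln$ (Theorem~\ref{thm:1} plus Lemmas~\ref{lem:vlnmarkov}, \ref{lem:llponsound}) and the countable-compactness extraction (Lemmas~\ref{lem:lncompact}, \ref{lem:listwitnesses}) one obtains, primitively recursively in that derivation, a genuine finite $J = \{j_1, \dots, j_k\}$ together with a derivation in $T + \markov + \ipnn$ of $\bigvee_{i} \phi(\underline{j_i})$.

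Finally I would transfer this back from $T + \markov + \ipnn$ to $T + \markov + \llpo_n$. This is the step I expect to be the main obstacle, since $\ipnn$ is not obviously a consequence of $\llpo_n$'s ambient theory --- but it is, in the relevant direction: Lemma~\ref{lem:llpoipnn} gives exactly $\llpo_n \Rightarrow \ipnn$. Hence $T + \markov + \llpo_n \vdash \ipnn$, so any theorem of $T + \markov + \ipnn$, in particular $\bigvee_{i}\phi(\underline{j_i})$, is already a theorem of $T + \markov + \llpo_n$. This yields the required finite set $J$ with $T + \markov + \llpo_n \vdash \bigvee_{j \in J}\phi(\underline{j})$. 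The only real care needed is the bookkeeping in the middle paragraph: making sure the passage through $\vln$ is carried out inside a theory ($T + \markov + \ipnn$) that $\llpo_n$ proves, and that the finite witness set is genuinely extracted (not merely shown to exist) so that the final disjunction is over explicit numerals.
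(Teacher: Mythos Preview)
Your overall architecture matches the paper's: pass to $\vln$ inside $T + \markov + \ipnn$ using Lemmas~\ref{lem:llponsound} and~\ref{lem:vlnmarkov}, apply countable compactness (Lemma~\ref{lem:listwitnesses}) to get a finite $J$, then use $\llpo_n \Rightarrow \ipnn$ (Lemma~\ref{lem:llpoipnn}) and the degeneracy of $\vln$ under $\llpo_n$ (Lemma~\ref{lem:degeneratevln}) to land back in $T + \markov + \llpo_n$. Two points need tightening.

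First, the extraction of a \emph{concrete} $J$. You acknowledge this is delicate and propose to obtain $J$ ``primitively recursively in that derivation'' by formalising soundness and compactness. That is essentially asserting a numerical existence property for $T + \markov + \ipnn$ without proving it. The paper handles this step cleanly by invoking Theorem~\ref{thm:ipnep}, which was established earlier via realizability with truth (Lemma~\ref{lem:iptruth}): one proves in $T + \markov + \ipnn$ that \emph{some} code for a finite set $J$ works, then NEP hands back an actual numeral, hence an actual finite $J$. Your informal extraction would, if carried out, amount to re-deriving exactly this; citing Theorem~\ref{thm:ipnep} is the missing ingredient.

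Second, the bookkeeping at the end is slightly off. After extraction you do \emph{not} get a derivation of $\bigvee_{j \in J}\phi(\underline{j})$ in $T + \markov + \ipnn$; you get a derivation of $\vln \models \bigvee_{j \in J}\phi(\hat{\underline{j}})$ there. So the transfer via Lemma~\ref{lem:llpoipnn} only yields $T + \markov + \llpo_n \vdash \vln \models \bigvee_{j \in J}\phi(\hat{\underline{j}})$, and it is then Lemma~\ref{lem:degeneratevln} (which genuinely needs $\llpo_n$, not just $\ipnn$) that strips the ``$\vln \models$'' to give the desired $T + \markov + \llpo_n \vdash \bigvee_{j \in J}\phi(\underline{j})$. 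You mentioned Lemma~\ref{lem:degeneratevln} at the start but did not place it where it is actually used.
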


\begin{proof}
  Suppose that 
  \begin{equation*}
    T + \markov + \llpo_n \quad \vdash \quad (\exists j \in \nat)\,\phi(j)
  \end{equation*}
  Then we have by lemma \ref{lem:llponsound} that
  \begin{equation*}
    T + \markov + \ipnn \quad \vdash \quad \vln \models (\exists
    j \in \nat)\, \phi(j)
  \end{equation*}
  Fix a primitive recursive encoding of finite sets of naturals as
  naturals. Then by lemma \ref{lem:listwitnesses}, working in $T +
  \markov + \ipnn$ we can prove that there exists a natural number
  encoding a finite set $J$ such that $\vln \models (\exists j \in
  \hat{J})\, \phi(j)$. Now applying theorem \ref{thm:ipnep} and
  absoluteness for primitive recursive formulas we have a finite set
  $J \subseteq \nat$ such that
  \begin{equation*}
    \label{eq:6}
    T + \markov + \ipnn \quad \vdash \quad \vln \models \bigvee_{j \in
    J}\,\phi(\hat{\underline{j}})
  \end{equation*}
  
  By lemma \ref{lem:llpoipnn} we have in particular that,
  \begin{equation*}
    T + \markov + \llpo_n \quad \vdash \quad \vln \models \bigvee_{j \in
    J}\,\phi(\hat{\underline{j}})
  \end{equation*}

  Finally we apply lemma \ref{lem:degeneratevln} to get
  \begin{equation*}
    T + \markov + \llpo_n \quad \vdash \quad 
    \bigvee_{j \in J} \phi(\underline{j})
  \end{equation*}
\end{proof}

\begin{theorem}
  \label{thm:nepmain}
  Let $T$ be one of $\czf$ or $\izf$. Let $n,k \in \nat$ and $k < n$,
  and let $\phi(x)$ be a formula with one free variable, $x$. Suppose
  that
  \begin{equation*}
    T + \markov + \llpo_n \vdash (\exlte{k} x)\,\phi(x)
  \end{equation*}
  Then for some $j \in \nat$ we have
  \begin{align}
    T + \markov + \llpo_n &\vdash \neg \neg \phi(\underline{j}) \label{eq:dntruth}\\
    T + \markov + \llpo_{\lceil \frac{n}{k} \rceil} &\vdash
    \phi(\underline{j}) \label{eq:noverktruth}
  \end{align}
\end{theorem}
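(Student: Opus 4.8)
The plan is to run the argument of Theorem \ref{thm:2} through the topological model $\vln$, but this time feeding $\vln \models (\exlte{k} x)\,\phi(x)$ into Lemma \ref{lem:buildwitness} rather than into the compactness Lemma \ref{lem:lncompact}, and then using the numerical existence property of $T + \markov + \ipnn$ (Theorem \ref{thm:ipnep}) to pin down a single numeral $j$. First I would observe that, provably in $T + \markov + \ipnn$, $\vln$ is a model of $T$ (Theorem \ref{thm:1}, together with the corresponding fact for $\izf$), of $\markov$ (Lemma \ref{lem:vlnmarkov}), and of $\llpo_n$ (Lemma \ref{lem:llponsound}). Hence, since $\vln$ soundly interprets intuitionistic logic, the hypothesis $T + \markov + \llpo_n \vdash (\exlte{k} x)\,\phi(x)$ gives $T + \markov + \ipnn \vdash \vln \models (\exlte{k} x)\,\phi(x)$.

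Next I would unfold this statement. Writing $p_i := \llbracket \phi(\hat i)\rrbracket$ — a subset of $\{0\}$, hence a set, uniformly definable in $i$ — the first conjunct of $\exlte{k}$ yields $0 \triangleleft \bigcup_{i\in\nat} p_i$ relative to $\mathcal{L}_n$. For the second conjunct, $\mathcal{L}_n$ is proper by Proposition \ref{prop:dncovinh}, so $\llbracket\bot\rrbracket = \emptyset$; thus, given any finite $I \subseteq \nat$ with $|I| > k$, choosing $k+1$ pairwise distinct elements of $I$ and using absoluteness of $\neq$ on numerals turns $\vln \models \neg\bigwedge_l \phi(\widehat{i_l})$ into $\bigcap_{i\in I} p_i = \emptyset$. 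These are exactly the hypotheses of Lemma \ref{lem:buildwitness}, so with $m := \lceil \frac{n}{k}\rceil$ we obtain, in $T + \markov + \ipnn$, the statement $(\exists j\in\nat)\,\Theta(j)$ where $\Theta(j)$ abbreviates ``there is a good $m$-tree $S$ with $\cov(S) \subseteq \llbracket\phi(\hat j)\rrbracket$''. Since the $m$-trees form a set and $\llbracket\phi(\hat j)\rrbracket$ is a set uniformly in $j$, this is a genuine arithmetical existential statement, so Theorem \ref{thm:ipnep} supplies a numeral $j$ with $T + \markov + \ipnn \vdash \Theta(\underline{j})$.

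To finish I would note that $\llpo_n$ implies $\ipnn$ (Lemma \ref{lem:llpoipnn}) and that $\llpo_m$ implies $\llpo_n$ (since $m \le n$: given $\alpha_1,\dots,\alpha_n \in \natinfty$ pairwise joining to $1$, apply the $\natinfty$-formulation of $\llpo_m$ to $\alpha_1,\dots,\alpha_m$), so $\Theta(\underline{j})$ is provable in both $T+\markov+\llpo_n$ and $T+\markov+\llpo_m$. Fix the good $m$-tree $S$ from $\Theta(\underline{j})$, so $\cov(S)\subseteq\llbracket\phi(\hat j)\rrbracket$. In either theory $\llpo_n$ holds, so by Lemma \ref{lem:degeneratevln} $\vln$ is (isomorphic to) $V$ and $\vln\models\phi(\hat j)$ is equivalent to $\phi(\underline j)$; hence it suffices to produce $\neg\neg\, 0\in\cov(S)$ (respectively $0\in\cov(S)$). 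For \eqref{eq:dntruth}, working in $T+\markov+\llpo_n$, Proposition \ref{prop:dncovinh} gives $\neg\neg\, 0\in\cov(S)$ directly, whence $\neg\neg\phi(\underline{j})$. For \eqref{eq:noverktruth}, working in $T+\markov+\llpo_m$, Theorem \ref{thm:llpontrees} tells us the good $m$-tree $S$ is very good, so $0\in\cov(S)$ by Lemma \ref{lem:covvgood}, whence $\phi(\underline{j})$.

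The step I expect to need the most care is the unfolding in the second paragraph: verifying that $\vln\models(\exlte{k} x)\,\phi(x)$ really yields exactly the two hypotheses of Lemma \ref{lem:buildwitness}. This rests on properness of $\mathcal{L}_n$ (to pass from $\vln\models\neg\psi$ to $\llbracket\psi\rrbracket=\emptyset$) together with absoluteness of equality and inequality of numerals, and on the observation that a finite set of size $> k$ contains the $k+1$ distinct elements demanded by the definition of $\exlte{k}$. A minor additional point is that the tree lemmas (Proposition \ref{prop:dncovinh}, Lemma \ref{lem:covvgood}, Theorem \ref{thm:llpontrees}) are applied here to trees of arity $m = \lceil \frac{n}{k}\rceil$ rather than to $n$-trees; their proofs are uniform in the arity, so this causes no difficulty.
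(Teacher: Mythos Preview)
Your proposal is correct and follows essentially the same route as the paper: pass to $\vln$ via Lemma~\ref{lem:llponsound}, apply Lemma~\ref{lem:buildwitness} to the sets $p_j = \llbracket \phi(\hat j)\rrbracket$, extract a numeral via the numerical existence property (Theorem~\ref{thm:ipnep}), and then reflect back using Lemma~\ref{lem:degeneratevln} together with the tree lemmas. The only cosmetic differences are that you invoke Proposition~\ref{prop:dncovinh} where the paper cites Corollary~\ref{cor:nvgoodngood} (these are equivalent via Lemma~\ref{lem:covvgood}), and you make explicit both the unfolding of $\exlte{k}$ into the hypotheses of Lemma~\ref{lem:buildwitness} and the implication $\llpo_{\lceil n/k\rceil} \Rightarrow \llpo_n$, which the paper leaves implicit.
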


\begin{proof}
  Suppose that $T + \markov + \llpo_n \vdash (\exlte{k}
  x)\,\phi(x)$. Then we have by lemma \ref{lem:llponsound} that
  \begin{equation*}
    \label{eq:44}
    T + \markov + \ipnn \quad \vdash \quad \vln \models (\exlte{k}
    x)\, \phi(x)
  \end{equation*}
  Hence, 
  applying lemma \ref{lem:buildwitness} with $p_j := \llbracket
  \phi(\hat{j}) \rrbracket$, and writing
  $\operatorname{Good}(T)$ to mean $T$ is a good $\lceil \frac{n}{k}
  \rceil$ tree and
  $\operatorname{VeryGood}(T)$ to mean $T$ is a very good tree,
  \begin{multline}
    \label{eq:48}
    T + \markov + \ipnn \quad \vdash \quad (\exists j \in \nat)(\exists
    T)\,\operatorname{Good}(T) \; \wedge \\ \operatorname{VeryGood}(T)
    \rightarrow \left( \vln \models \phi(\hat{j}) \right)
  \end{multline}

  We now apply lemma \ref{thm:ipnep} to find $j \in \nat$ such that
  \begin{multline}
    T + \markov + \ipnn \quad \vdash \quad (\exists
    T)\,\operatorname{Good}(T) \; \wedge \\ \operatorname{VeryGood}(T)
    \rightarrow \left( \vln \models \phi(\hat{\underline{j}}) \right)
  \end{multline}
  
  By lemma \ref{lem:llpoipnn} we have in particular that,
  \begin{multline}
    T + \markov + \llpo_n \quad \vdash \quad (\exists
    T)\,\operatorname{Good}(T) \; \wedge \\ \operatorname{VeryGood}(T)
    \rightarrow \left( \vln \models \phi(\hat{\underline{j}}) \right)
  \end{multline}
  
  However, we also have by lemma \ref{lem:degeneratevln} that 
  \begin{equation*}
    \label{eq:49}
    T + \markov + \llpo_n \quad \vdash \quad (\forall j \in \nat)\;\left(\vln
      \models \phi(j) \right) \rightarrow \phi(j)
  \end{equation*}
  
  Finally, we deduce \eqref{eq:dntruth} by corollary
  \ref{cor:nvgoodngood} and deduce \eqref{eq:noverktruth} by theorem
  \ref{thm:llpontrees}.
\end{proof}

\begin{corollary}
  \label{thm:6}
  Let $T$ be one of $\czf$ or $\izf$. Let $n,k \in \nat$ and $k < n$,
  and let $\phi_1,\ldots,\phi_k$ be sentences.
  Suppose that
  \begin{equation*}
    T + \markov + \llpo_n \vdash \bigvee_{i = 1}^k \phi_i
  \end{equation*}
  Then for some $1 \leq i \leq k$ we have
  \begin{align*}
    T + \markov + \llpo_n &\vdash \neg \neg \phi_i \\
    T + \markov + \llpo_{\lceil \frac{n}{k} \rceil} &\vdash \phi_i
  \end{align*}
\end{corollary}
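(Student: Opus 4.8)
The plan is to reduce Corollary \ref{thm:6} to Theorem \ref{thm:nepmain} by coding the disjunction $\bigvee_{i=1}^k \phi_i$ as an existential statement over $\nat$ having at most $k$ witnesses. (Here, as in the cited results, we take $1 \le k < n$, so that $\lceil \frac{n}{k}\rceil$ is defined.)

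First I would define $\phi(x)$ to be the formula $\bigvee_{i=1}^{k}\bigl(x = \underline{i-1} \wedge \phi_i\bigr)$, which has exactly one free variable. From the hypothesis $T + \markov + \llpo_n \vdash \bigvee_{i=1}^k \phi_i$ one immediately gets $T + \markov + \llpo_n \vdash (\exists x \in \nat)\,\phi(x)$, since from $\phi_i$ one obtains the witness $x = \underline{i-1}$. For the bound: any witness of $\phi$ satisfies $x = \underline{i-1}$ for some $1 \le i \le k$, hence is one of the $k$ numerals $\underline 0,\dots,\underline{k-1}$; since equality on $\nat$ is decidable and the pigeonhole principle for finitely enumerable sets holds in $\czf$, $\czf$ proves that no $k+1$ pairwise distinct naturals can all be witnesses of $\phi$. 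Thus $T + \markov + \llpo_n \vdash (\exlte{k} x)\,\phi(x)$, and Theorem \ref{thm:nepmain} applies: there is $j \in \nat$ with
\[
T + \markov + \llpo_n \vdash \neg\neg\phi(\underline j)
\qquad\text{and}\qquad
T + \markov + \llpo_{\lceil \frac{n}{k}\rceil} \vdash \phi(\underline j).
\]

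Next I would unwind $\phi(\underline j)$. If $j \ge k$, then $\czf \vdash \neg\phi(\underline j)$ (every disjunct contains the provably false equation $\underline j = \underline{i-1}$), and together with the two displayed consequences this makes both theories $T + \markov + \llpo_n$ and $T + \markov + \llpo_{\lceil \frac{n}{k}\rceil}$ inconsistent, so the conclusion holds trivially with $i = 1$. Otherwise $j < k$; set $i_0 := j+1 \in \{1,\dots,k\}$. Using decidability of equality on $\nat$, $\czf$ proves $\underline j = \underline{i_0-1}$ and $\underline j \neq \underline{i-1}$ for all $i \neq i_0$, so $\czf \vdash \phi(\underline j) \leftrightarrow \phi_{i_0}$. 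Substituting this equivalence into the two consequences of Theorem \ref{thm:nepmain} yields $T + \markov + \llpo_n \vdash \neg\neg\phi_{i_0}$ and $T + \markov + \llpo_{\lceil \frac{n}{k}\rceil} \vdash \phi_{i_0}$, which is what is required with $i = i_0$.

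This is essentially bookkeeping rather than a substantial argument; the one place needing a little care is checking the ``at most $k$ witnesses'' clause of $(\exlte{k}x)\,\phi(x)$ — that is, that the combinatorial fact ``$k+1$ pairwise distinct naturals cannot all lie below $k$'' is derivable in $\czf$ — which it is, via decidable equality on $\nat$ and the pigeonhole principle already recorded in the excerpt. The degenerate possibility $j \ge k$ is handled trivially once one observes it forces inconsistency of the hypothesis theory.
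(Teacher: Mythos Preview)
Your reduction is correct and is exactly the intended derivation: the paper states this result as a corollary of Theorem \ref{thm:nepmain} with no explicit proof, and the natural way to obtain it is precisely to code $\bigvee_{i=1}^k \phi_i$ as an $(\exlte{k} x)$-statement over $\nat$ and then unwind the numeral produced by the theorem. Your handling of the boundary case $j \geq k$ and the verification of the ``at most $k$ witnesses'' clause are both fine.
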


\begin{corollary}
  \label{cor:unep}
  Let $T$ be one of $\czf$ or $\izf$. Let $n \in \nat$
  and let $\phi(x)$ be a formula with one free variable, $x$. Suppose
  that
  \begin{equation*}
    T + \markov + \llpo_n \vdash (\exists ! x \in \nat)\,\phi(x)
  \end{equation*}
  Then for some $j \in \nat$ we have
  \begin{align*}
    T + \markov + \llpo_{n} &\vdash
    \phi(\underline{j})
  \end{align*}
\end{corollary}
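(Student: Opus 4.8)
The plan is to recognise unique existence over $\nat$ as precisely the case $k = 1$ of the hypothesis of Theorem~\ref{thm:nepmain}. First I would observe that $\czf$ (and hence $T$) proves
\[
  (\exists ! x \in \nat)\,\phi(x) \quad \rightarrow \quad (\exlte{1} x)\,\phi(x) .
\]
Indeed, the first conjunct of $(\exlte{1} x)\,\phi$, namely $(\exists x \in \nat)\,\phi$, is immediate. For the second conjunct we must show $(\forall x_1, x_2 \in \nat)(x_1 \neq x_2 \rightarrow \neg(\phi(x_1) \wedge \phi(x_2)))$; but if $x$ is the unique witness and $\phi(x_1) \wedge \phi(x_2)$ holds, then $x_1 = x$ and $x_2 = x$, so $x_1 = x_2$, contradicting $x_1 \neq x_2$. (The converse implication also holds, using decidability of equality on $\nat$, but we will not need it.)

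Consequently, if $T + \markov + \llpo_n \vdash (\exists ! x \in \nat)\,\phi(x)$, then $T + \markov + \llpo_n \vdash (\exlte{1} x)\,\phi(x)$. Since $\llpo_n$ is only considered for $n \geq 2$, we have $1 < n$, so Theorem~\ref{thm:nepmain} applies with $k = 1$. As $\lceil \frac{n}{1} \rceil = n$, its conclusion \eqref{eq:noverktruth} yields directly that for some $j \in \nat$,
\[
  T + \markov + \llpo_n \vdash \phi(\underline{j}) ,
\]
which is exactly what is claimed. (Conclusion \eqref{eq:dntruth} would only give the weaker $\neg\neg\phi(\underline{j})$ over the same theory, and is subsumed.)

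I do not anticipate any genuine obstacle: the corollary is essentially an immediate instance of Theorem~\ref{thm:nepmain}, the only point worth spelling out being that unique existence over $\nat$ unfolds to $\exlte{1}$, and that dividing the bound $n$ by $k = 1$ leaves it unchanged, so no passage to a weaker fragment $\llpo_{\lceil n/k \rceil}$ is incurred.
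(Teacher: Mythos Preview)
Your proposal is correct and matches the paper's own proof, which simply states that the corollary is the special case of \eqref{eq:noverktruth} in Theorem~\ref{thm:nepmain} with $k := 1$. You have merely spelled out in more detail why $(\exists ! x \in \nat)\,\phi(x)$ entails $(\exlte{1} x)\,\phi(x)$ and why $\lceil n/1 \rceil = n$, which is fine.
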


\begin{proof}
  This is a special case of \eqref{eq:noverktruth} in theorem
  \ref{thm:nepmain} taking $k := 1$.
\end{proof}

By contrast, we see below that none of these theories can have the
full numerical existence property.

\begin{theorem}
  \label{thm:llpononep}
  The numerical existence property does not hold for any consistent,
  recursively axiomatisable extension of $\czf + \llpo_\infty$.
\end{theorem}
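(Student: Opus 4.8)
The plan is a Gödel--style diagonal argument. Assume for contradiction that $T$ is a consistent, recursively axiomatisable theory with $T \supseteq \czf + \llpo_\infty$ and that $T$ has the numerical existence property, i.e.\ whenever $T \vdash (\exists x \in \nat)\,\psi(x)$ there is a numeral $\underline{n}$ with $T \vdash \psi(\underline{n})$. Fix a primitive recursive proof predicate $\mathrm{Prf}_T(p,c)$ for $T$ (``$p$ codes a $T$-proof of the sentence with G\"odel number $c$''), a primitive recursive substitution function $\mathrm{sub}(y,k)$ returning the G\"odel number of the formula coded by $y$ with its free variable replaced by $\underline{k}$, and a primitive recursive surjective pairing $\langle\cdot,\cdot\rangle$ with primitive recursive inverses. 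Let $A(y,i)$ be the $\Delta_0$ (equivalently, primitive recursive) formula of arithmetic which, writing $i = \langle k,m\rangle$, asserts $\mathrm{Prf}_T(m,\mathrm{sub}(y,k))$ together with: for every $i' < i$, writing $i' = \langle k',m'\rangle$, $\neg\,\mathrm{Prf}_T(m',\mathrm{sub}(y,k'))$. By the diagonal lemma (available in $\czf$) there is a formula $\phi(x)$ with
\[
  T \vdash \phi(x) \;\leftrightarrow\; (\forall n \in \nat)\,\neg A\big(\godelno{\phi},\langle x,n\rangle\big),
\]
and we let $\alpha\colon\nat\to 2$ be the provably total, $\Delta_0$-definable function with $\alpha(i) = 1 \leftrightarrow A(\godelno{\phi},i)$. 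The ``second conjunct'' of $A$ guarantees, provably in $\czf$, that $\alpha(i) = 1$ for at most one $i$.

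First I would observe that $T \vdash (\exists k \in \nat)\,\phi(k)$. Indeed, $T$ proves that $\alpha$ is a binary sequence with at most one $1$, so the instance of $\llpo_\infty$ for $\alpha$ yields $T \vdash (\exists k \in \nat)(\forall n \in \nat)\,\alpha(\langle k,n\rangle) = 0$, which is $T$-equivalent to $(\exists k \in \nat)\,\phi(k)$ by construction. By the numerical existence property there is a specific $j \in \nat$ with $T \vdash \phi(\underline{j})$.

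Then I would derive a contradiction. Since $T \vdash \phi(\underline{j})$, there is an actual $T$-proof of $\phi(\underline{j})$, so the set of $i = \langle k,m\rangle$ with $\mathrm{Prf}_T(m,\mathrm{sub}(\godelno{\phi},k))$ true is nonempty; let $i^* = \langle k^*,m^*\rangle$ be its least element. By minimality $A(\godelno{\phi},i^*)$ is true, so $\alpha(i^*) = 1$ is a true closed primitive recursive assertion about numerals, hence provable: $T \vdash \alpha(\langle \underline{k^*},\underline{m^*}\rangle) \neq 0$. On the other hand $m^*$ codes a $T$-proof of $\phi(\underline{k^*})$, so $T \vdash \phi(\underline{k^*})$, whence $T \vdash (\forall n \in \nat)\,\alpha(\langle\underline{k^*},n\rangle) = 0$, and in particular $T \vdash \alpha(\langle\underline{k^*},\underline{m^*}\rangle) = 0$. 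This contradicts the consistency of $T$, completing the argument.

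The only points requiring care are bookkeeping: arranging $A$ so that $T$ provably sees $\alpha$ as a total binary sequence with at most one $1$ (handled by the ``least index'' clause, since $\llpo_\infty$ applies only to such sequences), and keeping all arithmetic at complexity $\Delta_0$ so that $\czf$ --- hence $T$ --- proves its true instances with numerals (using decidability of bounded arithmetic formulas in $\czf$). I expect the diagonal construction itself to be routine; the main conceptual content is simply the choice to apply $\llpo_\infty$ to the ``first $T$-proof of an instance of $\phi$'' sequence.
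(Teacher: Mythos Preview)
Your proposal is correct and follows essentially the same G\"odel-style diagonal argument as the paper: both construct a formula $\phi$ asserting ``no proof of $\phi(\underline{x})$ is the first proof of any instance of $\phi$,'' apply $\llpo_\infty$ to the associated binary sequence to obtain $T \vdash (\exists x)\,\phi(x)$, and then derive a contradiction from the least actual proof. The only cosmetic difference is that you invoke the single-sequence formulation of $\llpo_\infty$ directly, whereas the paper uses the equivalent $\natinfty$-family formulation (their $\alpha_n$ with $\alpha_n \vee \alpha_{n'} = 1$).
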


\begin{proof}
  Let $T$ be a consistent recursively axiomatisable extension of $\czf
  + \llpo_\infty$. In fact, a similar proof works for any theory $T$
  that interprets enough first order arithmetic to state
  $\llpo_\infty$ and carry out the constructions used in G\"odel's
  incompleteness theorem. However, for convenience we will use
  definitions and notation from set theory.

  Assume that we are given a bijective pairing on $\nat$ with
  primitive recursive pairing and projection functions, which we write
  as $( , )$, $( )_0$ and $( )_1$ respectively, and let $\prf$ be a
  primitive recursive provability predicate.  

  Construct by diagonalisation a formula $\phi(n)$, where $n$ is the
  only free variable and such that
  \begin{multline}
    \label{eq:23}
    T \vdash (\forall n \in \nat)\,(\phi(n) \; \leftrightarrow \;
    ((\forall m \in \nat)\,((m)_0 = n \wedge \prf((m)_1,
    \godelno{\phi(\underline{n})}) \quad \rightarrow \\
    (\exists m' < m)\,
    \prf((m')_1, \godelno{\phi(\underline{(m')_0})}))))
  \end{multline}

  Write $\psi(n, m)$ for the formula
  \begin{multline}
    \label{eq:45}
    \psi(n, m) := ((m)_0 = n \wedge \prf((m)_1,
    \godelno{\phi(\underline{n})}) \quad \rightarrow \\
    (\exists m' < m)\,
    \prf((m')_1, \godelno{\phi(\underline{(m')_0})})))
  \end{multline}

  Now define for each $n \in \nat$, $\alpha_n \in \natinfty$ as
  follows
  \begin{equation*}
    \label{eq:24}
    \alpha_n(l) :=
    \begin{cases}
      1 & \text{for all } m \leq l, \psi(n, m) \\
      0 & \text{otherwise}
    \end{cases}
  \end{equation*}

  So that we can apply $\llpo_\infty$, we first show that for all $n
  \neq n'$ we have $\alpha_n \vee \alpha_{n'} = 1$. For any $l \in
  \nat$, assume for a contradiction that $\alpha_n \vee \alpha_{n'}
  (l) = 0$. Without loss of generality we may assume $l$ is the least
  such number (since $\psi(n, m)$ is primitive recursive and so
  decidable). By the minimality of $l$ we must have either $\neg
  \psi(n, l)$ or $\neg \psi(n', l)$. However, we cannot have both of
  these since this would imply $(l)_0 = n$ and $(l)_0 = n'$. Hence we
  have without loss of generality $(l)_0 = n$ and since
  $\alpha_{n'}(l) = 0$ and $\psi(n', l)$, there must be some $l' < l$
  such that $\neg \psi(n', l')$. In particular we have $\prf((l')_1,
  \godelno{\phi(\underline{(l')_0})})$ but also for all $m < l$, $\neg
  \prf((m)_1, \godelno{\phi(\underline{(m)_0})})$, giving us a
  contradiction. Therefore, $\alpha_n \vee \alpha_{n'} = 1$ as
  required.

  We can now apply $\llpo_\infty$ to show that $T \vdash (\exists n
  \in \nat)\, \alpha_n = 1$. Note that this implies $T \vdash (\exists
  n \in \nat)\, \phi(n)$.

  Now if we assume that the numerical existence property holds for $T$
  then there must be some $n \in \nat$ such that $T \vdash
  \phi(\underline{n})$. So there must be $m$ such that $(m)_1$ codes a
  proof for $\phi(\underline{(m)_0})$ (by taking $(m)_0 = n$). Since
  the provability predicate is decidable, without loss of generality
  we can take $m$ to be the least number such that $(m)_1$ codes a
  proof for $\phi(\underline{(m)_0})$. By the minimality of $m$ we
  have that for all $m' < m$, $\neg \prf((m')_1,
  \phi(\underline{(m')_0}))$. But this is a $\Delta_0$ sentence, so by
  absoluteness for $\Delta_0$ sentences we have
  \begin{equation*}
    \label{eq:46}
    T \vdash \neg (\exists m' < \underline{m})\,\prf((m')_1,
    \phi(\underline{(m')_0}))
  \end{equation*}
  Again by absoluteness of $\Delta_0$ sentences, we also have
  \begin{equation*}
    \label{eq:47}
    T \vdash (\underline{m})_0 = \underline{(m)_0} \,\wedge\,
    \prf((\underline{m})_1, \godelno{\phi(\underline{(m)_0})})
  \end{equation*}
  Hence we have $T \vdash \neg \phi(\underline{(m)_0})$,
  contradicting that $T \vdash \phi(\underline{(m)_0})$ and the
  consistency of $T$. Therefore the numerical existence property must
  fail for $T$.
\end{proof}

\begin{corollary}
  For every $n$, there is a formula with one free variable, $\phi(x)$,
  such that
  $\izf + \llpo_n + \markov \vdash (\exists x \in \nat)\,\phi(x)$ but
  for every formula $\psi(x)$,
  $\izf + \llpo_n + \markov \nvdash (\exists ! x \in \nat)\,\phi(x)
  \wedge \psi(x)$.
\end{corollary}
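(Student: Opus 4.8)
The plan is to reduce the claim, via Corollary~\ref{cor:unep}, to the following weaker failure of the numerical existence property: for each $n$ there is a formula $\phi(x)$ with $\izf + \markov + \llpo_n \vdash (\exists x \in \nat)\,\phi(x)$ but $\izf + \markov + \llpo_n \nvdash \phi(\underline{j})$ for every $j \in \nat$. Given such a $\phi$, the corollary follows immediately: if $\psi$ were a formula with $\izf + \markov + \llpo_n \vdash (\exists! x \in \nat)(\phi(x) \wedge \psi(x))$, then Corollary~\ref{cor:unep} applied to the formula $\phi(x)\wedge\psi(x)$ would produce $j \in \nat$ with $\izf + \markov + \llpo_n \vdash \phi(\underline{j}) \wedge \psi(\underline{j})$, hence $\izf + \markov + \llpo_n \vdash \phi(\underline{j})$, contradicting the choice of $\phi$.

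To construct $\phi$ I would reuse the diagonalisation from the proof of Theorem~\ref{thm:llpononep}, but apply $\llpo_n$ to $n$ sequences rather than $\llpo_\infty$ to infinitely many, by reading the first projection of a pair modulo $n$. Fix a primitive recursive provability predicate $\prf$ for $T := \izf + \markov + \llpo_n$ and a primitive recursive pairing $(\cdot,\cdot)$ with projections $(\cdot)_0, (\cdot)_1$, and let $P := \{\, m : \prf((m)_1, \godelno{\phi(\underline{(m)_0})}) \,\}$, which is primitive recursive and, despite the self-reference, well defined by the diagonal lemma. For $0 \le k < n$ set $\beta_k(l) := 0$ if some $m \le l$ is the least element of $P$ and satisfies $(m)_0 \equiv k \pmod n$, and $\beta_k(l) := 1$ otherwise. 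Each $\beta_k$ is clearly a decreasing binary sequence, and since a least element of $P$ is unique, $\neg(\beta_i(l) = 0 \wedge \beta_j(l) = 0)$ whenever $i \ne j$, so $\beta_i \vee \beta_j = 1$; all of this is provable in a weak fragment of $T$. Finally put $\phi(x) := x < \underline{n} \wedge (\forall l \in \nat)\,\beta_x(l) = 1$.

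Two checks remain. First, $\izf + \markov + \llpo_n \vdash (\exists x \in \nat)\,\phi(x)$: the $n$ sequences $\beta_0,\dots,\beta_{n-1}$ pairwise join to $1$, so $\llpo_n$ (in its $\natinfty$ formulation) yields $k < n$ with $\beta_k = 1$, i.e. $(\exists x \in \nat)\,\phi(x)$. Second, no numeral witness is provable: suppose $T \vdash \phi(\underline{j})$. Pairing a $T$-proof code of $\phi(\underline{j})$ with $j$ exhibits an element of $P$, so $\min P$ exists; write $m^* := \min P$ and $j^* := (m^*)_0$. Then $(m^*)_1$ genuinely codes a $T$-proof of $\phi(\underline{j^*})$, so $T \vdash \phi(\underline{j^*})$; as $\phi(\underline{j^*})$ entails $\underline{j^*} < \underline{n}$ and $T$ is consistent (which holds by Theorem~\ref{thm:ctllpocon} whenever $\izf$ is, the statement being vacuous otherwise), we get $j^* < n$, hence $j^* \bmod n = j^*$. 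But then $\beta_{j^*}(m^*) = 0$ is a true $\Delta_0$ statement, so $T \vdash \beta_{\underline{j^*}}(\underline{m^*}) = 0$, while $\phi(\underline{j^*})$ gives $T \vdash \beta_{\underline{j^*}}(\underline{m^*}) = 1$ --- contradicting the consistency of $T$.

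I expect the main obstacle to be routine bookkeeping rather than anything conceptual: setting up the self-referential definition of $\phi$ (equivalently of $P$ and the $\beta_k$) through the diagonal lemma so that $P$ is primitive recursive and the assertions ``$\beta_k \in \natinfty$'' and ``$\beta_i \vee \beta_j = 1$'' admit bounded proofs, and verifying that $\izf$, which interprets full second-order arithmetic, carries out the elementary arithmetic used above, in particular the formalised soundness step that a genuine proof code of a sentence $\sigma$ yields $T \vdash \sigma$. Apart from the modular reduction the argument is the one in the proof of Theorem~\ref{thm:llpononep}; alternatively, since $\llpo_n$ implies $\llpo = \llpo_2$, one could work throughout with just the two sequences indexed by $(m)_0 \bmod 2$.
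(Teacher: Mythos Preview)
Your reduction via Corollary~\ref{cor:unep} and the diagonal construction are exactly what the paper does; the paper's proof is just shorter because it reuses the formula from Theorem~\ref{thm:llpononep} verbatim rather than rebuilding a mod-$n$ variant. This works since $\llpo_n$ trivially implies $\llpo_\infty$ (given pairwise-joining $(\alpha_i)_{i\in\nat}$, apply $\llpo_n$ to $\alpha_0,\dots,\alpha_{n-1}$), so $\izf+\markov+\llpo_n$ is already an extension of $\czf+\llpo_\infty$ and Theorem~\ref{thm:llpononep} applies directly; your tailored construction is correct but unnecessary.

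One genuine error: your closing remark that ``$\llpo_n$ implies $\llpo=\llpo_2$'' has the implication backwards. The hierarchy goes the other way---$\llpo_2$ is the strongest and $\llpo_{n+1}$ is strictly weaker than $\llpo_n$ (this is precisely the separation the paper establishes). So the alternative ``two-sequence'' argument you sketch at the end would not go through for $n>2$. This does not affect your main argument, which correctly uses $n$ sequences and $\llpo_n$.
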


\begin{proof}
  Let $\phi(x)$ be the formula from the proof of theorem
  \ref{thm:llpononep}. If
  $\izf + \llpo_n + \markov \vdash (\exists ! x \in \nat)\,\phi(x)
  \wedge \psi(x)$ was provable, then by corollary \ref{cor:unep} there
  would be some $j$ such that
  $\izf + \llpo_n + \markov \vdash \phi(\underline{j}) \wedge
  \psi(\underline{j})$. But in particular this gives
  $\izf + \llpo_n + \markov \vdash \phi(\underline{j})$ contradicting
  theorem \ref{thm:llpononep}.
\end{proof}

In \cite{friedmandpnep}, Friedman showed that for every
recursively axiomatisable extension of Heyting arithmetic the
disjunction property implies the numerical existence property. He
further remarks, without proof, that there is a $\Delta_2^0$ extension
that satisfies the disjunction property but not the numerical
existence property. As a corollary of the above results, we obtain a
reasonably natural example of a $\Pi^0_2$ theory with the disjunction
property but not the numerical existence property.

\begin{corollary}
  \label{cor:dpalln}
  Assume classical logic in the meta theory. The theory $T :=
  \bigcap_n \, \izf + \markov + \llpo_n$ (i.e. the set of formulas
  provable in $\izf + \markov + \llpo_n$ for every $n$) has the
  disjunction property.
\end{corollary}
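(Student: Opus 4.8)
The plan is to derive the disjunction property for $T$ from Corollary~\ref{thm:6} with $k = 2$, together with two elementary observations: that $T$ is deductively closed, and that the theories $\izf + \markov + \llpo_m$ decrease (in the sense of provable consequences) as $m$ grows. All metatheoretic reasoning will be classical, as permitted in the hypothesis.

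First I would record the two preliminaries. Since an intersection of deductively closed theories is again deductively closed, for any sentence $\chi$ we have $T \vdash \chi$ iff $\chi \in T$ iff $\izf + \markov + \llpo_m \vdash \chi$ for every $m \geq 2$. Next, using the equivalent formulation of $\llpo_n$ in terms of $\natinfty$ (the $n$-ary analogue of part~2 of Proposition~\ref{prop:llpoversions}), one sees immediately that $\czf \vdash \llpo_m \rightarrow \llpo_{m'}$ whenever $2 \leq m \leq m'$: given $m'$ sequences in $\natinfty$ that pairwise join to $1$, the first $m$ of them already pairwise join to $1$, so $\llpo_m$ produces one among them equal to $1$. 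Hence $(\izf + \markov + \llpo_m)^{\vdash} \supseteq (\izf + \markov + \llpo_{m'})^{\vdash}$ for $m \leq m'$.

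Now suppose $T \vdash \phi \vee \psi$ for sentences $\phi,\psi$. By the first preliminary, $\izf + \markov + \llpo_{2n} \vdash \phi \vee \psi$ for every $n \geq 2$. Since $2 < 2n$ and $\lceil 2n/2 \rceil = n$, Corollary~\ref{thm:6} applied with $k = 2$ tells us that for each $n \geq 2$, either $\izf + \markov + \llpo_n \vdash \phi$ or $\izf + \markov + \llpo_n \vdash \psi$. Put $A := \{\, n \geq 2 : \izf + \markov + \llpo_n \vdash \phi \,\}$ and $B := \{\, n \geq 2 : \izf + \markov + \llpo_n \vdash \psi \,\}$. Then $A \cup B = \{\, n \in \nat : n \geq 2 \,\}$, and by the second preliminary both $A$ and $B$ are downward closed. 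Classically $A$ is either unbounded or bounded. If $A$ is unbounded then, being downward closed, $A = \{\, n \geq 2 \,\}$, so $\phi$ is provable in $\izf + \markov + \llpo_m$ for all $m$, i.e.\ $\phi \in T$ and $T \vdash \phi$. If $A$ is bounded, then all sufficiently large $n$ lie in $B$, so $B$ is unbounded and hence equals $\{\, n \geq 2 \,\}$, giving $T \vdash \psi$. In either case $T$ has the disjunction property.

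The one point that requires care is the passage from the \emph{non-uniform} statement ``for each $n$, one of $\phi,\psi$ is provable in $\izf + \markov + \llpo_n$'' to the \emph{uniform} conclusion that a single disjunct is provable in all of them; this is exactly where the monotonicity of the hierarchy is essential, and where the classical metatheory is genuinely used (both to make the disjunctive choice for each $n$ and to split on whether $A$ is bounded). Everything else is routine bookkeeping on top of Corollary~\ref{thm:6}.
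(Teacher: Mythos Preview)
Your proof is correct and follows essentially the same approach as the paper: reduce to Corollary~\ref{thm:6} with $k=2$ via the doubling $n \mapsto 2n$, then use monotonicity of the $\llpo_m$ hierarchy to see that the sets $A$ and $B$ are downward closed, and finish by a classical argument that two downward closed sets covering $\{n \geq 2\}$ cannot both be proper. The paper compresses the final step into the single line ``by classical logic we therefore have either $X = \nat$ or $Y = \nat$,'' whereas you spell out the bounded/unbounded case split; this is merely a difference in level of detail, not in method.
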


\begin{proof}
  Suppose that $T \vdash \phi \vee \psi$. Then, for each $n$, $\izf +
  \markov + \llpo_{2 n} \vdash \phi \vee \psi$. Hence either $\izf +
  \markov + \llpo_{n} \vdash \phi$ or $\izf + \markov + \llpo_{n}
  \vdash \psi$. Let $X := \{ n \in \nat \;|\; \izf + \markov +
  \llpo_{n} \vdash \phi \}$ and $Y := \{ n \in \nat \;|\; \izf +
  \markov + \llpo_{n} \vdash \phi \}$. $X$ and $Y$ are downwards
  closed subsets of $\nat$ such that $X \cup Y = \nat$. By classical
  logic we therefore have either $X = \nat$ or $Y = \nat$. Without
  loss of generality, say $X = \nat$. Then we have that for every $n$,
  \begin{equation*}
    \label{eq:27}
    \izf + \markov + \llpo_{n} \vdash \phi
  \end{equation*}
  But we have now shown the disjunction property for this theory.
\end{proof}

\begin{theorem}
  The theory $T := \bigcap_n \, \izf + \markov + \llpo_n$ (i.e. the
  set of formulas provable in $\izf + \markov + \llpo_n$ for every
  $n$) does not have the numerical existence property.
\end{theorem}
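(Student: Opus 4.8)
The plan is to adapt the self-referential argument from the proof of Theorem~\ref{thm:llpononep}, the one twist being that $T$ is not recursively axiomatisable (it is only $\Pi^0_2$), so we cannot diagonalise against $T$ itself. Instead I would diagonalise against the single recursively axiomatisable theory $\izf + \markov + \llpo_2$, which is consistent whenever $\izf$ is (e.g.\ by Theorem~\ref{thm:ctllpocon} instantiated at $n = 2$) --- and we may assume $\izf$ consistent, since otherwise $T$ is inconsistent and trivially has the numerical existence property. So fix a primitive recursive provability predicate $\prf$ for $\izf + \markov + \llpo_2$, and let $\phi(n)$, $\psi(n,m)$ and the decreasing sequences $\alpha_n \in \natinfty$ be exactly the objects built from this $\prf$ in the proof of Theorem~\ref{thm:llpononep}. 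Two features of that construction will be used: the diagonalisation gives, in weak arithmetic, $\phi(n) \leftrightarrow (\forall m \in \nat)\,\psi(n,m)$, and directly from the definition of $\alpha_n$ one has $\alpha_n = 1 \leftrightarrow \phi(n)$; and the computation performed there shows $\alpha_n \vee \alpha_{n'} = 1$ for all $n \neq n'$ using only primitive recursive reasoning (so it is insensitive to which theory $\prf$ refers to).

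The first step is to check $T \vdash (\exists n \in \nat)\,\phi(n)$. Although the proof of Theorem~\ref{thm:llpononep} invokes $\llpo_\infty$ here, the conclusion $(\exists n)\,\alpha_n = 1$ already follows by applying $\llpo_m$ to the finite tuple $\alpha_0, \dots, \alpha_{m-1}$: these lie in $\natinfty$ and are pairwise joined to $1$, so $\llpo_m$ yields some $i < m$ with $\alpha_i = 1$, hence $\phi(i)$. Thus $\izf + \markov + \llpo_m \vdash (\exists n \in \nat)\,\phi(n)$ for every $m \geq 2$, and so this sentence belongs to $T = \bigcap_m (\izf + \markov + \llpo_m)$.

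The second step is the ``no numeral witness'' half, which I would inherit essentially verbatim from the G\"odel-style argument in the proof of Theorem~\ref{thm:llpononep}: if $\izf + \markov + \llpo_2 \vdash \phi(\underline n)$ for some concrete $n$, then choosing the least $m$ for which $(m)_1$ codes a proof of $\phi(\underline{(m)_0})$ and invoking absoluteness of $\Delta_0$ sentences yields both $\izf + \markov + \llpo_2 \vdash \phi(\underline{(m)_0})$ and $\izf + \markov + \llpo_2 \vdash \neg\phi(\underline{(m)_0})$, contradicting consistency. Hence $\izf + \markov + \llpo_2 \nvdash \phi(\underline n)$ for all $n \in \nat$. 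To conclude: if $T$ had the numerical existence property, then from $T \vdash (\exists n \in \nat)\,\phi(n)$ we would obtain $n_0$ with $T \vdash \phi(\underline{n_0})$; but every theorem of $T$ is a theorem of $\izf + \markov + \llpo_2$, so $\izf + \markov + \llpo_2 \vdash \phi(\underline{n_0})$, contradicting the previous sentence.

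The only genuine obstacle is the first step: one must ensure that the witness for $(\exists n)\,\phi(n)$ survives the passage from $\llpo_\infty$ to each individual $\llpo_m$ (so that the statement lands in the intersection $T$), while the failure of a numeral witness stays anchored to the single r.e.\ theory $\izf + \markov + \llpo_2$ sitting above $T$. Everything else is a direct transcription of the proof of Theorem~\ref{thm:llpononep}.
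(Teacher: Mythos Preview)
Your proof is correct, but the paper takes a much shorter route. Instead of diagonalising, the authors simply take $\phi(n)$ to be (a formalisation of) the statement $\llpo_n$ itself: each theory $\izf + \markov + \llpo_m$ trivially proves $(\exists n \in \nat)\,\llpo_n$ (witnessed by $m$), so $T$ proves this existential, whereas for each fixed $n$ the earlier separation results show $\izf + \markov + \llpo_{n+1} \nvdash \llpo_n$, so $T \nvdash \llpo_n$. This is a two-line argument that leans on the hierarchy-separation work already done in the paper. Your approach, by contrast, adapts the G\"odel--Rosser machinery of Theorem~\ref{thm:llpononep}, diagonalising against the single recursively axiomatisable theory $\izf + \markov + \llpo_2$ above $T$; it is longer but more self-contained, needing only the consistency of $\izf + \markov + \llpo_2$ rather than the full separation of the $\llpo_n$ hierarchy.
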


\begin{proof}
  Note that the statement $(\exists n \in \nat)\,\llpo_n$ can be
  formalised in set theory and holds in each $\izf + \markov +
  \llpo_n$ for each $n$. However, for each $n$, we have seen that
  $\izf + \markov + \llpo_{n + 1}$ does not prove $\llpo_n$, so it is
  not provable in $T$. Hence $T$ proves $(\exists n \in
  \nat)\,\llpo_n$ but does not prove $\llpo_n$ for any $n$, so the
  numerical existence property fails.
\end{proof}

\subsection{Consistency of Brouwerian Continuity Principles}

Recall that the fan theorem and bar induction are defined as below.

\begin{definition}
  \label{def:10}
  Write $2^\ast$ for the set of finite binary sequences. If $\alpha :
  \nat \rightarrow 2$ is an infinite binary sequence, write
  $\bar{\alpha}(n)$ for the finite binary sequence of length $n$
  obtained by restricting $\alpha$.

  A subset $R$ of $2^\ast$ is a \emph{bar} if for every $\alpha : \nat
  \rightarrow 2$, there exists some $n \in \nat$ such that
  $\bar{\alpha(n)} \in R$.

  A bar, $R$, is \emph{uniform} if there exists $n \in \nat$ such that
  for all $\alpha : \nat \rightarrow 2$, there exists $m \leq n$ such
  that $\bar{\alpha}(m) \in R$.

  The \emph{fan theorem}, $\fan$ is the axiom that every bar is
  uniform.

  A subset $R$ of $\nat^\ast$ is a \emph{bar} if for every $\alpha :
  \nat \rightarrow \nat$, there exists some $n \in \nat$ such that
  $\bar{\alpha(n)} \in R$.

  A bar, $R$, is \emph{monotone} if whenever $s \in R$ and $s'$ is a
  finite binary sequence extending $s$, then also $s' \in R$.

  If $s$ and $t$ are finite binary sequences, write $s \ast t$ for the
  concatenation of $s$ and $t$.

  \emph{Monotone bar induction}, $\bim$, is the following axiom. Let
  $Q \subset \nat^\ast$ be such that there is a monotone bar $R$ with
  $R \subseteq Q$ and $Q$ has the property that whenever $s \ast
  \langle n \rangle \in Q$ for all $n$ also $s \in Q$. Then $\langle
  \rangle \in Q$.
\end{definition}

\begin{proposition}[$\czf + \markov$]
  \label{prop:ktwomarkov}
  Let $\vktwo$ be the function realizability model from
  \cite{rathjenbrouwerian}. Then $\markov$ holds in $\vktwo$.
\end{proposition}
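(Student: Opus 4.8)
The plan is to run essentially the same argument as in Lemma~\ref{lem:vlnmarkov}, with the topological absoluteness inputs replaced by the corresponding absoluteness properties of the function realizability model $\vktwo$, the real work being done (as always for $\markov$ in a realizability model) by $\markov$ in the background universe $\czf+\markov$. So the first thing I would do is record the function-realizability analogue of Lemma~\ref{lem:functionabs}: if $\vktwo \models \alpha \in 2^\nat$, then there is a genuine $g : \nat \to 2$ with $\vktwo \models \alpha = \hat g$, and moreover $g(n)$ together with a realizer of $\vktwo \models \alpha(\hat n) = \widehat{g(n)}$ can be computed, as an element of $\kltwo$, uniformly from a realizer of $\alpha \in 2^\nat$. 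This is a standard feature of McCarty-style realizability (it holds because $\czf$ proves $\alpha = \{\langle n, \alpha(n)\rangle \mid n \in \nat\}$ and numerals are absolute), it needs no choice, and the argument transfers verbatim from $\vkone$ (cf.\ \cite{mccarty}) to $\vktwo$ (cf.\ \cite{rathjenbrouwerian}).

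Granting this, suppose $\alpha \in \vktwo$ with $\vktwo \models \alpha \in 2^\nat \;\wedge\; \neg\neg(\exists x \in \nat)\,\alpha(x) = 1$, and let $g : \nat \to 2$ be as above. I claim $\neg\neg(\exists m \in \nat)\,g(m) = 1$ holds in the background. Indeed, if $g(m) = 0$ for every $m$, then the uniform realizers from the previous step assemble into a realizer of $\vktwo \models (\forall x \in \nat)\,\alpha(x) = 0$, and since $(\forall x \in \nat)\,\alpha(x) = 0$ proves $\neg(\exists x \in \nat)\,\alpha(x) = 1$ in $\czf$, soundness of $\vktwo$ for $\czf$ gives a realizer of $\vktwo \models \neg(\exists x \in \nat)\,\alpha(x) = 1$. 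But a realizer of $\neg\neg(\exists x \in \nat)\,\alpha(x) = 1$ precisely witnesses that $\neg(\exists x \in \nat)\,\alpha(x) = 1$ has no realizer, a contradiction. Hence $\neg\neg(\exists m \in \nat)\,g(m) = 1$, so by $\markov$ in the background there is $m$ with $g(m) = 1$; then $\vktwo \models \alpha(\hat m) = \hat 1$ and so $\vktwo \models (\exists x \in \nat)\,\alpha(x) = 1$, a realizer being obtained by pairing the trivial realizer of $\hat m \in \hat\nat$ with the computed realizer of $\alpha(\hat m) = \hat 1$. All of this — the search for $m$ and the realizers involved — is produced uniformly from the given data, and the unbounded search is representable in $\kltwo$ in exactly the way used in Lemmas~\ref{lem:ipnnrealized} and \ref{lem:ipnnktworealized} (using Lemma~\ref{lem:parallelktwo} to handle the partiality of application in $\kltwo$); one only needs the search to terminate when the hypothesis is genuinely realized, which is harmless. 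Hence a single realizer witnesses $\vktwo \models \markov$.

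The main obstacle is the first step: making precise, for the particular model $\vktwo$ of \cite{rathjenbrouwerian}, the uniform extraction of the ground-model function $g$ from a realizer of $\alpha \in 2^\nat$, and verifying that the construction is uniform in $\alpha$ so that one fixed application term realizes $\markov$ rather than a realizer depending on $\alpha$. Once that function-absoluteness statement is established, the remainder is the same bookkeeping as in Lemma~\ref{lem:vlnmarkov}, with $\markov$ in the background providing the one genuinely non-constructive step.
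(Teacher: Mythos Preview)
Your proof is correct and takes essentially the same approach as the paper's: apply $\markov$ in the background to ensure termination of the unbounded search for the least $n$ with $\alpha(n)=1$, and note that this search is (continuous, hence) representable in $\kltwo$. The paper compresses this into a single sentence while you unpack function absoluteness explicitly; your reference to Lemma~\ref{lem:parallelktwo} is unnecessary, since no parallelism is needed here --- a plain search suffices, as you yourself acknowledge at the end.
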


\begin{proof}
  This can easily be checked by applying $\markov$ in the background
  and noting that there is a continuous functional that takes as input
  $\alpha : \nat \rightarrow 2$ such that there exists $n$ with
  $\alpha(n) = 1$ and returns the first $n$ such that $\alpha(n) =
  1$.
\end{proof}

\begin{lemma}[$\czf + \markov + \ip{\mathcal{F}_n}{\baire} + \fan$]
  \label{lem:vlnfan}
  \begin{equation*}
    \label{eq:7}
    \vln \models \fan
  \end{equation*}
\end{lemma}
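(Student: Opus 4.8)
The plan is to reduce the internal fan theorem to the fan theorem in the background universe. Since the underlying poset of $\mathcal{L}_n$ is the single point $\{0\}$, for every sentence $\phi$ we have $\vln\models\phi$ precisely when $0\in\llbracket\phi\rrbracket$, and by Lemma \ref{lem:abslem}(2) this makes $\llbracket\phi\to\psi\rrbracket=\top$ equivalent to ``$\vln\models\phi$ implies $\vln\models\psi$''. So it is enough to fix $R\in\vln$ with $\vln\models$ ``$R$ is a bar on $2^\ast$'' and derive $\vln\models$ ``$R$ is uniform''. For a finite binary sequence $s$, coded as a natural number in the standard way, write $p_s:=\llbracket\hat{s}\in R\rrbracket$, a $\triangleleft$-closed subset of $\{0\}$. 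Using that $2^\nat$ and the operations $\alpha\mapsto\bar\alpha(n)$ are absolute (Lemma \ref{lem:functionabs} together with the absoluteness results of Section \ref{sec:some-absol-lemm}), the hypothesis that $R$ is a bar unwinds to: for every $\alpha\in 2^\nat$ in the background, $0\triangleleft\bigcup_{n\in\nat}p_{\bar\alpha(n)}$. By countable compactness (Lemma \ref{lem:lncompact}) there is then some $N_\alpha\in\nat$ with $0\triangleleft\bigcup_{n\le N_\alpha}p_{\bar\alpha(n)}$.

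Next I would manufacture a bar in the background out of the $p_s$. Writing $t\preceq s$ for ``$t$ is an initial segment of $s$'', set
\[
  R':=\bigl\{\, s\in 2^\ast \;\mid\; 0\triangleleft\textstyle\bigcup_{t\preceq s}p_t \,\bigr\}.
\]
Unfolding $0\triangleleft(-)$ as ``there is a good $n$-tree $T$ with $\cov(T)\subseteq(-)$'' and using that the $n$-trees form a set, $R'$ is a set by bounded separation. It is monotone, since $s\preceq s'$ gives $\bigcup_{t\preceq s}p_t\subseteq\bigcup_{t\preceq s'}p_t$. And it is a bar: the initial segments of $\bar\alpha(N_\alpha)$ are exactly $\bar\alpha(0),\dots,\bar\alpha(N_\alpha)$, so $\bigcup_{t\preceq\bar\alpha(N_\alpha)}p_t=\bigcup_{n\le N_\alpha}p_{\bar\alpha(n)}$ and hence $\bar\alpha(N_\alpha)\in R'$. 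Applying $\fan$ in the background produces $N\in\nat$ such that every $\alpha$ has some $m\le N$ with $\bar\alpha(m)\in R'$; by monotonicity $\bar\alpha(N)\in R'$ for all $\alpha$, that is, $0\triangleleft\bigcup_{n\le N}p_{\bar\alpha(n)}$ for every $\alpha\in 2^\nat$. Reading this back internally (using absoluteness of $2^\nat$ again, now in the form that every internal element of $2^\nat$ is of the form $\hat\alpha$) gives $\vln\models(\forall\alpha\in 2^\nat)(\exists m\le\hat{N})\,\bar\alpha(m)\in R$, i.e.\ $\vln\models$ ``$R$ is uniform'', so $\vln\models\fan$.

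The main obstacle is not computational but conceptual: one must keep the covering relation inside the definition of $R'$. The naive choice $\{\, s\mid\exists t\preceq s,\ 0\in p_t\,\}$ need not be a bar, because $0\triangleleft\bigcup_n p_{\bar\alpha(n)}$ asserts only that some good tree's cover lies in the union, not that $0$ actually belongs to one of the individual $p_{\bar\alpha(n)}$ --- precisely the failure of the converses in Lemma \ref{lem:abslem}. Passing to the $\triangleleft$-thickened $R'$ repairs this while remaining strong enough that the uniform bound obtained for $R'$ transfers back to $R$. The only other thing to be careful about is the absoluteness bookkeeping --- that $2^\nat$, the restriction map, and bounded numerical quantifiers are absolute, so that the passage between ``$\vln\models\cdots$'' and statements about the family $(p_s)_{s\in 2^\ast}$ is legitimate in both directions --- but this is entirely covered by the lemmas already established and so merely needs to be cited.
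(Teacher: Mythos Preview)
Your proof is correct and follows essentially the same strategy as the paper: define an external bar $R'$ consisting of those $\sigma$ for which, internally, some initial segment of $\sigma$ lies in $R$, use countable compactness of $\mathcal{L}_n$ to show $R'$ is a bar, apply $\fan$ in the background, and transfer the uniform bound back. Your $R'=\{\,s\mid 0\triangleleft\bigcup_{t\preceq s}p_t\,\}$ and the paper's $R'=\{\,\sigma\mid \vln\models(\exists\sigma'\in R)\,\sigma'\le\hat\sigma\,\}$ are the same set, just described in covering-relation language versus semantic-bracket language; likewise your appeal to Lemma~\ref{lem:lncompact} is the paper's appeal to Lemma~\ref{lem:listwitnesses}.
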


\begin{proof}
  Let $R \in \vln$ be such that the statement that $R$ is a bar holds
  in $\vln$. We first construct a set $R'$ in the background universe
  and check that $R'$ is a bar. Let $R'$ be the set of $\sigma \in
  2^\ast$ such that $\vln \models (\exists \sigma' \in
  R)\,\sigma' \leq \hat{\sigma}$.

  To show that $R'$ is a bar, let $\alpha \in 2^\nat$. Then $\vln
  \models (\exists j \in \nat)\,\bar{\hat{\alpha}}(j) \in R$, since
  $R$ is internally a bar in $\vln$. Hence by lemma
  \ref{lem:listwitnesses}, there is a finite set $J \subseteq N$ such
  that $\vln \models (\exists j \in J)\,\bar{\hat{\alpha}}(j) \in
  R$. Then set $N := \max J$. We clearly have $\bar{\alpha}(N) \in
  R'$, and so $R'$ is a bar.

  We can now apply $\fan$ in the background universe to find $m$ such
  that for every $\alpha \in 2^\nat$ there exists $l \leq m$ such that
  $\bar{\alpha}(l) \in R'$. But we now have $\vln \models (\exists x
  \leq \hat{m})\, \bar{\alpha}(x) \in R$ as required.
\end{proof}

\begin{lemma}[$\czf + \markov + \ipnn + \bim$]
  \label{lem:vlnbim}
  \begin{equation*}
    \label{eq:53}
    \vln \models \bim
  \end{equation*}
\end{lemma}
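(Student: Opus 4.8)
The strategy mirrors the proof of Lemma~\ref{lem:vlnfan}: given a monotone bar $R$ witnessing the hypotheses of $\bim$ internally in $\vln$, pull back along $\hat{(-)}$ to a subset $R'$ of $\nat^\ast$ in the background, check it is a monotone bar there, apply $\bim$ in the background, and push the conclusion back up. The key difference from the fan theorem case is that bar induction is an \emph{inductive} principle, so I must be careful that the predicate $Q' \subseteq \nat^\ast$ I build in the background genuinely satisfies the ``hereditary'' closure condition required to invoke $\bim$, and this is where absoluteness of bounded existential formulas and Lemma~\ref{lem:listwitnesses} do the real work.

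First I would fix $Q, R \in \vln$ with $\vln \models$ ``$R$ is a monotone bar, $R \subseteq Q$, and $Q$ is hereditary''. Define $Q'$ to be the set of $s \in \nat^\ast$ such that $\vln \models \hat{s} \in Q$, and $R'$ to be the set of $s$ such that $\vln \models \hat{s} \in R$. By the absoluteness lemmas for finite tuples and for $\nat$, membership of standard elements in these sets behaves well; monotonicity of $R'$ is immediate from monotonicity of $R$ inside $\vln$ together with part~6 of Lemma~\ref{lem:abslem}. To see $R'$ is a bar: given $\alpha : \nat \to \nat$ in the background, internally $\vln \models (\exists j \in \nat)\,\bar{\hat\alpha}(j) \in R$; by Lemma~\ref{lem:listwitnesses} there is a finite $J \subseteq \nat$ with $\vln \models (\exists j \in \hat J)\,\bar{\hat\alpha}(j) \in R$, hence some actual $j$ with $\vln \models \bar{\hat\alpha}(j) \in R$ (using properness of $\mathcal{L}_n$ and that a finite disjunction of stable statements over a proper topology decides), so $\bar\alpha(j) \in R'$. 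Here I need that $\llbracket \bar{\hat\alpha}(j) \in R \rrbracket$, for $j$ ranging over the finite set $J$, is a finite family of subsets of $\{0\}$ whose union is covered by a good tree; an application of Lemma~\ref{lem:lncompact} or direct inspection shows one of them must already contain $0$ --- but actually this needs the members to be ``decided'', so I should instead argue: $\vln \models (\exists j \in \hat J)\phi$ unfolds to $0 \triangleleft \bigcup_{j \in J}\llbracket\phi(\hat j)\rrbracket$, and since $|J|$ is finite and each $\llbracket\phi(\hat j)\rrbracket \subseteq \{0\}$, countable (indeed finite) compactness plus the trivial structure of $\{0\}$ forces $0 \in \llbracket\phi(\hat j)\rrbracket$ for some $j$. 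The hereditariness of $Q'$ is similar: if $s \ast \langle n \rangle \in Q'$ for every $n \in \nat$, then $\vln \models (\forall n \in \hat\nat)\,\widehat{s \ast \langle n\rangle} \in Q$ by part~1 of Lemma~\ref{lem:abslem}, hence $\vln \models (\forall n \in \nat)\,\hat s \ast \langle n\rangle \in Q$ by absoluteness of $\nat$ and of concatenation, hence $\vln \models \hat s \in Q$ by the internal hereditariness, i.e. $s \in Q'$. Finally $R' \subseteq Q'$ is immediate. Applying $\bim$ in the background yields $\langle\rangle \in Q'$, i.e. $\vln \models \widehat{\langle\rangle} \in Q$, which is exactly $\vln \models$ ``$\langle\rangle \in Q$'', giving $\vln \models \bim$.

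The main obstacle I anticipate is purely the bookkeeping of absoluteness: one must verify that the background predicates $Q'$, $R'$ are the ``same'' as the internal $Q$, $R$ on standard finite sequences, which requires the absoluteness lemmas for finite tuples, for $\nat$, and for the primitive-recursive operations of concatenation, length, and restriction $\bar\alpha(n)$ --- all established earlier, but needing to be assembled carefully. A secondary subtlety is that $\bim$ as stated quantifies over \emph{arbitrary} $\alpha : \nat \to \nat$ when saying $R$ is a bar, and internally $\vln$ has more such functions than the background; however Lemma~\ref{lem:functionabs} tells us every internal $f : \nat \to \nat$ is $\hat g$ for a background $g$, so the internal bar property of $R$ is no weaker than what we verified for $R'$, and conversely the background bar property of $R'$ transfers back. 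I would flag this use of Lemma~\ref{lem:functionabs} explicitly, since without it the pullback argument would only show $R'$ is a bar for background functions and one could not conclude $R$ is internally a bar --- but here the logic runs the convenient direction (we are given $R$ is internally a bar and only need $R'$ a bar in the background), so no circularity arises.
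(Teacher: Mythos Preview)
Your overall architecture matches the paper's: define $R' := \{\sigma \in \nat^\ast \mid \vln \models \hat\sigma \in R\}$ and $Q' := \{\sigma \mid \vln \models \hat\sigma \in Q\}$, verify the hypotheses of $\bim$ for $R', Q'$ in the background, apply $\bim$, and push $\langle\rangle \in Q'$ back inside. The treatment of hereditariness of $Q'$ and of $R' \subseteq Q'$ is fine.

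The gap is in your verification that $R'$ is a bar. After Lemma~\ref{lem:listwitnesses} you have a finite $J$ with $0 \triangleleft \bigcup_{j \in J}\llbracket \bar{\hat\alpha}(\hat j) \in R\rrbracket$, and you then claim that ``finite compactness plus the trivial structure of $\{0\}$ forces $0 \in \llbracket\bar{\hat\alpha}(\hat j) \in R\rrbracket$ for some $j$.'' This is not valid. The poset $\{0\}$ is trivial, but the covering relation $\triangleleft$ of $\mathcal{L}_n$ is not: for $\alpha_1,\ldots,\alpha_n \in \natinfty$ with pairwise joins $1$, one has $0 \triangleleft \bigcup_i \llbracket \hat\alpha_i = 1\rrbracket$ (this is exactly Lemma~\ref{lem:llponsound}), yet concluding $0 \in \llbracket \hat\alpha_i = 1\rrbracket$ for some $i$ would give $\alpha_i = 1$ in the background, i.e.\ $\llpo_n$, which is not among your hypotheses. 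Neither stability of the disjuncts nor properness helps: a finite disjunction of $\neg\neg$-stable statements need not be $\neg\neg$-stable.

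The paper avoids this by exploiting the one hypothesis you never used at this step, namely \emph{monotonicity of $R$}. Setting $N := \max J$, monotonicity gives $\llbracket \bar{\hat\alpha}(\hat j) \in R\rrbracket \subseteq \llbracket \bar{\hat\alpha}(\hat N) \in R\rrbracket$ for every $j \in J$, hence $0 \triangleleft \llbracket \bar{\hat\alpha}(\hat N) \in R\rrbracket$; since truth values are $\triangleleft$-closed this yields $0 \in \llbracket \bar{\hat\alpha}(\hat N) \in R\rrbracket$, i.e.\ $\bar\alpha(N) \in R'$. Inserting this step repairs your argument and brings it in line with the paper's proof.
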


\begin{proof}
  Suppose that $R, Q \in \vln$ are such that in $\vln$ the following
  holds: $R \subseteq Q \subseteq \nat^\ast$, $R$ is a monotone bar and
  whenever $Q$ contains every immediate successor of $\sigma \in
  \nat^\ast$, it also contains $\sigma$. We first define external
  versions of $R$ and $Q$ as follows:
  \begin{align*}
    \label{eq:54}
    R' &:= \{ \sigma \in \nat^\ast \;|\; \vln \models \hat{\sigma} \in R \}
    \\
    Q' &:= \{ \sigma \in \nat^\ast \;|\; \vln \models \hat{\sigma} \in Q \}
  \end{align*}

  Note that we can easily show $R' \subseteq Q' \subseteq \nat^\ast$
  and that $R'$ is monotone. To apply $\bim$ in the background, it
  only remains to check that $R'$ is a bar and that for any $\sigma
  \in \nat^\ast$ if $Q'$ contains every immediate successor of
  $\sigma$ it also contains $\sigma$.

  To check that $R'$ is a bar, let $f : \nat \rightarrow \nat$. Then
  $\vln \models (\exists x \in \nat)\,\bar{\hat{f}}(x) \in R$. Hence
  by lemma \ref{lem:listwitnesses}, there is a finite set
  $J \subseteq N$ such that
  $\vln \models \bigvee_{j \in J}\bar{\hat{f}}(j) \in R$. Then set
  $N := \max J$. By monotonicity we have that for each $j \in J$,
  $\vln \models \bar{\hat{f}}(j) \in R \rightarrow \bar{\hat{f}}(N)
  \in R$.
  So we deduce that $\vln \models \bar{\hat{f}}(N) \in R$ and so
  $\bar{f}(N) \in R'$. Therefore $R'$ is a bar as required.

  Now let $\sigma \in \nat^\ast$ be such that for all $m \in \nat$,
  $\sigma \ast \langle m \rangle \in Q'$. Then by absoluteness, we
  have $\vln \models (\forall x \in \nat)\,\hat{\sigma} \ast \langle x
  \rangle \in Q$. Therefore, $\vln \models \hat{\sigma} \in Q$ and so
  $\sigma \in Q'$.

  We can now apply $\bim$ in the background to deduce that $\langle
  \rangle \in Q'$. Therefore $\vln \models \langle \rangle \in Q$. So
  we have confirmed $\bim$ holds in $\vln$ as required.
\end{proof}

\begin{lemma}[$\czf + \markov + \ipnn + \contbn$]
  \label{lem:vlncontbn}
  \begin{equation*}
    \vln \models \contbn
  \end{equation*}
\end{lemma}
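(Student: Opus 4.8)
The plan is to follow the same strategy as in the proofs of Lemmas~\ref{lem:vlnfan} and~\ref{lem:vlnbim}: starting from an internal function, build an external counterpart, apply $\contbn$ in the background universe, and transfer the conclusion back into $\vln$ using the absoluteness results of Section~\ref{sec:some-absol-lemm} together with Lemma~\ref{lem:functionabs}. Recall that $\contbn$ asserts that every $F : \baire \to \nat$ is continuous, i.e.\ for every $f \in \baire$ there is $n \in \nat$ such that every $g \in \baire$ with $(\forall i < n)\,f(i) = g(i)$ satisfies $F(f) = F(g)$; I use this bounded-quantifier formulation of the modulus condition, which is equivalent to the usual one phrased with finite restrictions and makes the absoluteness bookkeeping transparent.

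So suppose $F \in \vln$ with $\vln \models F : \baire \to \nat$. By Lemma~\ref{lem:function2abs} there is an external $G : \baire \to \nat$ with $\vln \models F = \hat G$, so it suffices to prove $\vln \models \hat G$ is continuous. Two further facts from the proof of Lemma~\ref{lem:function2abs} are used: first, $\vln \models \hat\baire$ is exactly the set of functions $\nat \to \nat$ (absoluteness of $\baire$); and second, by Lemma~\ref{lem:functionabs}, every $g \in \vln$ with $\vln \models g \in \baire$ satisfies $\vln \models g = \hat{g'}$ for some external $g' : \nat \to \nat$. Now apply $\contbn$ in the background to $G$: fix any $f \in \vln$ with $\vln \models f \in \baire$, choose external $f'$ with $\vln \models f = \hat{f'}$, and let $n \in \nat$ be a modulus of continuity of $G$ at $f'$. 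I claim $\vln \models (\forall g \in \baire)\,[\,(\forall i < \hat n)\, f(i) = g(i) \;\rightarrow\; \hat G(f) = \hat G(g)\,]$. Indeed, take $g \in \vln$ with $\vln \models g \in \baire$ and external $g'$ with $\vln \models g = \hat{g'}$; by absoluteness of function application for $\nat^\nat$, absoluteness of equality on $\hat\nat$, and part~1 of Lemma~\ref{lem:abslem} for the bounded universal quantifier, the hypothesis $(\forall i < \hat n)\, \hat{f'}(i) = \hat{g'}(i)$ holds in $\vln$ iff $(\forall i < n)\, f'(i) = g'(i)$ holds externally; the latter gives $G(f') = G(g')$ in the background, hence $\hat G(\hat{f'}) = \hat G(\hat{g'})$ holds in $\vln$, again by absoluteness of function application and of equality of naturals. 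Since $\hat n$ comes from an external $n$, part~\ref{abslemex} of Lemma~\ref{lem:abslem} gives $\vln \models (\exists n \in \nat)(\forall g \in \baire)\,[\dots]$, and since $f$ was arbitrary, $\vln \models \contbn$.

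The one genuinely delicate point --- the step I expect to require the most care --- is the direction of absoluteness that pushes a \emph{universal} internal quantifier over $\baire$ out to the metatheory: a priori $\vln \models (\forall g \in \baire)\,\chi(g)$ need not follow from ``$\vln \models \chi(\hat{g'})$ for every external $g'$''. What makes it work here is exactly Lemma~\ref{lem:functionabs} (and the absoluteness of $\baire$ derived from it in Lemma~\ref{lem:function2abs}), which shows that every internal member of $\baire$ is internally equal to a standard function $\hat{g'}$; given that, the rest is a routine unwinding of the absoluteness lemmas for bounded arithmetic formulas, function application, and equality of naturals, exactly as in Lemmas~\ref{lem:vlnmarkov} and~\ref{lem:llponsound}. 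Note that $\markov$ and $\ipnn$ are used only through the invocations of Lemmas~\ref{lem:functionabs} and~\ref{lem:function2abs}.
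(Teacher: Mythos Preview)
Your proposal is correct and follows essentially the same approach as the paper: pull the internal $F$ back to an external $G$ via Lemma~\ref{lem:function2abs}, apply $\contbn$ in the background, and push the conclusion back into $\vln$ by absoluteness. The paper's proof is terser and simply writes ``by absoluteness'' at the step you flag as delicate (the universal quantifier over $\baire$), whereas you spell out explicitly that Lemma~\ref{lem:functionabs} ensures every internal element of $\baire$ is internally equal to some $\hat{g'}$; this extra care is welcome but does not change the underlying argument.
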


\begin{proof}
  Suppose $\vln \models F : \baire \rightarrow \nat$. Then by lemma
  \ref{lem:function2abs} there is $G : \baire \rightarrow \nat$ such
  that $\vln \models F = \hat{G}$. Let $\alpha \in \baire$. By
  $\contbn$ in the background, there exists $j$ such that for any
  $\beta \in \baire$, $\bar{\alpha}(j) = \bar{\beta}(j)$ implies
  $G(\alpha) = G(\beta)$. However, by absoluteness we then have $\vln
  \models (\forall \beta \in \baire)\,\bar{\alpha}(j) =
  \bar{\beta}(j) \rightarrow G(\alpha) = G(\beta)$. But we now have
  that in $\vln$, $\hat{G}$ and so also $F$ are continuous. We deduce
  $\contbn$ in $\vln$.
\end{proof}

\begin{lemma}[$\czf + \markov + \ipnn + \choice_2$]
  \label{lem:vlnbc}
  Let $n, k \in \nat$ and $2 \leq k < n$. Then
  \begin{equation*}
    \vln \models \acb{k}{\lceil \frac{n}{k} \rceil}
  \end{equation*}
\end{lemma}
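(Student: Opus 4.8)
The plan is to adapt the proof of Lemma~\ref{lem:vlncc} from index set $\nat$ to index set $\baire$, the extra ingredient being Lemma~\ref{lem:functionabs}, which lets us reduce internal quantification over $\baire$ to external quantification over functions $g\colon\nat\to\nat$.

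Fix a bounded formula $\phi(x,y)$ (possibly with parameters from $\vln$) and suppose $\vln\models(\forall x\in\baire)(\exlte{k}y)\,\phi(x,y)$; we must show the consequent of $\acb{k}{\lceil\frac{n}{k}\rceil}$ holds in $\vln$, i.e. produce internally a function $f\colon\baire\to\nat$ witnessing it. By Lemma~\ref{lem:functionabs} (and, as in the proof of Lemma~\ref{lem:function2abs}, the absoluteness of $\baire$ in $\vln$) every $z\in\vs$ with $\vln\models z\in\baire$ equals $\hat g$ for some background function $g\colon\nat\to\nat$. Hence for each such $g$ we have $0\in\llbracket(\exlte{k}y)\,\phi(\hat g,y)\rrbracket$, which, unfolding the interpretation exactly as in Lemma~\ref{lem:vlncc}, says $0\triangleleft\bigcup_{i\in\nat}\llbracket\phi(\hat g,\hat i)\rrbracket$ together with $\bigcap_{1\le j\le k+1}\llbracket\phi(\hat g,\widehat{i_j})\rrbracket=\emptyset$ for every list $i_1,\dots,i_{k+1}$ of distinct naturals. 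Applying Lemma~\ref{lem:buildwitness} with $p_i:=\llbracket\phi(\hat g,\hat i)\rrbracket$ then gives, for every $g$, some $y\in\nat$ and a good $\lceil\frac{n}{k}\rceil$-tree $S$ with $\cov(S)\subseteq\llbracket\phi(\hat g,\hat y)\rrbracket$.

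Next I would invoke $\choice_2$ to obtain a function $F\colon\baire\to\nat$ such that for every $g\colon\nat\to\nat$ there is a good $\lceil\frac{n}{k}\rceil$-tree $S$ with $\cov(S)\subseteq\llbracket\phi(\hat g,\widehat{F(g)})\rrbracket$. For a fixed $g$, choosing such an $S$ and letting $h\in\baire$ be its shape-and-data code (Definition~\ref{def:shpdata}), Theorem~\ref{thm:goodabs} gives $\vln\models$ ``$\hat h$ codes a good $\lceil\frac{n}{k}\rceil$-tree'' (the arity is a standard integer, hence tracked by the absolute primitive-recursive description). Since $S$ is good, Lemma~\ref{lem:covvgood} turns $\cov(S)\subseteq\llbracket\phi(\hat g,\widehat{F(g)})\rrbracket$ into: if $S$ is very good then $0\in\llbracket\phi(\hat g,\widehat{F(g)})\rrbracket$. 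Because ``$\hat h$ codes a very good tree'' is by Lemma~\ref{lem:vgoodpr} equivalent to a formula $(\exists l)\,\psi(l)$ with $\psi$ negative and absolute, one checks $\bigcup_l\llbracket\psi(\hat l)\rrbracket\subseteq\llbracket\phi(\hat g,\widehat{F(g)})\rrbracket$, so Lemma~\ref{lem:disjexcriteria} yields $\vln\models$ ``$\hat h$ codes a very good tree $\rightarrow\phi(\hat g,\widehat{F(g)})$''. Combining this with goodness and Lemma~\ref{lem:abslem}(\ref{abslemex}), and using the absoluteness of function application for $\nat^\baire$ so that $\widehat{F(g)}$ is $\hat F(\hat g)$ internally, we obtain, for every background $g$, $\vln\models(\exists T)\bigl(\operatorname{Good}(T)\wedge(\operatorname{VeryGood}(T)\rightarrow\phi(\hat g,\hat F(\hat g)))\bigr)$, with $T$ ranging over $\lceil\frac{n}{k}\rceil$-trees.

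Finally, to pass from ``for every background $g$'' to $\vln\models(\forall x\in\baire)(\dots)$, apply Lemma~\ref{lem:functionabs} once more: any $z$ with $\vln\models z\in\baire$ satisfies $\vln\models z=\hat g$ for some $g$, and soundness of the equality axioms transports the statement from $\hat g$ to $z$. Since also $\vln\models\hat F\colon\baire\to\nat$ by absoluteness, taking $f:=\hat F$ establishes $\vln\models\acb{k}{\lceil\frac{n}{k}\rceil}$. The genuinely substantive ingredient, exactly as in Lemma~\ref{lem:vlncc}, is the combinatorial Lemma~\ref{lem:buildwitness}, which I would simply cite; beyond that the only real work is the bookkeeping around Lemma~\ref{lem:functionabs} and the application-absoluteness for $\baire\to\nat$, with $\choice_2$ over $\baire$ playing the role that $\choice_{\nat,\nat}$ played in the $\nat$-indexed case. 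I would also remark, as after Lemma~\ref{lem:buildwitness}, that since we only obtain a good $\lceil\frac{n}{k}\rceil$-tree rather than a good $n$-tree, the conclusion is the weakened principle $\acb{k}{\lceil\frac{n}{k}\rceil}$ and not full choice over $\baire$.
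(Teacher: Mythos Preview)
Your proposal is correct and follows exactly the approach the paper intends: the paper's own proof is the single sentence ``By adapting the proof of lemma~\ref{lem:vlncc} and applying $\choice_2$ in the background,'' and you have carried out precisely that adaptation, correctly identifying Lemma~\ref{lem:functionabs} (absoluteness of $\baire$) as the extra ingredient needed to replace the index set $\nat$ by $\baire$, and $\choice_2$ as the replacement for $\choice_{\nat,\nat}$.
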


\begin{proof}
  By adapting the proof of lemma \ref{lem:vlncc} and applying $\choice_2$
  in the background.
\end{proof}

\begin{lemma}[$\czf + \markov + \ipnn + \choice_2$]
  \label{lem:vlnhbc}
  \begin{equation*}
    \vln \models \hacb
  \end{equation*}
\end{lemma}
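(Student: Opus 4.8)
The plan is to adapt the proof of Lemma~\ref{lem:hacnnsound} ($\vln \models \hacn$) almost verbatim, replacing the domain $\nat$ by $\baire$ and using $\choice_2$ in place of $\choice_{\nat,\nat}$. By Proposition~\ref{prop:hebrandboundeddef}, which holds in $\vln$ since $\vln \models \czf$ by Theorem~\ref{thm:1}, it suffices to prove: if $\vln \models (\forall x \in \baire)(\exists y \in \nat)\,\phi(x,y)$ for a bounded $\phi$ (possibly with parameters from $\vln$), then there is $h \in \vln$ with $\vln \models h : \baire \to \nat$ and $\vln \models (\forall x \in \baire)(\exists y < h(x))\,\phi(x,y)$. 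Throughout the argument I would use that $\baire$ is absolute, i.e. that $\vln$ thinks $\widehat\baire$ is the set of functions $\nat \to \nat$, which follows from Lemma~\ref{lem:functionabs} as in the proof of Lemma~\ref{lem:function2abs}; combined with Lemma~\ref{lem:abslem}(1) this makes internal statements $\vln \models (\forall x \in \baire)\,\psi(x)$ equivalent to the external family ``$\vln \models \psi(\hat\alpha)$ for every $\alpha \in \baire$''.

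First I would unpack the hypothesis, which then gives $\vln \models (\exists y \in \nat)\,\phi(\hat\alpha, y)$ for each $\alpha \in \baire$. Fixing $\alpha$ and applying Lemma~\ref{lem:listwitnesses} (with $p_j := \llbracket\phi(\hat\alpha, \hat j)\rrbracket$) produces a finite $J \subseteq \nat$, and hence a bound $N \in \nat$, with $\vln \models (\exists y < \hat N)\,\phi(\hat\alpha, y)$. Writing $\Theta(\alpha, N)$ for ``$\vln \models (\exists y < \hat N)\,\phi(\hat\alpha, y)$'', we have established $(\forall \alpha \in \baire)(\exists N \in \nat)\,\Theta(\alpha, N)$. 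Now I apply $\choice_2$ in the background — the single place the extra choice hypothesis is used, exactly as $\choice_{\nat,\nat}$ is used in Lemma~\ref{lem:hacnnsound} — to obtain a function $f : \baire \to \nat$ with $\Theta(\alpha, f(\alpha))$ for all $\alpha \in \baire$.

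Finally I would set $h := \hat f$. Since $f$ is a background function $\baire \to \nat$ and $\baire$ is absolute, $\vln \models \hat f : \baire \to \nat$; and by absoluteness of application in $\nat^{\baire}$ we have $\vln \models \hat f(\hat\alpha) = \widehat{f(\alpha)}$ for each $\alpha$, so $\Theta(\alpha, f(\alpha))$ says precisely $\vln \models (\exists y < \hat f(\hat\alpha))\,\phi(\hat\alpha, y)$. Applying Lemma~\ref{lem:abslem}(1) in the reverse (``assembly'') direction, together with absoluteness of $\baire$, yields $\vln \models (\forall x \in \baire)(\exists y < \hat f(x))\,\phi(x,y)$, which by Proposition~\ref{prop:hebrandboundeddef} is exactly $\vln \models \hacb$.

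No genuine obstacle is expected: the proof is essentially a transcription of Lemma~\ref{lem:hacnnsound} with $\baire$ in place of $\nat$. The only facts specific to $\baire$ that are needed — absoluteness of $\baire$ and of application in $\nat^{\baire}$ — are exactly those derived from Lemma~\ref{lem:functionabs} in the proof of Lemma~\ref{lem:function2abs}, and once one checks that the bound $N$ can be chosen for each $\alpha$ (so that $\choice_2$ applies), the remaining bookkeeping is identical to the case of $\hacn$.
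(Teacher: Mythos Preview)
Your proposal is correct and follows essentially the same approach as the paper, which simply says to adapt the proof of Lemma~\ref{lem:hacnnsound} using $\choice_2$ in the background. You have spelled out the adaptation in detail, including the necessary absoluteness of $\baire$ (via Lemma~\ref{lem:functionabs}) and the use of Proposition~\ref{prop:hebrandboundeddef}, which is exactly what the one-line proof in the paper intends.
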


\begin{proof}
  By adapting the proof of lemma \ref{lem:hacnnsound} and applying
  $\choice_2$ in the background.
\end{proof}

\begin{theorem}
  \label{thm:ktwoipcon}
  Assume $\czf$ is consistent. Then for each $n$, so is the following
  theory.
  \begin{equation}
    \label{eq:20}
    \czf + \contac + \fan + \choice_2 + \rdc + \markov +
    \ip{\mathcal{F}_n}{\baire}
  \end{equation}

  Assume $\czf + \markov + \rea$ is consistent. Then for each $n$, so
  is the following theory.
  \begin{equation}
    \label{eq:55}
    \czf + \rea + \contac + \bim + \choice_2 + \rdc + \markov +
    \ip{\mathcal{F}_n}{\baire}
  \end{equation}
\end{theorem}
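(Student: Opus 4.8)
The plan is to show that all of the listed axioms hold simultaneously in the function realizability model $\vktwo$ of \cite{rathjenbrouwerian}, so that the consistency of the base theory over which $\vktwo$ is built yields the consistency of the extension.

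First I would arrange for $\markov$ to be available in the metatheory. Since $\markov$ follows from $\lpo$ and $\czf + \lpo$ has the same consistency strength as $\czf$ by \cite{rathjenlpo}, the consistency of $\czf$ gives the consistency of $\czf + \markov$; for the second part $\markov$ is already present in the hypothesis $\czf + \markov + \rea$. Working in $\czf + \markov$ (respectively $\czf + \markov + \rea$) I then construct $\vktwo$ as in \cite{rathjenbrouwerian}.

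Next I would assemble the axioms realized by $\vktwo$. From \cite{rathjenbrouwerian} we have $\vktwo \models \czf + \contac + \fan + \choice_2 + \rdc$, and, when the background theory additionally contains $\rea$, also $\vktwo \models \rea + \bim$. Proposition \ref{prop:ktwomarkov} gives $\vktwo \models \markov$, and Lemma \ref{lem:ipnnktworealized} gives $\vktwo \models \ip{\mathcal{F}_n}{\baire}$ for every $n$. Putting these together, $\vktwo$ is a model of the theory \eqref{eq:20} when built over $\czf + \markov$, and of the theory \eqref{eq:55} when built over $\czf + \markov + \rea$, so both theories are consistent under the stated hypotheses.

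The only genuinely new input is Lemma \ref{lem:ipnnktworealized}; everything else amounts to importing the established properties of $\vktwo$. The step I expect to require the most care is confirming that those imported properties hold over exactly the metatheories used here --- in particular that $\vktwo$ realizes $\contac$, $\fan$, $\choice_2$ and $\rdc$ already over $\czf + \markov$, with no bar induction or dependent choice assumed in the background, and that adding $\rea$ to the background is precisely what is needed to realize $\bim$. Once those are granted, the theorem follows by simply listing which axioms are simultaneously realized in $\vktwo$.
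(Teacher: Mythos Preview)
Your proposal is correct and matches the paper's approach: the paper's proof simply cites Proposition~\ref{prop:ktwomarkov} and Lemma~\ref{lem:ipnnktworealized} and says to adapt \cite[Theorem 9.10]{rathjenbrouwerian}, which is exactly the function-realizability argument you spell out. Your added care about arranging $\markov$ in the metatheory and checking the background assumptions for the imported axioms is appropriate and fills in what the paper leaves implicit.
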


\begin{proof}
  Using proposition \ref{prop:ktwomarkov} and lemma
  \ref{lem:ipnnktworealized} one can easily adapt the proof of
  \cite[Theorem 9.10]{rathjenbrouwerian} to show this.
\end{proof}

\begin{theorem}
  If $\czf$ is consistent then for each $n$, the following theory is
  also consistent.
  \begin{equation}
    \label{eq:28}
    \czf + \markov + \bigwedge_{2 \leq k < n} \acb{k}{\lceil \frac{n}{k}
      \rceil} + \hacb + \llpo_n + \contbn + \fan
  \end{equation}

  If $\czf + \rea$ is consistent then for each $n$, the
  following theory is also consistent.
  \begin{equation}
    \czf + \markov + \bigwedge_{2 \leq k < n} \acb{k}{\lceil \frac{n}{k}
      \rceil} + \hacb + \llpo_n + \contbn + \bim
  \end{equation}
\end{theorem}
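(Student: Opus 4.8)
The plan is to follow the same two-step strategy as in the proof of Theorem~\ref{thm:ctllpocon}, but with the function realizability model $\vktwo$ in place of the McCarty model $\vkone$. Assume first that $\czf$ is consistent. By the first part of Theorem~\ref{thm:ktwoipcon} the theory
\[
T_0 \;:=\; \czf + \contac + \fan + \choice_2 + \rdc + \markov + \ip{\mathcal{F}_n}{\baire}
\]
is consistent, so it suffices to build a model of the target theory inside $T_0$. I take this model to be $\vln$: the axiom $\ip{\mathcal{F}_n}{\baire}$ is available in $T_0$ and is exactly what was needed to construct $\mathcal{L}_n$ as a formal topology, so $\vln \models \czf$ by Gambino's Theorem~\ref{thm:1}.

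Next I would collect the soundness lemmas proved earlier. Since $\ip{\mathcal{F}_n}{\baire}$ implies $\ipnn$ and its variants, Lemma~\ref{lem:vlnmarkov} gives $\vln \models \markov$ and Lemma~\ref{lem:llponsound} gives $\vln \models \llpo_n$. Because $\choice_2$ is present in $T_0$, Lemma~\ref{lem:vlnbc} gives $\vln \models \acb{k}{\lceil \frac{n}{k} \rceil}$ for every $2 \leq k < n$ and Lemma~\ref{lem:vlnhbc} gives $\vln \models \hacb$; because $\fan$ is present, Lemma~\ref{lem:vlnfan} gives $\vln \models \fan$; and since $\contac$ entails $\contbn$, the hypothesis of Lemma~\ref{lem:vlncontbn} is met, so $\vln \models \contbn$. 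Putting these together, inside $T_0$ the model $\vln$ satisfies
\[
\czf + \markov + \bigwedge_{2 \leq k < n} \acb{k}{\lceil \frac{n}{k} \rceil} + \hacb + \llpo_n + \contbn + \fan,
\]
so this theory is consistent.

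For the second statement I would start from the consistency of $\czf + \rea$ and first pass to the consistency of $\czf + \markov + \rea$ --- as in the proof of Theorem~\ref{thm:ctllpocon}, adding $\markov$ does not affect consistency strength. The second part of Theorem~\ref{thm:ktwoipcon} then yields the consistency of
\[
T_1 \;:=\; \czf + \rea + \contac + \bim + \choice_2 + \rdc + \markov + \ip{\mathcal{F}_n}{\baire},
\]
and, building $\vln$ inside $T_1$, the argument of the previous paragraph carries over unchanged, except that Lemma~\ref{lem:vlnbim} (whose background hypothesis $\bim$ is now present) replaces Lemma~\ref{lem:vlnfan} and gives $\vln \models \bim$; note that $\rea$ appears only in the background theory, not in the target, so nothing further is required.

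I expect no substantive obstacle: all the work lives in the soundness lemmas already proved. The only thing to be careful about is bookkeeping --- confirming that each soundness lemma's background hypotheses ($\ipnn$ via $\ip{\mathcal{F}_n}{\baire}$, $\choice_2$, and $\fan$ respectively $\bim$, together with $\contbn$) really are among the axioms supplied by Theorem~\ref{thm:ktwoipcon} --- which in turn relies on the standard implication $\contac \Rightarrow \contbn$ and on the passage from $\czf + \rea$ to $\czf + \markov + \rea$ at the level of consistency.
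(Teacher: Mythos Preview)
Your proposal is correct and follows exactly the paper's approach: build $\vln$ inside the theory provided by Theorem~\ref{thm:ktwoipcon} and invoke the soundness lemmas \ref{lem:vlnmarkov}, \ref{lem:llponsound}, \ref{lem:vlnbc}, \ref{lem:vlnhbc}, \ref{lem:vlncontbn}, \ref{lem:vlnfan} (respectively \ref{lem:vlnbim}). You are in fact slightly more careful than the paper's terse proof in making explicit the step from $\czf + \rea$ to $\czf + \markov + \rea$ and the implication $\contac \Rightarrow \contbn$ needed to meet the hypothesis of Lemma~\ref{lem:vlncontbn}.
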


\begin{proof}
  We build $\vln$ in the theory $\eqref{eq:20}$, which is consistent
  by theorem \ref{thm:ktwoipcon}. We then have that $\vln$ models
  \eqref{eq:28} by lemmas \ref{lem:vlnmarkov}, \ref{lem:vlnbc},
  \ref{lem:vlnhbc}, \ref{lem:llponsound}, \ref{lem:vlncontbn} and
  \ref{lem:vlnfan}. To add monotone bar induction we also apply lemma
  \ref{lem:vlnbim}.
\end{proof}

(There is already a similar result for $\llpo$ over second order
arithmetic due to Van Oosten in \cite[Section 5]{vanoostenlifschitz}.)

\begin{corollary}
  $\czf + \markov + \bigwedge_{2 \leq k < n} \acb{k}{\lceil
    \frac{n}{k} \rceil} + \hacb + \llpo_n + \contbn + \bim$ does not prove
  $\lcp$ or $\choice_{\baire,2}$.
\end{corollary}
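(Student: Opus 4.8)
The plan is to use the topological model $\vln$ from the preceding theorem as a single witness refuting both principles. Built inside the theory \eqref{eq:55} (or \eqref{eq:20} for the variant with $\fan$ in place of $\bim$), which is consistent by Theorem~\ref{thm:ktwoipcon}, this $\vln$ satisfies $\czf+\markov+\bigwedge_{2\le k<n}\acb{k}{\lceil n/k\rceil}+\hacb+\llpo_n+\contbn+\bim$. Since $\vln\models\czf$, it suffices to show, working inside that background theory, that $\vln\not\models\choice_{\baire,2}$ and $\vln\not\models\lcp$; a derivation of either from the theory of the preceding theorem would then make \eqref{eq:55} inconsistent, contradicting Theorem~\ref{thm:ktwoipcon}.

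For $\choice_{\baire,2}$ the approach is to exhibit a bounded formula $\phi(\alpha,y)$, with a parameter chosen in $\vln$, defining internally a total relation on $\baire\times 2$ with at most two witnesses per $\alpha$ but with no choice function in $\vln$. The relation will be built from the data of Lemma~\ref{lem:buildwitness}: the parameter encodes, internally in $\vln$, a $\baire$-indexed family of good $n$-trees, so that totality holds in $\vln$ by $\vln\models\llpo_n$ (Lemma~\ref{lem:llponsound}), while the ``triple-meets-empty'' hypothesis of Lemma~\ref{lem:buildwitness} with $k=2$ bounds the number of witnesses by two. A choice function for $\phi$ in $\vln$ would, by Lemma~\ref{lem:functionabs}, be of the form $\hat{G}$ for a background $G\colon\baire\to 2$, and the statement $\vln\models$ ``$G(\alpha)$ is a genuine witness for every $\alpha$'' translates back, by absoluteness of the relevant $\Delta_0$ data, to $G$ being an actual, uniform selector in the background --- which is ruled out exactly by the remark following Lemma~\ref{lem:buildwitness}, that only $\neg\neg$-witnesses can be extracted. (Continuity of $G$, forced by $\vln\models\contbn$ via Lemma~\ref{lem:vlncontbn} together with the countable compactness of $\mathcal{L}_n$, Lemma~\ref{lem:lncompact}, can alternatively be used to obtain the contradiction directly.) Hence $\vln\not\models\choice_{\baire,2}$.

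For $\lcp$ the plan is to observe that $\lcp$ provably (over $\czf$) implies $\choice_{\baire,2}$ --- a relation with at most two witnesses is in particular total, and $\lcp$ produces, a fortiori ignoring continuity, a selecting function --- so $\vln\not\models\lcp$ as well; if $\lcp$ is instead only a local-continuity principle not yielding selection, it is refuted directly by the same $\phi$ via the discontinuity argument. Combining the two, the theory of the preceding theorem proves neither $\lcp$ nor $\choice_{\baire,2}$.

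The hard part will be the construction and verification of $\phi$: it must be presented by a bounded formula, so as to be a genuine instance of the $\choice_{\baire,2}$ schema, and, crucially, it must be \emph{provably} total with at most two witnesses inside $\vln$. The subtlety is that a naive ``$\llpo$-shaped'' dichotomy on $\baire$ is total in $\vln$ only where one can decide that a given $\alpha$ is a legitimate instance, which $\vln$ cannot do in general; the remedy is to let the family of $n$-trees be supplied as a parameter (so that legitimacy of each instance is, in $\vln$, absolute and hence available) rather than recovered from $\alpha$ itself, and to check that the resulting relation is still $\Delta_0$. One also has to handle the degenerate case $n=2$ on its own, where the conjunction $\bigwedge_{2\le k<n}\acb{k}{\cdots}$ is vacuous and $\vln$ validates $\llpo$ outright.
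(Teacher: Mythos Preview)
Your plan is unnecessarily indirect, and the paper's route is much simpler. The paper does not revisit the model $\vln$ at all. Since the theory in question is consistent by the preceding theorem and already contains $\llpo_n$ and $\contbn$, it suffices to show that $\czf+\llpo_n+\lcp$ and $\czf+\llpo_n+\contbn+\choice_{\baire,2}$ are each inconsistent. This is done by an elementary construction entirely in the object theory: one writes down an explicit continuous surjection $F\colon\baire\to\{(\alpha_1,\ldots,\alpha_n)\in\natinfty^{\,n}\mid \alpha_i\vee\alpha_j=1\text{ for }i\neq j\}$. By $\llpo_n$, for each $\alpha$ there is $i$ with $(F(\alpha))_i=1$. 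Applying $\lcp$ at the zero sequence gives a single $i$ working on an open neighbourhood, which is contradicted by perturbing $\alpha$ far out; applying $\choice_{\baire,2}$ gives a selection $G\colon\baire\to\nat$, which by $\contbn$ is continuous and is contradicted the same way. No model-theoretic machinery, no absoluteness, no parameters in $\vln$.

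Your sketch, by contrast, has genuine gaps. Your invocation of Lemma~\ref{lem:buildwitness} inverts its logic: that lemma \emph{assumes} the ``meets-empty'' hypothesis and produces a single $j$ with a good $\lceil n/k\rceil$-tree covering $p_j$; it does not manufacture a $\baire$-indexed family of relations with at most two witnesses. The reference to Lemma~\ref{lem:functionabs} should be to Lemma~\ref{lem:function2abs} (functions $\baire\to\nat$), and even then the passage from an internal choice function to a background selector ``ruled out by the remark following Lemma~\ref{lem:buildwitness}'' is not an argument --- that remark only says a certain construction yields $\neg\neg 0\in p_j$ rather than $0\triangleleft p_j$, which is not the same as forbidding selectors. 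Finally, the claim that $\lcp$ implies $\choice_{\baire,2}$ over $\czf$ is unsupported (and your hedge acknowledges this). The ``hard part'' you identify is genuinely hard precisely because the whole detour through $\vln$ is unnecessary: the contradiction lives already in the object theory.
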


\begin{proof}
  $\czf + \markov + \bigwedge_{2 \leq k < n} \acb{k}{\lceil
    \frac{n}{k} \rceil} + \hacb + \llpo_n + \contbn + \bim$ is consistent, so
  it suffices to show $\czf + \llpo_n + \lcp$ and $\czf + \llpo_n +
  \contbn + \choice_{\baire,2}$ are not.

  In both cases, we show the theories are inconsistent by first
  noting that there is a surjection $F : \baire \twoheadrightarrow
  \{\langle \alpha_1,\ldots, \alpha_n \rangle \in \natinfty^n \;|\;
  \alpha_i \vee \alpha_j = 1, \text{ for } i \neq j \}$, defined as
  follows. 
  \begin{equation*}
    \label{eq:57}
    (F(\alpha))_i(k) =
    \begin{cases}
      0 & \alpha(k')
      \equiv i \mod n + 1 \text{ where } k' \leq k \text{
        least s.t. } \alpha(k') \neq 0  \\
      1 & \text{otherwise}
    \end{cases}
  \end{equation*}
  
  By $\llpo_n$, there is $1 \leq i \leq n$ for each $\alpha \in
  \baire$ such that $(F(\alpha))_i = 1$. Let $\alpha$ be such that
  $(F(\alpha))_i = 1$ for all $i$. By $\lcp$ there is some $i, k \in
  \nat$ such that whenever $\bar{\beta}(k) = \bar{\alpha}(k)$,
  $(F(\beta))_i = 1$. However, we can now easily find $\beta$ such
  that $\bar{\beta}(k) = \bar{\alpha}(k)$ but $(F(\beta))_i \neq 1$ to
  get a contradiction. Similarly, we can use $\choice_{\baire, 2}$ to
  get a function $G: \baire \rightarrow \nat$ such that for all
  $\alpha$, $(F(\alpha))_{G(\alpha)} = 1$, contradicting $\contbn$.
\end{proof}

\section{Connections to Other Formal Systems}
\label{sec:sugg-furth-work}

\subsection{Connections to Topos Theory}
\label{sec:conn-topos-theory}

\newcommand{\eff}{\mathsf{Eff}}
\newcommand{\set}{\mathsf{Set}}

The $\mathcal{L}_n$ considered in this paper appear to be strongly
related to the local operators in the effective topos previously
considered by Lee and Van Oosten in \cite{leevanoosten}, specifically
to the local operators corresponding to finitary sights. We expect
that in fact these local operators can be obtained by carrying out the
construction of $\mathcal{L}_n$ in the effective topos. The
realizability model $\vktwo$ corresponds to the topos
$\mathsf{RT}(\kltwo)$ (as described, for example, in \cite[Section
4.3]{vanoosten}). Since we only require computable functions, one
might expect our constructions to work also in the relative
realizability topos $\mathsf{RT}(\kltwo^\mathrm{REC}, \kltwo)$ (see
\cite[Section 4.5]{vanoosten}). The realizability with truth model is
related to the topos $(\eff \downarrow \Delta)$ obtained by gluing
along the inclusion functor from $\set$ to $\eff$. Putting this all
together, we make the following conjecture.

\begin{conjecture}
  Some of the local operators in $\eff$ considered in
  \cite{leevanoosten} have counterparts in the toposes
  $\mathsf{RT}(\kltwo)$, $\mathsf{RT}(\kltwo^\mathrm{REC}, \kltwo)$
  and $(\eff \downarrow \Delta)$.
\end{conjecture}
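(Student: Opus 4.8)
The plan is to reduce the conjecture to a fact emphasised throughout the paper --- that the construction of $\mathcal{L}_n$ uses nothing beyond $\czf + \markov + \ipnn$ --- together with the standard dictionary between set-presentable formal topologies built in the internal language of a topos (in Gambino's sense; cf.\ \cite{gambinohvi,vanoosten}) and local operators on that topos: such an internal formal topology $\mathcal{S}$ determines a local operator $j_{\mathcal{S}}$ whose category of $j_{\mathcal{S}}$-sheaves is the category of internal sheaves over $\mathcal{S}$. Hence it suffices to (i) carry out the internal construction of $\mathcal{L}_n$ in each of $\mathsf{RT}(\kltwo)$, $\mathsf{RT}(\kltwo^{\mathrm{REC}}, \kltwo)$ and $(\eff \downarrow \Delta)$, and then (ii) identify the resulting $j_{\mathcal{L}_n}$ with one of the finitary-sight local operators of \cite{leevanoosten}.

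For (i) the hypotheses are already available. In $\mathsf{RT}(\kltwo)$ the internal logic validates $\markov$ (Proposition~\ref{prop:ktwomarkov}) and $\ipnn$ (Lemma~\ref{lem:ipnnktworealized}), so $\mathcal{L}_n$ is an internal set-presentable formal topology there, and the associated local operator is the topos-theoretic shadow of the model $\vln$ built over $\vktwo$. For the relative realizability topos $\mathsf{RT}(\kltwo^{\mathrm{REC}}, \kltwo)$ one re-runs the proof of Lemma~\ref{lem:ipnnktworealized} --- an adaptation of Lemma~\ref{lem:ipnnrealized} via the parallelisation of Lemma~\ref{lem:parallelktwo} --- observing that the realiser it produces is computable and hence lies in $\kltwo^{\mathrm{REC}}$, while the data it is applied to is allowed to range over all of $\kltwo$; $\markov$ holds for the same reason. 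Finally $(\eff \downarrow \Delta)$ is the gluing topos corresponding to realizability with truth $\vtruth$, and Lemma~\ref{lem:iptruth} shows $\ipnn$ holds there provided it holds in the ambient meta-theory, which is automatic over a classical meta-theory since then $\llpo_n$, and so $\ipnn$ by Lemma~\ref{lem:llpoipnn}, holds trivially. In each case $\mathcal{L}_n$ is thus a genuine internal formal topology, yielding the desired local operator.

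For (ii) one must compare two descriptions of the same subtopos: the sheaf subtopos for the internal formal topology $\mathcal{L}_n$, and the subtopos cut out by Lee and Van Oosten's local operator for the appropriate finitary sight. I would carry this out by reducing both to a common normal form --- the induced closure operator on subobjects of $1$, equivalently the modal operator on $\Omega$ --- and checking agreement; since the defining clauses of $\mathcal{L}_n$ mention only the decreasing-binary-sequence structure of $\natinfty$, the pairwise-join condition $\alpha_i \vee \alpha_j = 1$, and the good/very-good tree predicates, all of which transcribe unchanged into each $\kltwo$-flavoured setting, the comparison established for $\mathsf{RT}(\kltwo)$ should propagate to the other two toposes. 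The main obstacle is precisely this translation between the predicative formal-topology language of the paper and the impredicative Lawvere--Tierney language of \cite{leevanoosten,vanoosten}: pinning down which finitary sight corresponds to which $n$ (and whether all finitary sights arise in this way, or only a sub-family), and, for $\mathsf{RT}(\kltwo^{\mathrm{REC}},\kltwo)$, verifying carefully that the relative realizability structure --- tracked realisers but untracked generic data --- does not interfere with the parallelisation argument underpinning $\ipnn$.
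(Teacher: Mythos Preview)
The statement you are attempting to prove is a \emph{conjecture} in the paper, not a theorem: the authors explicitly leave it open and provide no proof. There is therefore nothing to compare your proposal against. The surrounding discussion does sketch exactly the strategy you outline --- carry out the construction of $\mathcal{L}_n$ internally in each topos and identify the resulting local operator with one of Lee and Van Oosten's finitary-sight operators --- so in that sense your plan is faithful to the paper's intentions.

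That said, your proposal is itself not a proof but a programme, and you flag the real difficulty yourself. Step (i) is essentially done by the lemmas you cite (with the caveat that for $(\eff \downarrow \Delta)$ you are quietly assuming a classical meta-theory to get $\ipnn$ for free, which is fine but should be stated). Step (ii), however, is where all the work lies: you would need to actually compute the closure operator on subobjects of $1$ induced by the internal $\mathcal{L}_n$ and match it against the explicit description of the finitary-sight local operators in \cite{leevanoosten}. You gesture at ``transcribing unchanged'' and ``propagating'' the comparison, but no comparison has been carried out even once --- not even for $\eff$ itself, which is the base case the paper says it \emph{expects} to hold. Until that identification is made concrete for at least one of the toposes, this remains a plausible outline rather than a proof, which is precisely why the paper records it as a conjecture.
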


(We again point out that Van Oosten has already shown that the
original Lifschitz realizability model has a counterpart in
$\mathsf{RT}(\kltwo)$ (see \cite[Section 4.3]{vanoosten}) and for
$\mathbf{q}$-realizability (an ancestor of realizability with truth)
(see \cite[Proposition 3.5]{vanoostenlifschitz})).

\subsection{Connections to Type Theory}
\label{sec:conn-type-theory}

\begin{definition}
  Let $\Gamma$ be a context in type theory. We say that $\Gamma$ has
  \emph{propositional canonicity for $\nat$} if whenever $\Gamma
  \vdash t : \nat$, there is some $n \in \nat$ and a term $p$ such
  that $\Gamma \vdash p : \operatorname{Id}_\nat (t, \underline{n})$.
\end{definition}

Suppose we are working in a variant of type theory that has a
propositional truncation operator (such as type theory with brackets,
as in \cite{awodeybauerpat}). In such theories there are two different
ways of formalising $\llpo$ depending on whether or not we use the
propositional truncation operator $\| - \|$. We call these $\llpo_+$
and $\llpo_\vee$ and define them as follows.

\begin{align*}
  \llpo_+ &:= \prod_{\alpha : \nat \rightarrow 2} \left(\prod_{m, n :
      \nat} (\alpha(m) = 1 + \alpha(n) = 1 \;\rightarrow\; m = n)
  \right) \quad \rightarrow \nonumber\\ & \qquad \left( \left(\prod_{n
        : \nat} \alpha (2 n) = 0\right) + \left(\prod_{n : \nat}
      \alpha (2 n + 1) =
      0\right)
    \right) \\
    \llpo_\vee &:= \prod_{\alpha : \nat \rightarrow 2} \left(\prod_{m,
        n : \nat} (\alpha(m) = 1 + \alpha(n) = 1 \;\rightarrow\; m =
      n) \right) \quad \rightarrow \nonumber\\ & \qquad \left\|
      \left(\prod_{n : \nat} \alpha (2 n) = 0\right) + \left(\prod_{n : \nat} \alpha
      (2 n + 1) = 0\right) \right\|
\end{align*}

By adapting the proof of theorem \ref{thm:llpononep}, we have,
\begin{theorem}
  The context $(x : \llpo_+)$ does not have propositional canonicity
  for $\nat$ over any variant of type theory for which it is
  consistent (that is, there is no term of type $\bot$ in context $(x
  : \llpo_+)$) and such that the set of judgements is computably
  enumerable.
\end{theorem}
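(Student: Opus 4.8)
The plan is to transpose the proof of Theorem \ref{thm:llpononep} to the type-theoretic setting, with ``$T \vdash \chi$'' replaced throughout by ``$(x : \llpo_+)$ derives a term of type $\chi$'', and with the single appeal to $\llpo_\infty$ replaced by an appeal to $x : \llpo_+$. The crucial feature of $\llpo_+$ (as opposed to $\llpo_\vee$) is that its conclusion is an \emph{untruncated} coproduct, so that from $x$ applied to a binary sequence with at most one $1$ one may extract, by case analysis on the coproduct, an honest element of $2$; iterating this along the binary expansion of a putative position of the $1$ (a ``binary search'') yields, for every $\gamma : \nat \to 2$ with at most one $1$, a genuine term $N(\gamma) : \nat$ together with a proof that $\gamma(m) = 0$ for all $m \neq N(\gamma)$. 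This is the type-theoretic substitute for the step ``$\llpo_\infty$ gives $(\exists n)\,\alpha_n = 1$'' in Theorem \ref{thm:llpononep}, and the witness $t : \nat$ exhibiting the failure of canonicity will be $N$ applied to a suitable self-referential sequence.

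Concretely, I would fix a computable enumeration $(j_m)_{m \in \nat}$ of the derivable judgements (available by hypothesis) and note that, for a fixed term $s$ of type $\nat$ in context $(x : \llpo_+)$, the predicate ``$j_m$ is a derivation of $\operatorname{Id}_\nat(s, \underline{k})$'' is decidable in $(m, k)$, since from a listed valid judgement one can read off its type. Using a type-theoretic recursion theorem I would then build a term $t$ with $(x : \llpo_+) \vdash t : \nat$ together with a primitive-recursively defined $\gamma_t : \nat \to 2$ such that: (i) $\gamma_t$ provably has at most one $1$, so that $x(\gamma_t)$ is legitimate; (ii) the unique $1$ of $\gamma_t$, if it occurs, is placed at a position coding the least $m$ for which $j_m$ derives $\operatorname{Id}_\nat(t, \underline{k})$ for some $k$, together with that $k$; and (iii) $t$ is defined from $N(\gamma_t)$ so as to be self-undermining in exactly the way $\phi$ is in Theorem \ref{thm:llpononep} --- informally, $t$ ``says'' that the numeral it equals is \emph{not} the one recorded by the first derivation that it equals some numeral.

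With $t$ in hand, the contradiction would be obtained by mirroring the minimality argument of Theorem \ref{thm:llpononep}. Propositional canonicity for $\nat$ would give a numeral $\underline{k}$ and a term $p$ with $(x : \llpo_+) \vdash p : \operatorname{Id}_\nat(t, \underline{k})$; this derivation is some $j_M$, and by decidability there is a least $m_0 \leq M$ with $j_{m_0}$ deriving $\operatorname{Id}_\nat(t, \underline{k'})$ for some $k'$. Then $\gamma_t$ provably carries its $1$ at the position coding $(m_0, k')$, so $N(\gamma_t)$ provably equals that code; feeding this into the defining equation of $t$, and using decidability of equality on $\nat$ together with numeral-wise provability of the (true, primitive recursive) facts about the fixed enumeration --- the analogue of the $\Delta_0$-absoluteness used in Theorem \ref{thm:llpononep} --- one derives a term of type $\bot$ in context $(x : \llpo_+)$, contradicting the assumed consistency.

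The main obstacle is the self-reference in the middle step: the definition of $t$ must refer, via $\gamma_t$, to the enumeration of the very judgements that type $t$, and one must verify carefully both that the resulting $\gamma_t$ provably has at most one $1$ (so that $\llpo_+$ is applicable) and that the binary-search extraction of $N(\gamma_t)$ from the untruncated coproduct $x(\gamma_t)$ really does terminate at a numeral. A secondary point is to pin down the (mild) closure properties needed of the ambient type theory beyond c.e.-ness of judgements --- essentially that it proves all true primitive recursive sentences numeral-wise, which is what makes the final minimality computation go through.
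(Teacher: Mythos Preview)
Your overall strategy --- transpose the self-referential argument of Theorem~\ref{thm:llpononep} to the context $(x : \llpo_+)$, exploiting the untruncated coproduct to extract genuine data from $x$ --- is the right one and matches what the paper intends. But the step where you claim that iterating $\llpo_+$ ``along the binary expansion'' produces a term $N(\gamma) : \nat$ with $\gamma(m) = 0$ for all $m \neq N(\gamma)$ does not go through. Each application of $x$ yields one bit; iterating yields a stream in $2^{\nat}$, never an element of $\nat$. There is no termination condition: when $\gamma$ is identically $0$ both disjuncts hold at every stage and the bits are unconstrained, and even when $\gamma$ has its $1$ at position $p$ the bits are forced only for the first $\lceil \log_2 p \rceil$ steps and then drift. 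In effect you are trying to derive an untruncated form of $\llpo_\infty$ from $\llpo_+$, and that is not available; so the term $t$ you describe, which has $N(\gamma_t)$ as a subterm, is not well-typed.

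The repair is to avoid needing the full position of the $1$. Arrange the diagonal term $t$ to take only the two values $0$ and $1$, so that a \emph{single} application of $x$ suffices. Concretely: let $\gamma_t(m) = 1$ iff $m$ codes the least derivation of some $\operatorname{Id}_\nat(t,\underline{k})$, placing this $1$ at an even index when that $k$ is $0$ and at an odd index otherwise; then define $t$ by case analysis on $x(\gamma_t,\ldots)$, returning $0$ on the left summand (``all even entries $0$'') and $1$ on the right. If propositional canonicity produces a first proof of $\operatorname{Id}_\nat(t,\underline{0})$, then $\gamma_t$ provably has a $1$ at an even index, refuting the left disjunct, whence case analysis plus ex falso gives a proof of $t = \underline{1}$ and hence of $\bot$; the case $k \neq 0$ is symmetric. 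With this change your self-reference, minimality, and numeral-wise provability of true primitive recursive facts all go through exactly as you outlined.
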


However, we expect by analogy with the results in this paper that the
following holds.
\begin{conjecture}
  The context $(x : \llpo_\vee)$ has propositional canonicity for
  $\nat$ over type theory with bracket types, as studied by Awodey and
  Bauer in \cite{awodeybauerpat}, or similar systems studied by
  Maietti in \cite{maiettimodular}.
\end{conjecture}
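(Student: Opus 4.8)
The plan is to transport to type theory the two-step construction that this paper uses for the existence properties of $\czf + \markov + \llpo_n$. Recall that there we built the topological model $\vln$ inside the realizability-with-truth model $\vtruth$ over $\klone$, the truth component forcing closed natural number terms to denote genuine numerals; this is the set-theoretic prototype of what is wanted, with Corollary~\ref{cor:unep} (the unique numerical existence property for $\czf + \markov + \llpo_n$) being exactly the analogue of propositional canonicity for $\nat$. The type-theoretic counterpart would be a single gluing model: a proof-relevant logical predicate over the syntactic category of the type theory with bracket types of \cite{awodeybauerpat} (or the minimalist systems of \cite{maiettimodular}), whose ``realizers'' are drawn from a Lifschitz-style realizability model over $\klone$ (as in \cite{rathjenchen}, or as built in Section~\ref{sec:defin-mathc} using $\mathcal{L}_2$, equivalently $\ip{\mathcal{F}_2}{\baire}$) validating $\llpo_\vee$; in topos-theoretic terms one glues the category of sets onto the Lifschitz realizability topos, along the lines sketched in Section~\ref{sec:conn-topos-theory}. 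Concretely, to each well-formed type $\Gamma \vdash A\ \mathsf{type}$ in a context $\Gamma$ extending $(x : \llpo_\vee)$ we assign a family $A^{\ast}$ sending each closed term $\Gamma \vdash t : A$ to a set $A^{\ast}(t)$ of computability witnesses (and likewise for terms), pinned down at $\nat$ by: $\nat^{\ast}(t)$ is inhabited if and only if there is $m \in \nat$ and a term $p$ with $\Gamma \vdash p : \operatorname{Id}_\nat(t, \underline{m})$. Propositional canonicity for $(x : \llpo_\vee)$ is then the fundamental lemma instantiated at the identity substitution, given that $(x : \llpo_\vee)$ is inhabited in the model.

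First I would verify (at length, but routinely) that such a logical predicate is preserved by every rule of the system: the cases for $\Pi$, $\Sigma$, $+$, $\operatorname{Id}$ and $\nat$-elimination are the standard gluing computations for a canonicity proof. The new ingredient is the bracket type, which I would interpret as the \emph{proof-irrelevant support} of the underlying predicate in the realizability layer --- so $\| A \|^{\ast}(t)$ asserts only that some realizer witnesses the mere inhabitation of $A^{\ast}$ --- arranged so that (a) $\| A \|^{\ast}(t)$ is itself an h-proposition in the model, validating $\| - \|$-formation and elimination into h-propositions, and (b) it holds whenever $A^{\ast}(s)$ is inhabited for some closed $s : A$, but \emph{forgets which} $s$. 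This forgetting is precisely the point on which $\llpo_\vee$ and $\llpo_+$ diverge.

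The crux is to inhabit $(x : \llpo_\vee)$. Suppose $\alpha : \nat \rightarrow 2$ comes equipped with a computability witness and with the interpretation of a proof that it takes the value $1$ at most once. As in Proposition~\ref{prop:llpoversions} we form $\alpha_0, \alpha_1 \in \natinfty$ with $\alpha_0 \vee \alpha_1 = 1$ and with $\alpha_k = 1$ forcing $\prod_n \alpha(2n + k) = 0$, so that $\maketree(\nil, \nil; \alpha_0, \alpha_1)$ is a good $2$-tree. The parallel-search realizer underlying Lemma~\ref{lem:ipnnrealized}, together with Corollary~\ref{cor:nvgoodngood} and Theorem~\ref{thm:llpontrees} (equivalently, Lemma~\ref{lem:getuniquewitness} at $k = 1$), then produces in the realizability layer a realizer for the \emph{bracketed} disjunction $\| (\prod_n \alpha(2n) = 0) + (\prod_n \alpha(2n + 1) = 0) \|$, and hence, by the definition of $\| - \|^{\ast}$, an inhabitant of the required computability predicate --- all \emph{without} ever deciding which disjunct holds. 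Because the target is a bracket type this squashed obligation is all that is demanded, and it is furnished by exactly the ``at most one witness''/good-tree mechanism by which Lifschitz realizability validates $\llpo$. One further needs the type-theoretic analogue of \cite[Lemma~5.10]{rathjen05}: a true $\neg\neg$-statement (equivalently, a bracketed decidable statement) has a canonical realizer, so that the realizer found by the parallel search is compatible with the truth/gluing layer.

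The main obstacle I expect is precisely this interaction between the elimination rule of the bracket type and canonicity for $\nat$. The rule for $\| - \|$ permits elimination into \emph{any} h-proposition, and one must show that in the presence of $\llpo_\vee$ this cannot be used to ``smuggle'' the disjunct chosen by the realizer out into a term of type $\nat$ --- which would be exactly the diagonal construction of Theorem~\ref{thm:llpononep}, and is exactly why $(x : \llpo_+)$, where the sum is informative, \emph{fails} propositional canonicity. Making this watertight amounts to showing that the glued interpretation of $\| A \|$ is simultaneously a genuine h-proposition satisfying the elimination rule \emph{and} proof-irrelevant enough that no such smuggling occurs, all while the $\nat$-predicate stays ``tight'' (nothing enters $\nat^{\ast}(t)$ unless $t$ is provably a numeral). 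In practice this is careful bookkeeping of which components of the model live in ``h-propositional position'' and a check that bracket elimination only ever produces terms there; it is where the analogy with this paper --- in particular the contrast between the disjunctive and unique existence properties (Theorem~\ref{thm:2}, Corollary~\ref{cor:unep}) and the failure of the full numerical existence property (Theorem~\ref{thm:llpononep}) --- has to be turned into an actual argument.
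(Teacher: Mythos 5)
The statement you are addressing is stated in the paper as a \emph{conjecture}: the authors offer no proof, only the remark that they ``expect by analogy with the results in this paper'' that it holds. So there is no proof in the paper to compare yours against, and the right standard to judge your text by is whether it actually closes the conjecture. It does not. What you have written is a research plan --- a sensible one, closely tracking the analogy the authors themselves draw (realizability with truth $\leftrightarrow$ gluing, Corollary \ref{cor:unep} $\leftrightarrow$ propositional canonicity, the good-tree/parallel-search mechanism of Lemma \ref{lem:ipnnrealized} $\leftrightarrow$ inhabiting $\llpo_\vee$) --- but every step that carries real mathematical weight is deferred. You say the fundamental lemma cases are ``routine,'' but the whole content of the conjecture is that they are \emph{not} routine in the presence of brackets: a Lifschitz-style realizability interpretation of full dependent type theory with bracket types has to be constructed from scratch (the paper only builds $\mathcal{L}_n$ over set-theoretic realizability models, and even the topos-theoretic transfer is itself only Conjecture 7.1), and the compatibility of bracket \emph{elimination} with a tight $\nat^\ast$ is exactly the point you flag as ``the main obstacle'' and then leave as ``careful bookkeeping.''

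Two specific gaps to highlight. First, your crux step --- that the parallel-search realizer ``produces a realizer for the bracketed disjunction'' --- silently assumes that the glued model's interpretation of $\|-\|$ accepts a realizer that cannot be refined to a realizer of either disjunct; whether such an interpretation can be made to satisfy the elimination rule into \emph{all} h-propositions of the model (not just the syntactic ones) while keeping $\nat^\ast(t)$ inhabited only for provable numerals is precisely the open question, and Theorem \ref{thm:llpononep} shows the answer is delicately balanced: the untruncated version fails. Second, in the set-theoretic prototype the argument needs $\ipnn$ to hold both in the realizability model \emph{and} (for the truth layer, Lemma \ref{lem:iptruth}) in the background; you have not identified the type-theoretic analogue of this hypothesis or argued that it is available in the gluing model. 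In short: your proposal is a plausible attack consistent with the authors' intent, but the conjecture remains open after reading it.
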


\section*{Acknowledgements}
\label{sec:acknowledgements}

This work was supported by the EPSRC project ``Homotopical inductive
types'' through grant No. EP/K023128/1 and by the Hausdorff Research
Institute for Mathematics in Bonn during the trimester ``Types, Sets,
and Constructions'' 2018.  This publication was made possible through
the support of a grant from the John Templeton Foundation (``A new
dawn of intuitionism: mathematical and philosophical advances,'' ID
60842). The opinions expressed in this publication are those of the
authors and do not necessarily reflect the views of the John Templeton
Foundation.

\bibliographystyle{abbrv}
\bibliography{mybib}{}
\end{document}